 \newtheorem{theorem}{Theorem}[section]
 \renewcommand\thetheorem{\Alph{theorem}}
 \newtheorem{corollary}[theorem]{Corollary}
 \newtheorem{lemma}[theorem]{Lemma}
 \newtheorem{proposition}[theorem]{Proposition}
 \newtheorem{definition}[theorem]{Definition}
 \newtheorem{remark}[theorem]{Remark}
 \newtheorem{example}[theorem]{Example}
 \newtheorem{observation}[theorem]{Observation}
 \numberwithin{equation}{section}
\begin{document}

\title[Removal of Singularities]
{A Geometric Proof of Removal of Boundary Singularities of Pseudo-Holomorphic Curves}

\author{URS FUCHS}
\address{Urs Fuchs, Mathematisches Institut, Westf\"{a}lische
Wilhelms-Universit\"{a}t M\"{u}nster, 48149 M\"{u}nster, Germany
\newline
\indent Department of Mathematics, Purdue University, West Lafayette, IN
47907, USA}
\email{ufuchs@math.purdue.edu}

\author{LIZHEN QIN}
\address{Lizhen Qin, Department of Mathematics, Purdue University, West Lafayette, IN 47907, USA}
\email{qinl@math.pudue.edu}








\begin{abstract}
We prove two theorems on the removal of singularities
on the boundary of a pseudo-holomorphic curve. In one theorem, we need \textit{no} apriori
assumption on the area of the curve. The proof uses a doubling argument with the goal of converting curves
with boundary to curves without boundary. Our method is new and geometric and it does \textit{not}
need Sobolev spaces and PDEs.
\end{abstract}

\maketitle

\section{Introduction}
We provide in this paper a new and detailed geometric proof of the removal of boundary singularities
of pseudo-holomorphic curves. More precisely,
we shall prove the following Theorems \ref{theorem_A} and \ref{theorem_B} due to Gromov \cite{gromov}
by using a doubling argument.
The idea of using a doubling argument is also due to him.
Precise statements of Theorems \ref{theorem_A} and \ref{theorem_B} are Theorems
\ref{theorem_finite_area} and \ref{theorem_tame} respectively.

Let $D^{+}$ be the upper half disk on the complex plane and $\check{D}^{+}$ be
the punctured upper half disk $D^{+} - \{0\}$ (see (\ref{half_disk}) and (\ref{pucture_half_disk})).
Suppose $M$ is a manifold with almost complex structure $J$ and $W$ is an embedded totally
real submanifold of $M$. Assume $f: \check{D}^{+} \rightarrow M$ is a smooth and pseudo-holomorphic map such that
$f(\partial \check{D}^{+}) \subseteq W$.

\begin{theorem}\label{theorem_A}
Suppose the image of $f$ is relatively compact
and the area of the image is finite (see Definition \ref{definition_image_volume})
for some Riemannian metric.
Then $f$ has a smooth extension over $0 \in D^{+}$.
\end{theorem}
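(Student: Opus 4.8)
\emph{Plan of the proof.} The idea is to reduce Theorem~\ref{theorem_A} to the interior removal of singularities by doubling \emph{both} the source and the target, and then applying the (boundaryless) removal theorem to the doubled curve. I would carry this out in three stages.

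\textbf{Step 1 (localization via the area hypothesis).} Write $\check D^+_r = \check D^+ \cap \{|z| < r\}$. Using that the image of $f$ is relatively compact and its area is finite, I would first show that $f$ extends continuously to $0$, with $f(0) =: p_0 \in W$. The mechanism is standard in spirit: the area of $f$ over the half-annuli $\{r/2 \le |z| \le 2r\} \cap D^+$ tends to $0$ as $r \to 0$, and a monotonicity/isoperimetric inequality for pseudo-holomorphic half-disks with boundary on the totally real submanifold $W$ forces the images of these half-annuli to have diameter tending to $0$; since a part of their boundary is sent into $W$, these images stay near $W$, which yields both the continuous extension and $p_0 \in W$. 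This is the only place the finite-area hypothesis enters.

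\textbf{Step 2 (the doubling construction).} Near $p_0$ I would choose coordinates $\psi$ in which $W$ becomes an open piece of $\mathbb{R}^n \subseteq \mathbb{C}^n$ and the anti-holomorphic reflection $\sigma(z) = \bar z$ is anti-$J$-holomorphic along $W$; since $W$ is totally real one first arranges $\psi_*J = J_{\mathrm{std}}$ on $\mathbb{R}^n$. Crucially, I would then successively remove, by further coordinate changes tangent to the identity along $\mathbb{R}^n$, the obstructions to \emph{smoothness} of the reflected structure: the identity $J^2 = -\mathrm{Id}$ forces the normal Taylor coefficients of $\psi_*J$ along $\mathbb{R}^n$ to anti-commute with $J_{\mathrm{std}}$ once the lower-order ones are normalized, and anti-commuting matrices form exactly the image of $\mathrm{ad}_{J_{\mathrm{std}}}$, which is precisely what coordinate changes can adjust. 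The outcome is a \emph{smooth} almost complex manifold $(\widehat N, \widehat J)$ — the double of a tubular neighbourhood $N$ of $W$ along $W$ — containing $N$ as its ``upper half'' with $\widehat J|_N = J$, together with the involution $\sigma$. Doubling the source by reflecting $\check D^+_r$ across the real axis and setting $\widehat f$ equal to $\psi \circ f$ on the upper half and $\sigma\bigl((\psi\circ f)(\bar z)\bigr)$ on the lower half, the hypothesis $f(\partial\check D^+)\subseteq W$ makes $\widehat f \colon \check D_r \to \widehat N$ continuous, smooth and $\widehat J$-holomorphic on each closed half-disk away from $0$, with relatively compact image and finite area (twice that of $f$).

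\textbf{Step 3 (reduction and conclusion).} Since $\widehat J$ is now smooth and $\widehat f$ is $C^1$ up to the open real segment from both sides, $\widehat f$ is in fact smooth and $\widehat J$-holomorphic across that segment (removal of singularities along a smooth hypersurface), so $\widehat f$ is a $\widehat J$-holomorphic curve $\check D_r \to \widehat N$ with an isolated singularity only at $0$. Applying the interior removal of singularities for finite-area curves with relatively compact image gives a smooth extension of $\widehat f$ over $0$, and restricting, $f = \psi^{-1}\circ \widehat f|_{D^+_r}$ extends smoothly over $0$. The main obstacle I expect is Step~2 — producing coordinates in which $\sigma^*J$ agrees with $J$ to infinite order along $W$, so that $\widehat J$ is honestly $C^\infty$ rather than merely Lipschitz (as the naive reflection would be); this is the step that replaces the usual elliptic-regularity bootstrap by the PDE-free argument promised in the abstract. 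A secondary technical point is the boundary version of the monotonicity lemma underlying Step~1.
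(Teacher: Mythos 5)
Your Step 2 is where the argument breaks down, and the gap is not a technicality. First, the object you call ``the double of a tubular neighbourhood $N$ of $W$ along $W$'' does not exist as a smooth manifold: $W$ has half the dimension of $N$ (codimension $n$), so it does not separate $N$ and gluing two copies of $N$ along $W$ is not a manifold; there is no ``upper half'' of the target. The only meaningful version of your construction keeps the target equal to a reflection-invariant neighbourhood $U$ of $W$ and doubles the source; then $\widehat f$ is $J$-holomorphic on the upper half-disk but only $\widetilde J$-holomorphic on the lower one, where $\widetilde J=-d\sigma\, J\, d\sigma$. Second, your normalization of Taylor coefficients can at best make $\widetilde J$ agree with $J$ to infinite order \emph{along} $W$ (even granting that the successive coordinate changes can be carried out), and that is not enough: the image of the lower half-disk is not contained in $W$, so at the points actually visited by $\widehat f$ the two structures differ, and $\widehat f$ is not pseudo-holomorphic for any single smooth structure. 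Genuine invariance $\sigma^{*}J=J$ on a neighbourhood of $W$ is in general impossible: already for integrable $J$, an anti-holomorphic involution is locally conjugate to an anti-linear one, so its fixed locus is real-analytic, whereas Theorem \ref{theorem_finite_area} allows a merely smooth $W$. Consequently Step 3 has nothing to which the interior removal theorem applies. This is precisely the difficulty the paper acknowledges (right after Remark \ref{remark_c_1} it states that $F$ cannot be shown to be $J$- or $\widetilde J$-holomorphic), and its proof never produces a pseudo-holomorphic double.

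For comparison, the paper's way around this is to work with the pair $(J,\widetilde J)$, which agree only on $TM|_{W}$ (Lemma \ref{lemma_structrue_in_u}), and to prove the isoperimetric and diameter estimates directly for the piecewise pseudo-holomorphic double (Lemmas \ref{lemma_isoperimetric_disk}, \ref{lemma_diameter_boundary}, \ref{lemma_isoperimetric_boundary}), which yields continuity and then Lipschitz continuity; higher regularity is obtained not from an interior removal theorem but by a bootstrap: $\partial f/\partial x$ and $\partial f/\partial y$ are pseudo-holomorphic maps into $(TU_{0},J^{(1)})$ with boundary on the totally real submanifolds $W_{x}$, $W_{y}$, hence satisfy the hypotheses of Theorem \ref{theorem_tame} via Lemma \ref{lemma_tame_to_area}, and one inducts, finishing with Whitney's extension theorem. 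Two smaller gaps in your plan: in Step 1, ``the images stay near $W$'' does not by itself give a limit at $0$, since the curve could drift along $W$; one needs a diameter bound of the type of Lemma \ref{lemma_diameter_boundary}. And in Step 3, the assertion that a map which is $C^{1}$ across the real segment and pseudo-holomorphic on both open halves is automatically smooth across it is itself a regularity statement that would require proof (in this paper that assertion is essentially the content of the theorem being established).
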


\begin{theorem}\label{theorem_B}
Suppose the image of $f$ is relatively compact.
Suppose $\alpha$ is a $1$-from on $M$ such that $d \alpha$ tames $J$.
Furthermore, assume $\alpha$ is exact on $W$. Then $f$ has a smooth extension over $0 \in D^{+}$.
\end{theorem}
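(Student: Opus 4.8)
The plan is to deduce Theorem~\ref{theorem_B} from Theorem~\ref{theorem_A}: I will show that the taming hypothesis together with the exactness of $\alpha$ on $W$ already forces the image of $f$ to have finite area, after which Theorem~\ref{theorem_A} applies verbatim and produces the smooth extension over $0$. Thus no doubling argument is needed for Theorem~\ref{theorem_B} beyond the one carried out for Theorem~\ref{theorem_A}; the extra work is a Stokes--plus--mean-value estimate on shrinking annuli.

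Set $\omega=d\alpha$ and let $g$ be the metric associated to $(\omega,J)$ on a relatively compact neighbourhood of the closure of $f(\check{D}^{+})$, so $g(v,v)=\omega(v,Jv)>0$ and $J$ is a $g$-isometry there. For a $J$-holomorphic map $f^{*}\omega$ is pointwise nonnegative, and in polar coordinates $z=re^{i\theta}$ the identity $\partial_\theta f=J(r\,\partial_r f)$ gives $|\partial_r f|_g=r^{-1}|\partial_\theta f|_g$, hence $f^{*}\omega=\tfrac1r|\partial_\theta f|_g^{2}\,dr\wedge d\theta$. Fix $r_0\in(0,1)$, write $C_\rho^{+}:=\{\,z:|z|=\rho,\ \mathrm{Im}\,z\ge 0\,\}$, and for $0<r<r_0$ set $A_r^{+}:=\{\,r\le|z|\le r_0,\ \mathrm{Im}\,z\ge 0\,\}$, $\phi(s):=\int_0^{\pi}|\partial_\theta f(se^{i\theta})|_g^{2}\,d\theta$, and
\[
E(r):=\int_{A_r^{+}}f^{*}\omega=\int_r^{r_0}\frac{\phi(s)}{s}\,ds\ \ge 0 ,
\]
which is nonincreasing in $r$, with $E(0^{+}):=\lim_{r\to0^{+}}E(r)\in(0,\infty]$; finiteness of $E(0^{+})$ is exactly finiteness of the area (and energy) of $f$. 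By Stokes' theorem $E(r)$ is the sum of the integrals of $f^{*}\alpha$ over the four boundary pieces of $A_r^{+}$: the two arcs $C_{r_0}^{+},C_r^{+}$ contribute $\pm I(r_0)\pm I(r)$ with $I(\rho):=\int_0^{\pi}\alpha_{f(\rho e^{i\theta})}(\partial_\theta f)\,d\theta$, while the two real segments, on which $f$ takes values in $W$ and $\alpha=dh$, contribute a sum of at most four values of $h\circ f$, bounded because the closure of $f(\partial\check{D}^{+})$ in $W$ is compact. Hence $E(r)\le C+|I(r)|$ for a constant $C$ independent of $r$, and by Cauchy--Schwarz applied to $\mathrm{length}_g(f(C_r^{+}))=\int_0^{\pi}|\partial_\theta f|_g\,d\theta$,
\[
E(r)\ \le\ C+\|\alpha\|_{\infty}\,\mathrm{length}_g\!\big(f(C_r^{+})\big)\ \le\ C+\|\alpha\|_{\infty}\sqrt{\pi\,\phi(r)} ,
\]
where $\|\alpha\|_{\infty}$ is the supremum of the pointwise $g$-dual norm of $\alpha$ over the (compact) closure of the image.

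Now assume for contradiction that $E(0^{+})=\infty$, and shrink $r_0$ so that $E(r)>C+1$ on $(0,r_0]$. The last display gives $\phi(r)\ge(E(r)-C)^{2}/(\pi\|\alpha\|_{\infty}^{2})$, and since $E'(r)=-\phi(r)/r$ this is the differential inequality
\[
\frac{-E'(r)}{(E(r)-C)^{2}}\ \ge\ \frac{1}{\pi\|\alpha\|_{\infty}^{2}\,r},\qquad 0<r\le r_0 .
\]
Integrating over $[\varepsilon,r_0]$ bounds the left side by $(E(r_0)-C)^{-1}$ while the right side is $(\pi\|\alpha\|_{\infty}^{2})^{-1}\log(r_0/\varepsilon)\to\infty$ as $\varepsilon\to0^{+}$, a contradiction. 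Hence $E(0^{+})<\infty$, the area of the image of $f$ is finite, and Theorem~\ref{theorem_A} yields the desired smooth extension over $0\in D^{+}$.

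I expect the one genuinely delicate point to be the boundedness of the two boundary-segment contributions, i.e.\ that a primitive of $\alpha|_W$ stays bounded along $f(\partial\check{D}^{+})$ as one approaches the puncture: this is precisely where the exactness of $\alpha$ on $W$ is used (together with relative compactness and the hypothesis that the embedded $W$ is well placed in $M$, e.g.\ closed, so that the boundary image has compact closure in $W$), and it is the hypothesis that cannot be removed. The remaining ingredients --- the pointwise identities for a $J$-holomorphic curve and the shrinking-annulus mean-value mechanism --- are standard, and the removal of the now finite-area singularity is exactly Theorem~\ref{theorem_A}. Alternatively one could double $(M,W,J,\alpha)$ across $W$ to obtain a closed pseudo-holomorphic curve on a punctured disk and invoke interior removal of singularities directly; exactness of $\alpha$ on $W$ is what makes the doubled taming form behave well along the fixed locus.
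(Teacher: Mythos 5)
Your reduction is correct in substance, but it follows a genuinely different route from the paper. The paper proves Theorem~\ref{theorem_B} by establishing the pointwise gradient bound $\|df(z)\|\le C\,(|z|\log\frac{1}{|z|})^{-1}$ (Lemma~\ref{lemma_tame_to_area}), via the intrinsic doubling of the pullback metric across $\partial\check{D}^{+}$ (using the special Hermitian metric of Lemma~\ref{lemma_hermitian} to make the doubled metric $C^{2}$), the two isoperimetric inequalities of Lemmas~\ref{lemma_isoperimetric_gauss} and~\ref{lemma_isoperimetric_form}, and comparison with the hyperbolic metric of $\check{D}$ through universal covering maps; finiteness of the area is then a corollary. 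You instead prove only the weaker statement that the energy $\int f^{*}d\alpha$ over a small punctured half disk is finite, by Stokes on half-annuli: the exactness of $\iota^{*}\alpha$ turns the two real boundary segments into boundary values of a bounded primitive, the inner arc is controlled by Cauchy--Schwarz, and the resulting differential inequality $-E'\ge c\,(E-C)^{2}/r$ rules out $E(0^{+})=\infty$. This is shorter and more elementary, at the price of not yielding the decay estimate; but finiteness of area is all Theorem~\ref{theorem_A} needs, so the reduction is legitimate, provided you add the small bookkeeping the paper handles via Observation~\ref{claim_metric} and the comment after Lemma~\ref{lemma_tame_to_area}: for the symmetrized taming metric $g(v,w)=\tfrac12\bigl(\omega(v,Jw)+\omega(w,Jv)\bigr)$ one checks $g(Jv,Jw)=g(v,w)$ and $g(v,Jv)=0$, so $f^{*}g$ is conformal and its area form is exactly $f^{*}\omega$, whence finite energy equals finite $g$-area, finiteness is metric-independent on the relatively compact image, and a rescaling $z\mapsto r_{0}z$ puts you in the setting of Theorem~\ref{theorem_A}. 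One slip to repair: you cannot ``shrink $r_{0}$ so that $E(r)>C+1$ on $(0,r_{0}]$,'' since by definition $E(r_{0})=0$ (and $C$ depends on $r_{0}$ through $I(r_{0})$); instead keep $r_{0}$ fixed and, using $E(r)\to\infty$ and monotonicity of $E$, choose $r_{1}\in(0,r_{0})$ with $E(r_{1})>C+1$, run the differential inequality on $(0,r_{1}]$, and bound the integrated left side by $(E(r_{1})-C)^{-1}\le 1$; the contradiction with $\log(r_{1}/\varepsilon)\to\infty$ is unchanged. With these repairs your argument is complete; the concluding remark about doubling $(M,W,J,\alpha)$ across $W$ is only a sketch and is where the paper's actual work (Sections~\ref{section_reduction_lemma}--\ref{section_higher_order_derivatives}) lies, but your proof does not rely on it.
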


In Theorem \ref{theorem_B}, the assumption of the exactness of $\alpha$ on $W$
was not stated by Gromov in \cite[1.3.C']{gromov}. However,
Theorem \ref{theorem_B} will \textit{no} longer be true if we drop this assumption.
We construct the following simple counterexample to show that, without the assumption,
there could be \textit{no} continuous extension. (The proof of this example is given
in Section \ref{section_main_result}.)

\renewcommand\thetheorem{\arabic{theorem}}
\numberwithin{theorem}{section}

\begin{example}\label{example_no_extension}
Let $M = \mathbb{C}$ be the complex plane. Choose $J$ to be the standard complex structure.
Let $W= \{ z=x+iy \in \mathbb{C} \mid |z|=1 \}$ be the unit circle. Define $\alpha = xdy$.
Define a holomorphic function $f: \check{D}^{+} \rightarrow \mathbb{C}$ as
\begin{equation}
f(z) = \exp \left( -\frac{i}{z} \right).
\end{equation}

Then $f: \check{D}^{+} \rightarrow (M, W, J, \alpha)$ satisfies every assumption
in Theorem \ref{theorem_B} with one exception: $\alpha$ is not exact on $W$.

The conclusion of Theorem \ref{theorem_B} does not hold in this example.
\end{example}

To the best of our knowledge, up to now, there has been no work in the literature to
give a correct statement of Theorem \ref{theorem_B}, let alone to prove it.
Theorem \ref{theorem_B} does have certain advantages over the Theorem \ref{theorem_A}.
This will be illustrated by a simple Example \ref{example_advantage}.

We shall call the above two theorems the \textit{boundary case} of the removal of singularities.
In these theorems, if $D^{+}$ (resp. $\check{D}^{+}$) is replaced by the disk
$D =  \{ z \in \mathbb{C} \mid |z|<1 \}$ (resp. the punctured
disk $\check{D} = D - \{0\}$) and all assumptions related to $W$ are dropped, we will get theorems
on the removal of interior singularities (see \cite[1.3 \& 1.4]{gromov} and \cite[p.41, Theorem 2.1]{hummel}).
We shall call them the \textit{interior case}.

The removal of singularities is important to symplectic geometry.
It is a key ingredient in Gromov compactness for pseudo-holomorphic curves.
Namely it enables us to identify pseudo-holomorphic
spheres and disks as the obstructions to
compactness for pseudo-holomorphic curves. (See \cite[1.5]{gromov} and \cite[p.71]{mcduff_salamon}.)

There has been much work in the literature to present detailed proofs of the removal of singularities
in both interior case and boundary case. They can be roughly divided into two types.

The first type is analytic. It involves nontrivial tools of analysis such as Sobolev spaces
and PDEs. The strategy is as follows. One first proves the pseudo-holomorphic map $f$ on $D$ or $D^{+}$
belongs to the Sobolev space $W^{1,p}$ for some $p>2$, or is H\"{o}lder continuous. Here one
improves the integrability of the derivative of $f$ by analytic methods. Then the elliptic
regularity of PDEs implies that $f$ is smooth on $D$ or $D^{+}$. This method has been used in, e.g.
\cite{oh}, \cite{parker_wolfson}, \cite{ye}, \cite{mcduff_salamon} and \cite{ivashkovich_shevchishin}.

The second one is geometric and stems from the original argument by Gromov \cite{gromov}. Rather than
using the above analytic machinery, it is based on the geometric insight into the problem. For example,
different geometric aspects yield different types of isoperimetric inequalities. The proof relies on
the combination and the refinement of these isoperimetric inequalities. The process of this proof is different
from that of the analytic one. First, one proves that $f$ has a continuous extension over $0$.
Second, one proves that $f$ is Lipschitz continuous at $0$. Then, by a geometric construction,
one reduces the study of the derivatives of $f$ to the case of $f$ itself. Thus the $C^{k}$
regularity implies the $C^{k+1}$ regularity, which completes the proof. This method
has been used in, e.g. \cite{muller} and \cite{hummel} in the interior case.

Up to now, there has been no geometric proof of the removal
of boundary singularities. This paper provides such a geometric proof.
Therefore, our proof is new for both Theorems \ref{theorem_A} and \ref{theorem_B}.
The readers of this paper do not need any knowledge of Sobolev spaces and PDEs.

Here is a bibliography of the previous work in the boundary case. The paper \cite{oh} gives a proof
of Theorem \ref{theorem_A} under the additional assumption that $J$ is compatible with a symplectic
form and $W$ is Lagrangian. A second proof
of Theorem \ref{theorem_A} is given in \cite{ye}.
The book \cite[Section 4.5]{mcduff_salamon} presents a proof of Theorem \ref{theorem_A} under the additional assumption
that $J$ is tamed by a symplectic form and $W$ is Lagrangian.
The paper \cite[Corollary 3.2]{ivashkovich_shevchishin}
proves a more general theorem which implies Theorem \ref{theorem_A}, where $W$
is not necessarily embedded and the assumption on the area of the image is weakened.

We describe now the main idea of our proof. In \cite[1.3.C]{gromov}, Gromov suggests that a doubling
argument would reduce the boundary case to the interior case. Our method follows this idea.

We try to find a good doubling map $F: \check{D} \rightarrow (M,J)$ of $f: \check{D}^{+} \rightarrow (M,J)$.
The good map $F$ is an extension of $f$
and satisfies two conditions: (1) it is pseudo-holomorphic; (2) it has sufficient symmetry.
Because of the symmetry, one expects that $F$ has good properties if $f$ does. If one can find such $F$,
then the boundary case can be easily reduced to the interior case. It turns out that such a good map
is difficult to be obtained. However, fortunately, we can construct in this paper
a doubling map close to such a map.

A natural way to define a doubling map is as follows: If the image of $f$ is contained in a tubular neighborhood of $W$,
one can reflect the image of $f$ with respect to $W$. More generally, if $f$ maps a smaller punctured half disk into this tubular neighborhood,
one can apply the reflection to the restriction of $f$ to this smaller punctured half disk.
Since the removal of singularities is a local argument, this is sufficient.

There are two difficulties when we do a doubling argument.
First, under the assumption of Theorem \ref{theorem_B}, it's not easy to show
that $f$ maps a smaller punctured half disk into a tubular neighborhood of $W$. This makes the construction
of a doubling map difficult. Actually, Lemma \ref{lemma_converge_neighborhood} shows
that such a situation becomes better in the case of Theorem \ref{theorem_A}.
Second, the triple $(M,W,J)$ lacks sufficient symmetry.
In fact, the symmetry of $(M,W,J)$ helps a doubling argument.
Example \ref{example_pansu} tells us, if one makes the assumption that $J$ is integrable
in a neighborhood of $W$ and $W$ is real analytic, then a holomorphic doubling map
is easily obtained. This strong assumption actually gives $(M,W,J)$ more symmetry.
(This assumption appears in \cite[2.1.D]{gromov} and \cite[p.244-245]{pansu}.)

In order to overcome the first difficulty, we construct the following intrinsic doubling.
We pull the geometric data on $M$, such as metrics and forms, back to $\check{D}^{+}$
by using $f$, and then we extend these data symmetrically over $\check{D}$. Instead of constructing
a doubling map from $\check{D}$ to $M$, we take $\check{D}$ as our ambient manifold. On $\check{D}$,
we can adapt certain arguments which were used by Gromov on $M$ in the interior case.
This reduces Theorem \ref{theorem_B} to Theorem \ref{theorem_A}.
To make these arguments possible, the extended geometric data on $\check{D}$
need to have good properties. We achieve this by two ingredients: the first one is the fact that
the $1$-form $\alpha$ is exact on $W$; the second one is some constructions suggested by Gromov
\cite[1.3.C]{gromov} such as finding a Hermitian metric on $M$ which makes $W$ totally geodesic.

The above intrinsic doubling is not sufficient because we have to use an extrinsic property
of a pseudo-holomorphic curve: it is ``almost minimal" in the ambient manifold (see
comment before Lemma \ref{lemma_isoperimetric_disk}).
Thus we also establish an extrinsic doubling.
Here we do define a doubling map $F: \check{D} \rightarrow M$ by a reflection as mentioned above.
However, this map $F$ is not necessarily pseudo-holomorphic because of the second difficulty
mentioned above. This difficulty is overcome by the following symmetric construction.
We introduce a new almost complex structure $\widetilde{J}$ arising naturally from the
reflection. Then $F$ is pseudo-holomorphic with respect to $J$ on $\check{D}^{+}$ and
with respect to $\widetilde{J}$ on $\check{D}^{-}$ (the lower half disk). Furthermore, $J$ and $\widetilde{J}$
coincide on $W$. Therefore, our map $F$ is sufficiently close to a pseudo-holomorphic map so that
an adaptation of Gromov's approach in the interior case finishes the proof.

The outline of this paper is as follows. Section \ref{section_main_result} precisely formulates the main results
of this paper. Section \ref{section_set_up} lists some technical results
frequently used throughout this paper. In Section \ref{section_reduction_lemma}, we construct
our intrinsic doubling which reduces Theorem \ref{theorem_B} to Theorem
\ref{theorem_A}. The subsequent sections constitute a progress of improving the
regularity of $f$ at $0 \in D^{+}$ under the assumption of Theorem \ref{theorem_A}.
Section \ref{section_pre-continuity} shows that $f$ maps a smaller punctured half disk into a tubular neighborhood
of $W$. In this part, we follow \cite[p.135-136]{oh}. Such a result paves the way for the extrinsic doubling
in the next section. In Section \ref{section_continuity}, we establish the continuous
extension of $f$ over $0$ by using the extrinsic doubling. The key argument in this step
is to establish certain isoperimetric inequalities. This follows from the ``almost minimal"
property of pseudo-holomorphic curves. In Section \ref{section_lipschitz_continuity},
the Lipschitz continuity of $f$ is proved. The proof is a refinement of that in Section
\ref{section_continuity}. It relies on an improvement of the previous isoperimetric inequalities.
Section \ref{section_almost_structure} recalls the fact that the tangent bundle of $M$ is
naturally an almost complex manifold. This is needed for a geometric construction in the
next section. Finally, in Section \ref{section_higher_order_derivatives}, we study the higher
derivatives of $f$ by a boot-strapping argument, which finishes the proof of Theorems
\ref{theorem_A} and \ref{theorem_B}.


\renewcommand\thetheorem{\arabic{theorem}}
\numberwithin{theorem}{section}

\section{Main Result}\label{section_main_result}
In this paper, all manifolds are without boundary if we don't say this
explicitly. All manifolds,
maps, functions, metrics, almost complex structures, forms and so on are smooth
if we don't state this
explicitly. Similarly, all submanifolds are assumed to be smoothly embedded submanifolds unless otherwise mentioned. We say
a submanifold is a closed submanifold if it is a \textit{closed subset} of the ambient manifold.

\begin{definition}\label{definition_image_volume}
Suppose $N$ is an $n$-dimensional $C^{1}$ Riemannian manifold, $S$ is a $k$-dimensional
$C^{1}$ manifold, and $h: S \rightarrow N$ is a $C^{1}$ map.
Pulling back the Riemannian metric from $N$ to $S$ by
$h$, we get a possibly singular $C^{0}$ metric on $S$. The volume of the image of $h$ is
defined as the volume of $S$ with respect to this pull back metric. Denote this volume
by $|h(S)|$. When $k=1$ or $2$, we also call it length or area of the image respectively.
\end{definition}

As a subset of $N$, $h(S)$ has its $k$-dimensional Hausdorff measure with respect to
the metric of $N$. By Federer's area formula, we know that $|h(S)|$ is no less than the
Hausdorff measure of $h(S)$. It also could happen that $|h(S)|$ is strictly greater than
the Hausdorff measure, for example, when $h$ is a covering map.

\begin{definition}
We say a subset $Y$ is relatively compact in a topological space $X$ if the closure of
$Y$ in $X$ is compact.
\end{definition}

Let's recall some basic definitions related to almost complex manifolds. Suppose
$M$ is a manifold with an almost complex structure $J$, that is a field
of endomorphisms on $T_{p} M$ for all $p \in M$ such that $J^{2} = - \text{Id}$.
We call $M$ an almost complex manifold and also denote it by $(M,J)$. The dimension
of $M$ has to be even. We say a $2$-form $\omega$ on $M$ is a symplectic form if $\omega$
is closed and nondegenerate.

\begin{definition}
We say a $2$-form $\omega$ tames $J$ if $\omega(v,Jv) > 0$ for any nonzero
tangent vector $v$ on $M$. We say $\omega$ is compatible with $J$ if $\omega(\cdot, J \cdot)$
is a Riemannian metric on $M$.
\end{definition}

Clearly, the fact that $\omega$ is compatible with $J$ implies that $\omega$ tames $J$.
If $\omega$ tames $J$, then $\omega$ is obviously nondegenerate.
In the compatible case, $J$ preserves the Riemannian metric
$\omega(\cdot, J \cdot)$, and $H(\cdot, \cdot) = \omega(\cdot, J \cdot) - i \omega(\cdot, \cdot)$
defines a Hermitian metric on $M$ with respect to $J$,
where $i$ is the imaginary unit, i.e.\ $i^{2}=-1$.

\begin{definition}\label{definition_totally_real}
A submanifold $W$ of $(M,J)$ is said to be totally real if
$\dim (W) = \frac{1}{2} \dim (M)$ and $J(T_{p}W) \cap T_{p}W
= \{0\}$ for all $p \in W$.
\end{definition}

\begin{definition}
Suppose $(S, J_{1})$ and $(N, J_{2})$ are manifolds with almost complex structures $J_{1}$
and $J_{2}$. A $C^{1}$ map $h: S \rightarrow N$ is $J$-holomorphic or pseudo-holomorphic
if the derivative of $h$ is complex linear with respect to $J_{1}$ and $J_{2}$, i.e.
\[
J_{2} \cdot dh = dh \cdot J_{1}.
\]
If $S$ is a Riemann surface, we call such a map a $J$-holomorphic curve or a
pseudo-holomorphic curve in $N$.
\end{definition}

We fix now some notations for some subsets of the complex plane $\mathbb{C}$ which are frequently
used throughout this paper. Denote by
\begin{equation}\label{half_disk}
D^{+} = \{ z \in \mathbb{C} \mid |z|<1, \text{Im}z \geq 0 \},
\end{equation}
the half disk on the complex plane. Define
\begin{equation}\label{pucture_half_disk}
\check{D}^{+} = D^{+} - \{0\}.
\end{equation}
as the punctured half disk.
Clearly, $\check{D}^{+}$ is a Riemann surface with boundary. Its boundary is
\begin{equation}
\partial \check{D}^{+} = \{ z \in \mathbb{R} \mid 0 < |z| < 1 \}.
\end{equation}

The main goal of this paper is to present a new and geometric proof of the following
two theorems due to Gromov \cite[1.3.C]{gromov}. They are the precise versions of
Theorems \ref{theorem_A} and \ref{theorem_B} in the Introduction.

\begin{theorem}\label{theorem_finite_area}
Suppose $(M,J)$ is a smooth almost complex manifold.
Suppose $f: \check{D}^{+} \rightarrow (M,J)$ is a smooth $J$-holomorphic map
such that $f(\partial \check{D}^{+}) \subseteq W$, where $W$ is a smoothly embedded
totally real submanifold and a closed subset of $M$. Suppose the image of $f$ is relatively compact
and the area of the image is finite for some Riemannian metric on $M$.

Then $f$ has a smooth extension over $0 \in D^{+}$.
\end{theorem}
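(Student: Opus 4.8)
The plan is to run Gromov's doubling strategy in the finite-area setting, in four stages: first push $f$ into a tubular neighborhood of $W$; then reflect across $W$ to obtain an ``almost pseudo-holomorphic'' doubled map and extract a continuous extension from an isoperimetric inequality; then sharpen that inequality to get Lipschitz continuity at $0$; and finally bootstrap to smoothness by applying the $C^{1}$ statement to a pseudo-holomorphic lift of $f$ to the tangent bundle. \textbf{Stage 1 (pre-continuity).} I would first show that for some $r>0$ the restriction of $f$ to $\{z : 0<|z|<r,\ \text{Im}\, z \geq 0\}$ takes values in a tubular neighborhood $U$ of $W$ carrying a smooth involution $\sigma$ that fixes $W$ pointwise. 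Following \cite{oh}, this uses finiteness of the area: if $f$ returned to a fixed distance from $W$ along circles $|z|=\rho_{n}\downarrow 0$, then the totally real boundary condition together with a monotonicity estimate for $J$-holomorphic maps would trap a definite amount of area in each annulus $\{\rho_{n+1}<|z|<\rho_{n}\}$, contradicting $|f(\check{D}^{+})|<\infty$. This is the content of Lemma \ref{lemma_converge_neighborhood}.

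\textbf{Stage 2 (continuity).} With $f$ now valued in $U$, define the \emph{extrinsic doubling} $F$ on a punctured disk by $F(z)=f(z)$ for $\text{Im}\, z\geq 0$ and $F(z)=\sigma\big(f(\bar z)\big)$ for $\text{Im}\, z\leq 0$. Since $f(\partial\check{D}^{+})\subseteq W=\mathrm{Fix}(\sigma)$, the two formulas agree on the real axis, so $F$ is continuous and is smooth off $\mathbb{R}$. Letting $\widetilde{J}$ be the almost complex structure obtained by transporting $J$ through $\sigma$ and the conjugation $z\mapsto\bar z$, one checks that $F$ is $J$-holomorphic on the upper half and $\widetilde{J}$-holomorphic on the lower half, and that $\widetilde{J}=J$ along $W$. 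Thus $F$ is not a genuine $J$-holomorphic curve, but it solves two matching elliptic equations whose symbols coincide on the seam, and it retains the ``almost minimal'' property (cf.\ the comment before Lemma \ref{lemma_isoperimetric_disk}). From that I would derive an isoperimetric inequality bounding the area of $F$ over $\{|z|<\rho\}$ by a constant times the square of the length $\ell(\rho)$ of the curve $F(\{|z|=\rho\})$. Combined with the Cauchy--Riemann equations (which relate $\ell(\rho)$ to the radial energy) and finiteness of the total area, a Gronwall/ODE argument applied to the area function forces $\ell(\rho)\to 0$ and then $F(z)$ to converge as $z\to 0$; hence $f$ extends continuously over $0$.

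\textbf{Stages 3--4 (Lipschitz continuity and bootstrap).} Once $f$ is continuous at $0$ the relevant isoperimetric constant improves, approaching the Euclidean value $\tfrac{1}{4\pi}$ as the scale shrinks because the curve is confined to a tiny ball; reinserting the sharpened inequality into the ODE argument yields $\mathrm{area}(\{|z|<\rho\})=O(\rho^{2})$ and then, via interior elliptic/mean-value estimates, $|f(z)-f(0)|=O(|z|)$, i.e.\ $f$ is Lipschitz at $0$. For smoothness I would use that $TM$ carries a natural almost complex structure lifting $J$ (Section \ref{section_almost_structure}) for which a suitably normalized first derivative $\partial f:\check{D}^{+}\to TM$ is again pseudo-holomorphic, with the boundary condition $f(\partial\check{D}^{+})\subseteq W$ lifting to a totally real boundary condition on $TW\subseteq TM$; Lipschitz continuity of $f$ makes the image of $\partial f$ relatively compact and of finite area. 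Applying the already-established extension result to $\partial f$ shows $f$ is $C^{2}$ at $0$, and iterating this construction ($C^{k}$ at $0$ implies $C^{k+1}$ at $0$, as in \cite{muller} and \cite{hummel} in the interior case) shows that $f$ extends smoothly over $0\in D^{+}$.

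\textbf{Main obstacle.} I expect Stage 2 to be the crux: turning the merely Lipschitz, two-equation map $F$ into a usable isoperimetric inequality. The almost-minimality and monotonicity comparison with minimal surfaces must be carried out across the non-smooth seam on the real axis, and one must control the defect coming from $J\neq\widetilde{J}$ away from $W$. This is exactly where the smallness of $U$ (so that $|J-\widetilde{J}|$ is small) and the totally real condition are used decisively; making this quantitative, uniformly as the scale shrinks, is the technical heart of the argument.
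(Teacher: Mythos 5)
Your outline reproduces the paper's architecture (pre-continuity via monotonicity, extrinsic doubling with $\widetilde{J}$, isoperimetric inequality for continuity, sharpened inequality for Lipschitz continuity, tangent-bundle lift for the bootstrap), but Stage 4 contains a genuine error. You claim that ``Lipschitz continuity of $f$ makes the image of $\partial f$ relatively compact and of finite area.'' Relative compactness, yes; finite area, no. The area of the lifted curve $\frac{\partial f}{\partial x}\colon \check{D}^{+}(\epsilon)\rightarrow TM$ is $\int \|d(\partial f/\partial x)\|^{2}$, which involves the \emph{second} derivatives of $f$; a bound on $\|df\|$ gives no control of this integral, so the finite-area hypothesis of the theorem you are trying to apply inductively is not verified, and the induction cannot start. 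This is exactly why the paper does not apply Theorem \ref{theorem_finite_area} directly to the derivatives: it instead checks the hypotheses of the \emph{tamed} statement, Theorem \ref{theorem_tame}, for $\frac{\partial f}{\partial x}$ and $\frac{\partial f}{\partial y}$ — it constructs on $U_{0}\times B_{3}$ the $1$-form $\alpha=P_{1}^{*}\alpha_{0}+\lambda P_{2}^{*}\beta$ vanishing on the totally real lifts $W_{x}$, $W_{y}$ with $d\alpha$ taming $J^{(1)}$ — and then invokes the apriori gradient estimate of Lemma \ref{lemma_tame_to_area} to \emph{deduce} finite area (Lemma \ref{lemma_boot_strap}). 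So either you must prove the tamed-case reduction (which your proposal omits entirely, since you only address the finite-area theorem), or you must supply some other argument producing an $L^{2}$ bound on the second derivatives; as written, the bootstrap step fails.

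Two smaller glosses are worth flagging. In Stage 2, the implication ``$\ell(\rho)\to 0$, hence $F(z)$ converges as $z\to 0$'' is precisely the nontrivial point: small boundary length plus small enclosed area does not by itself bound the diameter of $F$ over the punctured disk; the paper needs the area-decay rate under conformal reparametrization (Lemma \ref{lemma_decay_rate}), the radial-length estimate (Lemma \ref{lemma_derivative_area}, Corollary \ref{corollary_ray}), and the resulting diameter bound (Lemma \ref{lemma_diameter_boundary}). In Stage 3, ``interior elliptic/mean-value estimates'' do not apply off the shelf at the boundary point $0$, and the doubled map $F$ is not pseudo-holomorphic for a single structure; the paper instead converts the $O(\rho^{2})$ area decay into a pointwise bound on the conformal factor of the doubled metric $G$ via holomorphic reparametrizations of disks (Lemma \ref{lemma_metric_bound}). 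These steps are recoverable along the lines you indicate, but the finite-area claim for $\partial f$ is not.
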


Actually, the finiteness of the area of the image in Theorem \ref{theorem_finite_area} does not depend on the
Riemannian metric (see comment before Claim \ref{claim_metric}).

\begin{theorem}\label{theorem_tame}
Suppose $(M,J)$ is a smooth almost complex manifold.
Suppose $f: \check{D}^{+} \rightarrow (M,J)$ is a smooth $J$-holomorphic map
such that $f(\partial \check{D}^{+}) \subseteq W$, where $W$ is a smoothly embedded
totally real submanifold and a closed subset of $M$. Denote by $\iota: W \hookrightarrow M$
the inclusion of $W$. Suppose $J$ is tamed by $d \alpha$,
where $\alpha$ is a smooth $1$-from on $M$
such that $\iota^{*} \alpha$ is exact on $W$. Suppose the image of
$f$ is relatively compact.

Then $f$ has a smooth extension over $0 \in D^{+}$.
\end{theorem}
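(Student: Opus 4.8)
The plan is to reduce Theorem \ref{theorem_tame} to Theorem \ref{theorem_finite_area} by constructing an \emph{intrinsic doubling} on the punctured disk $\check{D}$, thereby importing the removal-of-singularities mechanism without ever producing a genuine doubling map into $M$. Since the statement is local near $0$, I would first shrink and rescale so that $f$ is defined on a small punctured half disk whose image lies in a coordinate chart of $M$ adapted to $W$. Following Gromov's suggestion in \cite[1.3.C]{gromov}, I would choose a Hermitian metric on $M$ (near the image of $f$) for which $W$ is totally geodesic; this is the first key ingredient, because it makes the geodesic reflection $\rho$ across $W$ an isometry on a tubular neighborhood and guarantees that reflected geometric data match up smoothly along $W$. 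The second key ingredient is that $\iota^*\alpha$ is exact on $W$: writing $\iota^*\alpha = dg$ for some smooth function $g$ on $W$, one can modify $\alpha$ by an exact form so that, after the modification, $\alpha$ vanishes on $W$ to the order needed for the symmetric extension to be smooth across $\partial\check{D}^+$.

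Next I would pull everything back to $\check{D}^+$ via $f$: the metric $f^*h$, the taming form $f^*(d\alpha) = d(f^*\alpha)$, and the almost complex structure (which on the Riemann surface $\check{D}^+$ is just the standard one, since $f$ is pseudo-holomorphic). The crucial point is that these pulled-back data, restricted to $\partial\check{D}^+ \subseteq \mathbb{R}$, are compatible with the reflection $z \mapsto \bar z$ of $\check{D}$ across the real axis: the metric extends to a (possibly only continuous, but sufficiently regular) metric on $\check{D}$, and the primitive $f^*\alpha$ — which is now tangent-vanishing along the boundary — extends to a $1$-form $\beta$ on $\check{D}$ whose differential tames the standard complex structure on $\check{D}$. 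Then I would invoke the interior removal-of-singularities mechanism (the content of Theorem \ref{theorem_finite_area} in the interior case, i.e. the constructions of later sections applied on $\check{D}$ rather than on $M$): the doubled taming form $d\beta$ controls the area of $\check{D}$, so the doubled curve — here meaning the identity map $\check{D} \hookrightarrow (\check{D}, \text{extended metric})$ viewed through the lens of these adapted data — has finite area, and the argument yields continuity, then Lipschitz continuity, then smoothness of $f$ at $0$. Alternatively, once finite area is established for the original half-disk data via $d(f^*\alpha)$ and the exactness-based bound on the boundary term, one applies Theorem \ref{theorem_finite_area} directly.

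More concretely, the area estimate is the heart of it: for a pseudo-holomorphic $f$, $\operatorname{area}(f(\check{D}^+_r)) = \int_{\check{D}^+_r} f^*(d\alpha) = \int_{\partial(\check{D}^+_r)} f^*\alpha$ by Stokes, and the boundary of $\check{D}^+_r$ decomposes into the arc $|z|=r$ and two segments on the real axis where $f$ maps into $W$; on those segments $\int f^*\alpha = \int f^*\iota^*\alpha = \int f^*(dg) = \int d(g\circ f)$, which is controlled purely by the values of $g\circ f$ at the two endpoints — hence uniformly bounded since the image is relatively compact and $g$ is bounded. This shows the area of $f$ restricted to any $\check{D}^+_r$ is bounded uniformly in $r$, i.e. $f$ has finite area, so Theorem \ref{theorem_finite_area} applies and we are done.

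I expect the main obstacle to be the regularity bookkeeping in the doubling step: the reflected/extended metric on $\check{D}$ is a priori only continuous across the real axis, not smooth, and one must verify that the arguments borrowed from the interior case (isoperimetric inequalities, the almost-minimality of pseudo-holomorphic curves, the bootstrapping) survive with this limited regularity, and that the exactness of $\alpha$ on $W$ together with the totally-geodesic choice of metric gives exactly enough matching along the boundary. Getting the taming property of $d\beta$ to persist uniformly near $0$ — rather than degenerating — is the delicate quantitative issue, and it is precisely where the hypothesis $\iota^*\alpha$ exact (as opposed to merely closed) is indispensable, as Example \ref{example_no_extension} demonstrates.
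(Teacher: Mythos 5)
Your overall scaffolding — pull back the data to $\check{D}^{+}$, double it across the real axis using a Hermitian metric for which $W$ is totally geodesic (so the reflected metric is sufficiently regular) and using $\iota^{*}\alpha = dg$ (so the reflected primitive is continuous across $\partial\check{D}^{+}$), then run Gromov's interior machinery on $\check{D}$ — is the route the paper takes in Section~\ref{section_reduction_lemma}, culminating in Lemma~\ref{lemma_tame_to_area}. But the proposal has a genuine gap at exactly the point where the paper does its real work: showing that the area is finite.

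Your ``more concrete'' Stokes computation is wrong. The domain $\check{D}^{+}(r)$ is punctured at $0$ and is not compact, so one cannot write $\int_{\check{D}^{+}(r)} f^{*}(d\alpha) = \int_{\partial \check{D}^{+}(r)} f^{*}\alpha$. What Stokes actually gives, for $0 < \epsilon < r$, is
\[
\int_{\{\epsilon\le|z|\le r,\ \mathrm{Im}\,z\ge 0\}} f^{*}(d\alpha)
\;=\; \int_{\Gamma_{r}} f^{*}\alpha
\;-\; \int_{\Gamma_{\epsilon}} f^{*}\alpha
\;+\; \text{(real segment terms)},
\]
where $\Gamma_{t}=\{|z|=t,\ \mathrm{Im}\,z\ge 0\}$. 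The real-segment terms are bounded as you say, and $\Gamma_{r}$ is fixed, but you have no control over $\int_{\Gamma_{\epsilon}} f^{*}\alpha$; its size is bounded only by a constant times the length of $f(\Gamma_{\epsilon})$, and absent an a~priori gradient bound on $f$ near $0$ there is no reason that length stays bounded as $\epsilon\to 0$. (A Courant--Lebesgue argument would hand you a good subsequence $\epsilon_{n}$ along which that length vanishes, but Courant--Lebesgue presupposes finite area, so that route is circular.) The control you need is precisely the estimate $\|df(z)\| \le C/\bigl(|z|\log\tfrac{1}{|z|}\bigr)$ of Lemma~\ref{lemma_tame_to_area} — the theorem's output, not an input.

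The same gap hides in your intrinsic-doubling paragraph, where you assert that ``the doubled taming form $d\beta$ controls the area of $\check{D}$, so the doubled curve has finite area.'' Taming gives only a pointwise inequality on area densities; it does not make the total area finite — that is the conclusion, not a hypothesis. The paper obtains finiteness by first proving the gradient bound: it establishes two isoperimetric inequalities (Lemma~\ref{lemma_isoperimetric_gauss} from the curvature bound, Lemma~\ref{lemma_isoperimetric_form} from the bounded, boundary-continuous doubled primitive $\alpha'$), applies them not on $\check{D}$ directly but on the hyperbolic universal covering maps $\varphi_{z_{0}}\colon D\to\check{D}$, and then compares the doubled metric $G$ with the hyperbolic metric (Lemmas~\ref{lemma_bound_disk_covering} and \ref{lemma_metric_hyperbolic_bound}) to extract the pointwise bound. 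The exactness of $\iota^{*}\alpha$ does its work inside Lemma~\ref{lemma_isoperimetric_form}: it makes $\alpha'$ continuous across the real axis, so the Stokes identity survives after pulling back by $\varphi_{z_{0}}$ to disks in $D$ that cross the preimage of the real axis many times. Your proposal has the right ingredients but omits the detour through the universal cover, which is what actually closes the argument.
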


Now we give a proof of our counterexample in the Introduction.

\begin{proof}[Proof of Example \ref{example_no_extension}]
We know that $W$ is compact and $d \alpha = dx \wedge dy$ is even compatible with $J$.
However, $\iota^{*} \alpha$
is not exact on the unit circle $W$. Actually, there exists no $1$-form $\beta$ such that
$d \beta$ tames $J$ and $\iota^{*} \beta$ is exact. Otherwise, by Stokes' formula,
there would be no nonconstant compact $J$-holomorphic curve inside $M$ whose boundary lies
on $W$. However, the inclusion of the closed unit disk is such a curve,
which results in a contradiction.

Clearly, $f$ is $J$-holomorphic. By direct computation, we have $f(\partial \check{D}^{+})
\subseteq W$. Furthermore, $|f(z)| \leq 1$ for all $z \in \check{D}^{+}$, which implies that
the image of $f$ is relatively compact.

Nevertheless, the limit $\displaystyle \lim_{z \rightarrow 0} f(z)$ does not exist.
Actually, $0$ is an essential singularity of $\exp (\frac{-i}{z})$.
Therefore, $f$ has no continuous extension over $0$.
\end{proof}

\begin{remark}
Readers are suggested to understand Example \ref{example_no_extension} together with Lemma
\ref{lemma_form_zero}, Example \ref{example_stokes_disk} and Lemma \ref{lemma_isoperimetric_form}.
\end{remark}

As mentioned in the Introduction, the following simple example shows that Theorem \ref{theorem_tame}
has its advantage over Theorem \ref{theorem_finite_area} in certain cases.

\begin{example}\label{example_advantage}
Let $f$ be a complex valued function on $\check{D}^{+}$ such that $f$ is smooth and bounded.
Suppose $f$ is holomorphic in the interior of $\check{D}^{+}$. Suppose $f$ takes real values
on $\partial \check{D}^{+}$. By the Schwarz reflection principle, $f$ extends to a bounded
holomorphic function on $\check{D}$, which implies that $0$ is a removable singularity.

This removal of singularities trivially follows from Theorem \ref{theorem_tame}
by taking $M = \mathbb{C}$, $W = \mathbb{R}$ and $\alpha = x dy$.

However, it's not easy to apply Theorem \ref{theorem_finite_area} because it's
nontrivial to show that the area of the image of $f$ is finite.
\end{example}

\section{Set Up}\label{section_set_up}
In this section, we shall give some notation, definitions and results which are frequently used
throughout this paper.

First, together with (\ref{half_disk}) and (\ref{pucture_half_disk}), we
define some subsets of the complex plane $\mathbb{C}$ as
\begin{equation}
D = \{ z \in \mathbb{C} \mid |z|<1 \}, \qquad
\overline{D} = \{ z \in \mathbb{C} \mid |z| \leq 1 \},
\end{equation}
\begin{equation}\label{lower_half_disk}
D^{-} = \{ z \in \mathbb{C} \mid |z|<1, \text{Im}z \leq 0 \}, \qquad
\check{D}^{-} = D^{-} - \{0\},
\end{equation}
\begin{equation}
D(r) = \{ z \in \mathbb{C} \mid |z|<r \}, \qquad
\check{D}^{+}(r) = \{ z \in D(r) \mid z \neq 0, \text{Im}z \geq 0 \},
\end{equation}
and
\begin{equation}
\partial D(r)
= \{ z \in \mathbb{C} \mid |z|=r \}.
\end{equation}

Let's go back to Definition \ref{definition_image_volume}.
If $S$ is also a $C^{1}$ Riemannian manifold, then the volume of the image,
$|h(S)|$, also equals the Riemannian integral of the (absolute value of the)
Jacobian of $h$ on $S$. In particular, suppose $S$ is a Riemann surface and $N$ is an
almost complex manifold. Suppose both $S$ and $N$ are equipped with Hermitian metrics.
Suppose $h$ is $J$-holomorphic. Then $h$ is conformal and therefore the
Jacobian of $h$ equals $\|dh\|^{2}$.

Suppose $\varphi: D \rightarrow N$ is a $C^{1}$ $J$-holomorphic map from the unit disk
to a $C^{1}$ almost complex manifold $N$. Equip $D$ with the standard Euclidean metric,
and equip $N$ with a $C^{1}$ Hermitian metric. For $0< r \leq 1$, define
\[
A(r) = |\varphi(D(r))|.
\]
For $0< r <1$, define
\[
L(r) = |\varphi(\partial D(r))|.
\]
By the conformality of $\varphi$, we infer
\begin{equation}\label{holomorphic_circle_length}
A(r) = \int_{D(r)} \| d \varphi \|^{2}, \qquad \text{and} \qquad
L(r) = \int_{\partial D(r)} \| d \varphi \|.
\end{equation}

We see that $A(r)$ is $C^{1}$ on $[0,1)$ and $L(r)$
is $C^{0}$ in $[0,1)$. Furthermore, we can easily prove the following formula,
for $r \in (0,1)$, (see the statement and the proof in the last line of \cite[p.\ 315]{gromov})
\begin{equation}\label{gromov_formula}
\frac{d}{dr} A(r) \geq (2 \pi r)^{-1} L(r)^{2}.
\end{equation}
This simple and powerful formula will play a key role in our argument.

We can obviously
make a slight generalization of (\ref{gromov_formula}). Actually, this has been done in the
proof of \cite[1.3.B']{gromov}. Suppose the above $\varphi$ is only defined on $D-\{z_{0}\}$
and $A(1)$ is finite. Then $A$ is continuous in $[0,1]$, which follows from the absolute
continuity of integral. Furthermore, $A$ is $C^{1}$ in $[0, |z_{0}|) \cup (|z_{0}|,1)$,
(here $z_{0}$ could be $0$), and (\ref{gromov_formula}) still holds in the interior of these intervals. This
generalization will be frequently used in this paper.

Suppose $(M,J)$ is an almost complex manifold. Let $H$
be a Hermitian metric on $M$. Then, $\text{Re}H$, the real part of $H$ is a Riemannian metric
on $M$; $-\text{Im}H$, the negative of the imaginary part of $H$ is a nondegenerate $2$-form on $M$.
In order to prove Theorems \ref{theorem_finite_area} and \ref{theorem_tame}, Gromov suggests
the following lemma (\cite[1.3.C]{gromov}). This construction has been used in some
previous proofs (e.g. \cite[Section 4.3]{mcduff_salamon}). It is important for us as well.

\begin{lemma}\label{lemma_hermitian}
Suppose $W$ is a closed totally real submanifold of $(M, J)$.
Then there exists a Hermitian metric $H$ on $M$ which satisfies the following properties.

(1). $TW \perp J(TW)$ with respect to $\text{Re}H$.

(2). $W$ is totally geodesic with respect to $\text{Re}H$.

(3). There exists a $1$-form $\alpha_{0}$ in a neighborhood of $W$ such that
$\iota^{*} \alpha_{0} = 0$ on $W$,
$d \alpha_{0}$ tames $J$ and $d \alpha_{0} = - \text{Im}H$ on $TM|_{W}$,
where $\iota: W \hookrightarrow M$ is the inclusion.
\end{lemma}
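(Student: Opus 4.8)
The plan is to build the metric $H$ in two stages: first define a good Hermitian metric in a tubular neighborhood of $W$ that realizes conditions (1) and (2), then extend it globally by a partition of unity, and finally produce the primitive $\alpha_0$ on the neighborhood. For the local construction, I would fix any auxiliary Hermitian metric $H_1$ on $M$ and work in a tubular neighborhood of $W$ coming from the normal exponential map. Since $W$ is totally real, the map $TW \to TM|_W$, $v \mapsto Jv$ identifies $J(TW)$ with a complement of $TW$, so $TM|_W = TW \oplus J(TW)$ as a real bundle. I would first correct $H_1$ along $W$ so that this splitting is orthogonal: this is a pointwise linear-algebra modification of the metric on the finite-dimensional spaces $T_pM$, done smoothly in $p \in W$, and one checks it can be arranged to remain compatible with $J$ (e.g.\ by averaging $H_1$ appropriately or by choosing the metric to be the standard one in a unitary frame adapted to the splitting $TW \oplus J(TW)$). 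This gives condition (1) along $W$.

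For condition (2), I would transport this metric off $W$ using the structure of the normal bundle $N = J(TW)$. Using the exponential map of some background metric to get tubular coordinates, or more directly by declaring the metric on the total space of $N$ near the zero section to be a product-type metric (the pullback of $\operatorname{Re}H|_W$ on the $TW$-directions plus a fixed fiber metric on the $N$-directions, with the cross terms vanishing to first order along $W$), one obtains a Riemannian metric $g_0$ near $W$ for which $W$ is totally geodesic: the fibers of $N$ are geodesics and $W$ sits as the zero section with vanishing second fundamental form. The subtlety is that $g_0$ so constructed need not be the real part of a Hermitian metric for $J$ away from $W$; I would fix this by replacing $g_0$ with the associated $J$-averaged metric $\tfrac12(g_0 + J^*g_0)$, observing that since $g_0$ is already $J$-compatible \emph{along} $W$ and the total-geodesic and orthogonality conditions are conditions only on the $1$-jet along $W$, the averaging does not destroy (1) and (2). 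Then patch this local Hermitian metric to the global $H_1$ via a cutoff function supported near $W$; since compatibility with $J$ and the value of the metric along $W$ are preserved under convex combinations, (1) and (2) persist globally, yielding a global Hermitian metric $H$.

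For (3), set $\omega = -\operatorname{Im}H$, a nondegenerate $2$-form on $M$ (not necessarily closed). I want a $1$-form $\alpha_0$ on a neighborhood $U$ of $W$ with $\iota^*\alpha_0 = 0$, $d\alpha_0 = \omega$ on $TM|_W$, and $d\alpha_0$ taming $J$ on $U$. Using the tubular coordinates, I would first produce a closed $2$-form $\omega_0$ on $U$ agreeing with $\omega$ along $TM|_W$: take $\omega_0 = \omega|_W$ pulled back along the bundle projection $\pi: U \to W$ (this is closed), then $\omega_0 = \omega$ on $TM|_W$ because along the zero section the projection is the identity on $TW$ and the extra fiber directions are handled by the orthogonal splitting. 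Shrinking $U$ if necessary, $\omega_0$ still tames $J$ by continuity, since $\omega_0$ tames $J$ along $W$ (it equals $-\operatorname{Im}H$ there, and $\operatorname{Re}H(v,v) = \omega(v,Jv)$). Then the relative Poincaré lemma for the deformation retraction $U \searrow W$ gives a primitive $\alpha_0$ of $\omega_0$ on $U$ with $\iota^*\alpha_0 = 0$; explicitly $\alpha_0 = \int_0^1 \phi_t^* (\iota_{V}\omega_0)\, dt$ where $\phi_t$ is the radial contraction and $V$ its generating vector field, which manifestly vanishes on $W$. This $\alpha_0$ satisfies $d\alpha_0 = \omega_0$, hence $d\alpha_0 = \omega = -\operatorname{Im}H$ on $TM|_W$ and $d\alpha_0$ tames $J$ on $U$.

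I expect the main obstacle to be the second stage: simultaneously arranging that $W$ is totally geodesic \emph{and} that the metric remains the real part of a $J$-Hermitian metric. The totally-geodesic condition is naturally imposed by a product construction in tubular coordinates, but such coordinates are not adapted to $J$, so one must argue carefully that the $J$-averaging step repairs Hermitian-ness without disturbing the $1$-jet data along $W$ that encodes (1) and (2). Everything else—the partition-of-unity patching and the relative Poincaré lemma—is routine once the local model is in place.
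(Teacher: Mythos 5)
Your plan for conditions (1) and (2) matches the paper in spirit (the paper simply cites \cite[Lemma 4.3.3]{mcduff_salamon} and extends globally using that $W$ is closed), though you should be aware that the claim that $J$-averaging preserves total geodesy because ``(1) and (2) are $1$-jet conditions'' is not automatic: $g_0$ and $J^*g_0$ agree only at the $0$-jet along $W$, so their normal derivatives (which enter the second fundamental form) can differ, and a further argument (e.g.\ a reflection-symmetrization as in McDuff--Salamon) is needed. That is a fixable detail; the genuine gap is in part (3).

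Your construction of $\omega_0$ does not work. You set $\omega_0 = \pi^*(\iota^*\omega)$ with $\omega = -\mathrm{Im}\,H$. But by condition (1), $TW \perp J(TW)$ with respect to $\mathrm{Re}\,H$, and since $\omega(w_1,w_2) = \mathrm{Re}\,H(Jw_1, w_2)$, one gets $\iota^*\omega = 0$: the submanifold $W$ is isotropic (indeed Lagrangian) for $\omega$ along $W$. Hence $\pi^*(\iota^*\omega)$ is the zero form, which certainly cannot agree with the nondegenerate form $\omega$ on $TM|_W$. More structurally, \emph{any} form of the shape $\pi^*\beta$ annihilates the fiber directions $J(TW)$, while $\omega$ restricted to $TW \times J(TW)$ equals the metric pairing $\mathrm{Re}\,H(\cdot,\cdot)$ on $TW$, which is nondegenerate, so the mixed block can never be reproduced by a $\pi$-pullback. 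Consequently the relative Poincar\'e lemma would hand you a primitive of the zero form, not a $1$-form with $d\alpha_0 = -\mathrm{Im}\,H$ on $TM|_W$.

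What is actually needed, and what the paper does, is to exploit the canonical (tautological) $1$-form on the cotangent bundle. Using $\mathrm{Re}\,H$, one identifies the normal bundle $J(TW)$ with $T^*W$ via $v \mapsto \bigl(w \mapsto \mathrm{Re}\,H(v, Jw)\bigr)$, so the tubular neighborhood $U$ becomes a neighborhood of the zero section of $T^*W$. Transporting the Liouville form $\alpha_0 = -\sum_j p_j\, dq_j$ over gives $\iota^*\alpha_0 = 0$ automatically, and a direct computation (carried out in the paper's proof) shows $d\alpha_0(w_1 + v_1, w_2 + v_2) = -\mathrm{Re}\,H(v_1, Jw_2) + \mathrm{Re}\,H(v_2, Jw_1) = -\mathrm{Im}\,H(w_1+v_1, w_2+v_2)$ for $w_i \in TW$, $v_i \in J(TW)$. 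The crucial feature of the Liouville form is that its exterior derivative pairs base directions against fiber directions nontrivially; a naive $\pi$-pullback cannot do this, which is precisely where your argument breaks.
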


\begin{proof}
A proof of (1) and (2) is presented in \cite[Lemma 4.3.3]{mcduff_salamon}.
 One can construct such a $H$ first in a neighborhood $U$ of $W$ and then extend it
(using that $W$ is closed) to a Hermitian metric $H$ defined on $M$ satisfying (1) and (2).

Now we prove (3). By (1), we know $J(TW)$ is the normal bundle of
$W$ inside $M$. By using the exponential map,
a neighborhood of $W$ in $M$ is identified with a neighborhood
of $W$ (i.e.\ the zero section) in $J(TW)$. By using $\text{Re}H$,
we can further identify $J(TW)$ with $T^{*}W$ as follows. For each $v \in J(T_{x}W)$, the map
\[
w \rightarrow \text{Re}H (v, Jw)
\]
is a linear form on $T_{x}W$, where $w \in T_{x}W$.

Therefore, a neighborhood $U$ of $W$ in $M$ is identified with a neighborhood $\mathcal{N}$ of $W$
(i.e.\ the zero section) in $T^{*}W$.

On the manifold $T^{*}W$, there exists a natural $1$-form $\alpha_{0}$:
it is locally written as $-\sum_{j=1}^{n} p_{j} d q_{j}$ in terms of the standard (Position, Momentum)
coordinates $((q_{1}, \cdots, q_{n}),(p_{1}, \cdots, p_{n}))$.
By the identification between $U$ and $\mathcal{N}$,
we have that $\alpha_{0}$ is defined on $U$ and $\iota^{*} \alpha_{0} = 0$. We can also check that,
for $w_{1}, w_{2} \in T_{x}W \subseteq T_{x}U$ and $v_{1}, v_{2} \in J(T_{x}W) \subseteq T_{x}U$,
\begin{eqnarray*}
& & d \alpha_{0} (w_{1} + v_{1}, w_{2} + v_{2}) \\
& = & - \text{Re}H (v_{1}, Jw_{2}) + \text{Re}H (v_{2}, Jw_{1}) \\
& = & - \text{Im}H (v_{1}, w_{2}) - \text{Im}H (w_{1}, v_{2}) \\
& = & - \text{Im}H (w_{1} + v_{1}, w_{2} + v_{2}).
\end{eqnarray*}
We infer, $d \alpha_{0} = - \text{Im}H$, i.e.\ $d \alpha_{0}$ is compatible with
$J$, on $TM|_{W}$. Therefore, $d \alpha_{0}$ tames $J$ in a (possibly smaller) neighborhood of $W$.
\end{proof}

In Theorem \ref{theorem_finite_area}, we assume that the image of $f$ has finite
area with respect to a specific Riemannian metric. Actually, the finiteness of the area
does not depend on the metric: Any two metrics on $f(\check{D}^{+})$ are equivalent since $f(\check{D}^{+})$
is relatively compact. Therefore, we get the following.

\begin{observation}\label{claim_metric}
Without loss of generality, when we prove Theorem \ref{theorem_finite_area},
we may assume that the metric in the assumption of
this theorem is the $\text{Re}H$ in Lemma \ref{lemma_hermitian}.
\end{observation}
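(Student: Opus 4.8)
The plan is to reduce the statement to Lemma \ref{lemma_hermitian} by exploiting the fact that finiteness of area is a metric-independent condition on a relatively compact set. Since $W$ is a smoothly embedded totally real submanifold of $(M,J)$ that is a closed subset of $M$, Lemma \ref{lemma_hermitian} applies and produces a Hermitian metric $H$ on $M$; its real part $\text{Re}H$ is a Riemannian metric on $M$, and this is the metric we wish to use. So the first step is simply to invoke that lemma and fix such an $H$.

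Next I would compare the two metrics on the image. Let $g$ denote the Riemannian metric appearing in the hypothesis of Theorem \ref{theorem_finite_area}, so that the area $|f(\check{D}^{+})|$ computed with $g$ is finite, and set $K = \overline{f(\check{D}^{+})}$, which is compact by the relative compactness assumption. On the compact set $K$ the two continuous fields of positive-definite inner products $g$ and $\text{Re}H$ are uniformly comparable: there is a constant $C \geq 1$ with $C^{-1} g_{p}(v,v) \leq \text{Re}H_{p}(v,v) \leq C\, g_{p}(v,v)$ for every $p \in K$ and every $v \in T_{p}M$. Pulling these inequalities back along $f$ gives the same comparison between the (possibly singular) $C^{0}$ metrics $f^{*}g$ and $f^{*}(\text{Re}H)$ on $\check{D}^{+}$, hence a pointwise bound between their area densities. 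Integrating over $\check{D}^{+}$, and using the description of $|h(S)|$ via the Jacobian of $h$ recalled after Definition \ref{definition_image_volume}, shows that the area of $f(\check{D}^{+})$ measured with $\text{Re}H$ is at most a constant multiple of the area measured with $g$, and is therefore finite as well.

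Finally I would observe that all the remaining hypotheses of Theorem \ref{theorem_finite_area} ($f$ smooth and $J$-holomorphic, $f(\partial\check{D}^{+})\subseteq W$, relative compactness of the image) and its conclusion (existence of a smooth extension of $f$ over $0$) make no reference to the chosen Riemannian metric. Hence replacing $g$ by $\text{Re}H$ affects neither the assumptions nor the statement to be proved, which is exactly the claimed reduction. I do not expect a genuine obstacle here; the only point requiring a little care is the comparison of two-dimensional areas, and this is handled cleanly by working with the pulled-back metrics on $\check{D}^{+}$ rather than with Hausdorff measures in $M$.
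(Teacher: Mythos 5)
Your proposal is correct and follows essentially the same route as the paper: the paper's justification is precisely that any two Riemannian metrics are uniformly equivalent on the relatively compact image $f(\check{D}^{+})$, so finiteness of area is metric-independent, and then one fixes the metric $\text{Re}H$ from Lemma \ref{lemma_hermitian}. Your additional details (comparison constant on the compact closure, pulling back along $f$, comparing Jacobians) simply make explicit what the paper leaves implicit.
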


Now we point out that we may assume $f$ is an embedding in the assumption of
Theorems \ref{theorem_finite_area} and \ref{theorem_tame} when we prove them.
The idea is to replace $f$ by its graph. More precisely, we do the following
graph construction.

Define
\[
\hat{M} = \mathbb{C} \times M,
\]
which is also an almost complex manifold. Define
$\hat{f}: \check{D}^{+} \rightarrow \hat{M} = \mathbb{C} \times M$
as
\[
\hat{f} (z) = (z, f(z)).
\]
Then $\hat{f}$ is certainly a $J$-holomorphic embedding.
Let
\[
\hat{W} = \mathbb{R} \times W.
\]
Then $\hat{W}$ is a closed totally real submanifold of $\hat{M}$
and $\hat{f}(\partial D^{+}) \subseteq \hat{W}$.

It's easy to see that $\hat{f}: \check{D}^{+} \rightarrow \hat{M}$ satisfies the assumption
of Theorems \ref{theorem_finite_area} and \ref{theorem_tame} as long as $f$ does.
If $\hat{f}$ has a
$C^{k}$ extension over $0$, then certainly so does $f$. Therefore, we get the following.

\begin{observation}\label{claim_embedding}
Without loss of generality,
we may assume $f$ is an embedding in the assumption of Theorems \ref{theorem_finite_area}
and \ref{theorem_tame} when we prove them.
\end{observation}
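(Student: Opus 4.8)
The plan is to carry out precisely the graph construction set up just above and then to \emph{verify} the three assertions made there: that $\hat f$ is a $\hat J$-holomorphic embedding with $\hat f(\partial\check D^{+})\subseteq\hat W$ for the closed totally real submanifold $\hat W=\mathbb R\times W$; that each of the hypothesis packages of Theorems \ref{theorem_finite_area} and \ref{theorem_tame} is inherited by $\hat f$ whenever $f$ satisfies it; and that a $C^{k}$ (in particular, smooth) extension of $\hat f$ over $0$ descends to one for $f$. Most of this is routine. Since $\hat f(z)=(z,f(z))$ has the inclusion $\check D^{+}\hookrightarrow\mathbb C$ as its first component, it is an injective immersion and the projection $p_{1}\colon\hat M\to\mathbb C$ restricts to a continuous inverse on its image, so $\hat f$ is an embedding; $\hat J$-holomorphicity of $\hat f$ is immediate from that of $f$ together with that of $\operatorname{id}_{\mathbb C}$; and for $z\in\partial\check D^{+}\subseteq\mathbb R$ we have $f(z)\in W$, hence $\hat f(z)\in\mathbb R\times W=\hat W$. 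That $\hat W$ is a closed, smoothly embedded submanifold is clear, and it is totally real in the sense of Definition \ref{definition_totally_real} because $\dim\hat W=1+\tfrac12\dim M=\tfrac12\dim\hat M$, while $\hat J(T_{p}\hat W)\cap T_{p}\hat W$ splits as the two vanishing intersections $\mathbb R\cap i\mathbb R$ and $T_{w}W\cap J(T_{w}W)$ at $p=(s,w)$. Finally $f=p_{2}\circ\hat f$ with $p_{2}\colon\hat M\to M$ smooth, so composing any $C^{k}$ extension of $\hat f$ over $0$ with $p_{2}$ produces a $C^{k}$ extension of $f$.

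The two genuinely content-bearing points are where the hypotheses are used. For Theorem \ref{theorem_tame} I would take $\hat\alpha=p_{1}^{*}(x\,dy)+p_{2}^{*}\alpha$, so $d\hat\alpha=p_{1}^{*}(dx\wedge dy)+p_{2}^{*}d\alpha$. For a nonzero $(u,v)\in T\hat M$ one computes $d\hat\alpha\big((u,v),\hat J(u,v)\big)=|u|^{2}+d\alpha(v,Jv)$, which is strictly positive since $|u|^{2}\ge 0$ vanishes only at $u=0$ and $d\alpha(v,Jv)\ge 0$ vanishes only at $v=0$ (taming of $J$ by $d\alpha$); hence $d\hat\alpha$ tames $\hat J$. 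For exactness on $\hat W$, note that $p_{1}\circ\hat\iota$ maps into $\mathbb R=\{y=0\}$, so $\hat\iota^{*}p_{1}^{*}(x\,dy)=0$, whereas $\hat\iota^{*}p_{2}^{*}\alpha=q^{*}(\iota^{*}\alpha)=q^{*}(dg)=d(g\circ q)$, where $\iota^{*}\alpha=dg$ is the given primitive on $W$ and $q\colon\hat W\to W$ is the projection; thus $\hat\iota^{*}\hat\alpha$ is exact. This is exactly the step that consumes the hypothesis that $\iota^{*}\alpha$ is exact on $W$, consistent with Example \ref{example_no_extension} showing that it cannot be dropped. Relative compactness of the image of $\hat f$ is clear, as that image lies in $\overline{D^{+}}\times\overline{f(\check D^{+})}$, a compact subset of $\hat M$; the same remark disposes of the relative compactness hypothesis in Theorem \ref{theorem_tame}.

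For Theorem \ref{theorem_finite_area} I would first use Observation \ref{claim_metric} (or, equivalently, the metric independence of finiteness of area on the relatively compact image) to test the area of $f$ with the $J$-invariant metric $\operatorname{Re}H$, and then equip $\hat M$ with the product metric $g_{0}\oplus\operatorname{Re}H$, where $g_{0}$ is the standard metric on $\mathbb C$. Since $\operatorname{Re}H$ is $J$-invariant, $f$ is conformal, i.e.\ $f^{*}\operatorname{Re}H=\rho\,g_{0}$ for some $\rho\ge 0$, so $\hat f^{*}(g_{0}\oplus\operatorname{Re}H)=(1+\rho)\,g_{0}$ and hence $|\hat f(\check D^{+})|=\int_{\check D^{+}}(1+\rho)\,dA_{0}=\operatorname{Area}(\check D^{+})+|f(\check D^{+})|<\infty$. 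I would flag that this conformality reduction is genuinely needed: area is not subadditive when a non-conformal metric is added, so the naive estimate fails unless the metric on $M$ is chosen $J$-invariant first. In summary, the statement is a bookkeeping reduction with no real obstacle; the only steps requiring care are the construction of the product taming form $\hat\alpha$ (and checking that its restriction to $\hat W$ remains exact) and the invocation of Observation \ref{claim_metric} to make the areas of $f$ and $\hat f$ comparable via conformality.
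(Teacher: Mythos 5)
Your proposal is correct and follows exactly the paper's route: the graph construction $\hat f(z)=(z,f(z))$ into $\hat M=\mathbb{C}\times M$ with $\hat W=\mathbb{R}\times W$, checking that $\hat W$ is closed and totally real, that the hypotheses are inherited, and that a $C^{k}$ extension of $\hat f$ projects to one for $f$. The only difference is that you spell out the details the paper dismisses as "easy to see" — the taming form $p_{1}^{*}(x\,dy)+p_{2}^{*}\alpha$ with exact restriction to $\hat W$, and the conformality/metric-equivalence argument giving finiteness of the area of the graph — and these verifications are accurate.
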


Now we describe a cone construction which is useful to obtain certain isoperimetric
inequalities for $J$-holomorphic curves.

Let $\Gamma = \gamma(S^{1})$ be a closed curve in a finite dimensional inner product vector space
$V$, where $S^{1}$ is the unit circle and $\gamma: S^{1} \rightarrow V$ is a $C^{1}$ immersion.
Define the center of mass of $\Gamma$ as
\begin{equation}\label{mass_center}
c = \frac{1}{|\Gamma|} \int_{S^{1}} \gamma \|d \gamma\|.
\end{equation}
The center of mass $c$ does not depend on the parametrization of $\Gamma$.
Joining each point in $\Gamma$ to $c$ by a line, we construct a cone with vertex $c$
and boundary $\Gamma$. Denote this cone by $K(\Gamma)$. The following classical
isoperimetric inequality shows a relation between the length of $\Gamma$ and the
area of $K(\Gamma)$.

\begin{lemma}\label{lemma_cone_isoperimetric}
\[
|\Gamma|^{2} \geq 4 \pi |K(\Gamma)|.
\]
\end{lemma}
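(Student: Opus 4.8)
The plan is to reduce the statement to a one–variable inequality on the circle and then invoke Wirtinger's (Poincar\'e) inequality. First I would translate $V$ so that the center of mass $c$ sits at the origin; by the definition (\ref{mass_center}) of $c$ this just says $\int_{S^{1}}\gamma\,\|d\gamma\|=0$. Reparametrizing $\Gamma$ proportionally to arc length produces a $C^{1}$, $2\pi$-periodic map $\gamma:[0,2\pi]\to V$ with $\|\dot\gamma(t)\|\equiv \frac{|\Gamma|}{2\pi}$ and, crucially, $\int_{0}^{2\pi}\gamma(t)\,dt=0$: the arc-length measure equals $\frac{|\Gamma|}{2\pi}\,dt$, so the vanishing of the center of mass transfers to a mean-zero condition for this parametrization.

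Next I would compute $|K(\Gamma)|$ from the obvious parametrization of the cone, $\Phi(t,\tau)=\tau\,\gamma(t)$ with $(t,\tau)\in[0,2\pi]\times[0,1]$. Since $\Phi_{t}=\tau\dot\gamma(t)$ and $\Phi_{\tau}=\gamma(t)$, the area element is $\|\Phi_{t}\wedge\Phi_{\tau}\|=\tau\,\|\dot\gamma(t)\wedge\gamma(t)\|$, and integrating $\tau$ over $[0,1]$ gives
\[
|K(\Gamma)| \;=\; \frac12\int_{0}^{2\pi}\|\dot\gamma(t)\wedge\gamma(t)\|\,dt .
\]
I would then estimate $\|\dot\gamma\wedge\gamma\|\le\|\dot\gamma\|\,\|\gamma\|\le\tfrac12\big(\|\dot\gamma\|^{2}+\|\gamma\|^{2}\big)$, so that
\[
|K(\Gamma)| \;\le\; \frac14\int_{0}^{2\pi}\|\dot\gamma\|^{2}\,dt \;+\; \frac14\int_{0}^{2\pi}\|\gamma\|^{2}\,dt .
\]
The first integral equals $\frac14\cdot 2\pi\cdot\big(\frac{|\Gamma|}{2\pi}\big)^{2}=\frac{|\Gamma|^{2}}{8\pi}$ by the constancy of $\|\dot\gamma\|$. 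For the second I would apply Wirtinger's inequality to each coordinate function $\gamma_{j}$ of $\gamma$ — each is $C^{1}$, $2\pi$-periodic and mean-zero, hence $\int_{0}^{2\pi}\gamma_{j}^{2}\le\int_{0}^{2\pi}\dot\gamma_{j}^{2}$; summing over $j$ yields $\int_{0}^{2\pi}\|\gamma\|^{2}\,dt\le\int_{0}^{2\pi}\|\dot\gamma\|^{2}\,dt=\frac{|\Gamma|^{2}}{2\pi}$, so the second term is also $\le\frac{|\Gamma|^{2}}{8\pi}$. Adding the two bounds gives $|K(\Gamma)|\le\frac{|\Gamma|^{2}}{4\pi}$, which is the assertion.

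The only nontrivial ingredient is Wirtinger's inequality, $\int_{0}^{2\pi}g^{2}\le\int_{0}^{2\pi}(g')^{2}$ for mean-zero $2\pi$-periodic $g$, which is immediate from Fourier series (the Parseval sums satisfy $\sum_{k}|c_{k}|^{2}\le\sum_{k}k^{2}|c_{k}|^{2}$ once $c_{0}=0$); everything else is bookkeeping, so there is no real obstacle. The one genuinely conceptual point, and the only place where the particular choice of vertex enters, is that centering at the center of mass is exactly what makes the coordinate functions mean-zero, which is what lets Wirtinger apply and produces the sharp constant $4\pi$. Minor care is needed only in noting that the $C^{1}$-immersion hypothesis makes the arc-length reparametrization $C^{1}$, so $\dot\gamma$ is continuous and the area formula for $\Phi$ is valid, and in observing that the degeneracies of $\Phi$ (at $\tau=0$, or where $\gamma$ and $\dot\gamma$ are parallel) are harmless because $|K(\Gamma)|$ is defined through the possibly degenerate pull-back metric of Definition \ref{definition_image_volume}.
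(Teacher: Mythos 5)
Your argument is correct, and it is exactly the standard computation the paper points to instead of giving its own proof (Hummel, Appendix A): parametrize $\Gamma$ by constant speed, bound the cone area by $\tfrac12\int\|\dot\gamma\wedge(\gamma-c)\|\,dt$, and apply Cauchy--Schwarz/AM--GM together with Wirtinger's inequality, where the choice of the center of mass as vertex is precisely what gives the mean-zero hypothesis. So you have supplied, correctly, the proof the authors omit, including the key point highlighted in their remark about the vertex.
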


The book \cite[Appendix A]{hummel} presents a quick proof of a result more general
than Lemma \ref{lemma_cone_isoperimetric}. (Actually, the computation at the top on
\cite[p.\ 116]{hummel} is sufficient for proving this lemma.) So it's safe to omit a proof
here.

\begin{remark}
In the above cone construction, it's important to choose the vertex to be the center of mass $c$
in (\ref{mass_center}). Otherwise, Lemma \ref{lemma_cone_isoperimetric}
will no longer be true.
\end{remark}

\section{A Reduction Lemma}\label{section_reduction_lemma}
The main goal of this section is to prove the following apriori estimate which reduces Theorem \ref{theorem_tame}
to Theorem \ref{theorem_finite_area}.

\begin{lemma}\label{lemma_tame_to_area}
Under the assumption of Theorem \ref{theorem_tame}, equip $M$ with an arbitrary
Riemannian metric and equip $\check{D}^{+}$ with the standard Euclidean metric.
Then there exists a constant $C>0$ such that
\begin{equation}\label{lemma_tame_to_area_1}
\|df(z)\| \leq \frac{C}{|z| \log\frac{1}{|z|}}.
\end{equation}
In particular $|f(\check{D}^{+}(r))| < + \infty$ for all $r\in(0,1)$.
\end{lemma}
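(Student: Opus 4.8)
The plan is to establish the finiteness of $|f(\check D^{+}(r))|$ first --- which is all that the reduction to Theorem \ref{theorem_finite_area} actually needs --- and only afterwards to upgrade it to the pointwise bound (\ref{lemma_tame_to_area_1}). Two harmless reductions come first. Since $W$ is a closed submanifold, a primitive of $\iota^{*}\alpha$ on $W$ extends to a smooth function on $M$; subtracting its differential from $\alpha$ leaves $d\alpha$ unchanged and makes $\iota^{*}\alpha=0$ on $W$. And since $\overline{f(\check D^{+})}$ is compact, any two Riemannian metrics on it are uniformly comparable, so it suffices to prove (\ref{lemma_tame_to_area_1}) for one convenient metric; I take a $J$-Hermitian one, say $\mathrm{Re}\,H$ from Lemma \ref{lemma_hermitian}, so that $f$ is conformal and $|f(R)|=\int_{R}\mathrm{Jac}(f)$ for measurable $R\subseteq\check D^{+}$. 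Because $d\alpha$ tames $J$ there is $c_{0}>0$ with $d\alpha(v,Jv)\geq c_{0}\|v\|^{2}$ for tangent vectors $v$ over $\overline{f(\check D^{+})}$, and since $f$ is $J$-holomorphic, $f^{*}(d\alpha)(\partial_{x},\partial_{y})=d\alpha(df\,\partial_{x},\,J\,df\,\partial_{x})\geq c_{0}\,\mathrm{Jac}(f)$; hence $|f(R)|\leq c_{0}^{-1}\int_{R}f^{*}(d\alpha)$ for every $R\subseteq\check D^{+}$.

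Next comes a Stokes argument on half-annuli. For $0<s<r<1$ let $R(s,r)=\{z:s<|z|<r,\ \mathrm{Im}\,z\geq 0\}$, let $C^{+}(\rho)$ be the upper semicircle of radius $\rho$, and set $L(\rho)=|f(C^{+}(\rho))|$. The real-axis part of $\partial R(s,r)$ lies in $W$, where $\iota^{*}\alpha=0$, so $f^{*}\alpha$ vanishes there and Stokes' theorem gives $\int_{R(s,r)}f^{*}(d\alpha)=\int_{C^{+}(r)}f^{*}\alpha-\int_{C^{+}(s)}f^{*}\alpha$; with $C_{1}=\sup_{\overline{f(\check D^{+})}}\|\alpha\|$ this yields
\begin{equation}\label{reductionStokes}
|f(R(s,r))|\;\leq\;\frac{C_{1}}{c_{0}}\bigl(L(r)+L(s)\bigr).
\end{equation}
Since $f$ is smooth on $\check D^{+}$, for fixed $s$ the function $r\mapsto|f(R(s,r))|$ is $C^{1}$ with derivative $\phi(r):=\int_{0}^{\pi}\mathrm{Jac}(f)(re^{i\theta})\,r\,d\theta$, and the Cauchy--Schwarz computation behind (\ref{gromov_formula}), restricted to $\theta\in[0,\pi]$, gives $L(r)^{2}\leq\pi r\,\phi(r)$. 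Fix $r_{0}\in(0,1)$ and put $G(s)=|f(R(s,r_{0}))|$ for $s\in(0,r_{0})$: this is a finite, $C^{1}$, nonincreasing function with $-G'(s)=\phi(s)\geq L(s)^{2}/(\pi s)$. Inserting this and the fixed finite number $L(r_{0})$ into (\ref{reductionStokes}) gives $G(s)\leq C_{2}\sqrt{-s\,G'(s)}+C_{3}$. If $G$ were unbounded as $s\to 0$, then eventually $G(s)\geq 2C_{3}$, hence $G(s)^{2}\leq 4C_{2}^{2}(-s\,G'(s))$, i.e.\ $-G'(s)/G(s)^{2}\geq(4C_{2}^{2}s)^{-1}$; integrating from $s$ to a fixed $s^{*}$ forces $1/G(s^{*})\geq(4C_{2}^{2})^{-1}\log(s^{*}/s)\to+\infty$, which is absurd. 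Hence $G$ is bounded, so $|f(\check D^{+}(r_{0}))|=\lim_{s\to 0}G(s)<+\infty$, and therefore $|f(\check D^{+}(r))|<+\infty$ for all $r\in(0,1)$; in particular Theorem \ref{theorem_tame} is now reduced to Theorem \ref{theorem_finite_area}.

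It remains to get the pointwise bound. Now $A(\rho):=|f(\check D^{+}(\rho))|$ is finite for $\rho<1$, $A(s)\to 0$ as $s\to 0$, and the Stokes identity shows $\int_{C^{+}(s)}f^{*}\alpha$ is Cauchy as $s\to 0$; since $\int_{0}^{r_{0}}L(\rho)^{2}\rho^{-1}\,d\rho\leq\pi A(r_{0})<+\infty$ forces $\liminf_{s\to 0}L(s)=0$, that limit is $0$, so $A(r)=\int_{C^{+}(r)}f^{*}\alpha$ and hence $A(r)\leq c_{0}^{-1}C_{1}L(r)$ for all $r<1$. With $L(r)^{2}\leq\pi r\,A'(r)$ this gives a differential inequality $A'(r)\geq\kappa\,A(r)^{2}/r$ for some $\kappa>0$, which integrates to $A(r)\leq C/\log(1/r)$ for small $r$. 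Finally one converts this area decay into (\ref{lemma_tame_to_area_1}) by a mean-value estimate for $J$-holomorphic curves on the annulus $\{|z|/2<|w|<2|z|\}$: in the coordinate $w=e^{-\zeta}$ this becomes a $J$-holomorphic map on a strip of fixed size, with energy $A(2|z|)-A(|z|/2)$ there and derivative norm $|z|\,\|df(z)\|$ at the point in question, and for boundary points one first reflects $f$ across $W$, which is totally geodesic for $\mathrm{Re}\,H$ by Lemma \ref{lemma_hermitian}; iterating sharpens the power of the logarithm.

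I expect the delicate point to be exactly this last step. The finite-area statement --- which is what the reduction to Theorem \ref{theorem_finite_area} requires --- drops out cleanly from Stokes' theorem, the taming hypothesis, the exactness of $\iota^{*}\alpha$ on $W$, and (\ref{gromov_formula}). But extracting the sharp factor $\log(1/|z|)$ in (\ref{lemma_tame_to_area_1}) requires a geometric mean-value inequality for pseudo-holomorphic curves, resting on their ``almost minimal'' property and the cone isoperimetric inequality of Lemma \ref{lemma_cone_isoperimetric}, and it must be applied up to the totally real boundary, which is where a local reflection across $W$ enters.
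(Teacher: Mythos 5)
Your argument for the ``in particular'' clause is correct, and it is genuinely different from (and for that clause more elementary than) the paper's route: you get $|f(\check{D}^{+}(r))|<+\infty$ directly from Stokes' theorem on half-annuli (using $\iota^{*}\alpha=0$ on the real-axis part of the boundary), the taming constant, and the half-disk version of (\ref{gromov_formula}), with no doubling at all. The paper obtains finiteness only as a corollary of the pointwise bound (\ref{lemma_tame_to_area_1}), which it proves by doubling the pulled-back metric to $G$ on $\check{D}$, establishing the two isoperimetric inequalities of Lemmas \ref{lemma_isoperimetric_gauss} and \ref{lemma_isoperimetric_form} for the universal coverings $\varphi_{z_{0}}:D\rightarrow\check{D}$, bounding $\|d\varphi_{z_{0}}(0)\|$ uniformly (Lemma \ref{lemma_bound_disk_covering}), and comparing with the hyperbolic metric of $\check{D}$ (Lemma \ref{lemma_metric_hyperbolic_bound}). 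Since the reduction of Theorem \ref{theorem_tame} to Theorem \ref{theorem_finite_area}, and the later application in Lemma \ref{lemma_boot_strap}, only use finiteness of the area, this part of your proposal is sound and useful.

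However, the main assertion of the lemma is the estimate (\ref{lemma_tame_to_area_1}), and there your final step has a genuine gap. From your differential inequality you get $A(r)\leq C/\log\frac{1}{r}$, and a mean-value inequality on the half-annulus $\{|z|/2<|w|<2|z|\}$ then gives at best $|z|^{2}\|df(z)\|^{2}\leq C\bigl(A(2|z|)-A(|z|/2)\bigr)\leq C/\log\frac{1}{|z|}$, i.e.\ $\|df(z)\|\leq C/\bigl(|z|\sqrt{\log\frac{1}{|z|}}\bigr)$, which misses the claim by a factor $\sqrt{\log\frac{1}{|z|}}$. The proposed iteration does not close this: if $\|df(w)\|\leq K/\bigl(|w|(\log\frac{1}{|w|})^{a}\bigr)$, then the semicircle length satisfies $L(\rho)\leq \pi K(\log\frac{1}{\rho})^{-a}$, the Stokes identity bounds the half-annulus energy by $C(\log\frac{1}{r})^{-a}$, and the mean-value inequality returns the exponent $a/2$; each pass halves the exponent rather than raising it, so no iteration of this scheme can reach $a=1$ (nor can dyadic-annulus energy bounds, since $A(r)\leq C/\log\frac{1}{r}$ does not control the individual increments $A(2r)-A(r/2)$ by $(\log\frac{1}{r})^{-2}$). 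The full logarithm is exactly the density of the hyperbolic metric of the punctured disk, and the paper extracts it through the global comparison via the covering maps $\varphi_{z_{0}}$, namely the uniform bound $\|d\varphi_{z_{0}}(0)\|\leq C$ combined with $|\varphi_{z_{0}}'(0)|=2|z_{0}|\log\frac{1}{|z_{0}|}$; some substitute for this hyperbolic input is indispensable. In addition, the boundary mean-value estimate you invoke ``after reflecting across $W$'' is not free: the reflected map is in general not pseudo-holomorphic (this is precisely the difficulty the $\widetilde{J}$ construction in Section \ref{section_continuity} is designed to handle), so that step would itself require justification.
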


Lemma \ref{lemma_tame_to_area} shows that $|f(\check{D}^{+}(r))| < + \infty$ for
any $r \in (0,1)$. Therefore, up to a holomorphic reparametrization of $\check{D}^{+}(r)$, we infer that
$f: \check{D}^{+}(r) \rightarrow M$ in Theorem \ref{theorem_tame} satisfies
the assumption of Theorem \ref{theorem_finite_area}.
(More precisely, the map $f_{r}: \check{D}^{+} \rightarrow M$ satisfies the assumption
of Theorem \ref{theorem_finite_area}, where $f_{r}(z) = f(rz)$. The map
$z \rightarrow rz$ is a holomorphic reparametrization of $\check{D}^{+}(r)$.)
Since the removal of singularities at $0 \in \check{D}^{+}$ is a local argument,
Theorem \ref{theorem_tame} is reduced to Theorem \ref{theorem_finite_area}.

Our strategy to prove (\ref{lemma_tame_to_area_1}) starts from the following observation.
An estimate on $\|df\|$ is equivalent to an estimate on the metric on $\check{D}^{+}$ pulled
back by $f$. To estimate the pullback metric, we need the intrinsic doubling mentioned in the Introduction.

The pullback metric is extended to be a metric $G$ on $\check{D}$. Following Gromov's approach in the
interior case (see \cite[1.3.A']{gromov}), we bound $G$ in terms of the hyperbolic metric on $\check{D}$,
which immediately implies (\ref{lemma_tame_to_area_1}). To bound $G$,
we need to estimate the derivatives of the universal covering maps from the Euclidean disk
$D$ to $(\check{D},G)$.

In this process, we use certain isoperimetric inequalities resulting from the Gaussian curvature and forms on
$\check{D}$ (see Lemmas \ref{lemma_isoperimetric_gauss} and \ref{lemma_isoperimetric_form}).
We shall study the metrics, Gaussian curvature and forms on $\check{D}$ at first.

Similar as Observation \ref{claim_metric}, we may assume the metric in Lemma
\ref{lemma_tame_to_area} is the $\text{Re}H$ in Lemma \ref{lemma_hermitian}. By Observation
\ref{claim_embedding}, we may also assume that $f$ is an embedding. Then the pull back
metric $f^{*}\text{Re}H$ makes sense and is conformal on $\check{D}^{+}$. Therefore, we have
the following two lemmas given by \cite{gromov}.

\begin{lemma}\label{lemma_gauss_upper}
The Gaussian (sectional) curvature of $f^{*}\text{Re}H$ on $\check{D}^{+}$
has an upper bound.
\end{lemma}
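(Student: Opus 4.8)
The plan is to pass to the graph and use the Gauss equation. By Observations \ref{claim_metric} and \ref{claim_embedding} we may assume that the metric on $M$ is the $\text{Re}H$ of Lemma \ref{lemma_hermitian} and that $f$ is an embedding; then $\Sigma := f(\check{D}^{+})$ is an embedded $2$-dimensional submanifold (with boundary) of $M$ whose closure $\overline{\Sigma}$ is compact, since the image of $f$ is relatively compact, and $f^{*}\text{Re}H$ is precisely the metric induced on $\check{D}^{+}\cong\Sigma$ from $(M,\text{Re}H)$. Hence the Gaussian curvature of $f^{*}\text{Re}H$ at a point equals the intrinsic Gauss curvature $K_{\Sigma}$ of $\Sigma$ there, and the goal becomes to bound $K_{\Sigma}$ from above, uniformly on $\check{D}^{+}$, by splitting it via the Gauss equation into an ambient part and a part carried by the second fundamental form.

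Concretely, I would fix $p\in\check{D}^{+}$, choose an $f^{*}\text{Re}H$–orthonormal basis $e_{1},e_{2}$ of $T_{p}\check{D}^{+}$, let $B$ be the second fundamental form of $\Sigma$ in $(M,\text{Re}H)$, and set $\eta=\tfrac12\bigl(B(e_{1},e_{1})+B(e_{2},e_{2})\bigr)$ for the mean curvature vector. The Gauss equation reads
\[
K_{\Sigma}(p)=\overline{K}(e_{1}\wedge e_{2})+\bigl\langle B(e_{1},e_{1}),B(e_{2},e_{2})\bigr\rangle-\bigl|B(e_{1},e_{2})\bigr|^{2},
\]
where $\overline{K}$ is the sectional curvature of $(M,\text{Re}H)$. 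The first term is bounded above along $\Sigma$ by a constant depending only on $H$ and the compact set $\overline{\Sigma}$, because $H$ is smooth. For the remaining two terms the polarization identity $\langle B(e_{1},e_{1}),B(e_{2},e_{2})\rangle=|\eta|^{2}-\tfrac14|B(e_{1},e_{1})-B(e_{2},e_{2})|^{2}$ yields the elementary bound
\[
\bigl\langle B(e_{1},e_{1}),B(e_{2},e_{2})\bigr\rangle-\bigl|B(e_{1},e_{2})\bigr|^{2}\le|\eta|^{2},
\]
so that $K_{\Sigma}(p)\le\overline{K}(e_{1}\wedge e_{2})+|\eta(p)|^{2}$. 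Thus everything reduces to a uniform bound on $|\eta|$ on $\check{D}^{+}$.

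The only real work is this mean-curvature estimate, which reflects the fact that a pseudo-holomorphic curve is ``almost minimal'' in the ambient manifold; this is the step I expect to be the main obstacle, although it is classical. In a local conformal coordinate $z=s+it$ on $\check{D}^{+}$ one has $f_{t}=Jf_{s}$ and induced metric $\lambda^{2}(ds^{2}+dt^{2})$ with $\lambda^{2}=|f_{s}|^{2}=|f_{t}|^{2}$; writing $\nabla$ for the Levi-Civita connection of $\text{Re}H$ and using $f_{t}=Jf_{s}$ together with the symmetry $\nabla_{s}f_{t}=\nabla_{t}f_{s}$, a direct computation gives
\[
\nabla_{s}f_{s}+\nabla_{t}f_{t}=(\nabla_{f_{t}}J)f_{s}+J\,(\nabla_{f_{s}}J)f_{s}.
\]
Projecting onto the normal bundle of $\Sigma$ and dividing by $\lambda^{2}$ expresses $2\eta$, and since $|f_{t}|=|f_{s}|$ the factor $\lambda^{2}$ cancels, leaving $|\eta|\le C\,\|\nabla J\|_{\overline{\Sigma}}$, a constant depending only on $J$, $H$ and the compact set $\overline{\Sigma}$. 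Combining this with the bound on $\overline{K}$ gives a uniform upper bound for $K_{\Sigma}$ on $\check{D}^{+}$, i.e.\ for the Gaussian curvature of $f^{*}\text{Re}H$. If one prefers, this almost-minimality estimate is exactly the one used by Gromov in \cite[1.3]{gromov} and may simply be quoted, so that the proof is then only the Gauss equation plus compactness of $\overline{\Sigma}$.
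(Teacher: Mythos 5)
Your proposal is correct and is essentially the argument the paper points to: the paper merely cites \cite[1.1.B]{gromov} and \cite[p.\ 219]{muller}, and what you have written out (Gauss equation $K_\Sigma=\overline K+\langle B(e_1,e_1),B(e_2,e_2)\rangle-|B(e_1,e_2)|^2\le\overline K+|\eta|^2$, followed by the conformal-coordinate computation $\nabla_sf_s+\nabla_tf_t=(\nabla_{f_t}J)f_s+J(\nabla_{f_s}J)f_s$ giving $|\eta|\le\|\nabla J\|$ on the compact closure of the image) is precisely the content of those references. So you have supplied the details of the same approach, not an alternative one.
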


Lemma \ref{lemma_gauss_upper} is proved in \cite[1.1.B]{gromov} under the assumption that
$M$ is compact. More details can be found in \cite[p.\ 219]{muller}. This argument certainly
works for us because $f(\check{D}^{+})$ is relatively compact.

\begin{lemma}\label{lemma_geodesic}
$\partial \check{D}^{+}$ is totally geodesic in $\check{D}^{+}$ with respect to $f^{*} \text{Re}H$.
\end{lemma}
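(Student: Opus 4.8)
The plan is to transport the statement to the image surface $\Sigma := f(\check{D}^{+}) \subseteq M$ and there identify the geodesic curvature of the boundary curve with a component of the second fundamental form of $W$, which vanishes by Lemma \ref{lemma_hermitian}(2). By Observation \ref{claim_embedding} we may assume $f$ is an embedding; being $J$-holomorphic it is then a conformal immersion, so $g := f^{*}\text{Re}H$ is a genuine Riemannian metric on $\check{D}^{+}$ and $f$ is an isometry of $(\check{D}^{+}, g)$ onto $\Sigma$ with its induced metric. Since $\partial\check{D}^{+}$ is $1$-dimensional, the assertion that it is totally geodesic in $(\check{D}^{+}, g)$ means precisely that $\partial\check{D}^{+}$, parametrized by $g$-arclength, is a geodesic of $(\check{D}^{+}, g)$; because $f$ is an isometry onto $\Sigma$ (and isometries preserve geodesic curvature), this is equivalent to the curve $c := f|_{\partial\check{D}^{+}}$ being a geodesic of $\Sigma$. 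So it suffices to show that the geodesic curvature $\kappa_g$ of $c$ inside $\Sigma$ vanishes identically.

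Fix $p = c(t) \in f(\partial\check{D}^{+}) \subseteq W$ and set $u := \dot{c}(t)$, a unit vector with $u \in T_{p}\Sigma \cap T_{p}W$. Since $\Sigma$ is a $J$-holomorphic curve, $T_{p}\Sigma$ is $J$-invariant, hence $T_{p}\Sigma = \mathrm{span}_{\mathbb{R}}(u, Ju)$; note $J$ is $\text{Re}H$-orthogonal because $\text{Re}H$ is Hermitian, so $Ju$ is a unit vector orthogonal to $u$ and is therefore the unit normal to $c$ within $\Sigma$, call it $\nu := Ju$. Then
\[
\kappa_g(t) = \big\langle \nabla^{\text{Re}H}_{u} u,\ \nu \big\rangle,
\]
where $\nabla^{\text{Re}H}$ is the Levi-Civita connection of $\text{Re}H$ on $M$: the part of $\nabla^{\text{Re}H}_{u}u$ normal to $\Sigma$ is orthogonal to $\nu \in T_{p}\Sigma$ and so does not contribute, and the part tangent to $\Sigma$ coincides with the intrinsic $\Sigma$-covariant derivative.

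Now along $c$ the vector field $u$ is tangent to $W$ (as $c$ maps into $W$), while $\nu = Ju \in J(T_{p}W)$, which by Lemma \ref{lemma_hermitian}(1) is exactly the $\text{Re}H$-orthogonal complement of $T_{p}W$ in $T_{p}M$. Hence $\langle \nabla^{\text{Re}H}_{u}u,\ \nu\rangle = \big\langle \mathrm{II}_{W}(u,u),\ \nu\big\rangle$, where $\mathrm{II}_{W}$ is the second fundamental form of the embedding $W \hookrightarrow (M,\text{Re}H)$. By Lemma \ref{lemma_hermitian}(2), $W$ is totally geodesic, so $\mathrm{II}_{W} \equiv 0$; therefore $\kappa_g \equiv 0$, the curve $c$ is a geodesic of $\Sigma$, and the lemma follows.

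The only place where anything geometric is used — and what I expect to be the crux — is the identification of the normal direction $\nu$ to the boundary curve inside the $J$-holomorphic surface $\Sigma$ with the normal direction $J(TW)$ to $W$ inside $M$; this is the conjunction of the $J$-invariance of $T\Sigma$ with the orthogonal splitting $TM|_{W} = TW \oplus J(TW)$ furnished by Lemma \ref{lemma_hermitian}(1). Once that alignment is in place, total geodesy of $W$ forces the geodesic curvature of $c$ to vanish and no genuine computation remains; the sole routine point is the standard fact that an isometry carries geodesics to geodesics.
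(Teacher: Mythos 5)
Your proof is correct, and it is essentially the argument the paper has in mind: the paper omits the proof, stating only that the lemma follows from parts (1) and (2) of Lemma \ref{lemma_hermitian} together with the $J$-holomorphy of $f$, and your argument (aligning the normal $Ju$ of the boundary curve inside the $J$-invariant surface $f(\check{D}^{+})$ with the $\text{Re}H$-orthogonal complement $J(TW)$ of $TW$, then invoking $\mathrm{II}_{W}\equiv 0$) is exactly a derivation from those ingredients.
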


As mentioned in \cite[1.3.C]{gromov}, Lemma \ref{lemma_geodesic} follows
from (1) and (2) in Lemma \ref{lemma_hermitian} and the fact that $f$ is $J$-holomorphic.
We omit a proof here since it's easy.

Define $\sigma: \mathbb{C} \rightarrow \mathbb{C}$ as the complex conjugation, i.e.\
$\sigma(z) = \bar{z}$. By (\ref{lower_half_disk}), we have
$\check{D}^{-} = \sigma (\check{D}^{+})$.
Define a Riemannian metric $G$ on $\check{D}$ as
\begin{equation}\label{metric_on_disk}
  G =
  \begin{cases}
     f^{*} \text{Re}H & \text{on $\check{D}^{+}$}, \\
     \sigma^{*} f^{*} \text{Re}H & \text{on $\check{D}^{-}$}.
  \end{cases}
\end{equation}

The importance of (2) in Lemma \ref{lemma_hermitian}, i.e.\ the fact that $W$ is totally geodesic, lies in
the following lemma.

\begin{lemma}\label{lemma_metric_on_disk}
The metric $G$ in (\ref{metric_on_disk}) is a well defined $C^{2}$
conformal metric on $\check{D}$.
In particular, the Gaussian curvature of $G$ makes sense.
Furthermore, $G$ is smooth on $\check{D}^{+}$ and $\check{D}^{-}$,
and $\sigma$ is an isometry.
\end{lemma}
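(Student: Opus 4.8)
The plan is to use conformality to reduce the whole statement to a question about a single positive scalar function --- the conformal factor of $f^{*}\text{Re}H$ --- and then to feed in Lemma \ref{lemma_geodesic} at exactly the point where it forces the one boundary condition that makes the reflected factor $C^{2}$.

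First I would record the setup. Since $f$ is $J$-holomorphic and, by Observation \ref{claim_embedding}, an embedding, $f^{*}\text{Re}H$ is a conformal metric on $\check{D}^{+}$ with strictly positive conformal factor; write $f^{*}\text{Re}H = \lambda\,(dx^{2}+dy^{2})$ with $\lambda$ smooth and positive on $\check{D}^{+}$, the real boundary $\partial\check{D}^{+}$ included (as $f$ is a smooth immersion there). Since $\sigma(z)=\bar z$ is a Euclidean isometry, $\sigma^{*}(dx^{2}+dy^{2}) = dx^{2}+dy^{2}$, so $\sigma^{*}f^{*}\text{Re}H = (\lambda\circ\sigma)(dx^{2}+dy^{2})$ on $\check{D}^{-}$. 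Hence on all of $\check{D}$ we have $G = \mu\,(dx^{2}+dy^{2})$, where $\mu(z)=\lambda(z)$ for $\text{Im}\,z\ge 0$ and $\mu(z)=\lambda(\bar z)$ for $\text{Im}\,z\le 0$, i.e.\ $\mu$ is the reflection of $\lambda$ across the real axis. Along the punctured real axis the two defining formulas agree (there $\bar z=z$; equivalently the Euclidean metric is invariant under the tangential reflection $d\sigma$), so $G$ is a well-defined conformal metric on $\check{D}$, plainly smooth on $\check{D}^{+}$ and on $\check{D}^{-}$, and $\sigma^{*}G = (\mu\circ\sigma)\,\sigma^{*}(dx^{2}+dy^{2}) = \mu\,(dx^{2}+dy^{2}) = G$ since $\mu\circ\sigma = \mu$, so $\sigma$ is an isometry of $(\check{D},G)$.

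The remaining and essential point is that $\mu$ is $C^{2}$ across $\partial\check{D}^{+} = \{\,y=0,\; 0<|x|<1\,\}$. Writing $\lambda = e^{2\phi}$ near a point of $\partial\check{D}^{+}$ inside the closed half disk, the condition that the line $\{y=0\}$ be a geodesic of $e^{2\phi}(dx^{2}+dy^{2})$ --- equivalently that its second fundamental form vanish --- reduces to the single equation $\partial_{y}\phi(x,0)=0$. Thus Lemma \ref{lemma_geodesic} gives $\partial_{y}\lambda(x,0)=0$ for $0<|x|<1$. Granting this, the even extension $\mu$ of $\lambda$ is $C^{2}$: $\partial_{y}\mu$ is the odd extension of $\partial_{y}\lambda$, which vanishes along $\{y=0\}$ and is therefore continuous (and $\partial_{y}\mu = 0$ there); $\partial_{y}^{2}\mu$ is the even extension of $\partial_{y}^{2}\lambda$, hence continuous; and the mixed second derivative is continuous because differentiating the identity $\partial_{y}\lambda(x,0)=0$ in $x$ gives $\partial_{x}\partial_{y}\lambda(x,0)=0$. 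Therefore $G = \mu\,(dx^{2}+dy^{2}) \in C^{2}(\check{D})$, and in particular its Gaussian curvature is defined (and continuous).

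The hard part will be this last step --- translating ``$\partial\check{D}^{+}$ totally geodesic'' into the flat statement ``$\partial_{y}\lambda = 0$ on the real axis'', and then the careful accounting of which derivatives of the even extension survive as continuous functions. I do not expect $G$ to be better than $C^{2}$ in general: the higher odd normal derivatives $\partial_{y}^{2k+1}\lambda|_{y=0}$ for $k\ge 1$ need not vanish, which is exactly why the statement claims only $C^{2}$ regularity on $\check{D}$ while asserting smoothness only on each half disk separately.
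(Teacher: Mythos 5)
Your proposal is correct and follows essentially the same route as the paper's proof: reduce everything to the conformal factor, use the fact that $\sigma$ reflects it evenly, invoke Lemma \ref{lemma_geodesic} to kill the normal derivative $\partial_y\lambda$ on the real axis, and then check that the even extension is $C^{2}$. You merely fill in details the paper leaves implicit (the translation of ``totally geodesic'' into $\partial_y\lambda(x,0)=0$ and the continuity of the mixed partials), and you obtain well-definedness directly from conformality rather than by citing part (1) of Lemma \ref{lemma_hermitian}, which is a harmless shortcut.
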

\begin{proof}
By (1) in Lemma \ref{lemma_hermitian} and the fact that $f$ is $J$-holomorphic,
we see that $f^{*} \text{Re}H = \sigma^{*} f^{*} \text{Re}H$ on $\partial \check{D}^{+}
= \partial \check{D}^{-}$. Therefore, $G$ is well defined.

We also see that everything in this lemma is easily to be checked except maybe that
$G$ is $C^{2}$. Let's prove this.

Since $G$ is conformal, using the coordinate $z = x+iy$, we have
\[
G = g(x,y) dx \otimes dx + g(x,y) dy \otimes dy.
\]
We know that $g$ is continuous on $\check{D}$, and smooth on $\check{D}^{\pm}$.
Since $\sigma$ is an isometry, we also have
\begin{equation}\label{lemma_metric_on_disk_1}
g(x,y) = g(x,-y).
\end{equation}

By Lemma \ref{lemma_geodesic}, we infer that
\begin{equation}\label{lemma_metric_on_disk_2}
\frac{\partial g}{\partial y^{+}} (x,0) = 0,
\end{equation}
where $\frac{\partial g}{\partial y^{+}}$ is the upper half partial derivative
with respect to $y$.
By (\ref{lemma_metric_on_disk_1}) and (\ref{lemma_metric_on_disk_2}),
we infer that $\frac{\partial g}{\partial y} (x,0)$ exists. Then
$\frac{\partial^{2} g}{\partial y^{2}} (x,0)$ automatically exists because
(\ref{lemma_metric_on_disk_1}) tells us $g$ is an even function of $y$.

Now it's easy to check that $g$ is $C^{2}$ on $\check{D}$.
\end{proof}

Lemmas \ref{lemma_gauss_upper} and \ref{lemma_metric_on_disk} immediately
imply the following.

\begin{lemma}\label{lemma_gauss}
The Gaussian curvature of $G$ on $\check{D}$ has an upper bound.
\end{lemma}

Now we consider differential forms. We shall construct a bounded 1-form $\alpha'$ on $\check{D}$
which is a piecewise primitive of a symplectic form $\eta$ on $\check{D}$. Both forms $\alpha'$ and $\eta$
are obtained by a symmetric construction involving the pullbacks of $\alpha$ and $d\alpha$ to $\check{D}$.
The exactness of $\iota^*\alpha$ enables us to establish a good property
for $(\alpha',\eta)$ which ensures a Stokes' formula (see Example \ref{example_stokes_disk}).
Such a property will be useful in Lemma 4.9 to do certain arguments on
$\check{D}$ which are similar to arguments of Gromov used on $M$.

\begin{lemma}\label{lemma_form_zero}
There exists a $1$-form $\alpha_{1}$ on $M$ such that $\iota^{*} \alpha_{1} = 0$
on $W$ and $d \alpha_{1} = d \alpha$.
\end{lemma}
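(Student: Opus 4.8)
The plan is to correct $\alpha$ by an exact $1$-form, which leaves $d\alpha$ untouched while killing the pullback to $W$. By hypothesis $\iota^{*}\alpha$ is exact on $W$, so there is a smooth function $h\colon W\to\mathbb{R}$ with $\iota^{*}\alpha=dh$. The first step is to extend $h$ to a globally defined smooth function $\tilde{h}\colon M\to\mathbb{R}$ with $\tilde{h}|_{W}=h$. This is where one uses that $W$ is not merely a submanifold but a closed subset of $M$: pick a tubular neighborhood $U$ of $W$ with projection $\pi\colon U\to W$, pick a bump function $\rho$ supported in $U$ which equals $1$ on a neighborhood of $W$, and set $\tilde{h}=\rho\cdot(h\circ\pi)$ on $U$ and $\tilde{h}=0$ outside $U$. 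Then $\tilde{h}$ is smooth on $M$ and $\tilde{h}|_{W}=h$.

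The second step is to put $\alpha_{1}=\alpha-d\tilde{h}$, a globally defined smooth $1$-form on $M$. Since $d(d\tilde{h})=0$ we get $d\alpha_{1}=d\alpha$. Since $\iota^{*}\tilde{h}=\tilde{h}\circ\iota=h$ and pullback commutes with $d$, we get $\iota^{*}\alpha_{1}=\iota^{*}\alpha-d(\iota^{*}\tilde{h})=dh-dh=0$ on $W$, which is exactly what is claimed. (Incidentally $d\alpha_{1}=d\alpha$ still tames $J$, though this is not part of the statement.)

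There is essentially no genuine obstacle in this argument; the single point that requires a word of justification is the global extension of $h$ from $W$ to $M$, and the hypothesis that $W$ is a closed submanifold is precisely what makes the tubular-neighborhood-plus-bump-function extension available.
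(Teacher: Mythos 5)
Your proposal is correct and follows essentially the same route as the paper: write $\iota^{*}\alpha=dh$, extend $h$ to a smooth function on $M$ (using that $W$ is a closed submanifold), and set $\alpha_{1}=\alpha$ minus the differential of the extension. The only difference is that you spell out the tubular-neighborhood-and-bump-function construction of the extension, which the paper leaves implicit.
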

\begin{proof}
Since $\iota^{*} \alpha$ is exact, there exists a function $\mu$ on $W$
such that $\iota^{*} \alpha = d \mu$. Since $W$ is closed, we can extend
$\mu$ to be a function $\mu_{1}$ on $M$ such that $\iota^{*} \mu_{1} = \mu$.
We finish the proof by defining $\alpha_{1} = \alpha - d \mu_{1}$.
\end{proof}

Lemma \ref{lemma_form_zero} tells us that we may replace $\alpha$ in Theorem
\ref{theorem_tame} by $\alpha_{1}$. Therefore, from now on, we assume that
$\iota^{*} \alpha = 0$.

Define a $2$-form $\eta$ on $\check{D}$ as
\begin{equation}\label{2_form_disk}
  \eta =
  \begin{cases}
     f^{*} d \alpha & \text{on $\check{D}^{+}$}, \\
     -\sigma^{*} f^{*} d \alpha & \text{on $\check{D}^{-}$}.
  \end{cases}
\end{equation}
It's easy to check that $f^{*} d \alpha|_{\partial \check{D}^{+}}
= -\sigma^{*} f^{*} d \alpha|_{\partial \check{D}^{+}}$. Therefore,
$\eta$ is well defined. We infer that $\eta$ is smooth on $\check{D}^{+}$
and $\check{D}^{-}$, and continuous on $\check{D}$.

Similarly, define a $1$-form $\alpha'$ on $\check{D}$ as
\begin{equation}\label{1_form_disk}
  \alpha' =
  \begin{cases}
     f^{*} \alpha & \text{on $\check{D}^{+}$}, \\
     -\sigma^{*} f^{*} \alpha & \text{on $\check{D}^{-}$}.
  \end{cases}
\end{equation}
Let $\iota_{0}: \partial \check{D}^{+} \hookrightarrow \check{D}$
be the inclusion. By the assumptions that $\iota^{*} \alpha = 0$
and $f(\partial \check{D}^{+}) \subseteq W$, we get
\[
\iota_{0}^{*} f^{*} \alpha = 0.
\]
Therefore, it's easy to check that $\alpha'$ is well defined and
continuous on $\check{D}$.

It's important to observe that, if the assumption $\iota^{*} \alpha = 0$
is dropped, one cannot expect that $\alpha'$ is continuous on $\check{D}$.

Clearly, on $\check{D}^{+}$ and $\check{D}^{-}$, $\alpha'$ is smooth and
\begin{equation}\label{differential_disk}
d \alpha' = \eta.
\end{equation}

By Example \ref{example_no_extension}, Theorem \ref{theorem_tame} will no longer be
true if we drop the assumption that $\iota^{*} \alpha = 0$ (or more generally
$\iota^{*} \alpha$ is exact). How does this assumption help our proof? The following
is a quintessential example.

\begin{example}\label{example_stokes_disk}
Suppose $\Omega$ is a closed disk
in $\check{D}$. Then $\partial \check{D}^{+}$ divides $\Omega$
into two parts $\Omega^{+}$ and $\Omega^{-}$. (See Figure \ref{figure_1}.
The shadowed part is $\Omega$.)
\begin{figure}[!htbp]
\centering
\includegraphics[scale=0.24]{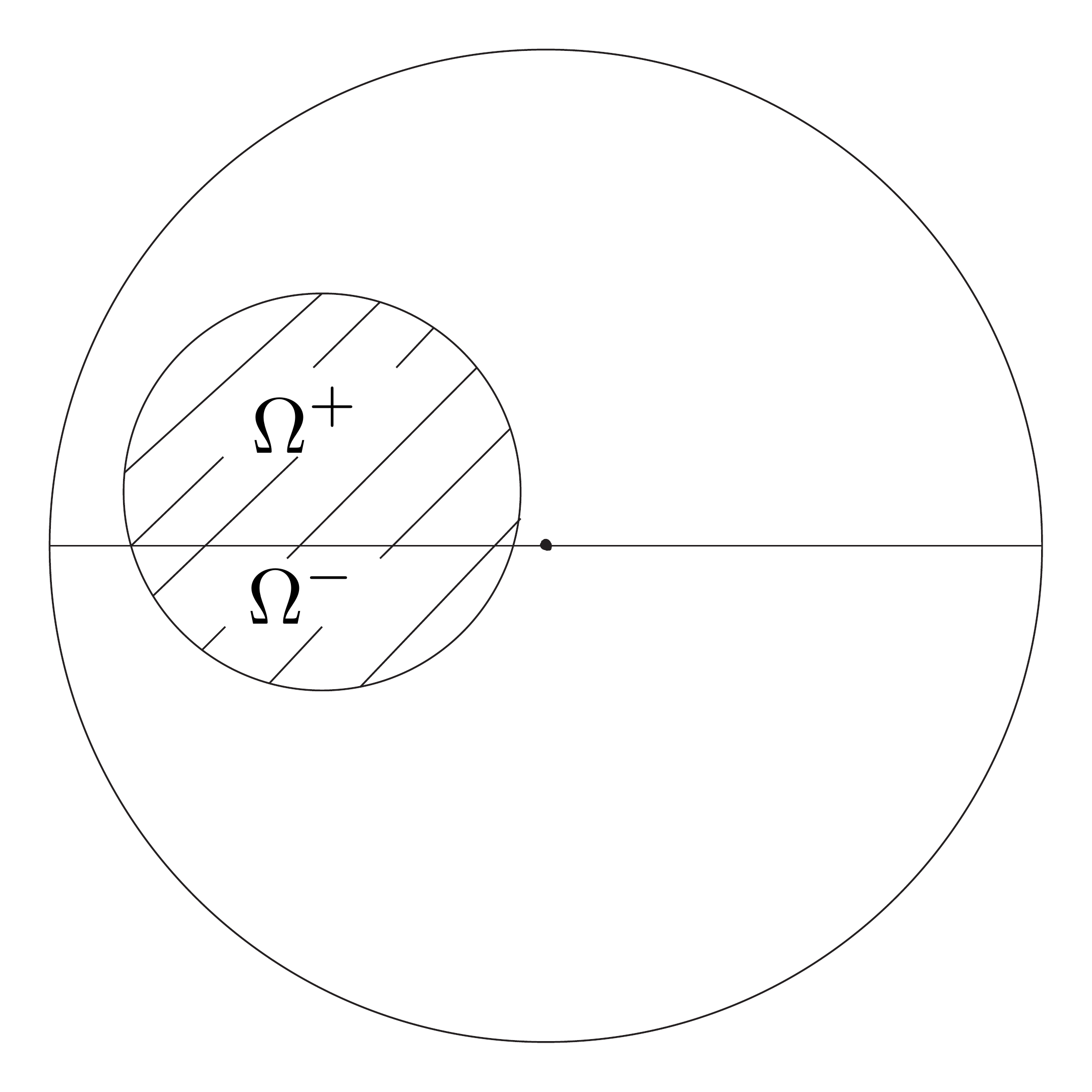} \caption{}
\label{figure_1}
\end{figure}

By (\ref{differential_disk}), we have
\[
\int_{\Omega^{\pm}} \eta = \int_{\Omega^{\pm}} d \alpha'
= \int_{\partial \Omega^{\pm}} \alpha'
\]
and
\[
\int_{\Omega} \eta = \int_{\Omega^{+}} \eta + \int_{\Omega^{-}} \eta
= \int_{\partial \Omega^{+}} \alpha' + \int_{\partial \Omega^{-}} \alpha'.
\]
Since $\iota_{0}^{*} \alpha' = 0$, we get the integrals of $\alpha'$ along
the real line is $0$. Therefore, we get the important formula
\begin{equation}\label{example_stokes_disk_1}
\int_{\Omega} \eta = \int_{\partial \Omega} \alpha'.
\end{equation}
Actually, as far as $\alpha'$ is continuous on $\check{D}$, we shall also get
(\ref{example_stokes_disk_1}) because the integrals on
$\partial \check{D}^{+}$ and $\partial \check{D}^{-}$ cancel each other.
However, as mentioned before, $\alpha'$ as defined above would not be continuous on $\check{D}$
if $\iota^{*}\alpha \neq 0$.
\end{example}

\begin{lemma}\label{lemma_form_disk}
There exist constants $C_{1}>0$ and $C_{2}>0$ such that
\[
G(v,v) \leq C_{1} \eta(v, iv)
\]
and
\[
\|\alpha'\|_{G} \leq C_{2},
\]
where $v$ is any tangent vector on $\check{D}$, $i$ is the complex structure on $\check{D}$ and $\|\alpha'\|_{G}$ is
the norm of $\alpha'$ with respect to $G$.
\end{lemma}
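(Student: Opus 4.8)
The plan is to verify both inequalities separately, using the symmetry of the constructions and the compactness of $f(\check{D}^+)$. For the first inequality, observe that $\eta(v,iv)$ is, by construction, a symmetric extension of the pullback $f^*d\alpha$, and since $d\alpha$ tames $J$ on $M$, the positive function $p \mapsto (d\alpha)(w,Jw)/\text{Re}H(w,w)$ (for $w$ a unit tangent vector) attains a positive minimum on the compact set $\overline{f(\check{D}^+)}$. Pulling this back via $f$ and using that $f$ is conformal for $f^*\text{Re}H$, one gets $f^*\text{Re}H(v,v) \le C_1 f^*d\alpha(v,iv)$ on $\check{D}^+$ for some $C_1>0$. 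Here I would be careful to note that $f$ being $J$-holomorphic means $df\circ i = J\circ df$, so $f^*d\alpha(v,iv) = (d\alpha)(df\,v, J\,df\,v)$, which is exactly where taming enters. On $\check{D}^-$ the inequality follows because both $G$ and $\eta(\cdot,i\cdot)$ are defined there by pulling back via $\sigma$, and $\sigma$ is anti-holomorphic: $\sigma^*\eta(v,iv) = \eta(d\sigma\,v, d\sigma(iv))$; the minus sign in the definition of $\eta$ on $\check{D}^-$ in (\ref{2_form_disk}) compensates for the orientation reversal, so the inequality is preserved with the same constant $C_1$. On $\partial\check{D}^+$ one uses continuity (or that the two expressions agree by the matching computations already established).

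For the second inequality, I would note that $\|\alpha'\|_G$ at a point $z\in\check{D}^+$ equals $\|f^*\alpha\|_{f^*\text{Re}H}(z)$. Since $f$ is conformal, for a $J$-holomorphic $f$ one has $\|f^*\alpha\|_{f^*\text{Re}H}(z) = \|\alpha\|_{\text{Re}H}(f(z))$ whenever $df(z)\neq 0$; this is the standard fact that pullback of a $1$-form by a conformal map preserves its pointwise norm. (At points where $df(z)=0$ the pullback $f^*\alpha$ vanishes, so the bound is trivial there.) Because $f(\check{D}^+)$ is relatively compact and $\alpha$ is smooth on $M$, the function $\|\alpha\|_{\text{Re}H}$ is bounded on $\overline{f(\check{D}^+)}$ by some $C_2>0$, giving $\|\alpha'\|_G \le C_2$ on $\check{D}^+$. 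On $\check{D}^-$ the same bound holds since $\alpha'$ and $G$ are both pulled back by the isometry $\sigma$ (up to the sign, which does not affect norms), and on $\partial\check{D}^+$ it follows by continuity of $\alpha'$ and $G$ established above.

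The main obstacle I anticipate is handling the degenerate locus where $df$ vanishes: the "conformal factor" argument identifying $\|f^*\alpha\|_{f^*\text{Re}H}$ with $\|\alpha\|_{\text{Re}H}\circ f$ breaks down pointwise there, although $f^*\alpha$ and $f^*\text{Re}H$ both degenerate so the inequality should persist by a limiting argument. The cleanest remedy is probably to invoke Observation \ref{claim_embedding} and work with the graph $\hat f$, which is an immersion, so $d\hat f$ never vanishes and the conformal-norm identity holds everywhere without caveat; one then checks that bounding $\|\hat\alpha'\|_{\hat G}$ on $\check D$ for the graph data suffices, since the extra $\mathbb{C}$-factor contributes a bounded term. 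A secondary subtlety is confirming that the $C^2$ metric $G$ from Lemma \ref{lemma_metric_on_disk} is nondegenerate near $\partial\check{D}^+$ so that $\|\cdot\|_G$ is well-defined there, but this is immediate since $f^*\text{Re}H$ is a genuine (conformal, hence nondegenerate) metric on $\check{D}^+$ up to its boundary.
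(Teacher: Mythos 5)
Your argument is correct and is essentially the paper's own proof: relative compactness of $f(\check{D}^{+})$ together with the taming condition gives uniform constants so that $\text{Re}H \leq C_{1}\, d\alpha(\cdot, J\cdot)$ and $\|\alpha\|_{\text{Re}H} \leq C_{2}$ on vectors tangent to the image, and the symmetric definitions of $G$, $\eta$ and $\alpha'$ (with the sign in (\ref{2_form_disk}) compensating the anti-holomorphicity of $\sigma$, as you check) transfer these bounds to all of $\check{D}$. The only small overstatement is the claimed pointwise equality $\|f^{*}\alpha\|_{f^{*}\text{Re}H} = \|\alpha\|_{\text{Re}H}\circ f$ — in general one only gets $\leq$, since the left side sees $\alpha$ only on the image of $df$ — but that inequality is all that is needed, and your appeal to Observation \ref{claim_embedding} for the degenerate locus matches the standing assumption the paper already makes in this section.
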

\begin{proof}
Since $f(\check{D}^{+})$ is relatively compact
and $d \alpha$ tames $J$, we infer there exist $C_{1}>0$ and $C_{2}>0$
such that, on $f(\check{D}^{+})$,
\[
H(w,w) \leq C_{1} d \alpha(w, Jw)
\]
and
\[
\|\alpha\|_{\text{Re}H} \leq C_{2},
\]
where $w$ is any tangent vector tangent to $f(\check{D}^{+})$.

By the definitions of $G$, $\eta$ and $\alpha'$, the lemma follows.
\end{proof}

Let $\varphi: D \rightarrow N$ be a $C^{1}$ $J$-holomorphic map,
where $N$ is a $C^{1}$ almost complex manifold. Let
$A(r)= |\varphi(D(r))|$ and $L(r) = |\partial \varphi(D(r))|$. Suppose
$A(r) \rightarrow 0$ when $r \rightarrow 0$. As we can
see in \cite[1.2 and 1.3]{gromov} and in this paper, studying the decaying
rate of $A(r)$ gives very important geometric information.

How to obtain such a decaying rate? Gromov tells us it's sufficient to use two tools.
The first one is the formula (\ref{gromov_formula}). The second one is an isoperimetric
inequality.

Following \cite[1.2 and 1.3]{gromov}, we shall build two types of isoperimetric
inequalities in this paper. The first one is $L^{2} \geq C A$ or more generally $L^{2} \geq C A
- C_{1}A^{2}$. The second one is $L^{2} \geq C A^{2}$. Comparing them,
the first one is better when $A$ is small, while the second one becomes better when
$A$ is large. The second one is important when we cannot bound $A(1)$, which lies at
the heart of how the form $\alpha$ plays a role in Theorem \ref{theorem_tame} and Lemma
\ref{lemma_tame_to_area} (see also Remark \ref{remark_isoperimetric_decay}).

For $z_{0} \in \check{D}$,
let $\varphi_{z_{0}}: D \rightarrow \check{D}$ be a holomorphic universal covering such that
$\varphi_{z_{0}}(0) = z_{0}$. It's well-known that $D$ carries a hyperbolic metric
$2 (1-|z|^{2})^{-1} |dz|$ and $\check{D}$ carries a hyperbolic metric
$\left( |z| \log \frac{1}{|z|} \right)^{-1} |dz|$, and
\begin{equation}\label{hyperbolic_covering}
\varphi_{z_{0}}: \left( D, \frac{2 |dz|}{1-|z|^{2}} \right) \rightarrow
\left( \check{D}, \frac{|dz|}{|z| \log \frac{1}{|z|}} \right)
\end{equation}
is a local isometry. Even though we use above the hyperbolic metrics to describe the map $\varphi_{z_0}$,
whenever not otherwise indicated, the metric on $D$
is the Euclidean metric, and the metric on $\check{D}$ is $G$. In particular the subsequent isoperimetric inequalities
are for the metric $G$ on $\check{D}$. (Compare also Definition \ref{definition_image_volume}).

By Lemma \ref{lemma_gauss} and the classical isoperimetric inequality in terms of
Gaussian curvature \cite[Theorem (1.2)]{barbosa_carmo} (see also \cite[p.\ 1206, (4.25)]{osserman}),
we obtain the following isoperimetric inequality.

\begin{lemma}\label{lemma_isoperimetric_gauss}
There exists a constant $C>0$ such that, for all $r \in (0,1)$ and $z_{0} \in \check{D}$,
\[
|\varphi_{z_{0}}(\partial D(r))|^{2} \geq 4 \pi \left( |\varphi_{z_{0}}(D(r))|
- C |\varphi_{z_{0}}(D(r))|^{2} \right).
\]
\end{lemma}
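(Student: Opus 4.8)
The plan is to invoke the classical isoperimetric inequality for surfaces whose Gaussian curvature is bounded above, applied to the pulled-back and symmetrized metric $G$ on $\check{D}$. By Lemma \ref{lemma_gauss}, the Gaussian curvature of $G$ on $\check{D}$ satisfies $K \leq \kappa$ for some constant $\kappa$. The isoperimetric inequality of Barbosa--do Carmo \cite[Theorem (1.2)]{barbosa_carmo} (or the version in \cite{osserman}) states that for a simply connected region $\Omega$ with rectifiable boundary on a surface with $K \leq \kappa$, one has $L(\partial\Omega)^2 \geq 4\pi A(\Omega) - \kappa A(\Omega)^2$ when $\kappa > 0$ (and the inequality $L^2 \geq 4\pi A$ holds outright when $\kappa \leq 0$). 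Since the map $\varphi_{z_0}: D \to (\check{D}, G)$ is a holomorphic covering, its restriction to any disk $D(r)$ has image $\varphi_{z_0}(D(r))$ which, while possibly an immersed rather than embedded region, is covered by the general form of this isoperimetric inequality (the cited references phrase it for immersed disks, i.e., for $C^1$ maps from $D(r)$ rather than embedded subsurfaces).

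First I would fix $r \in (0,1)$ and $z_0 \in \check{D}$, and consider the $C^1$ map $\varphi_{z_0}|_{D(r)}: D(r) \to (\check{D}, G)$. The length $|\varphi_{z_0}(\partial D(r))|$ and area $|\varphi_{z_0}(D(r))|$ are exactly the quantities appearing in Definition \ref{definition_image_volume}, computed with respect to $G$. Second, I would apply the cited isoperimetric inequality to this map: since the ambient Gaussian curvature of $G$ is bounded above by some constant (Lemma \ref{lemma_gauss}), taking $C$ to be (an upper bound for) this curvature constant — more precisely $C = \max(\kappa, 0)$ so that the stated inequality is meaningful and holds also in the nonpositive-curvature case where the $-C|\cdot|^2$ term is merely a harmless weakening — yields
\[
|\varphi_{z_0}(\partial D(r))|^2 \geq 4\pi\bigl(|\varphi_{z_0}(D(r))| - C\,|\varphi_{z_0}(D(r))|^2\bigr).
\]
Since the constant in the isoperimetric inequality depends only on the curvature bound, which by Lemma \ref{lemma_gauss} is uniform over all of $\check{D}$, the resulting constant $C$ is independent of both $r$ and $z_0$, as claimed.

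The only genuine point requiring care — and the step I expect to be the main obstacle in making the argument fully rigorous — is the verification that the hypotheses of the Barbosa--do Carmo inequality are met by $\varphi_{z_0}|_{D(r)}$. Concretely, one must check: (i) that the map is $C^1$ up to the boundary circle $\partial D(r)$, which holds since $\varphi_{z_0}$ is a holomorphic covering map, hence smooth, and $\overline{D(r)} \subset D$; (ii) that $G$ has the required regularity — by Lemma \ref{lemma_metric_on_disk}, $G$ is $C^2$ and conformal on $\check{D}$, which is ample for the curvature-based isoperimetric inequality; and (iii) that the inequality applies to immersed (not necessarily embedded) disks, which is precisely the generality in which the cited references (especially \cite{osserman}) are stated. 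Once these are in place, the conclusion is immediate, and I would simply cite the two references, noting that the uniformity of the constant follows from the uniformity of the curvature bound in Lemma \ref{lemma_gauss}.
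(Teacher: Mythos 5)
Your argument is correct and is essentially identical to the paper's, which likewise dispatches the lemma by citing the Barbosa--do Carmo/Osserman isoperimetric inequality for immersed disks together with the uniform curvature bound from Lemma \ref{lemma_gauss}. Your extra remarks about regularity of $G$ (Lemma \ref{lemma_metric_on_disk}) and the immersed-versus-embedded distinction are valid observations about why the cited theorem applies, but they reflect the same route, not a different one.
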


In the proof of the following isoperimetric inequality, we shall see the importance of the assumption
on $\alpha$ in Theorem \ref{theorem_tame}. (Compare \cite[p.\ 317, (8)]{gromov}.)

\begin{lemma}\label{lemma_isoperimetric_form}
There exists a constant $C>0$ such that, for all $r \in (0,1)$ and $z_{0} \in \check{D}$,
\[
|\varphi_{z_{0}}(\partial D(r))|^{2} \geq 2 \pi C |\varphi_{z_{0}}(D(r))|^{2}.
\]
\end{lemma}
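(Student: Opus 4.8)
The plan is to pull everything back by the covering map $\varphi_{z_0}$ to the Euclidean disk $D$ and then apply Stokes' formula in the form established in Example \ref{example_stokes_disk}. Write $A(r) = |\varphi_{z_0}(D(r))|$ and $L(r) = |\varphi_{z_0}(\partial D(r))|$, where lengths and areas are measured with respect to $G$. First I would observe that $\psi := \varphi_{z_0}$ is holomorphic from $(D,i)$ to $(\check D, i)$ and hence conformal with respect to $G$ (since $G$ is conformal on $\check D$), so by the discussion around (\ref{holomorphic_circle_length}) the area $A(r)$ is obtained by integrating the Jacobian, which equals the $G$-squared-norm $\|d\psi\|_G^2$. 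By the first inequality of Lemma \ref{lemma_form_disk}, $G(v,v) \le C_1\,\eta(v,iv)$ for all tangent vectors $v$ on $\check D$; applying this to $v = d\psi(\partial_x)$ and using conformality, the $G$-area of $\psi(D(r))$ is controlled by $C_1$ times the symplectic area $\int_{D(r)} \psi^*\eta$. That is, $A(r) \le C_1 \int_{D(r)} \psi^*\eta$.

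Next I would bound $\int_{D(r)} \psi^*\eta$ by a boundary integral. The image $\psi(D(r))$ need not be embedded, so I cannot literally write $\int_{\Omega}\eta$ for a disk $\Omega \subseteq \check D$; instead the cleanest route is to work upstairs on $D(r)$ itself. Set $\beta = \psi^*\alpha'$, a continuous $1$-form on $D$ which is smooth away from $\psi^{-1}(\partial\check D^+)$ and satisfies $d\beta = \psi^*\eta$ there; the continuity of $\alpha'$ on $\check D$ (which is exactly where the hypothesis $\iota^*\alpha = 0$, via Lemma \ref{lemma_form_zero}, is used) guarantees $\beta$ is continuous on $D$. Arguing exactly as in Example \ref{example_stokes_disk} — cutting $D(r)$ along the preimage of the real axis and cancelling the interior boundary contributions — gives $\int_{D(r)} \psi^*\eta = \int_{\partial D(r)} \beta$. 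Finally, by the second inequality of Lemma \ref{lemma_form_disk}, $\|\alpha'\|_G \le C_2$, hence $|\beta(\partial_\theta)| \le C_2 \|d\psi(\partial_\theta)\|_G$ pointwise on $\partial D(r)$, so $\left| \int_{\partial D(r)} \beta \right| \le C_2 \int_{\partial D(r)} \|d\psi\|_G = C_2\, L(r)$.

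Combining the three displayed estimates yields $A(r) \le C_1 C_2\, L(r)$, and squaring gives $L(r)^2 \ge (C_1 C_2)^{-2} A(r)^2$; taking $C = (2\pi)^{-1}(C_1 C_2)^{-2}$ gives the stated inequality $L(r)^2 \ge 2\pi C\, A(r)^2$. I expect the main obstacle to be the Stokes step: one must be careful that the preimage $\psi^{-1}(\partial\check D^+)$ inside $D(r)$ is a nice (real-analytic, or at least piecewise-$C^1$) $1$-dimensional set so that the cut-and-cancel argument of Example \ref{example_stokes_disk} is rigorous, and one must handle the fact that $\beta$ is only continuous (not $C^1$) across this set — this is where the continuity of $\alpha'$ on all of $\check D$, rather than merely on $\check D^\pm$ separately, is essential, and it is precisely the point at which the exactness hypothesis on $\iota^*\alpha$ enters. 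An alternative that sidesteps the regularity of the cut locus is to approximate $\beta$ by smooth forms agreeing with it near $\partial D(r)$, or to invoke directly that a continuous form which is smooth off a negligible set and closed there still satisfies Stokes on disks; either way the estimate is unaffected.
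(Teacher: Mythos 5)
Your argument is correct and is essentially the paper's own proof: both pull back by $\varphi_{z_0}$, bound the $G$-area by $C_1\int_{D(r)}\varphi_{z_0}^*\eta$, convert this to $\int_{\partial D(r)}\varphi_{z_0}^*\alpha'$ via Stokes on the pieces cut out by $\varphi_{z_0}^{-1}(\partial\check{D}^+)$ (where the line integrals vanish because $\iota^*\alpha=0$), and then bound the boundary term by $C_2 L(r)$. Your worry about the cut locus is handled in the paper by noting that $\varphi_{z_0}^{-1}(\partial\check{D}^+)$ consists of circular arcs (hyperbolic geodesics), so $D(r)$ is divided into finitely many compact submanifolds with corners and Stokes applies piecewise exactly as in Example \ref{example_stokes_disk}.
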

\begin{proof}
We know that $\varphi_{z_{0}}^{-1}(\partial \check{D}^{+})$ is
a countable union of circular arcs whose ends are on the boundary of $D$.
(Figure \ref{figure_2} shows some of these circular arcs.
They are in fact geodesics of the hyperbolic metric on $D$.)

\begin{figure}[!htbp]
\centering
\includegraphics[scale=0.24]{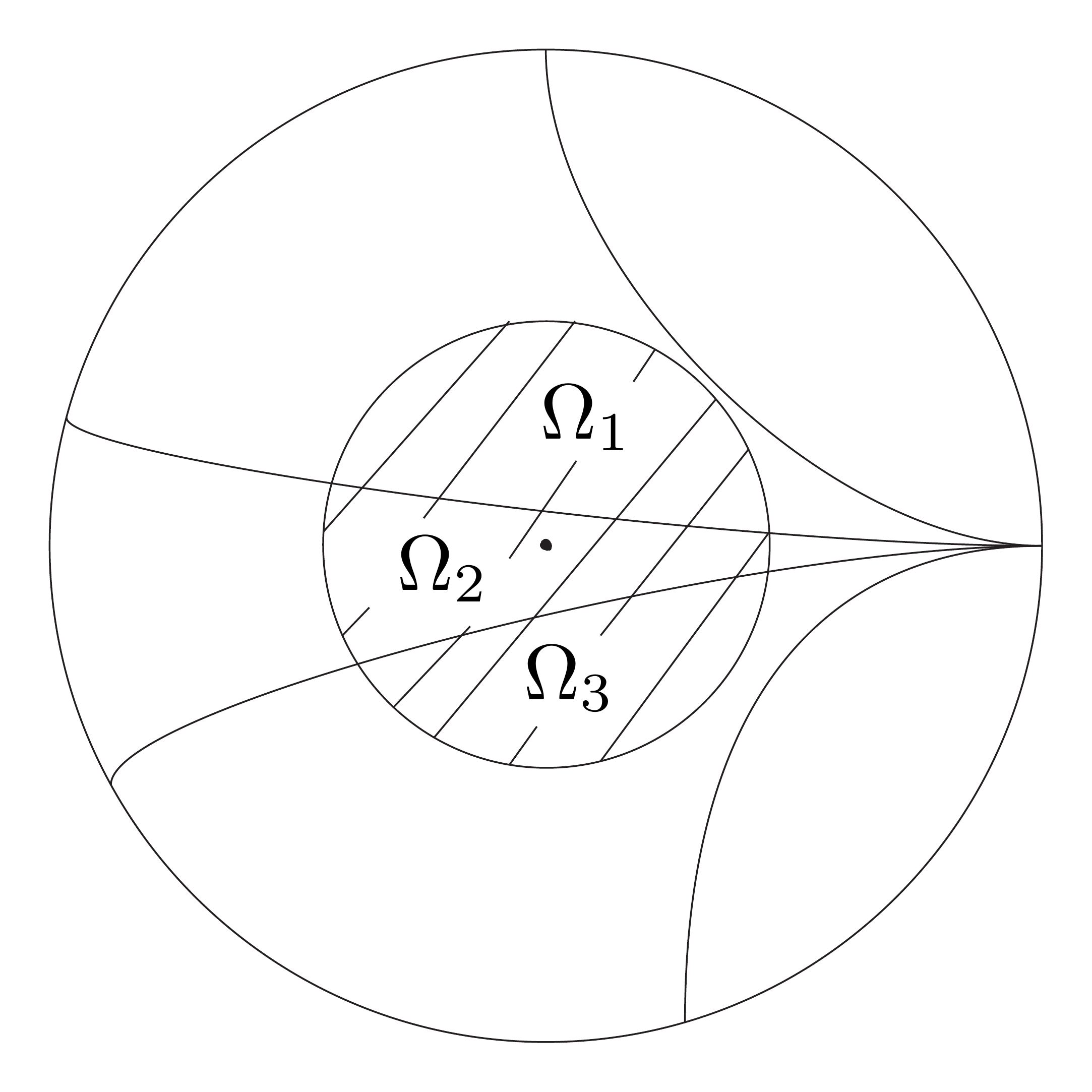} \caption{}
\label{figure_2}
\end{figure}

We consider the integral $\int_{D(r)} \varphi_{z_{0}}^{*} \eta$. Clearly,
$\varphi_{z_{0}}^{-1}(\partial \check{D}^{+})$ divides $D(r)$ into several domains
$\Omega_{1}, \cdots, \Omega_{k}$, which are compact submanifolds
with corners insider $D$. (This is illustrated by Figure \ref{figure_2}. The shadowed
part is $D(r)$.)
In each $\Omega_{i}$, by (\ref{differential_disk}), we have
\[
\varphi_{z_{0}}^{*} \eta = d \varphi_{z_{0}}^{*} \alpha'.
\]
Applying Stokes' formula, we get
\[
\int_{\Omega_{i}} \varphi_{z_{0}}^{*} \eta =
\int_{\partial \Omega_{i}} \varphi_{z_{0}}^{*} \alpha'.
\]
Similar to Example \ref{example_stokes_disk}, the line integral of $\varphi_{z_{0}}^{*} \alpha'$
vanishes on $\varphi_{z_{0}}^{-1}(\partial \check{D}^{+})$. Thus, summing up these integrals,
we get
\[
\int_{D(r)} \varphi_{z_{0}}^{*} \eta = \int_{\partial D(r)} \varphi_{z_{0}}^{*} \alpha'.
\]
By Lemma \ref{lemma_form_disk}, we get
\[
|\varphi_{z_{0}}(D(r))| \leq C_{1} \int_{D(r)} \varphi_{z_{0}}^{*} \eta
= C_{1} \int_{\partial D(r)} \varphi_{z_{0}}^{*} \alpha'
\leq C_{1} C_{2} |\varphi_{z_{0}}(\partial D(r))|,
\]
where $C_{1}$ and $C_{2}$ are the constants in Lemma \ref{lemma_form_disk}.
This finishes the proof.
\end{proof}

\begin{lemma}\label{lemma_bound_disk_covering}
There exists a constant $C>0$ such that
$\| d \varphi_{z_{0}} (0) \| \leq C$ for all maps $\varphi_{z_{0}}$.
\end{lemma}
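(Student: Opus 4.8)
The plan is to run Gromov's differential-inequality argument for the area function of $\varphi_{z_0}$, fed by the two isoperimetric inequalities in the ranges where each is effective. I would fix $r_0 = 1/2$ once and for all and, for each $z_0 \in \check{D}$, set $A(r) = |\varphi_{z_0}(D(r))|$ and $L(r) = |\varphi_{z_0}(\partial D(r))|$, areas and lengths taken with respect to $G$ (and the Euclidean metric on $D$). Since $\varphi_{z_0}$ is a holomorphic covering, it is a local biholomorphism, so $d\varphi_{z_0}$ vanishes nowhere; hence $A$ is smooth and strictly increasing on $[0,1)$ with $A(0) = 0$, $A(r) > 0$ for $r > 0$, $A(r_0) < +\infty$ (as $\varphi_{z_0}$ is smooth on the compact set $\overline{D(r_0)}$), and, by conformality of $\varphi_{z_0}$, $\lim_{r\to 0} A(r)/r^2 = \pi\,\|d\varphi_{z_0}(0)\|^2$. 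Throughout, (\ref{gromov_formula}) gives $A'(r) \ge (2\pi r)^{-1}L(r)^2$ on $(0,1)$.

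First I would use Lemma \ref{lemma_isoperimetric_form} to force $A$ to be uniformly small at a definite scale. On $(0, r_0)$, combining it with (\ref{gromov_formula}) yields $A'(r) \ge C\,A(r)^2/r$ for a universal constant $C > 0$; since $A > 0$ this is $(1/A)'(r) \le -C/r$, and integrating over $[r, r_0]$ (discarding the positive term $1/A(r_0)$) gives $A(r) \le \bigl(C\log(r_0/r)\bigr)^{-1}$ on $(0, r_0)$. Letting $C_2$ be the constant of Lemma \ref{lemma_isoperimetric_gauss} and $\varepsilon_0 = (2C_2)^{-1}$, I would then pick $r_1 \in (0, r_0)$ with $\bigl(C\log(r_0/r_1)\bigr)^{-1} \le \varepsilon_0$; this $r_1$ depends only on $C$ and $C_2$, not on $z_0$, and monotonicity of $A$ gives $A(r) \le A(r_1) \le \varepsilon_0$ for all $r \in (0, r_1]$ and all $z_0$.

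Next I would switch to Lemma \ref{lemma_isoperimetric_gauss} to promote this to quadratic decay. On $(0, r_1)$, combining it with (\ref{gromov_formula}) gives $A'(r) \ge 2\bigl(A(r) - C_2 A(r)^2\bigr)/r$. Substituting $A = Br^2$ with $B(r) = A(r)/r^2$, this becomes $B'(r) \ge -2C_2\,B(r)^2\,r$, i.e.\ $(1/B)'(r) \le 2C_2 r$; integrating over $[r, r_1]$ and using $B(r_1) = A(r_1)/r_1^2 \le \varepsilon_0/r_1^2$ yields
\[
\frac{1}{B(r)} \ge \frac{1}{B(r_1)} - C_2 r_1^2 \ge \frac{r_1^2}{\varepsilon_0} - C_2 r_1^2 = C_2 r_1^2 > 0 .
\]
Hence $B(r) \le (C_2 r_1^2)^{-1}$ on $(0, r_1)$, and letting $r \to 0$ gives $\pi\,\|d\varphi_{z_0}(0)\|^2 = \lim_{r\to 0} B(r) \le (C_2 r_1^2)^{-1}$. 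Since $C_2$ and $r_1$ are independent of $z_0$, this is the claim, with $C = (\pi C_2 r_1^2)^{-1/2}$.

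The content — as opposed to the bookkeeping — is the choice to play the two isoperimetric inequalities against each other at different scales: Lemma \ref{lemma_isoperimetric_form} is what makes the argument survive the possibility that $A(1) = |\varphi_{z_0}(D)|$ is infinite, which is exactly the situation allowed by Theorem \ref{theorem_tame}, while Lemma \ref{lemma_isoperimetric_gauss}, whose constant $4\pi$ is the sharp Euclidean one, is what upgrades the weak conclusion $A(r) \to 0$ into the bound $A(r) = O(r^2)$ that genuinely controls $\|d\varphi_{z_0}(0)\|$. The only point to watch is that the transition radius $r_1$ and the smallness threshold $\varepsilon_0$ can be taken uniform in $z_0$, which is immediate because the constants in both lemmas are already uniform in $z_0$; beyond that the proof is just the two ordinary-differential-inequality integrations above together with the Taylor expansion of $A$ at $0$.
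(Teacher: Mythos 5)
Your proof is correct and follows essentially the same strategy as the paper: play Lemma \ref{lemma_isoperimetric_form} against (\ref{gromov_formula}) to make $A$ uniformly small at a fixed scale $r_1$ independent of $z_0$ (this is where the exactness hypothesis on $\alpha$ enters and where the possible infiniteness of $A(1)$ is circumvented), and then bootstrap via Lemma \ref{lemma_isoperimetric_gauss} to obtain $A(r)=O(r^2)$. The only difference is bookkeeping: the paper integrates $(A - C_4 A^2)^{-1}\dot A \ge 2t^{-1}$ (i.e.\ tracks $\log\frac{A}{1-C_4 A}$) with an explicit $r_0 = \tfrac12 e^{-2C_4/C_3}$, whereas you substitute $B = A/r^2$ and integrate $(1/B)' \le 2C_2\,r$, which gets to $\lim_{r\to 0}B(r) = \pi\|d\varphi_{z_0}(0)\|^2$ a bit more directly; both lead to the same uniform bound.
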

\begin{proof}
Let $A(t)= |\varphi_{z_{0}}(D(t))|$ and $L(t) = |\varphi_{z_{0}}(\partial D(t))|$.
By (\ref{gromov_formula}) and Lemmas \ref{lemma_isoperimetric_form} and \ref{lemma_isoperimetric_gauss},
there exist constants $C_{3}>0$ and $C_{4}>0$ independent of $\varphi_{z_{0}}$ such that
\begin{equation}\label{lemma_bound_disk_covering_1}
\dot{A} \geq t^{-1} C_{3} A^{2},
\end{equation}
and
\begin{equation}\label{lemma_bound_disk_covering_2}
\dot{A} \geq 2 t^{-1} (A - C_{4} A^{2}).
\end{equation}

Since $A>0$ for $t \in (0,1)$, by (\ref{lemma_bound_disk_covering_1}), we infer
\[
A^{-2} \dot{A} \geq C_{3} t^{-1}.
\]
So, for $0 < r < \frac{1}{2}$, we have
\[
\int_{r}^{\frac{1}{2}} A^{-2} \dot{A} dt \geq
\int_{r}^{\frac{1}{2}} C_{3} t^{-1} dt
\]
or
\begin{equation}\label{lemma_bound_disk_covering_3}
A(r) \leq \frac{1}{A(\frac{1}{2})^{-1} - C_{3} \log 2r},
\end{equation}
where $A(\frac{1}{2}) < + \infty$.

Choose $r_{0}$ such that
\[
- C_{3} \log 2 r_{0} = 2 C_{4},
\]
we get
\begin{equation}\label{lemma_bound_disk_covering_4}
r_{0} = \frac{1}{2} e^{-\frac{2C_{4}}{C_{3}}}.
\end{equation}

By (\ref{lemma_bound_disk_covering_3}) and (\ref{lemma_bound_disk_covering_4}),
we get
\begin{equation}\label{lemma_bound_disk_covering_5}
A(r_{0}) \leq \frac{1}{A(\frac{1}{2})^{-1} - C_{3} \log 2r_{0}} \leq
\frac{1}{- C_{3} \log 2r_{0}} = \frac{1}{2 C_{4}}.
\end{equation}
For $0 < r \leq r_{0}$, since $A(r) \leq A(r_{0})$,  we have
\begin{equation}\label{lemma_bound_disk_covering_6}
A(r) - C_{4} A(r)^{2} \geq A(r)(1- C_{4} A(r_{0})) \geq 2^{-1} A(r) > 0.
\end{equation}

By (\ref{lemma_bound_disk_covering_2}) and (\ref{lemma_bound_disk_covering_6}),
we get, for $0 < r \leq r_{0}$,
\begin{equation}\label{lemma_bound_disk_covering_7}
(A - C_{4} A^{2})^{-1} \dot{A} \geq 2 t^{-1}.
\end{equation}
Integrating both sides of (\ref{lemma_bound_disk_covering_7}) on $[r,r_{0}]$, we get
\[
\log \frac{A(r_{0})}{1 - C_{4} A(r_{0})} - \log
\frac{A(r)}{1 - C_{4} A(r)}  \geq \log \left( \frac{r_{0}}{r} \right)^{2}
\]
or
\begin{equation}\label{lemma_bound_disk_covering_8}
\frac{A(r)}{1 - C_{4} A(r)} \leq
\frac{A(r_{0})}{1 - C_{4} A(r_{0})} \left( \frac{r}{r_{0}} \right)^{2}.
\end{equation}

By (\ref{lemma_bound_disk_covering_4}), (\ref{lemma_bound_disk_covering_5})
and (\ref{lemma_bound_disk_covering_8}), we get, for $r \leq r_{0}$,
\begin{eqnarray}\label{lemma_bound_disk_covering_9}
A(r) & \leq & \frac{A(r)}{1 - C_{4} A(r)} \leq
\frac{A(r_{0})}{1 - C_{4} A(r_{0})} \left( \frac{r}{r_{0}} \right)^{2}\\ \nonumber
& \leq & \frac{(2C_{4})^{-1}}{1 - C_{4} (2C_{4})^{-1}}
\left( \frac{1}{2} e^{-\frac{2C_{4}}{C_{3}}} \right)^{-2} r^{2}
= 4 C_{4}^{-1} e^{\frac{4C_{4}}{C_{3}}} r^{2}.
\end{eqnarray}

Therefore,
\[
\|d\varphi_{z_{0}}(0)\|^{2} = \lim_{r \rightarrow 0} (\pi r^{2})^{-1} A(r)
\leq 4 \pi^{-1} C_{4}^{-1} e^{\frac{4C_{4}}{C_{3}}},
\]
which finishes the proof.
\end{proof}

\begin{remark}\label{remark_isoperimetric_decay}
In the proof of Lemma \ref{lemma_bound_disk_covering}, by using Lemma \ref{lemma_isoperimetric_form},
we get the uniform estimate (\ref{lemma_bound_disk_covering_5}) for $A(r_{0})$, where both
(\ref{lemma_bound_disk_covering_5}) and $r_{0}$ are independent
of the maps $\varphi_{z_{0}}$.
Based on this estimate, Lemma \ref{lemma_isoperimetric_gauss} yields the decaying
rate (\ref{lemma_bound_disk_covering_9}).
\end{remark}

We are ready to conclude this section. Since the finiteness of $|f(\check{D}^{+}(r))|$ in Lemma \ref{lemma_tame_to_area}
follows from (\ref{lemma_tame_to_area_1}), it suffices to prove the following lemma which
implies (\ref{lemma_tame_to_area_1}) immediately.

\begin{lemma}\label{lemma_metric_hyperbolic_bound}
Using the standard coordinate $z=x+iy$ on $\check{D}$, the metric $G$ has the form
$g dx \otimes dx + g dy \otimes dy$. There exists a constant $C>0$ such that
\[
\sqrt{g(z)} \leq \frac{C}{|z| \log\frac{1}{|z|}}.
\]
\end{lemma}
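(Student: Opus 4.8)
The plan is to extract the conformal factor $g$ directly from the universal covering maps $\varphi_{z_0}$, exploiting the uniform bound on their derivatives at the center provided by Lemma \ref{lemma_bound_disk_covering}. Fix $z_0 \in \check{D}$ and let $\varphi_{z_0}'(0) \in \mathbb{C}$ denote the complex derivative of the holomorphic map $\varphi_{z_0}$ at $0$. Since $\varphi_{z_0}$ is a covering map, it is a local biholomorphism, so $\varphi_{z_0}'(0) \neq 0$, and the divisions performed below are legitimate.

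First I would compute $\|d\varphi_{z_0}(0)\|$, the operator norm taken with respect to the Euclidean metric on the source $D$ and the metric $G$ on the target $\check{D}$; this is the norm appearing in Lemma \ref{lemma_bound_disk_covering} (cf.\ the convention following (\ref{hyperbolic_covering})). In the standard real coordinates the real-linear map $d\varphi_{z_0}(0)\colon T_0 D \to T_{z_0}\check{D}$ is multiplication by the complex number $\varphi_{z_0}'(0)$, hence has Euclidean operator norm $|\varphi_{z_0}'(0)|$; and since $G = g\,dx\otimes dx + g\,dy\otimes dy$ scales the Euclidean norm on $T_{z_0}\check{D}$ by the factor $\sqrt{g(z_0)}$, we obtain $\|d\varphi_{z_0}(0)\| = \sqrt{g(z_0)}\,|\varphi_{z_0}'(0)|$. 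Combining this with Lemma \ref{lemma_bound_disk_covering} gives $\sqrt{g(z_0)}\,|\varphi_{z_0}'(0)| \leq C$ with $C$ independent of $z_0$.

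Next I would pin down $|\varphi_{z_0}'(0)|$ using (\ref{hyperbolic_covering}): the statement that $\varphi_{z_0}$ is a local isometry from $\left(D, \frac{2|dz|}{1-|z|^2}\right)$ onto $\left(\check{D}, \frac{|dz|}{|z|\log\frac{1}{|z|}}\right)$ means precisely that the conformal densities satisfy $\frac{2}{1-|z|^2} = \frac{|\varphi_{z_0}'(z)|}{|\varphi_{z_0}(z)|\log\frac{1}{|\varphi_{z_0}(z)|}}$ for every $z \in D$. Evaluating at $z = 0$, where $\varphi_{z_0}(0) = z_0$ and the left-hand side equals $2$, yields $|\varphi_{z_0}'(0)| = 2\,|z_0|\log\frac{1}{|z_0|}$ (note $\log\frac{1}{|z_0|} > 0$ since $|z_0| < 1$). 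Substituting into the inequality from the previous step gives $\sqrt{g(z_0)} \leq \frac{C}{2\,|z_0|\log\frac{1}{|z_0|}}$, and since $z_0 \in \check{D}$ was arbitrary this is exactly the asserted estimate (with constant $C/2$).

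I do not expect a serious obstacle: the lemma is really a bookkeeping step that converts the uniform bound of Lemma \ref{lemma_bound_disk_covering} into a pointwise bound on $g$ through the hyperbolic geometry of the punctured disk. The only places demanding a little care are confirming that the norm in Lemma \ref{lemma_bound_disk_covering} is the Euclidean-to-$G$ operator norm (as fixed by the convention after (\ref{hyperbolic_covering}), and as is visible from the identity $\|d\varphi_{z_0}(0)\|^2 = \lim_{r\to 0}(\pi r^2)^{-1}A(r)$ used in its proof) and checking that $\varphi_{z_0}'(0)\neq 0$ so that the final division is valid.
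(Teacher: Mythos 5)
Your proposal is correct and follows essentially the same route as the paper: it computes $\|d\varphi_{z_{0}}(0)\| = \sqrt{g(z_{0})}\,|\varphi_{z_{0}}'(0)|$, uses the local isometry (\ref{hyperbolic_covering}) to get $|\varphi_{z_{0}}'(0)| = 2|z_{0}|\log\frac{1}{|z_{0}|}$, and then applies the uniform bound of Lemma \ref{lemma_bound_disk_covering}, exactly as in the paper's comparison with the hyperbolic metric. Your extra checks (nonvanishing of $\varphi_{z_{0}}'(0)$ and the identification of the norm convention) are fine and only make the same argument more explicit.
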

\begin{proof}
Following the proof of \cite[1.3.A']{gromov}, the idea of this proof is to compare $G$
with the hyperbolic metric on $\check{D}$. (See also \cite[p.\ 227]{muller} and \cite[p.\ 42]{hummel}.)

Consider the above $\varphi_{z_{0}}: D \rightarrow \check{D}$ with
$\varphi_{z_{0}}(0) = z_{0}$. When $\varphi_{z_{0}}$ is viewed as a holomorphic function,
we denote its derivative by $\varphi_{z_{0}}'(z)$. Clearly,
\[
\| d\varphi_{z_{0}}(0) \| = \sqrt{g(z_{0})} |\varphi_{z_{0}}'(0)|.
\]

Since (\ref{hyperbolic_covering}) is a local isometry, we infer
\[
|\varphi_{z_{0}}'(0)| = 2 |z_{0}| \log \frac{1}{|z_{0}|} .
\]

By Lemma \ref{lemma_bound_disk_covering}, there exists a constant $C_{0}>0$
such that
\[
\| d\varphi_{z_{0}}(0) \| \leq C_{0}.
\]
Then we get
\[
\sqrt{g(z_{0})} \leq \frac{C_{0}}{2} \frac{1}{|z_{0}| \log \frac{1}{|z_{0}|}},
\]
which finishes the proof.
\end{proof}

\section{Pre-continuity}\label{section_pre-continuity}
The goal of this section is to prove the following lemma. It is the first step to
improve the regularity of $f$ at $0 \in D^{+}$ under the assumption of
Theorem \ref{theorem_finite_area}. The proof follows \cite[p.\ 135-136]{oh}
(compare \cite[p.\ 41-42]{hummel}) which is a combination of the Courant-Lebesgue Lemma
and a monotonicity lemma.

\begin{lemma}\label{lemma_converge_neighborhood}
Under the assumption of Theorem 2.6, suppose $U$ is a neighborhood of
$W$. Then there exists $r \in (0,1]$ such that
\[
f(\check{D}^{+}(r)) \subseteq U.
\]
\end{lemma}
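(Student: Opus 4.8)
The plan is to combine two classical ingredients, following the strategy of Oh: a Courant--Lebesgue type argument that produces, on a sequence of small radii, circles whose $f$-images have small length, and a monotonicity (isoperimetric) lemma for $J$-holomorphic curves that forces the enclosed image to be small as well, hence trapped near $W$. First I would set up the geometry near $W$: using Lemma \ref{lemma_hermitian}, equip $M$ with the Hermitian metric $H$ so that $W$ is totally geodesic with respect to $\mathrm{Re}H$, fix a tubular neighborhood of $W$ on which the nearest-point projection to $W$ is well defined, and choose $\varepsilon>0$ so small that the $\varepsilon$-neighborhood $U_\varepsilon$ of $W$ in this metric is contained in the given $U$. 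By Observation \ref{claim_metric} we may assume the metric on $M$ is this $\mathrm{Re}H$, and the area $|f(\check D^+)|$ is finite.

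Next I would run the Courant--Lebesgue argument on the half-annuli. Writing $E(\rho)=|f(\check D^+(\rho))|\to 0$ as $\rho\to 0$ (by absolute continuity of the integral, since the total area is finite), one considers for small $\delta$ the annular region $\{\delta\le |z|\le 2\delta\}\cap\check D^+$ and uses the conformality $|f(\partial D(t)\cap \check D^+)|^2 \le (\text{const})\,t\,\frac{d}{dt}E(t)$-type estimate obtained from (\ref{holomorphic_circle_length}) together with Cauchy--Schwarz; integrating in $t$ over $[\delta,2\delta]$ shows there is a radius $t_\delta\in[\delta,2\delta]$ with
\[
|f(\gamma_{t_\delta})|^2 \le \frac{C\,E(2\delta)}{\log 2},
\]
where $\gamma_t=\partial D(t)\cap \overline{D^+}$ is the half-circle. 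Hence along a sequence $t_n\to 0$ the half-circles $\gamma_{t_n}$ have image length tending to $0$. Moreover the two endpoints of $\gamma_{t_n}$ lie on $\partial\check D^+$, so their images lie on $W$; combined with small length this shows $f(\gamma_{t_n})$ lies in $U_\varepsilon$ for $n$ large.

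Then I would invoke a monotonicity/isoperimetric lemma to control the region bounded by $\gamma_{t_n}$. Because $f$ is $J$-holomorphic with boundary on the totally geodesic $W$, one can reflect across $W$ locally (the image near $W$ lies in the tubular neighborhood once we know the boundary half-circle does), or equivalently use the monotonicity formula for minimal-type surfaces with free boundary on a totally geodesic submanifold: if $f(\gamma_{t_n})\subseteq U_\varepsilon$ and $|f(\check D^+(t_n))|$ is small, then the image $f(\check D^+(t_n))$ stays within distance $C'\big(|f(\check D^+(t_n))|^{1/2}+\mathrm{diam}\,f(\gamma_{t_n})\big)$ of $W$, which is $<\varepsilon$ for $n$ large. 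This gives $f(\check D^+(t_n))\subseteq U_\varepsilon\subseteq U$, and taking $r=t_n$ for such an $n$ finishes the proof. The main obstacle is the monotonicity step: one must carefully formulate a boundary version of the monotonicity inequality that applies to a $J$-holomorphic curve meeting a totally real (not just totally geodesic) $W$; the totally-geodesic normalization from Lemma \ref{lemma_hermitian} and a local doubling/reflection across $W$ are exactly what make this work, and getting the constants uniform near $0$ is the delicate point. I expect the Courant--Lebesgue part to be routine and this isoperimetric trapping estimate to be where the real content lies.
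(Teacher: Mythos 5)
Your overall skeleton---Courant--Lebesgue to produce half-circles $\gamma_{t_n}$ whose images have small length and endpoints on $W$, plus a monotonicity estimate to trap the enclosed image near $W$---is exactly the strategy of the paper's proof (both follow Oh). However, the step you yourself single out as carrying ``the real content'' is where your proposal has a genuine gap, and it is a gap of misdirection rather than of depth: you defer the crux to an unformulated free-boundary monotonicity inequality for curves meeting the totally geodesic $W$, to be obtained by local reflection. No such statement is needed, and the paper uses none. The only image points you must exclude are those at distance at least $\varepsilon$ from $W$; for such a point $p=f(z)$ one applies the ordinary monotonicity Lemma \ref{lemma_monotone} with a ball $B(p,\epsilon_1)$, $\epsilon_1<\min\{\epsilon_0,\varepsilon/3\}$, which is automatically disjoint from $W$ and hence from the part of the curve's boundary lying on $W$, and (for $n$ large) from the images of the short cut circles, since those have small length and endpoints on $W$. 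Thus the totally geodesic normalization of Lemma \ref{lemma_hermitian} and any reflection across $W$ play no role at this stage; indeed the paper remarks that Lemma \ref{lemma_converge_neighborhood} remains true when $W$ is merely a closed subset of $M$, with no manifold structure at all.

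A second, smaller gap: you run the trapping estimate on $f(\check D^{+}(t_n))$, but $\check D^{+}(t_n)$ contains the puncture, so the subsurface on which you would invoke Lemma \ref{lemma_monotone} (which requires a \emph{compact} curve whose boundary image avoids the ball) need not exist: the relevant component of the preimage may accumulate at $0$. The paper's device is to work in the compact annular regions between consecutive good radii, $\Omega_n=\{r_{n+1}\le|z|\le r_n\}\cap\check D^{+}$, whose entire boundary image lies in $W\cup f(\Gamma_{r_n})\cup f(\Gamma_{r_{n+1}})$, and then to contradict finiteness of the area by summing the lower bound $C\epsilon_1^{2}$ over infinitely many disjoint annuli. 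Your direct version can be repaired the same way: for $z$ with $t_m<|z|\le t_n$ argue in the compact annulus between $t_m$ and $t_n$, and use that $|f(\check D^{+}(t_n))|\to 0$ by absolute continuity of the area integral, so the bound $C\epsilon_1^{2}\le |f(\check D^{+}(t_n))|$ is impossible for large $n$, uniformly in $z$. With these two corrections your argument coincides in substance with the paper's; as written, the decisive inequality rests on a lemma you have neither stated nor proved and which, taken literally, would demand considerably more work than the problem requires.
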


In this section, we assume that the metric on $M$ is the $\text{Re}H$ in
Lemma \ref{lemma_hermitian} and $f$ is an embedding.

\begin{remark}
As one can check, the argument in this section does not use the manifold
structure on $W$.
If we only assume $W$ is a closed subset of $M$, Lemma
\ref{lemma_converge_neighborhood} will be still true.
\end{remark}

First, we formulate a monotonicity lemma which describes an important local property of a
$J$-holomorphic curve.

\begin{lemma}\label{lemma_monotone}
Let $K$ be a compact subset of $M$. There exists $\epsilon_{0} > 0$ and $C>0$
such that the following holds. Suppose $h: S \rightarrow M$ is an arbitrary immersed compact
$J$-holomorphic curve with boundary. Suppose $q \in h(S) \cap K$. Suppose
$r \in (0, \epsilon_{0})$ and $h(\partial S)$ is contained in the complement
of $B(q,r) \subseteq M$, where $B(q,r)$ is the open ball with center $q$ and
radius $r$. Then
\begin{equation}\label{lemma_monotone_1}
|h(S)| \geq C r^{2}.
\end{equation}
\end{lemma}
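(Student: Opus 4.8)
The plan is to reduce the statement to the standard interior monotonicity inequality for $J$-holomorphic curves, which is proved by combining the differential inequality (\ref{gromov_formula}) with an isoperimetric inequality in $M$ coming from the cone construction of Lemma \ref{lemma_cone_isoperimetric}. First I would fix a Hermitian metric $H$ on $M$ and work with $\mathrm{Re}\,H$; since everything takes place near the compact set $K$, all constants will depend only on $K$ (and the geometry of $M$ near $K$). Because $K$ is compact we may choose $\epsilon_0>0$ small enough that for every $q$ in a fixed neighborhood of $K$ the ball $B(q,2\epsilon_0)$ lies in a single coordinate chart on which $\mathrm{Re}\,H$ is uniformly comparable to the Euclidean metric and on which, after shrinking $\epsilon_0$, geodesic balls are (uniformly) convex with smooth boundary spheres; this is where the compactness of $K$ is used.

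Next, for $q\in h(S)\cap K$ and $r\in(0,\epsilon_0)$ with $h(\partial S)\cap B(q,r)=\emptyset$, I would consider the function
\[
a(\rho) = \bigl| h(S)\cap B(q,\rho) \bigr|, \qquad \ell(\rho) = \bigl| h(S)\cap \partial B(q,\rho) \bigr|,
\]
for $\rho\in(0,r)$, where the first is the area of the part of the curve inside the geodesic ball and the second is the length of the slice curve $h(S)\cap\partial B(q,\rho)$ (for a.e.\ $\rho$ this slice is a $1$-manifold, by Sard applied to $\rho\circ h$). The coarea formula gives $a'(\rho)\ge \ell(\rho)$ up to a constant close to $1$ (the defect measured by how far $dist_q$ is from having unit gradient, which is $O(\epsilon_0)$ in our charts). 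For the reverse-type control, I would cone off the slice curve $h(S)\cap\partial B(q,\rho)$ to its center of mass and use that a $J$-holomorphic curve is area-minimizing enough: the piece $h(S)\cap B(q,\rho)$ together with this cone bounds, and by Lemma \ref{lemma_cone_isoperimetric} together with the almost-minimality (the Wirtinger-type inequality $|h(S)\cap B(q,\rho)|\le$ area of any competitor, valid up to a constant $1+O(\epsilon_0)$ in these charts because $-\mathrm{Im}\,H$ calibrates $J$-holomorphic planes to first order), one obtains
\[
\ell(\rho)^2 \ge (4\pi - C_1\epsilon_0)\, a(\rho).
\]
Combining the two displayed inequalities yields $a'(\rho) \ge \dfrac{c}{\rho}\, a(\rho)$ for a constant $c<2$ arbitrarily close to $2$ once $\epsilon_0$ is small; more precisely one gets the cleaner form $\frac{d}{d\rho}\bigl(\rho^{-c} a(\rho)\bigr)\ge 0$ after a standard manipulation, so $\rho^{-c}a(\rho)$ is nondecreasing on $(0,r)$. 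Since $q\in h(S)$, the density of the slice forces $\rho^{-2}a(\rho)\to \pi$ (or at least is bounded below by a fixed positive constant) as $\rho\to 0$; pushing this lower bound out to $\rho=r$ using monotonicity of $\rho^{-c}a(\rho)$ and $c\le 2$ gives $a(r)\ge C r^2$, hence $|h(S)|\ge a(r)\ge Cr^2$, which is (\ref{lemma_monotone_1}).

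The main obstacle, I expect, is organizing the error terms coming from the curvature of $M$ and from the fact that geodesic spheres are not totally umbilic with the right constant: one must verify that $\epsilon_0$ can be chosen uniformly over the compact set $K$ so that (i) the coarea defect, (ii) the deviation of the calibration inequality from equality, and (iii) the convexity of small balls are all controlled simultaneously, and that the resulting exponent $c$ stays $\le 2$ (or that one simply absorbs a $c$ slightly below $2$ into the final constant $C$, which is harmless since $\rho\mapsto \rho^{c}$ with $c<2$ only helps when passing from the infinitesimal density at $q$ out to radius $r<\epsilon_0<1$). Everything else is a routine adaptation of the standard monotonicity argument for minimal surfaces, and I would cite \cite[Section 4.3]{mcduff_salamon} or \cite{hummel} for the interior version while noting that here $S$ is allowed to have boundary, the only hypothesis being that $h(\partial S)$ avoids $B(q,r)$, which is exactly what makes the slices $h(S)\cap\partial B(q,\rho)$ closed curves so that the cone construction applies.
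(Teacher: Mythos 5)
Your ingredients are exactly those of the standard argument that the paper itself only cites (it refers to \cite[p.\ 26-28]{hummel} and \cite[4.2.2]{muller} and merely checks that compactness of $K$ supplies the positive injectivity radius and the local taming by exact forms): the coarea inequality $a'(\rho)\geq \ell(\rho)$ for a.e.\ $\rho$, and a cone-based isoperimetric inequality via Lemma \ref{lemma_cone_isoperimetric} and the local almost-minimality coming from the constant form $-\text{Im}H_{p}$. However, the step where you combine them contains a genuine error. From $a'(\rho)\geq \ell(\rho)$ and $\ell(\rho)^{2}\geq (4\pi-C_{1}\epsilon_{0})\,a(\rho)$ one cannot deduce $a'(\rho)\geq \frac{c}{\rho}a(\rho)$; that differential inequality requires the different estimate $a(\rho)\leq (1+C\epsilon_{0})\tfrac{\rho}{2}\,\ell(\rho)$, which comes from coning the slice to the \emph{center $q$ of the ball} (so that the cone has height at most $\rho$), not to its center of mass. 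Worse, your endgame fails as written: if $\rho^{-c}a(\rho)$ is nondecreasing with $c<2$, then since $a(\rho)=O(\rho^{2})$ near $0$ one has $\lim_{\rho\to 0}\rho^{-c}a(\rho)=0$, so monotonicity pushed out to $\rho=r$ yields $a(r)\geq 0$ and nothing more; the remark that a $c$ slightly below $2$ can be ``absorbed into $C$'' is backwards, because an exponent $c<2$ only helps when transporting bounds inward, not when transporting the density at $q$ outward to radius $r$.

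The repair is immediate and is in fact the route of the cited proof: your two displayed inequalities already give $a'(\rho)\geq \sqrt{(4\pi-C_{1}\epsilon_{0})\,a(\rho)}$ for a.e.\ $\rho\in(0,r)$, hence $\frac{d}{d\rho}\sqrt{a(\rho)}\geq \tfrac{1}{2}\sqrt{4\pi-C_{1}\epsilon_{0}}$, and integrating from $0$ to $r$ (using $a(\rho)>0$ for $\rho>0$ because $q\in h(S)$, and $a(0^{+})\geq 0$) gives $a(r)\geq \tfrac{1}{4}(4\pi-C_{1}\epsilon_{0})r^{2}$, which is (\ref{lemma_monotone_1}) with no density argument at all. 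Alternatively, if you insist on the classical form $a'\geq \frac{c}{\rho}a$, you must derive it from the vertex-at-$q$ cone bound above and then keep the exponent exactly $2$ with a multiplicative error (e.g.\ monotonicity of $e^{\Lambda\rho}\rho^{-2}a(\rho)$) so that the density limit $\liminf_{\rho\to 0}\rho^{-2}a(\rho)\geq \pi$ can actually be transported to $\rho=r$. With that correction, the rest of your outline (uniform choice of $\epsilon_{0}$ over the compact $K$, Sard for the slices, $h(\partial S)\cap B(q,r)=\emptyset$ ensuring the slices are closed curves) is sound and matches the argument the paper invokes.
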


Lemma \ref{lemma_monotone} is a slight generalization of
\cite[p.\ 21, Theorem 1.3]{hummel}. (See also \cite[4.2.1]{muller}.)
It is proved in \cite[p.\ 26-28]{hummel} under the assumption that
$M$ is compact and $K=M$. The proof is based on an isoperimetric inequality
\cite[p.\ 23, Lemma 3.1]{hummel}. The book \cite[4.2.2]{muller} gives a quick proof
of such an isoperimetric inequality. Their argument actually works in our case.
Essentially, the compactness of $M$ is used in \cite{muller} and \cite{hummel}
for proving two facts. First, the injectivity radius of $M$ is positive. Second, $M$
is covered by finitely many open sets such that $J$ is tamed by an exact form in
each of these open sets. The second fact can be found in \cite[p.\ 224]{muller},
and will be addressed again in Section \ref{section_lipschitz_continuity} in
this paper (see Remark \ref{remark_monotone}). In our case,
$K$ also satisfies the above two properties because
of its compactness. Therefore, the argument in \cite{hummel} implies
Lemma \ref{lemma_monotone}. We shall omit a proof here.

Now we prove a Courant-Lebesgue Lemma for our map $f: \check{D} \rightarrow M$.
A much more general result can be found in Courant's book
\cite[p.\ 101, Lemma\ 3.1]{courant} and \cite[p.\ 239]{d_h_k_w}. Our proof
follows \cite{courant}. (See also \cite[Lemma 4.2]{oh}.)

For $t \in (0,1)$, define
\[
\Gamma_{t} = \{ z \in \check{D}^{+} \mid |z|=t \}.
\]

\begin{lemma}[Courant-Lebesgue Lemma]\label{lemma_courant_lebesgue}
For all $r \in (0,1)$, there exists $t_{0} \in [r^{2},r]$ such that
\[
|f(\Gamma_{t_{0}})|^{2} \leq \frac{\pi |f(\check{D}^{+})|}{\log \frac{1}{r}}.
\]
\end{lemma}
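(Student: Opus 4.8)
The plan is to exploit the conformality of $f$ (with respect to $\text{Re}H$) together with an averaging argument over the nested circular arcs $\Gamma_t$, exactly in the spirit of the classical Courant–Lebesgue lemma. Write $A = |f(\check D^+)| < \infty$ and parametrize $\check D^+$ by polar coordinates $z = te^{i\theta}$ with $t \in (0,1)$ and $\theta \in [0,\pi]$. Since $f$ is $J$-holomorphic it is conformal, so the pull-back metric is $g(t,\theta)(dt^2 + t^2 d\theta^2)$ for some nonnegative function, and the area of the image is
\begin{equation}
A \;=\; \int_0^1\!\!\int_0^\pi g(t,\theta)\, t\, d\theta\, dt,
\end{equation}
while the length of the image of the arc $\Gamma_t$ is $|f(\Gamma_t)| = \int_0^\pi \sqrt{g(t,\theta)}\, t\, d\theta$. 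By the Cauchy–Schwarz inequality applied on the $\theta$-interval of length $\pi$,
\begin{equation}
|f(\Gamma_t)|^2 \;\le\; \pi \int_0^\pi g(t,\theta)\, t^2\, d\theta \;=\; \pi t \cdot \psi(t), \qquad \text{where } \psi(t) := \int_0^\pi g(t,\theta)\, t\, d\theta.
\end{equation}

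First I would observe that $\int_0^1 \psi(t)\, dt = A$. The main step is then a pigeonhole/mean-value argument on the interval $[r^2, r]$, which in the variable $s = \log(1/t)$ has length $\log(1/r^2) - \log(1/r) = \log(1/r)$. Changing variables via $t = e^{-s}$ (so $dt = -t\, ds$), one gets
\begin{equation}
\int_{\{s : t \in [r^2,r]\}} t\,\psi(t)\, ds \;=\; \int_{r^2}^{r} \psi(t)\, dt \;\le\; A.
\end{equation}
Hence there exists $s_0$ in an interval of length $\log(1/r)$ — equivalently $t_0 \in [r^2, r]$ — with $t_0\,\psi(t_0) \le A / \log\frac{1}{r}$. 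Combining with the Cauchy–Schwarz bound above gives $|f(\Gamma_{t_0})|^2 \le \pi t_0 \psi(t_0) \le \pi A / \log\frac{1}{r}$, which is exactly the claimed inequality.

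The only technical care needed is that $g$ may be merely $C^0$ and that $f$ is a priori only defined on the punctured half-disk, so the integrals must be interpreted as improper integrals over $(0,1)$; but finiteness of $A$ makes the change of variables and the pigeonhole step legitimate (one can run the argument on $[\delta, 1)$ and let $\delta \to 0$, or simply invoke the absolute continuity of the integral as is done elsewhere in the paper). I do not expect a serious obstacle here — the content is genuinely the two-line Cauchy–Schwarz-plus-averaging computation — so the writeup should be short, with the conformality of $J$-holomorphic maps (already recorded in Section~\ref{section_set_up}) doing the essential work.
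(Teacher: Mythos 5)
Your proposal is correct and follows essentially the same route as the paper: conformality to express area and arc length via $\|df\|$, Cauchy--Schwarz on each arc $\Gamma_t$ (your bound $|f(\Gamma_t)|^2 \leq \pi t\,\psi(t)$ is exactly the paper's $\int_{\Gamma_t}\|df\|^2 \geq |f(\Gamma_t)|^2/(\pi t)$), and then an averaging over $[r^2,r]$ with the logarithmic weight $dt/t$. The only cosmetic difference is that you select $t_0$ by a pigeonhole argument in the variable $s=\log(1/t)$, whereas the paper integrates the arcwise inequality and takes the $t_0$ minimizing the continuous function $t\mapsto|f(\Gamma_t)|$; both yield the same constant.
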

\begin{proof}
\begin{eqnarray*}
|f(\check{D}^{+})| & \geq & \int_{\check{D}^{+}(r) - \check{D}^{+}(r^{2})}
\| df \|^{2} \\
& = & \int_{r^{2}}^{r} \left( \int_{\Gamma_{t}} \|df\|^{2} \right) dt \\
& \geq & \int_{r^{2}}^{r} \left( \int_{\Gamma_{t}} \|df\| \right)^{2}
\left( \int_{\Gamma_{t}} 1 \right)^{-1}  dt \\
& = & \int_{r^{2}}^{r} |f(\Gamma_{t})|^{2}
(\pi t)^{-1}  dt.
\end{eqnarray*}
Since $|f(\Gamma_{t})|$ is continuous with respect to $t \in [r^{2},r]$, there exists
$t_{0} \in [r^{2},r]$ such that $|f(\Gamma_{t_{0}})|$ attains the minimum.
Therefore,
\[
|f(\check{D}^{+})| \geq (\pi)^{-1} |f(\Gamma_{t_{0}})|^{2}
\int_{r^{2}}^{r} t^{-1} dt = (\pi)^{-1} \left( \log \frac{1}{r} \right)
|f(\Gamma_{t_{0}})|^{2},
\]
which completes the proof.
\end{proof}

\begin{remark}
More generally, if $f$ is not $J$-holomorphic, we still have a Courant-Lebesgue
Lemma. However, we have to replace the $|f(\check{D}^{+})|$
in Lemma \ref{lemma_courant_lebesgue} by the energy
(or the Dirichlet integral) of $f$. See \cite{courant}.
\end{remark}

\begin{proof}[Proof of Lemma \ref{lemma_converge_neighborhood}]
Let $\overline{f(\check{D}^{+})}$ be the closure of $f(\check{D}^{+})$.
Then $\overline{f(\check{D}^{+})}$ is compact. By Lemma \ref{lemma_monotone},
there exists $\epsilon_{0} > 0$ such that the conclusion of Lemma
\ref{lemma_monotone} holds for $K = \overline{f(\check{D}^{+})}$.

Since $|f(\check{D}^{+})| < + \infty$, by Lemma \ref{lemma_courant_lebesgue},
there exists a decreasing sequence $\{r_{n}\}$ such that
$r_{n} \rightarrow 0$ and $|f(\Gamma_{r_{n}})| \rightarrow 0$
when $n \rightarrow \infty$. Define
\[
\Omega_{n} = \{ z \in \check{D}^{+} \mid r_{n+1} \leq |z| \leq r_{n} \}.
\]

We prove this lemma by contradiction. We may assume $U$ is an open subset.

Suppose this lemma is not true. Then, choosing a subsequence of $\{r_{n}\}$
if necessary, there exist $z_{n} \in \Omega_{n}$ such that $f(z_{n}) \notin U$.

Denote $f(z_{n})$ by $p_{n}$, we have $p_{n} \in \overline{f(\check{D}^{+})}
- U$. Since $\overline{f(\check{D}^{+})}$ is compact and $U$ is open, we
infer $\overline{f(\check{D}^{+})} - U$ is compact. Choosing a subsequence
of $\{r_{n}\}$ if necessary, we get a sequence $\{p_{n}\}$ such that
$p_{n} \in f(\Omega_{n})$ and $p_{n} \rightarrow p \notin U$ when $n \rightarrow \infty$.

Since $W$ is closed and $W \subseteq U$, we infer that the
distance $d_{0} = d(p,W) > 0$. Since the boundary of $f(\Gamma_{r_{n}})$
is in $W$ and $|f(\Gamma_{r_{n}})| \rightarrow 0$, we get
\[
\max_{q \in f(\Gamma_{r_{n}})} d(q,W) \rightarrow 0,
\]
when $n \rightarrow \infty$. Clearly,
\[
f(\partial \Omega_{n}) \subseteq W \cup f(\Gamma_{r_{n}}) \cup f(\Gamma_{r_{n+1}}).
\]
Thus, for $n$ large enough, we have
\[
d(p, f(\partial \Omega_{n})) > \frac{2d_{0}}{3}.
\]

Deleting first finitely many terms of $\{p_{n}\}$ if necessary, we have $p_{n} \in B(p, \frac{d_{0}}{3})$
for all $n$. We infer that
\begin{equation}\label{lemma_converge_neighborhood_1}
d(p_{n}, f(\partial \Omega_{n})) > \frac{d_{0}}{3}.
\end{equation}

Choose $\epsilon_{1} > 0$ such that $\epsilon_{1} < \min \{ \epsilon_{0}, \frac{d_{0}}{3} \}$.
Since $\epsilon_{1} < \frac{d_{0}}{3}$, by (\ref{lemma_converge_neighborhood_1}),
there exists $S_{n} \subseteq \Omega_{n}$ such that $p_{n} \in S_{n}$ and
$f(\partial S_{n})$ is outside of $B(p, \epsilon_{1})$, where $S_{n}$ is a compact
surface with boundary. Since $\epsilon_{1} < \epsilon_{0}$, taking $h=f$,
$S = S_{n}$ and $r = \epsilon_{1}$ in (\ref{lemma_monotone_1}),
we get
\[
|f(S_{n})| \geq C \epsilon_{1}^{2}.
\]

Thus,
\[
|f(\check{D}^{+})| \geq \sum_{n=1}^{\infty} |f(\Omega_{n})|
\geq \sum_{n=1}^{\infty} |f(S_{n})| \geq \sum_{n=1}^{\infty}
C \epsilon_{1}^{2} = + \infty,
\]
which contradicts the assumption that $|f(\check{D}^{+})| < + \infty$.
\end{proof}

\section{Continuity}\label{section_continuity}
The main goal of this section is to prove that, under the assumption of Theorem
\ref{theorem_finite_area}, $f$ has a continuous extension over $0 \in D^{+}$.
This will follow from a stronger Lemma \ref{lemma_continuity} which gives a continuous
extension of the extrinsic doubling map $F$.

The extrinsic doubling map $F$ pulls back important information from $M$ to $\check{D}$, which makes
our intrinsic doubling more efficient. In particular, this helps us take advantage of
the ``almost minimal" property of $J$-holomorphic curves (see comment before Lemma
\ref{lemma_isoperimetric_disk}).

First of all, the combination of the intrinsic doubling and Lemmas \ref{lemma_hermitian}
and \ref{lemma_converge_neighborhood} already gives us some information.

We assume $M$ is equipped with $\text{Re}H$ and $f$ is an embedding.

By using Lemma \ref{lemma_hermitian} and the exponential map for the bundle $J(TW) \rightarrow W$,
there exists an open neighborhood $U$ of $W$
such that the following holds: (1).\ $U$ is identified with a neighborhood of
the zero section of the bundle $J(TW) \rightarrow W$. (2).\ The $1$-form $\alpha_{0}$
in (3) of Lemma \ref{lemma_hermitian} is defined in $U$.

Consider the natural reflection
\begin{eqnarray}\label{reflection}
\tau: J(TW) & \rightarrow & J(TW), \\
\tau(p,v) & = & (p,-v), \nonumber
\end{eqnarray}
where $p$ is a base point on $W$ and $v \in J(T_{p}W)$. Then $\tau^{2} = \text{Id}$
and $\tau$ is a diffeomorphism with the fixed point set $W$. Shrinking $U$ if
necessary, we obtain
\[
\tau(U) = U.
\]

Define a new almost complex structure $\widetilde{J}$ on $U$ as
\begin{equation}\label{new_complex_structure}
\widetilde{J} = - d \tau J d \tau.
\end{equation}
Then $U$ has two almost complex structures $J$ and $\widetilde{J}$ and
$\tau: (U, J) \rightarrow (U, \widetilde{J})$
is an anti-$J$-holomorphic isomorphism, i.e.\ $d \tau J =
- \widetilde{J} d \tau$. For all $p \in W$, $v_{1} \in T_{p} W
\subseteq T_{p} U$ and $v_{2} \in J(T_{p} W)
\subseteq T_{p} U$, we have
\[
d \tau v_{1} = v_{1}, \qquad \text{and} \qquad d \tau v_{2} = - v_{2}.
\]
Therefore, $\widetilde{J}$ equals $J$ on $TU|_{W}$. Then choose $U$ sufficiently
small, we get the following.

\begin{lemma}\label{lemma_tame_both}
$d \alpha_{0}$ tames both $J$ and $\widetilde{J}$.
\end{lemma}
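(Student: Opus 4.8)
The statement to prove is Lemma \ref{lemma_tame_both}: that $d\alpha_0$ tames both $J$ and $\widetilde J$ on a sufficiently small neighborhood $U$ of $W$. The plan is to exploit the fact, established just before the statement, that $\widetilde J = J$ on $TU|_W$, combined with property (3) of Lemma \ref{lemma_hermitian}, which says $d\alpha_0$ tames $J$ (in particular $d\alpha_0 = -\operatorname{Im}H$ on $TM|_W$). First I would note that taming is an open condition: the set of pairs (point $p$, unit tangent vector $v$ at $p$) on which $d\alpha_0(v, J_p v) > 0$ is open, and since $W$ is compact (it is a closed submanifold and in the local setting here we work near a compact piece, or one uses that the unit sphere bundle over a compact set is compact), the strict positivity along $W$ persists in a neighborhood. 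That already re-confirms that $d\alpha_0$ tames $J$ near $W$; the point is that the \emph{same} neighborhood works for $\widetilde J$.

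The key step is the observation that $d\alpha_0(v, \widetilde J_p v) = d\alpha_0(v, J_p v)$ for every $p \in W$ and every $v \in T_pU$, simply because $\widetilde J_p = J_p$ on $T_pU$ when $p \in W$. Hence the function $(p,v) \mapsto d\alpha_0(v, \widetilde J v)$, which is continuous on the unit sphere bundle of $TU$, agrees with the function $(p,v)\mapsto d\alpha_0(v,Jv)$ over $W$, and the latter is bounded below by a positive constant $c>0$ on the unit sphere bundle restricted to (the relevant compact piece of) $W$, using property (3) of Lemma \ref{lemma_hermitian}. By continuity of $\widetilde J$ (which is smooth on $U$, being built from $\tau$ and $J$ via \eqref{new_complex_structure}) and of $d\alpha_0$, there is a neighborhood of the unit sphere bundle over $W$ on which $d\alpha_0(v,\widetilde J v) > c/2 > 0$; shrinking $U$ so that its unit sphere bundle lies inside this neighborhood gives that $d\alpha_0$ tames $\widetilde J$ on $U$. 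The same shrinkage of course still has $d\alpha_0$ taming $J$, since that was already an open condition around $W$.

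The only mild subtlety — and the step I expect to require the most care in the write-up — is the compactness/uniformity argument: $W$ itself need not be compact as a global object, but for the removal-of-singularities problem everything is local and $f(\check D^+)$ is relatively compact, so one only ever needs $U$ to be a neighborhood of a compact portion of $W$ meeting the closure of the image. Alternatively one simply invokes that the taming inequality, being fiberwise homogeneous of degree two, need only be checked on unit vectors, and the set where it holds for $\widetilde J$ is open and contains all unit vectors based at points of $W$; intersecting with a tubular neighborhood whose closure projects into a compact set of $W$ lets one extract a uniform lower bound and hence a genuine open neighborhood. Once this is said, the conclusion $\tau(U) = U$ and the taming of $d\alpha_0$ by both structures on this final $U$ follow immediately, and this is exactly the $U$ used for the extrinsic doubling in the remainder of the section.
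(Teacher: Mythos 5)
Your proposal is correct and matches the paper's (very terse, essentially implicit) argument: the paper simply observes that $\widetilde J = J$ on $TU|_W$, that $d\alpha_0$ already tames $J$ there by (3) of Lemma \ref{lemma_hermitian}, and then invokes the openness of the taming condition to shrink $U$. One minor remark: the ``uniform lower bound'' you worry about is unnecessary — since the unit sphere bundle has compact fibers, the projection is a closed map, so the open set of unit vectors where $d\alpha_0(v,\widetilde J v)>0$ (which contains the sphere bundle over $W$) already yields an open neighborhood of $W$ over which taming holds, without any compactness of $W$ itself.
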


Clearly, $\tau^{*}(\text{Re}H)$ is also a Riemannian metric on $U$.
By (1) of Lemma \ref{lemma_hermitian},
we infer that $\tau^{*}(\text{Re}H)$ equals $\text{Re}H$ on $TU|_{W}$.
It's easy to see that $\widetilde{J}$ preserves $\tau^{*}(\text{Re}H)$.
Thus we can construct a Hermitian $\widetilde{H}$ with respect to $\widetilde{J}$ on $U$ such that
\begin{equation}
\text{Re}\widetilde{H} = \tau^{*}(\text{Re}H).
\end{equation}
Then we get the following.

\begin{lemma}\label{lemma_structrue_in_u}
$\widetilde{H}$ is a Hermitian with respect to $\widetilde{J}$. We have
$J = \widetilde{J}$ and $H=\widetilde{H}$ on $TU|_{W}$.
\end{lemma}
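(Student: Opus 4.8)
The plan is to verify the three assertions separately; each is a direct consequence of the construction of $\widetilde{J}$ and $\widetilde{H}$ carried out just above the statement, so almost no new work is needed.

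First, that $\widetilde{H}$ is Hermitian with respect to $\widetilde{J}$: this is built into the way $\widetilde{H}$ was produced. We observed above that $\widetilde{J}$ preserves the Riemannian metric $\tau^{*}(\text{Re}H)$, and the construction recalled in Section \ref{section_main_result} — a Riemannian metric invariant under an almost complex structure determines a Hermitian metric whose real part is that metric — applied to the pair $(\tau^{*}(\text{Re}H),\widetilde{J})$ in place of $(\text{Re}H,J)$ yields exactly such an $\widetilde{H}$ with $\text{Re}\widetilde{H}=\tau^{*}(\text{Re}H)$. The only thing to keep straight here is the paper's sign convention relating $H$, $\text{Re}H$ and $\text{Im}H$, but that is the same convention used for $H$ itself.

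Second, $J=\widetilde{J}$ on $TU|_{W}$ has already been established in the paragraph preceding Lemma \ref{lemma_tame_both}: for $p\in W$ the differential $d\tau_{p}$ acts as $+\mathrm{Id}$ on $T_{p}W$ and as $-\mathrm{Id}$ on $J(T_{p}W)$, and substituting into $\widetilde{J}=-d\tau J d\tau$ gives $\widetilde{J}=J$ on $T_{p}U$. For the third assertion, $H=\widetilde{H}$ on $TU|_{W}$, I would argue as follows. By (1) of Lemma \ref{lemma_hermitian}, $T_{p}U=T_{p}W\oplus J(T_{p}W)$ is $\text{Re}H$-orthogonal; since $d\tau=\pm\mathrm{Id}$ on the two summands and they are orthogonal, the cross terms drop out and $\tau^{*}(\text{Re}H)=\text{Re}H$ on $TU|_{W}$, hence $\text{Re}\widetilde{H}=\text{Re}H$ there. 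Combining this with $\widetilde{J}=J$ on $TU|_{W}$ gives equality of imaginary parts as well, since $\text{Im}\widetilde{H}(u,v)=\text{Re}\widetilde{H}(u,\widetilde{J}v)=\text{Re}H(u,Jv)=\text{Im}H(u,v)$ for $u,v\in T_{p}U$, $p\in W$. Therefore $H=\widetilde{H}$ on $TU|_{W}$.

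I do not expect any real obstacle here: the statement is essentially a summary of facts assembled in the preceding discussion, and the one mildly delicate point — checking that an invariant Riemannian metric genuinely yields a Hermitian metric with the prescribed real and imaginary parts — is handled verbatim by the standard construction from Section \ref{section_main_result}, now applied with $\widetilde{J}$ and $\tau^{*}(\text{Re}H)$.
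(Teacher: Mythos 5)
Your proof is correct and follows essentially the same route as the paper, which gives no separate proof of this lemma but justifies it by the immediately preceding construction: $\widetilde{J}$ preserves $\tau^{*}(\text{Re}H)$ so the standard construction yields $\widetilde{H}$, the computation of $d\tau$ on $T_{p}W\oplus J(T_{p}W)$ gives $\widetilde{J}=J$ on $TU|_{W}$, and orthogonality from (1) of Lemma \ref{lemma_hermitian} gives $\tau^{*}(\text{Re}H)=\text{Re}H$ there, whence $H=\widetilde{H}$ since a Hermitian metric is determined by its real part and the complex structure. Your explicit verification of these points (including the imaginary-part identity, which is consistent with the paper's sign convention $H=\omega(\cdot,J\cdot)-i\omega(\cdot,\cdot)$) fills in exactly what the paper leaves implicit.
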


\begin{remark}
We would like to point out that $\widetilde{H} \neq \tau^{*} H$ because $\tau$
is \textit{anti-$J$-holomorphic}. Actually, we have
$\widetilde{H}(v_{1},v_{2}) = \overline{\tau^{*}H(v_{1},v_{2})}$,
where $\overline{\tau^{*}H(v_{1},v_{2})}$ is the complex conjugate of
$\tau^{*}H(v_{1},v_{2})$.
\end{remark}

Since $W$ is a closed subset of $M$ and $M$ is a metric space, there exists a neighborhood $U_{1}$ of $W$
such that the closure of $U_{1}$ is contained in $U$.
By Lemma \ref{lemma_converge_neighborhood}, there exists
$0 < r \leq 1$ such that $f(\check{D}^{+}(r)) \subseteq U_{1}$.
Since $f(\check{D}^{+})$ is relatively compact in $M$, we infer that
$f(\check{D}^{+}(r))$ is relatively compact in $U$.
Therefore, without loss of generality, we may assume
\[
f(\check{D}^{+}) \subseteq U,
\]
and it is relatively compact in $U$.

We need the metric $G$ on $\check{D}$ defined in (\ref{metric_on_disk})
again. Since $d \alpha_{0}$ tames $J$ in $U$, we know $f$ satisfies
the assumption of Theorem \ref{theorem_tame}. Therefore, all facts
about $G$ in Section \ref{section_reduction_lemma} can be used now.
By Lemma \ref{lemma_metric_hyperbolic_bound}, we get the following.

\begin{lemma}\label{lemma_circle_zero}
The length of $\partial D(r)$ with respect to $G$ converges to $0$
when $r$ converges to $0$.
\end{lemma}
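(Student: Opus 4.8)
The plan is to combine the metric estimate from Lemma~\ref{lemma_metric_hyperbolic_bound} with an elementary length computation. Write the metric $G$ in the standard coordinate $z = x+iy$ as $G = g\,dx\otimes dx + g\,dy\otimes dy$, so that $G$ is conformal. By Lemma~\ref{lemma_metric_hyperbolic_bound}, there is a constant $C>0$ with $\sqrt{g(z)} \leq C\,(|z|\log\tfrac{1}{|z|})^{-1}$ for all $z \in \check{D}$. The circle $\partial D(r) = \{z : |z|=r\}$ has Euclidean length $2\pi r$, and since $G$ is conformal the $G$-length of a curve is obtained by weighting the Euclidean arc length by $\sqrt{g}$. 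On $\partial D(r)$ the function $\sqrt{g}$ is bounded by the constant $C\,(r\log\tfrac{1}{r})^{-1}$, which is uniform along the circle.

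Concretely, I would write
\[
|\partial D(r)|_{G} \;=\; \int_{\partial D(r)} \sqrt{g}\;\leq\; \sup_{|z|=r}\sqrt{g(z)} \cdot (2\pi r) \;\leq\; \frac{C}{r\log\frac{1}{r}} \cdot 2\pi r \;=\; \frac{2\pi C}{\log\frac{1}{r}}.
\]
As $r \to 0^{+}$ we have $\log\tfrac{1}{r} \to +\infty$, hence the right-hand side tends to $0$, which is exactly the claimed statement. One small point of care: Lemma~\ref{lemma_metric_hyperbolic_bound} provides the bound on $\check{D} = D \setminus\{0\}$, and $\partial D(r)$ for $r \in (0,1)$ is contained in $\check{D}$, so the estimate applies on every such circle; the estimate degenerates only at $z=0$ and at $|z|=1$, neither of which is on $\partial D(r)$ for $r\in(0,1)$.

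I do not anticipate a genuine obstacle here: this is a short corollary of the hard analytic work already done in Section~\ref{section_reduction_lemma}. The only thing to be slightly attentive to is bookkeeping — making sure that the hypotheses needed for Lemma~\ref{lemma_metric_hyperbolic_bound} are in force, namely that $f$ maps $\check{D}^{+}$ into the neighborhood $U$ where $d\alpha_{0}$ tames $J$ (so that $f$ satisfies the hypotheses of Theorem~\ref{theorem_tame} and all the conclusions about $G$ from Section~\ref{section_reduction_lemma} apply), which has already been arranged in the paragraphs preceding the statement. Given that, the proof is a one-line integral estimate as above.
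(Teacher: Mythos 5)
Your proposal is correct and is precisely how the paper obtains this lemma: the paper states it as an immediate consequence of Lemma~\ref{lemma_metric_hyperbolic_bound}, and your computation $|\partial D(r)|_{G} \leq 2\pi C/\log\frac{1}{r} \to 0$ is exactly the intended one-line estimate. The bookkeeping point you note (that $f(\check{D}^{+}) \subseteq U$ so the results of Section~\ref{section_reduction_lemma} apply to $G$) is indeed arranged in the paragraphs preceding the lemma, just as you say.
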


Let $\sigma$ be the complex conjugation on $\check{D}$ as in Section \ref{section_reduction_lemma}.
We double $f: \check{D}^{+} \rightarrow U$ to be $F: \check{D} \rightarrow U$, where
\begin{equation}
F =
\begin{cases}
f & \text{on $\check{D}^{+}$},\\
\tau f \sigma & \text{on $\check{D}^{-}$}.
\end{cases}
\end{equation}

As we shall see later, Lemma \ref{lemma_circle_zero} implies that the length of
$F(\partial D(r))$ shrinks to $0$ when $r$ goes to $0$. In order to obtain the continuous
extension of $F$, following Gromov's idea in the interior case, it suffices to show that
the diameter of $F(\check{D}(r))$ is bounded in terms of
the length of $F(\partial D(r))$. This is the main task
in the section.

\begin{lemma}
$F$ is a well defined $C^{1}$ immersion. It is smooth on
$\check{D}^{+}$ and $\check{D}^{-}$. And $F(\check{D})$
is relatively compact.
\end{lemma}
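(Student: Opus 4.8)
The plan is to verify each assertion by a local computation; the only delicate point is $C^{1}$-regularity of $F$ along the real axis $\partial\check{D}^{+}=\partial\check{D}^{-}$.

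First I would check that $F$ is well defined and continuous. On $\check{D}^{+}$ and $\check{D}^{-}$ separately $F$ is given by a single formula, so the only issue is agreement on the overlap $\{z\in\mathbb{R}:0<|z|<1\}$. For such $z$ we have $\sigma(z)=\bar z=z$, hence $(\tau f\sigma)(z)=\tau(f(z))$; but $f(z)\in W$ because $f(\partial\check{D}^{+})\subseteq W$, and $W$ is precisely the fixed-point set of $\tau$ (the zero section of $J(TW)$), so $\tau(f(z))=f(z)$. Thus the two formulas coincide on the overlap and $F$ is well defined and continuous. Moreover $F|_{\check{D}^{+}}=f$ is smooth, while $F|_{\check{D}^{-}}=\tau\circ f\circ\sigma$ is a composition of the diffeomorphism $\sigma\colon\check{D}^{-}\to\check{D}^{+}$, the smooth map $f$, and the diffeomorphism $\tau$ of $U$, hence smooth; this gives smoothness of $F$ on $\check{D}^{+}$ and on $\check{D}^{-}$.

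The key step is $C^{1}$-regularity across a real point $z_{0}$. From the $\check{D}^{+}$ side the differential at $z_{0}$ is $df_{z_{0}}$; from the $\check{D}^{-}$ side it is $d\tau_{f(z_{0})}\circ df_{z_{0}}\circ d\sigma_{z_{0}}$ (using $\sigma(z_{0})=z_{0}$). Now $d\sigma_{z_{0}}$ fixes $\partial_{x}$ and sends $\partial_{y}\mapsto-\partial_{y}$. Since $f$ maps $\partial\check{D}^{+}$ into $W$, we have $df_{z_{0}}(\partial_{x})\in T_{f(z_{0})}W$; and since $f$ is $J$-holomorphic with $\partial_{y}=i\,\partial_{x}$ on the Riemann surface $\check{D}$, we get $df_{z_{0}}(\partial_{y})=J\,df_{z_{0}}(\partial_{x})\in J(T_{f(z_{0})}W)$, the normal direction. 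By part (1) of Lemma~\ref{lemma_hermitian} together with the definition of $\tau$, $d\tau_{f(z_{0})}$ is the identity on $T_{f(z_{0})}W$ and minus the identity on $J(T_{f(z_{0})}W)$. Plugging this in, the $\check{D}^{-}$-side differential sends $\partial_{x}\mapsto d\tau(df_{z_{0}}(\partial_{x}))=df_{z_{0}}(\partial_{x})$ and $\partial_{y}\mapsto d\tau(df_{z_{0}}(-\partial_{y}))=-\,d\tau(df_{z_{0}}(\partial_{y}))=df_{z_{0}}(\partial_{y})$, so it equals $df_{z_{0}}$. Hence the two one-sided differentials agree; since $F$ is continuous and smooth up to the boundary from each side with matching first derivatives along $\partial\check{D}^{+}$, it follows that $F$ is $C^{1}$ on $\check{D}$ with $dF=df$ on the real axis. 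The hard part is exactly this cancellation of two sign changes --- one from $d\sigma$ on the normal direction of $\check{D}$, one from $d\tau$ on the normal direction of $W$ --- which is where $J$-holomorphicity and the hypothesis $f(\partial\check{D}^{+})\subseteq W$ enter.

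The remaining claims are then routine. On $\check{D}^{+}$, $f$ is an embedding by Observation~\ref{claim_embedding}, so $df$ is injective; on $\check{D}^{-}$, $F$ is a composition of an immersion with diffeomorphisms, hence an immersion; and at a real point $z_{0}$ we have just seen that $dF_{z_{0}}=df_{z_{0}}$, which is injective. So $F$ is a $C^{1}$ immersion on all of $\check{D}$. For the image, $F(\check{D})=f(\check{D}^{+})\cup\tau\big(f(\sigma(\check{D}^{-}))\big)=f(\check{D}^{+})\cup\tau\big(f(\check{D}^{+})\big)$; as we have arranged $f(\check{D}^{+})$ to be relatively compact in $U$ and $\tau\colon U\to U$ is a homeomorphism, $\tau(f(\check{D}^{+}))$ is also relatively compact in $U$, and a finite union of relatively compact sets is relatively compact.
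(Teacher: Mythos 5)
Your proof is correct and takes essentially the same route as the paper: well-definedness from the fixed-point sets of $\sigma$ and $\tau$, $C^1$-matching along the real axis by decomposing $df$ along $TW$ and $J(TW)$ and observing that the two sign changes from $d\sigma$ and $d\tau$ cancel, and relative compactness from $F(\check{D}) = f(\check{D}^+) \cup \tau(f(\check{D}^+))$. You simply spell out the $\partial_x$ and $\partial_y$ comparison in more detail than the paper, which only exhibits the $\partial_y$ computation and leaves the rest to the reader.
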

\begin{proof}
Since $\partial \check{D}^{+}$ and $W$ are the fixed point sets of $\sigma$
and $\tau$ respectively, and $f(\partial \check{D}^{+}) \subseteq W$,
we have $f|_{\partial \check{D}^{+}} = \tau f \sigma|_{\partial \check{D}^{+}}$.
Therefore, $F$ is well defined and continuous.
Clearly, $F$ is smooth on $\check{D}^{+}$ and $\check{D}^{-}$.

Consider the coordinate $z=x+iy$ on $\check{D}$. Since $f$ is $J$-holomorphic,
and $f(\partial \check{D}^{+}) \subseteq W$, we infer that, for
$z \in \partial \check{D}^{+}$,
\[
df(z) \frac{\partial}{\partial y} \in J(T_{f(z)} W).
\]
By (\ref{reflection}), we get
\[
d (\tau f \sigma)(z) \frac{\partial}{\partial y} = df(z) \frac{\partial}{\partial y}.
\]
Then it's easy to check that $F$ is $C^{1}$. Since $f$ is an embedding, $F$ is an immersion.

Finally, since $F(\check{D}) = f(\check{D}^{+}) \cup \tau f(\check{D}^{+})$
and $f(\check{D}^{+})$ is relatively compact, we have $F(\check{D})$ is also
relatively compact.
\end{proof}

\begin{remark}\label{remark_c_1}
There is some inconvenience caused by the modest $C^{1}$ regularity of $F$.
For example, if $\alpha$ is a smooth form on $U$, then $F^{*} \alpha$ is only a $C^{0}$
form. We have trouble in applying Stokes' formula. But this can always be saved
by the piecewise smoothness of $F$.
\end{remark}

A natural question is if $F: \check{D} \rightarrow (U,J)$ or
$F: \check{D} \rightarrow (U,\widetilde{J})$ is $J$-holomorphic. Unfortunately,
we cannot give an affirmative answer. However, fortunately enough, we still have the above
Lemma \ref{lemma_structrue_in_u} and the following lemma.

\begin{lemma}\label{lemma_structure_F}
$F: \check{D}^{+} \rightarrow (U,J)$ and $F: \check{D}^{-} \rightarrow (U,\widetilde{J})$
are $J$-holomorphic. Furthermore,
\[
G|_{\check{D}^{+}} = F^{*} \text{Re}H \qquad \text{and} \qquad
G|_{\check{D}^{-}} = F^{*} \text{Re}\widetilde{H}.
\]
\end{lemma}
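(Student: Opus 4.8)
The plan is to verify the two claims of Lemma~\ref{lemma_structure_F} separately, both of which reduce to the definition of $F$ on $\check{D}^{-}$, namely $F = \tau f \sigma$.

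First I would treat the $\check{D}^{+}$ statement, which is essentially tautological: on $\check{D}^{+}$ we have $F = f$, so $F$ is $J$-holomorphic there because $f$ is, and $F^{*}\text{Re}H = f^{*}\text{Re}H = G|_{\check{D}^{+}}$ by the definition (\ref{metric_on_disk}) of $G$. For the $\check{D}^{-}$ statement I would compute $dF = d\tau \circ df \circ d\sigma$. Recall three facts: $\sigma : \check{D}^{-} \to \check{D}^{+}$ is anti-holomorphic, i.e.\ $d\sigma \circ i = -i \circ d\sigma$; $f : \check{D}^{+} \to (U,J)$ is $J$-holomorphic, i.e.\ $df \circ i = J \circ df$; and $\tau : (U,J) \to (U,\widetilde{J})$ is anti-$J$-holomorphic, i.e.\ $d\tau \circ J = -\widetilde{J} \circ d\tau$, which is exactly the defining relation (\ref{new_complex_structure}) for $\widetilde{J}$ (equivalently $\widetilde{J} = -d\tau\, J\, d\tau$, using $d\tau^{2}=\mathrm{Id}$). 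Composing the three sign changes,
\[
dF \circ i = d\tau \circ df \circ d\sigma \circ i = -\, d\tau \circ df \circ i \circ d\sigma = -\, d\tau \circ J \circ df \circ d\sigma = \widetilde{J} \circ d\tau \circ df \circ d\sigma = \widetilde{J} \circ dF,
\]
so $F : \check{D}^{-} \to (U,\widetilde{J})$ is $\widetilde{J}$-holomorphic, which is what "$J$-holomorphic" means with the target structure $\widetilde{J}$.

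Next I would handle the metric identity on $\check{D}^{-}$. Since $\text{Re}\widetilde{H} = \tau^{*}(\text{Re}H)$ by construction, we have $F^{*}\text{Re}\widetilde{H} = (\tau f \sigma)^{*}\tau^{*}\text{Re}H = \sigma^{*} f^{*} \tau^{*}\tau^{*}\text{Re}H = \sigma^{*} f^{*}\text{Re}H$ using $\tau^{2}=\mathrm{Id}$ (so $\tau^{*}\tau^{*} = \mathrm{Id}$ on forms). By the definition (\ref{metric_on_disk}) of $G$ on $\check{D}^{-}$, this is precisely $G|_{\check{D}^{-}}$. This completes the proof.

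I do not expect a genuine obstacle here; the only points requiring a little care are bookkeeping of which maps are holomorphic versus anti-holomorphic and the fact that $\tau$ being anti-$J$-holomorphic (not $J$-holomorphic) is exactly what makes the two sign flips from $\sigma$ and $\tau$ cancel against the one flip that would otherwise obstruct $\widetilde{J}$-holomorphicity — equivalently, this is why the target structure on $\check{D}^{-}$ must be $\widetilde{J}$ rather than $J$. The relation $\tau^{*}\tau^{*} = \mathrm{Id}$ on forms, used silently in the metric computation, is immediate from $\tau \circ \tau = \mathrm{Id}$.
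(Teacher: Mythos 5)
Your proof is correct, and it is exactly the routine verification the paper treats as immediate (the lemma is stated without proof): the three sign flips from $\sigma$ (anti-holomorphic), $f$ ($J$-holomorphic), and $\tau$ (anti-$J$-holomorphic, which is precisely $d\tau\, J = -\widetilde{J}\, d\tau$) compose to $dF\, i = \widetilde{J}\, dF$ on $\check{D}^{-}$, and the metric identity follows from $\mathrm{Re}\widetilde{H} = \tau^{*}\mathrm{Re}H$, $\tau^{*}\tau^{*} = \mathrm{Id}$, and the definition (\ref{metric_on_disk}) of $G$. Nothing to flag.
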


The following are two special examples to illustrate the above construction.

\begin{example}\label{example_schwarz}
Take $M = \mathbb{C}$ and $W = \mathbb{R}$. Then $f$ is a holomorphic function
which takes real values on $\partial \check{D}^{+}$. Define the above $\tau$
as the complex conjugation. Then, on $\check{D}^{-}$,
we have $F(z) = \overline{f(\overline{z})}$. By the Schwarz reflection
principle, $F$ is holomorphic on $\check{D}$.
\end{example}

\begin{example}\label{example_pansu}
More general than Example \ref{example_schwarz}, suppose there exists a neighborhood
$U_{0}$ of $W$ such that the following holds. The almost complex structure is integrable
in $U_{0}$, and $W$ is real analytic with respect to this complex structure.
Then \cite[p.\ 244-245]{pansu} (see also \cite[2.1.D]{gromov}) defines a reflection
$\tau_{M}$ in $U_{0}$ slightly different from (\ref{reflection}). Choose complex coordinate
charts such that the coordinates take pieces of $W$ into $\mathbb{R}^{n} \subseteq \mathbb{C}^{n}$.
Define $\tau_{M}$ as the complex conjugation in these charts. This definition globally makes
sense because the transition functions of these charts are holomorphic.
Suppose $f(\check{D}^{+}) \subseteq U_{0}$. Similar to Example \ref{example_schwarz},
a holomorphic doubling map $F: \check{D} \rightarrow U_{0}$ is obtained.
\end{example}

In the above examples, one gets a $J$-holomorphic doubling $F$ because $(M,W,J)$ has sufficient
symmetry. Our case is a further extension of Example \ref{example_pansu}. We have
difficulty to get a $J$-holomorphic $F$ unless $J=\widetilde{J}$.
However, we can consider the pair $(J,\widetilde{J})$ together. The idea comes from the following phenomenon.
Consider a group action on a space. If $X$ is a fixed point, then it has sufficient symmetry
such that a symmetric argument can be applied on it. On the other hand,
if $X$ is not a fixed point, then
we can still apply such an argument on its orbit.

It's more convenient to consider $(D,G)$ than $(\check{D},G)$. More precisely, we shall
consider $G$ as a metric on $D$ with a singularity $0$ where $G$ is undefined.
For clarity, we shall use $|\cdot|_{G}$ to denote the length or area with respect to $G$.

Define $\Omega$ as a closed disk inside $D$,
\begin{equation}\label{disk_in_disk}
\Omega = \{z \in D \mid |z-z_{0}| \leq r, z_{0} \in D, |z_{0}| + r < 1 \}.
\end{equation}

We shall consider the integral $\int_{\Omega} F^{*} d \alpha_{0}$.
As $d \alpha_{0}$ tames both $J$ and $\widetilde{J}$, this integral is actually
an integral of a positive function on $\Omega-\{0\}$. So it makes sense.

\begin{lemma}\label{lemma_form_integral}
There exists a constant $C>0$ independent of $\Omega$ such that
\begin{equation}\label{lemma_form_integral_1}
|\Omega|_{G} \leq C \int_{\Omega} F^{*} d \alpha_{0},
\end{equation}
where $|\Omega|_{G}$ is the area of $\Omega - \{0\}$ with respect to $G$.
Furthermore, if $0 \notin \partial \Omega$, then
\begin{equation}\label{lemma_form_integral_2}
\int_{\Omega} F^{*} d \alpha_{0} = \int_{\partial \Omega} F^{*} \alpha_{0}.
\end{equation}
\end{lemma}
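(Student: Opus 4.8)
The plan is to derive the inequality (\ref{lemma_form_integral_1}) from a pointwise taming estimate, and the identity (\ref{lemma_form_integral_2}) from Stokes' theorem, paying attention near the singularity $0$ and to the mild $C^{1}$ regularity of $F$.

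For (\ref{lemma_form_integral_1}): by Lemma \ref{lemma_structure_F}, $F$ is $J$-holomorphic on $\check{D}^{+}$ and $\widetilde{J}$-holomorphic on $\check{D}^{-}$, with $G = F^{*}\text{Re}H$ on $\check{D}^{+}$ and $G = F^{*}\text{Re}\widetilde{H}$ on $\check{D}^{-}$. Since $F(\check{D})$ is relatively compact in $U$ and, by Lemma \ref{lemma_tame_both}, $d\alpha_{0}$ tames both $J$ and $\widetilde{J}$, there is a constant $C>0$ with $H(w,w) \le C\, d\alpha_{0}(w, Jw)$ for all tangent vectors $w$ of $M$ over $\overline{F(\check{D}^{+})}$, and $\widetilde{H}(w,w) \le C\, d\alpha_{0}(w, \widetilde{J}w)$ over $\overline{F(\check{D}^{-})}$. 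Writing $G = g\,(dx \otimes dx + dy \otimes dy)$ on $\check{D}^{\pm}$ and using $dF \circ i = J \circ dF$ (resp.\ $\widetilde{J}$) together with $\frac{\partial}{\partial y} = i\frac{\partial}{\partial x}$, one computes on $\check{D}^{+}$
\[
F^{*} d\alpha_{0}\Bigl( \frac{\partial}{\partial x}, \frac{\partial}{\partial y} \Bigr) = d\alpha_{0}\Bigl( dF\frac{\partial}{\partial x},\, J\, dF\frac{\partial}{\partial x} \Bigr) \ge C^{-1} H\Bigl( dF\frac{\partial}{\partial x},\, dF\frac{\partial}{\partial x} \Bigr) = C^{-1} g,
\]
and likewise on $\check{D}^{-}$ with $\widetilde{J}$, $\widetilde{H}$. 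Thus $F^{*} d\alpha_{0}$ is a nonnegative multiple of the Euclidean area form on $\check{D} - \{0\}$, and integrating over $\Omega - \{0\}$ gives (\ref{lemma_form_integral_1}). The same computation with an upper taming bound shows $\int_{\Omega} F^{*} d\alpha_{0} \le C'\, |\Omega|_{G}$, which is finite since the image of $f$ has finite area.

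For (\ref{lemma_form_integral_2}), assume first $0 \notin \Omega$, so $\Omega \subseteq \check{D} = \check{D}^{+} \cup \check{D}^{-}$. The real axis $\partial\check{D}^{+}$ cuts $\Omega$ into $\Omega^{+} \subseteq \check{D}^{+}$ and $\Omega^{-} \subseteq \check{D}^{-}$, on each of which $F$ is smooth (it is the restriction of $f$ resp.\ $\tau f \sigma$, smooth on the manifold-with-boundary $\check{D}^{+}$ resp.\ $\check{D}^{-}$). Applying Stokes' theorem on $\Omega^{+}$ and $\Omega^{-}$ separately, where $d(F^{*}\alpha_{0}) = F^{*} d\alpha_{0}$, yields $\int_{\Omega^{\pm}} F^{*} d\alpha_{0} = \int_{\partial\Omega^{\pm}} F^{*}\alpha_{0}$. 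Since $\iota^{*}\alpha_{0} = 0$ on $W$ by property (3) of Lemma \ref{lemma_hermitian}, and $F(\partial\check{D}^{+}) \subseteq W$, the form $F^{*}\alpha_{0}$ pulls back to $0$ on $\partial\check{D}^{+}$, so the contributions along the common chord cancel and (\ref{lemma_form_integral_2}) follows. This is the analogue of Example \ref{example_stokes_disk}, and the $C^{1}$ issue of Remark \ref{remark_c_1} is avoided by working on each half separately.

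Finally, assume $0 \in \mathrm{int}\,\Omega$. For small $\delta > 0$ with $\overline{D(\delta)} \subseteq \mathrm{int}\,\Omega$, the previous argument applied to the annulus $\Omega - D(\delta)$ (again split by the real axis, along which the line integrals vanish) gives
\[
\int_{\Omega - D(\delta)} F^{*} d\alpha_{0} = \int_{\partial\Omega} F^{*}\alpha_{0} - \int_{\partial D(\delta)} F^{*}\alpha_{0}.
\]
As $\delta \to 0$ the left side increases to $\int_{\Omega} F^{*} d\alpha_{0}$ by monotone convergence, using $F^{*} d\alpha_{0} \ge 0$ and the finiteness noted above; and $\bigl| \int_{\partial D(\delta)} F^{*}\alpha_{0} \bigr| \le \bigl( \sup_{\check{D}} \|F^{*}\alpha_{0}\|_{G} \bigr)\, |\partial D(\delta)|_{G}$, where $\|F^{*}\alpha_{0}\|_{G}$ is bounded because $\alpha_{0}$ is smooth and $F(\check{D})$ is relatively compact in $U$, while $|\partial D(\delta)|_{G} \to 0$ by Lemma \ref{lemma_circle_zero}. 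Hence the boundary term vanishes in the limit and (\ref{lemma_form_integral_2}) holds. I expect this last step to be the main obstacle: making Stokes legitimate across the puncture uses simultaneously the nonnegativity of $F^{*} d\alpha_{0}$, the decay of the $G$-length of shrinking circles from Lemma \ref{lemma_circle_zero}, and the cancellation of the real-axis line integrals from $\iota^{*}\alpha_{0} = 0$ — the place where the exactness built into Lemma \ref{lemma_hermitian} enters.
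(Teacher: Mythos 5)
Your proof is correct and follows essentially the same route as the paper: the taming of $J$ and $\widetilde{J}$ by $d\alpha_{0}$ on the relatively compact image gives the pointwise comparison for (\ref{lemma_form_integral_1}), and (\ref{lemma_form_integral_2}) is obtained by excising $D(\delta)$, applying Stokes' formula on the two halves cut by the real axis, and letting $\delta \to 0$ using Lemma \ref{lemma_circle_zero} and monotonicity. The only cosmetic difference is that you make the real-axis line integrals vanish outright via $\iota^{*}\alpha_{0}=0$ and $F(\partial\check{D}^{+})\subseteq W$, whereas the paper cancels them using the continuity of $F^{*}\alpha_{0}$; both are valid.
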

\begin{proof}
By Lemma \ref{lemma_structure_F}, we get
\[
|\Omega|_{G}
= |\Omega \cap \check{D}^{+}|_{G} + |\Omega \cap \check{D}^{-}|_{G}
= \int_{\Omega \cap \check{D}^{+}} F^{*} (-\text{Im}H) +
\int_{\Omega \cap \check{D}^{-}} F^{*} (-\text{Im}\widetilde{H}).
\]
Since $F(\check{D})$ is relatively compact, by Lemma \ref{lemma_tame_both},
there exists a constant $C>0$ such that
$- \text{Im}H(v,Jv) \leq C d \alpha_{0}(v,Jv)$ and
$- \text{Im}\widetilde{H}(v,\widetilde{J}v) \leq C d \alpha_{0}(v,\widetilde{J}v)$
on $F(\check{D})$. Then we obtain
\[
\int_{\Omega \cap \check{D}^{+}} F^{*} (-\text{Im}H) +
\int_{\Omega \cap \check{D}^{-}} F^{*} (-\text{Im}\widetilde{H})
\leq C \int_{\Omega} F^{*} d \alpha_{0}.
\]
Since $C$ only depends on $F(\check{D})$, we proved (\ref{lemma_form_integral_1}).

For (\ref{lemma_form_integral_2}), we only prove the case of that $0 \in \Omega$.
The proof for the case of that $0 \notin \Omega$ is similar and even easier.

Since $0 \notin \partial \Omega$, we have
$D(\epsilon) = \{ z \mid |z| < \epsilon \} \subseteq \Omega$ when $\epsilon >0$ is
small enough. Consider the domain
\[
\Omega_{\epsilon} = \Omega - D(\epsilon).
\]
Define $\Omega_{\epsilon}^{+} = \Omega_{\epsilon} \cap \check{D}^{+}$
and $\Omega_{\epsilon}^{-} = \Omega_{\epsilon} \cap \check{D}^{-}$.
Since $F$ is smooth on $\Omega_{\epsilon}^{\pm}$, and $\Omega_{\epsilon}^{\pm}$
is a compact manifold with corners, applying Stokes' formula, we get
\[
\int_{\Omega_{\epsilon}^{\pm}} F^{*} d \alpha_{0}
= \int_{\Omega_{\epsilon}^{\pm}}  d F^{*} \alpha_{0}
= \int_{\partial \Omega_{\epsilon}^{\pm}}  F^{*} \alpha_{0}.
\]
Since $F$ is $C^{1}$ on $\check{D}$, we infer that $F^{*} \alpha_{0}$ is
continuous on $\check{D}$. Therefore, the integrals of $F^{*} \alpha_{0}$ on the
real line cancel each other, i.e.
\[
\int_{\Omega_{\epsilon}} F^{*} d \alpha_{0}
= \int_{\partial \Omega_{\epsilon}}  F^{*} \alpha_{0}
= \int_{\partial \Omega}  F^{*} \alpha_{0}
- \int_{\partial D(\epsilon)}  F^{*} \alpha_{0}.
\]

Since $F(\check{D})$ is relatively compact, we infer
$\| \alpha_{0} \|_{\text{Re}H}$ and $\| \alpha_{0} \|_{\text{Re}\widetilde{H}}$
are bounded on $F(\check{D})$.
By Lemma \ref{lemma_circle_zero}, $|\partial D(\epsilon)|_{G} \rightarrow 0$
when $\epsilon \rightarrow 0$. Thus $\int_{\partial D(\epsilon)} F^{*} \alpha_{0}
\rightarrow 0$. Furthermore,
$\int_{\Omega} F^{*} d \alpha_{0}$ is actually an integral of a positive
function on $\Omega-\{0\}$. Since the integral
$\int_{\Omega_{\epsilon}} F^{*} d \alpha_{0}$ is monotone increasing
when $\epsilon \rightarrow 0$, we get
\[
\int_{\Omega} F^{*} d \alpha_{0}
= \lim_{\epsilon \rightarrow 0} \int_{\Omega_{\epsilon}} F^{*} d \alpha_{0}
= \int_{\partial \Omega}  F^{*} \alpha_{0}.
\]
\end{proof}

In \cite[1.3.B \& 1.3.B']{gromov}, Gromov makes the following crucial observation:
A $J$-holomorphic curve is ``almost minimal", which results in isoperimetric
inequalities. The idea is as follows. Suppose $J$ is tamed by an exact form.
(This is true in our case by Lemma \ref{lemma_tame_both}.) Let $S$ be a compact $J$-holomorphic
curve with boundary. Then the area of $S$ is controlled
by some other compact surfaces $K$
with $\partial K = \partial S$. If a special $K$ satisfies an isoperimetric
inequality, then we can in turn establish an isoperimetric inequality for $S$.
This is indeed the case if $\partial S$ lies in a suitable coordinate chart. Gromov's
choice of $K$ is a minimal surface with respect to the Euclidean metric of
the chart. However, as shown in \cite[p.\ 25]{hummel}, there is an even
more elementary choice. That is the cone constructed at the end of Section \ref{section_set_up}.

Applying Gromov's idea, we obtain the following isoperimetric inequality which is
the most important for this section.

\begin{lemma}\label{lemma_isoperimetric_disk}
There exists a constant $\mu >0$ such that the following holds. For all disks
$\Omega$ defined in (\ref{disk_in_disk}) such that $0 \notin \partial \Omega$,
we have
\[
|\partial \Omega|_{G}^{2} \geq 2 \pi \mu |\Omega|_{G}.
\]
\end{lemma}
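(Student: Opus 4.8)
The plan is to bound $|\Omega|_{G}$ by a line integral over $\partial\Omega$ by means of Lemma~\ref{lemma_form_integral}, and then to bound that line integral by $|\partial\Omega|_{G}^{2}$ by spanning $F(\partial\Omega)$ with a cone inside a coordinate chart and invoking the classical cone isoperimetric inequality of Lemma~\ref{lemma_cone_isoperimetric}. Disks whose boundary is too long to fit into a single chart will be disposed of by a separate, trivial estimate that uses that the total $G$-area of $\check{D}$ is finite.

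First I would fix data depending only on $F$. Under the assumption of Theorem~\ref{theorem_finite_area}, Lemma~\ref{lemma_structure_F} and the definition of $G$ show that the total $G$-area of $\check{D}$ equals $2|f(\check{D}^{+})|<+\infty$; call it $E$. Since $\Omega-\{0\}\subseteq\check{D}$ for every $\Omega$ as in (\ref{disk_in_disk}), we always have $|\Omega|_{G}\le E$. As $\overline{F(\check{D})}$ is a compact subset of $U$, I would cover it by finitely many coordinate balls $V_{1},\dots,V_{N}$; on the relevant compact region the Euclidean metrics $g_{j}$ of these charts and the metrics $\text{Re}H$, $\text{Re}\widetilde{H}$ are pairwise comparable up to a common factor $c\ge 1$, and $\Lambda:=\max_{j}\sup\|d\alpha_{0}\|_{g_{j}}<+\infty$. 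Let $2\lambda$ be a Lebesgue number of the cover with respect to $\text{Re}H$ and put $\lambda':=4\lambda/\sqrt{c}$. Parametrizing $\partial\Omega$ and splitting a sub-arc into its portions in $\check{D}^{+}$ and $\check{D}^{-}$, the $\text{Re}H$-length of the $F$-image of the sub-arc is at most $\sqrt{c}$ times its $G$-length; choosing for two given points of $F(\partial\Omega)$ the shorter connecting sub-arc then gives that the $\text{Re}H$-diameter of $F(\partial\Omega)$ is at most $\tfrac{\sqrt{c}}{2}|\partial\Omega|_{G}$ and, running over the whole loop, $|F(\partial\Omega)|_{g_{j}}\le\sqrt{c}\,|\partial\Omega|_{G}$.

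If $|\partial\Omega|_{G}\ge\lambda'$, then $|\partial\Omega|_{G}^{2}\ge(\lambda')^{2}\ge 2\pi\mu|\Omega|_{G}$ as soon as $\mu\le(\lambda')^{2}/(2\pi E)$, and we are done. If $|\partial\Omega|_{G}<\lambda'$, then $F(\partial\Omega)$ has $\text{Re}H$-diameter $<2\lambda$ and hence lies in a single chart $V_{j}$; read in the coordinates of $V_{j}$ it is a $C^{1}$ immersed loop in a ball of $\mathbb{R}^{n}$, whose cone $K$ over its center of mass stays in that ball by convexity. Using $0\notin\partial\Omega$, Lemma~\ref{lemma_form_integral} gives $|\Omega|_{G}\le C\int_{\Omega}F^{*}d\alpha_{0}=C\int_{\partial\Omega}F^{*}\alpha_{0}$; Stokes' formula applied to the cone turns $\int_{\partial\Omega}F^{*}\alpha_{0}$ into $\int_{K}d\alpha_{0}\le\Lambda\,|K|_{g_{j}}$, and Lemma~\ref{lemma_cone_isoperimetric} together with the length comparison above gives $|K|_{g_{j}}\le\tfrac{1}{4\pi}|F(\partial\Omega)|_{g_{j}}^{2}\le\tfrac{c}{4\pi}|\partial\Omega|_{G}^{2}$. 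Chaining these estimates yields $|\partial\Omega|_{G}^{2}\ge\tfrac{4\pi}{C\Lambda c}\,|\Omega|_{G}$, so any $\mu\le\min\{(\lambda')^{2}/(2\pi E),\ 2/(C\Lambda c)\}$ works.

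The hard part will not be conceptual but a matter of care. One must justify Stokes' formula on the merely Lipschitz cone applied to the only $C^{1}$ form $F^{*}\alpha_{0}$; this is handled by subdividing $\partial\Omega$ along the real axis, where $F$ is smooth on each side, together with the analysis of the cone map carried out in \cite[Appendix~A]{hummel} (cf.\ Remark~\ref{remark_c_1}). One must also keep the comparison constant $c$ uniform over the finitely many charts and check that the cone spanning $F(\partial\Omega)$ never leaves its chart. Granting these points, the lemma is a direct assembly of Lemmas~\ref{lemma_form_integral} and~\ref{lemma_cone_isoperimetric}.
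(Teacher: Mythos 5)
Your proposal is correct and follows essentially the same strategy as the paper's proof: split according to whether $|\partial\Omega|_G$ is small (dispose of large boundary length by finiteness of $|\check{D}|_G$), use Lemma~\ref{lemma_form_integral} to reduce to a boundary integral of $\alpha_0$, cap $F(\partial\Omega)$ by the cone over its center of mass inside a single chart, invoke Lemma~\ref{lemma_cone_isoperimetric}, and handle the merely $C^1$ nature of $F$ along the real axis by subdividing before applying Stokes. Your Lebesgue-number argument for getting $F(\partial\Omega)$ into one chart is a concrete realization of the paper's constant $C_1$ in property~(4) of its cover, so there is no substantive difference (note only the small typo $\mathbb{R}^n$ where you mean $\mathbb{R}^{2n}$).
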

\begin{proof}
Since $F(\check{D})$ is relatively compact, $F(\check{D})$ can be covered by finitely
many subsets $B_{j}$ ($j=1, \dots, k$) of $U$ which have
the following properties: (1).\ Each $B_{j}$ is in a coordinate chart. (2).\ By
these coordinates, each $B_{j}$ is identified with
a closed ball in $\mathbb{R}^{2n}$.
(3).\ In each $B_{j}$, $\text{Re}H$ and $\text{Re}\widetilde{H}$ are equivalent with
the Euclidean metric on $\mathbb{R}^{2n}$, and $d \alpha_{0}$ is bounded by the
Euclidean metric. (4).\ There exists a constant $C_{1} > 0$  such
that, if $\Gamma$ is a curve in $\check{D}$ with $|\Gamma|_{G} \leq C_{1}$, then $F(\Gamma)$ is contained
in one of these $B_{j}$.

If $|\partial \Omega|_{G} > C_{1}$, we get
\[
|\partial \Omega|_{G}^{2}
=\frac{|\partial \Omega|_{G}^{2}}{|\Omega|_{G}} |\Omega|_{G}
> \frac{C_{1}^{2}}{|\check{D}|_{G}} |\Omega|_{G}.
\]
Since $|\check{D}|_{G} = 2 |\check{D}^{+}|_{G} < + \infty$,
we already obtained the isoperimetric inequality. Therefore, it remains to check the case
of that $|\partial \Omega|_{G} \leq C_{1}$.

By the above property (4), we may assume $F(\partial \Omega) \subseteq B_{1}$.
To make the idea clear, we first prove the case of that either $\Omega \subseteq D^{+}$
or $\Omega \subseteq D^{-}$.

Since $0 \notin \partial \Omega$, by Lemma \ref{lemma_form_integral}, we get
\begin{equation}\label{lemma_isoperimetric_disk_1}
|\Omega|_{G} \leq C \int_{\partial \Omega} F^{*} \alpha_{0}
= C \int_{F(\partial \Omega)} \alpha_{0}.
\end{equation}

Let $\gamma: S^{1} \rightarrow F(\partial \Omega)$ be a reparametrization of
$F(\partial \Omega)$. Since $\Omega \subseteq D^{+}$ or
$\Omega \subseteq D^{-}$, and $0 \notin \partial \Omega$, we infer that $F$ is smooth
on $\partial \Omega$. Thus we can also require that $\gamma$ is smooth. Since $F$ is an
immersion, we can also require that $\gamma$ is an immersion.
As Section \ref{section_set_up}, using the Euclidean metric $\| \cdot \|_{\text{Eu}}$
of this chart, we construct
the cone $K(\gamma(S^{1}))$ with vertex $c$ in (\ref{mass_center}) and the boundary
$\gamma(S^{1}) = F(\partial \Omega)$.

Define a smooth function $\rho: [0,1] \rightarrow [0,1]$ such
that $\rho|_{[0,\frac{1}{2}]}=0$ and $\rho$ strictly increases from $0$ to $1$ on
$[\frac{1}{2}, 1]$. Denote the closed unit disk by $\overline{D}$.
Use the angle coordinate $\theta$ for $S^{1}$.
Define $\xi: \overline{D} \rightarrow B_{1}$
as
\begin{equation}\label{lemma_isoperimetric_disk_2}
  \xi(r e^{i \theta}) =
  \begin{cases}
     c + \rho(r) (\gamma(\theta) - c) & r \geq \frac{1}{2}, \\
     c & r \leq \frac{1}{2}.
  \end{cases}
\end{equation}
Then $\xi(\overline{D})$ is $K(\gamma(S^{1}))$.
Since $B_{1}$ is convex and closed in $\mathbb{R}^{2n}$,
by (\ref{mass_center}), $c$ and the cone $K(\gamma(S^{1}))$ are in $B_{1}$.
Therefore, the map $\xi$ is well defined.

By Lemma \ref{lemma_cone_isoperimetric}, we get
\begin{equation}\label{lemma_isoperimetric_disk_3}
|\xi(\overline{D})|_{\text{Eu}} \leq (4 \pi)^{-1} |\gamma(S^{1})|_{\text{Eu}}^{2}.
\end{equation}
Since $\gamma$ and $\rho$ are smooth, we know that $\xi$ is smooth. Therefore,
\begin{equation}\label{lemma_isoperimetric_disk_4}
\int_{F(\partial \Omega)} \alpha_{0}
= \int_{\xi(\partial \overline{D})} \alpha_{0}
= \int_{\xi(\overline{D})} d \alpha_{0}.
\end{equation}
Since $d \alpha_{0}$ is bounded by the Euclidean metric in $B_{1}$, there
exists $C_{2} > 0$ such that
\begin{equation}\label{lemma_isoperimetric_disk_5}
\int_{\xi(\overline{D})} d \alpha_{0} \leq C_{2} |\xi(\overline{D})|_{\text{Eu}}.
\end{equation}
Furthermore, since $\text{Re}H$ and $\text{Re}\widetilde{H}$ are equivalent to
the Euclidean metric in $B_{1}$, by Lemma \ref{lemma_structure_F},
we infer that there exists $C_{3}>0$ such that
\begin{equation}\label{lemma_isoperimetric_disk_6}
|\gamma(S^{1})|_{\text{Eu}}^{2} \leq C_{3} |\partial \Omega|_{G}^{2}.
\end{equation}

By (\ref{lemma_isoperimetric_disk_1}), (\ref{lemma_isoperimetric_disk_4}),
(\ref{lemma_isoperimetric_disk_5}), (\ref{lemma_isoperimetric_disk_3})
and (\ref{lemma_isoperimetric_disk_6}), we get
\[
|\Omega|_{G} \leq C \int_{F(\partial \Omega)} \alpha_{0}
= C \int_{\xi(\overline{D})} d \alpha_{0}
\leq C C_{2} (4\pi)^{-1} C_{3} |\partial \Omega|_{G}^{2},
\]
where the constant $C C_{2} (4\pi)^{-1} C_{3}$ only depends on $B_{1}$.
Since there are only finite many $B_{j}$, we infer that there
exists $C_{4} > 0$ independent of $\Omega$ such that
\begin{equation}\label{lemma_isoperimetric_disk_7}
|\partial \Omega|_{G}^{2} \geq C_{4 } |\Omega|_{G}.
\end{equation}

Now we deal with the case that $|\partial \Omega|_{G} \leq C_{1}$
and $\Omega$ is contained neither $D^{+}$ nor $D^{-}$. We shall prove that
(\ref{lemma_isoperimetric_disk_7}) still
holds. The proof is the same as the one of the previous case with one exception:
the maps $F|_{\partial \Omega}$ and $\xi$ in (\ref{lemma_isoperimetric_disk_2})
are only $C^{1}$ now. (See Remark \ref{remark_c_1}.)

Checking the above argument, it suffices
to show that (\ref{lemma_isoperimetric_disk_4}) is still true.

Since $F$ is smooth on $\check{D}^{\pm}$ and $C^{1}$ on $\check{D}$, we can require
that the reparametrization $\gamma: S^{1} \rightarrow F(\partial \Omega)$
satisfies the following properties: $\gamma$ is $C^{1}$, $\gamma([0, \theta_{0}])
\subseteq F(\check{D}^{+})$, $\gamma([\theta_{0}, 2 \pi]) \subseteq F(\check{D}^{-})$,
and $\gamma$ is smooth on $[0, \theta_{0}]$ and $[\theta_{0}, 2 \pi]$.

Define
\[
Q = \{ r e^{i \theta} \in \overline{D} \mid \tfrac{1}{2} \leq r \leq 1 \},
\]
\[
Q^{+} = \{ r e^{i \theta} \in Q \mid 0 \leq \theta \leq \theta_{0} \}
\qquad \text{and} \qquad
Q^{-} = \{ r e^{i \theta} \in Q \mid \theta_{0} \leq \theta \leq 2 \pi \}.
\]
Since $\xi(D(\frac{1}{2})) = \{c\}$, we have
$\xi^{*} d \alpha_{0}$  vanishes on $D(\frac{1}{2})$ and
\[
\int_{\xi(\overline{D})} d \alpha_{0}
= \int_{\overline{D}} \xi^{*} d \alpha_{0}
= \int_{Q} \xi^{*} d \alpha_{0}.
\]

Now $Q^{\pm}$ is a compact manifold with corners,
and $\xi$ is smooth on $Q^{\pm}$. Applying Stokes' formula
on $Q^{\pm}$, canceling line integrals on $\partial Q^{+} \cap \partial Q^{-}$, we get
\[
\int_{\xi(\overline{D})} d \alpha_{0}
= \int_{\partial Q} \xi^{*} \alpha_{0}
= \int_{\partial \overline{D}} \xi^{*} \alpha_{0}
- \int_{\partial D(\frac{1}{2})} \xi^{*} \alpha_{0}.
\]
Since $\xi^{*} \alpha_{0}$  vanishes on $\partial D(\frac{1}{2})$, we get
(\ref{lemma_isoperimetric_disk_4}) again.

In summary, we get
\[
|\partial \Omega|_{G}^{2} \geq \min \left \{ \frac{C_{1}^{2}}{|\check{D}|_{G}}, C_{4} \right \}
|\Omega|_{G}.
\]
Defining $\mu = \frac{1}{2\pi} \min \left \{ \frac{C_{1}^{2}}{|\check{D}|_{G}}, C_{4} \right \}$,
we finish the proof.
\end{proof}

For the closed unit disk $\overline{D}$ and $\Omega$ as in (\ref{disk_in_disk}), let
\begin{equation}\label{holomorphic_isomorphism}
\varphi: \overline{D} \rightarrow \Omega
\end{equation}
be an arbitrary holomorphic isomorphism.

Let $r_{*}= |\varphi^{-1}(0)|$. Since a M\"{o}bius
transformation maps a disk to a disk, we infer that $\varphi(D(t))$ is a disk in
$\Omega$. Furthermore, if $t \neq r_{*}$, then $0 \notin \partial [\varphi(D(t))]$
and Lemma \ref{lemma_isoperimetric_disk} can be applied to $\varphi(D(t))$.
Note that $G$ is a conformal $C^{2}$ metric on $\Omega - \{0\}$. The function
$\| d \varphi \|$ is undefined at $\varphi^{-1} (0)$ and $C^{2}$ elsewhere.
Therefore, by the comment below (\ref{gromov_formula}), we can apply (\ref{gromov_formula})
to $\varphi$.

Similar to the proof of Lemma \ref{lemma_bound_disk_covering}, we obtain the following
decaying rate (compare \cite[p.\ 91]{mcduff_salamon}).

\begin{lemma}\label{lemma_decay_rate}
For $0< r_{1} \leq r_{2} \leq 1$ and $\varphi$ in (\ref{holomorphic_isomorphism}), we have
\[
|\varphi (D(r_{1}))|_{G} \leq
\left( \frac{r_{1}}{r_{2}} \right)^{\mu}
|\varphi (D(r_{2}))|_{G},
\]
where $\mu >0$ is the constant in Lemma \ref{lemma_isoperimetric_disk}.
\end{lemma}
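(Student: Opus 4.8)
The plan is to mimic the argument in Lemma \ref{lemma_bound_disk_covering}, but now applied to the holomorphic isomorphism $\varphi: \overline{D} \rightarrow \Omega$ rather than to the covering maps $\varphi_{z_0}$. Write $A(t) = |\varphi(D(t))|_G$ and $L(t) = |\varphi(\partial D(t))|_G$. The first ingredient is Gromov's formula (\ref{gromov_formula}): since $G$ is a conformal $C^2$ metric on $\Omega - \{0\}$ and $\|d\varphi\|$ is undefined only at $\varphi^{-1}(0)$ and $C^2$ elsewhere, the generalized form of (\ref{gromov_formula}) discussed after that formula applies, giving $\dot A(t) \geq (2\pi t)^{-1} L(t)^2$ on $(0, r_*) \cup (r_*, 1)$, where $r_* = |\varphi^{-1}(0)|$. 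The second ingredient is the isoperimetric inequality of Lemma \ref{lemma_isoperimetric_disk}: for $t \neq r_*$, the set $\varphi(D(t))$ is a disk of the form (\ref{disk_in_disk}) with $0 \notin \partial[\varphi(D(t))]$, so $L(t)^2 \geq 2\pi\mu\, A(t)$. Combining these two, I get
\begin{equation}\label{plan_ineq}
\dot A(t) \geq \frac{\mu}{t} A(t)
\end{equation}
on each of the intervals $(0,r_*)$ and $(r_*,1)$.

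**Integrating.** From (\ref{plan_ineq}), wherever $A(t) > 0$ we have $\frac{d}{dt}\log A(t) \geq \mu/t$, hence $\log A(t_2) - \log A(t_1) \geq \mu(\log t_2 - \log t_1)$, i.e.\ $A(t_1) \leq (t_1/t_2)^\mu A(t_2)$, for $t_1 \leq t_2$ within a single interval of smoothness. To conclude for arbitrary $0 < r_1 \leq r_2 \leq 1$ I need to bridge across the point $r_*$. Here I use that $A$ is continuous on all of $[0,1]$: $A$ is the $G$-area of $\varphi(D(t))$, which is finite since $|\Omega|_G \leq |\check D|_G = 2|\check D^+|_G < +\infty$ (finite by Lemma \ref{lemma_metric_hyperbolic_bound}), and it is continuous by absolute continuity of the integral, exactly as in the generalization of (\ref{gromov_formula}) recorded in Section \ref{section_set_up}. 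So the estimate $A(t_1) \leq (t_1/t_2)^\mu A(t_2)$ passes to the closure of each interval, and then a two-step application (from $r_1$ up to $r_*$, then from $r_*$ up to $r_2$, or simply a direct limiting argument using continuity of both sides) yields the claimed inequality for all $0 < r_1 \leq r_2 \leq 1$. If $A(t_1) = 0$ the inequality is trivial; and $A$ cannot vanish on a subinterval and then become positive, since $\varphi$ is an immersion away from $\varphi^{-1}(0)$, so in fact $A(t) > 0$ for all $t > 0$.

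**Main obstacle.** The routine part is the ODE manipulation; the only genuine subtlety is the behavior at $t = r_*$, where $\|d\varphi\|_G$ blows up (this is the preimage of the singular point $0$ of $G$) and the monotonicity formula (\ref{gromov_formula}) is not directly available. This is precisely the situation the paper anticipates in the remark following (\ref{gromov_formula}) — "here $z_0$ could be $0$" — so the fix is to invoke that generalization: $A$ remains continuous across $r_*$ and (\ref{gromov_formula}) holds on the two open subintervals, which is all the integration needs. I should also double-check that Lemma \ref{lemma_isoperimetric_disk} genuinely applies to every $\varphi(D(t))$ with $t \neq r_*$: a Möbius transformation carries the disk $D(t) \subset \overline D$ to a genuine round disk compactly contained in $\Omega \subset D$, so it has the form (\ref{disk_in_disk}), and $0 \notin \partial[\varphi(D(t))]$ exactly because $t \neq r_* = |\varphi^{-1}(0)|$ — so the hypothesis $0 \notin \partial\Omega$ of that lemma is met. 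With these checks in place the proof is complete.
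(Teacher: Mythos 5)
Your proposal is correct and follows essentially the same route as the paper: combine the generalized form of (\ref{gromov_formula}) with Lemma \ref{lemma_isoperimetric_disk} to get $\dot{A} \geq \mu t^{-1} A$ away from $r_{*}$, then integrate $\frac{d}{dt}\log A$ across $r_{*}$ using the continuity of $A$ on $[0,1]$, which is exactly the paper's argument (your explicit two-step bridging at $r_{*}$ is just a spelled-out version of the paper's appeal to the fundamental theorem of calculus for the piecewise-$C^{1}$, continuous function $\log A$). One small citation slip: the finiteness of $|\check{D}^{+}|_{G}$ is the standing area hypothesis of Theorem \ref{theorem_finite_area} (with the metric normalized as in Observation \ref{claim_metric}), not a consequence of Lemma \ref{lemma_metric_hyperbolic_bound}, whose pointwise bound does not by itself control the area of $G$ near $|z|=1$.
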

\begin{proof}
Denote $A(t) = |\varphi (D(t))|_{G}$ and $L(t) = |\varphi (\partial D(t))|_{G}$.
For $t \neq r_{*}$ and $t > 0$, by (\ref{gromov_formula}) and Lemma
\ref{lemma_isoperimetric_disk}, we get
\[
\dot{A} \geq \mu t^{-1} A.
\]
Since $A > 0$ for $t > 0$, we get
\begin{equation}\label{lemma_decay_rate_1}
A^{-1} \dot{A} \geq \mu t^{-1}.
\end{equation}

Since $A$ is continuous on $[0,1]$, the fundamental theorem of calculus still
holds for $\frac{d}{d t} \log A = A^{-1} \dot{A}$ on $[r_{1}, r_{2}]$.
Integrating both sides of (\ref{lemma_decay_rate_1}) on $[r_{1}, r_{2}]$,
we finish the proof.
\end{proof}

Consider $\|d\varphi (te^{i \theta})\|_{G}$ as a function of $(t,\theta)$.
We shall estimate the integral of $\|d\varphi (te^{i \theta})\|_{G}$ on
$(t,\theta) \in [0,r] \times [0, 2\pi]$. Using this estimate, we will derive useful
geometric consequences. Before doing this, we shall prove a lemma
which is its analytic translation.

\begin{lemma}\label{lemma_analytic}
Suppose $\lambda(t)$ is a nonnegative measurable function on $[0,a]$. Suppose
there exist constants $C>0$ and $\nu >0$ such that, for all $r_{1}$ and $r_{2}$
with $0 < r_{1} \leq r_{2} \leq a$,
we have
\[
\int_{0}^{r_{1}} \lambda^{2} t dt \leq C
\left( \frac{r_{1}}{r_{2}} \right)^{\nu}
\int_{0}^{r_{2}} \lambda^{2} t dt.
\]
Then, for all $r \in [0,a]$,
\[
\int_{0}^{r} \lambda dt \leq C^{\frac{1}{2}}
(1 - 2^{-\frac{\nu}{2}})^{-1} (\log 2)^{\frac{1}{2}}
\left( \int_{0}^{r} \lambda^{2} t dt \right)^{\frac{1}{2}}.
\]
\end{lemma}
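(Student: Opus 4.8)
The plan is to decompose the interval $[0,r]$ dyadically and apply the Cauchy--Schwarz inequality on each piece, writing $\lambda = (\lambda\, t^{1/2})\cdot t^{-1/2}$. First I would dispose of the trivial cases: if $r=0$ both sides vanish, and if $\int_0^r \lambda^2 t\,dt = +\infty$ the asserted inequality holds vacuously. So assume $r>0$ and $\int_0^r \lambda^2 t\,dt < +\infty$. Write $(0,r] = \bigcup_{k=0}^\infty I_k$ with $I_k = (r2^{-(k+1)}, r2^{-k}]$, a disjoint union whose complement in $[0,r]$ is the null set $\{0\}$; since $\lambda\ge 0$ is measurable, countable additivity gives $\int_0^r \lambda\,dt = \sum_{k=0}^\infty \int_{I_k}\lambda\,dt$.

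On each $I_k$, Cauchy--Schwarz yields
\[
\int_{I_k}\lambda\,dt \le \left(\int_{I_k}\lambda^2 t\,dt\right)^{1/2}\left(\int_{I_k}\frac{dt}{t}\right)^{1/2} = (\log 2)^{1/2}\left(\int_{I_k}\lambda^2 t\,dt\right)^{1/2},
\]
using $\int_{r2^{-(k+1)}}^{r2^{-k}} t^{-1}\,dt = \log 2$ for every $k$. I would then bound $\int_{I_k}\lambda^2 t\,dt \le \int_0^{r2^{-k}}\lambda^2 t\,dt$ and apply the hypothesis with $r_1 = r2^{-k} \le r = r_2 \le a$, obtaining $\int_0^{r2^{-k}}\lambda^2 t\,dt \le C\,2^{-k\nu}\int_0^r \lambda^2 t\,dt$.

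Combining these two estimates and summing the geometric series $\sum_{k\ge 0} 2^{-k\nu/2} = (1-2^{-\nu/2})^{-1}$ gives
\[
\int_0^r \lambda\,dt \le (\log 2)^{1/2} C^{1/2}\left(\int_0^r \lambda^2 t\,dt\right)^{1/2}\sum_{k=0}^\infty 2^{-k\nu/2} = C^{1/2}(1-2^{-\nu/2})^{-1}(\log 2)^{1/2}\left(\int_0^r \lambda^2 t\,dt\right)^{1/2},
\]
which is exactly the claimed bound. There is no genuine obstacle here; the only points needing a little care are the justification that the dyadic pieces exhaust $(0,r]$ up to measure zero (so the decomposition of $\int_0^r\lambda\,dt$ into a series is valid for the nonnegative integrand) and the reduction at the outset to the case $\int_0^r\lambda^2 t\,dt<\infty$, which guarantees that every quantity appearing above is finite and that the sum converges to the stated multiple of $\left(\int_0^r\lambda^2 t\,dt\right)^{1/2}$.
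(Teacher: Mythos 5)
Your proof is correct and is essentially the same argument as the paper's: a dyadic decomposition of $(0,r]$, Cauchy--Schwarz on each piece writing $\lambda=(\lambda t^{1/2})t^{-1/2}$, the hypothesis applied with $r_{1}=2^{-k}r$ and $r_{2}=r$, and summation of the geometric series $\sum_{k}2^{-k\nu/2}$. The only difference is your explicit handling of the trivial and infinite cases and of the measure-zero point $\{0\}$, which the paper leaves implicit.
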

\begin{proof}
For all integers $k \geq 0$, we have
\begin{eqnarray*}
\left( \int_{2^{-k-1}r}^{2^{-k}r} \lambda dt \right)^{2}
& \leq & \int_{2^{-k-1}r}^{2^{-k}r} \lambda^{2} t dt \cdot
\int_{2^{-k-1}r}^{2^{-k}r} t^{-1} dt \\
& = & (\log 2) \int_{2^{-k-1}r}^{2^{-k}r} \lambda^{2} t dt.
\end{eqnarray*}
Since
\[
\int_{2^{-k-1}r}^{2^{-k}r} \lambda^{2} t dt
 \leq  \int_{0}^{2^{-k}r} \lambda^{2} t dt
 \leq  C (2^{-k})^{\nu} \int_{0}^{r} \lambda^{2} t dt,
\]
we get
\[
\left( \int_{2^{-k-1}r}^{2^{-k}r} \lambda dt \right)^{2}
\leq C (\log 2) (2^{-k})^{\nu} \int_{0}^{r} \lambda^{2} t dt.
\]
Therefore,
\begin{eqnarray*}
\int_{0}^{r} \lambda dt
& = & \sum_{k=0}^{\infty} \int_{2^{-k-1}r}^{2^{-k}r} \lambda dt \\
& \leq & \sum_{k=0}^{\infty} C^{\frac{1}{2}} (\log 2)^{\frac{1}{2}}
(2^{-\frac{\nu}{2}})^{k} \left( \int_{0}^{r} \lambda^{2} t dt \right)^{\frac{1}{2}} \\
& = & C^{\frac{1}{2}} (1 - 2^{-\frac{\nu}{2}})^{-1} (\log 2)^{\frac{1}{2}}
\left( \int_{0}^{r} \lambda^{2} t dt \right)^{\frac{1}{2}}.
\end{eqnarray*}
\end{proof}

\begin{lemma}\label{lemma_derivative_area}
For the $\varphi$ in (\ref{holomorphic_isomorphism}), we have
\[
\int_{0}^{r} \int_{0}^{2 \pi} \|d \varphi (te^{i \theta}) \|_{G} d \theta dt
\leq C(\mu) |\varphi(D(r))|_{G}^{\frac{1}{2}},
\]
where
\[
C(\mu) = (2 \pi)^{\frac{1}{2}}
(1 - 2^{-\frac{\mu}{2}})^{-1} (\log 2)^{\frac{1}{2}},
\]
and $\mu >0$ is the constant in Lemma \ref{lemma_isoperimetric_disk}.
\end{lemma}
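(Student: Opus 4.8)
The plan is to extract from $\|d\varphi\|_{G}$ the one-variable function obtained by an $L^{2}$-average over circles, check that it satisfies the hypothesis of the analytic Lemma \ref{lemma_analytic} thanks to the decay estimate of Lemma \ref{lemma_decay_rate}, and then pass back to the double integral by Cauchy--Schwarz.

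Concretely, I would set
\[
\lambda(t) = \left( \int_{0}^{2\pi} \| d\varphi(te^{i\theta}) \|_{G}^{2}\, d\theta \right)^{1/2}, \qquad t \in (0,1],
\]
which is a nonnegative measurable function; note that $\|d\varphi\|_{G}$ is undefined only at the single point $\varphi^{-1}(0)$, a set of measure zero, so this causes no trouble in any of the integrals below. Since $\varphi$ is holomorphic it is $J$-holomorphic, hence conformal, so its Jacobian relative to the Euclidean metric on $\overline{D}$ and the conformal metric $G$ on $\Omega-\{0\}$ equals $\|d\varphi\|_{G}^{2}$. Writing the area of $\varphi(D(s))$ in polar coordinates therefore gives, for every $s\in(0,1]$,
\[
\int_{0}^{s} \lambda(t)^{2}\, t\, dt = \int_{D(s)} \|d\varphi\|_{G}^{2} = |\varphi(D(s))|_{G}.
\]

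Combining this identity with Lemma \ref{lemma_decay_rate}, for $0 < r_{1} \le r_{2} \le 1$ one gets
\[
\int_{0}^{r_{1}} \lambda^{2}\, t\, dt = |\varphi(D(r_{1}))|_{G} \le \left( \frac{r_{1}}{r_{2}} \right)^{\mu} |\varphi(D(r_{2}))|_{G} = \left( \frac{r_{1}}{r_{2}} \right)^{\mu} \int_{0}^{r_{2}} \lambda^{2}\, t\, dt,
\]
which is precisely the hypothesis of Lemma \ref{lemma_analytic} with $C=1$ and $\nu=\mu$. That lemma then yields
\[
\int_{0}^{r} \lambda(t)\, dt \le (1-2^{-\mu/2})^{-1} (\log 2)^{1/2} \left( \int_{0}^{r} \lambda^{2}\, t\, dt \right)^{1/2} = (1-2^{-\mu/2})^{-1} (\log 2)^{1/2}\, |\varphi(D(r))|_{G}^{1/2}.
\]
To finish, I would apply Cauchy--Schwarz on each circle, $\int_{0}^{2\pi} \|d\varphi(te^{i\theta})\|_{G}\, d\theta \le (2\pi)^{1/2} \lambda(t)$, and integrate in $t$; this gives the claimed inequality with $C(\mu) = (2\pi)^{1/2}(1-2^{-\mu/2})^{-1}(\log 2)^{1/2}$.

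There is no genuine difficulty in this argument: Lemma \ref{lemma_analytic} was set up precisely for this application. The only points deserving a word of care are that the singular point $\varphi^{-1}(0)$ of $G$ has measure zero, so it can be ignored in all integrations, and that the decay constant furnished by Lemma \ref{lemma_decay_rate} is $1$, which is exactly what lets one take $C=1$ in Lemma \ref{lemma_analytic} and thereby obtain the clean constant $C(\mu)$.
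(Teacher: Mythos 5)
Your proposal is correct and follows essentially the same route as the paper: the same choice of $\lambda(t)$ as the $L^{2}$ circular average, verification of the hypothesis of Lemma \ref{lemma_analytic} with $C=1$, $\nu=\mu$ via Lemma \ref{lemma_decay_rate}, and Cauchy--Schwarz on each circle to pass between $\int_{0}^{2\pi}\|d\varphi\|_{G}\,d\theta$ and $(2\pi)^{1/2}\lambda(t)$. The only difference is cosmetic (you apply Cauchy--Schwarz at the end rather than before invoking Lemma \ref{lemma_analytic}), so nothing further is needed.
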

\begin{proof}
Define
\[
\lambda(t) = \left( \int_{0}^{2 \pi} \|d \varphi (te^{i \theta}) \|_{G}^{2} d \theta \right)^{\frac{1}{2}},
\]
then
\[
\int_{0}^{r} \lambda^{2} t dt =
\int_{0}^{r} \int_{0}^{2 \pi} \|d \varphi\|_{G}^{2} t d \theta dt
= \int_{D(r)} \|d \varphi\|_{G}^{2}.
\]

By Lemma \ref{lemma_decay_rate}, $\lambda(t)$
satisfies the assumption of Lemma \ref{lemma_analytic} together with $C=1$ and $\nu = \mu$.
Applying Lemma \ref{lemma_analytic}, we get
\begin{eqnarray*}
\int_{0}^{r} \int_{0}^{2 \pi} \|d \varphi\|_{G} d \theta dt
& \leq & \int_{0}^{r} \left( \int_{0}^{2 \pi}
\|d \varphi\|_{G}^{2} d \theta \right)^{\frac{1}{2}} (2 \pi)^{\frac{1}{2}} dt \\
& \leq & (2 \pi)^{\frac{1}{2}}
(1 - 2^{-\frac{\mu}{2}})^{-1} (\log 2)^{\frac{1}{2}}
\left( \int_{D(r)} \|d \varphi\|_{G}^{2} \right)^{\frac{1}{2}},
\end{eqnarray*}
which finishes the proof.
\end{proof}

\begin{corollary}\label{corollary_ray}
For $\varphi$ defined in (\ref{holomorphic_isomorphism}),
we have the following conclusion.

There exists a measurable subset $\Theta \subseteq [0,2 \pi]$ such that
$\Theta$ has a positive measure, and for each $\theta_{0} \in \Theta$,
the length of the curve has the bound
\[
|\varphi(\{ te^{i \theta_{0}} \mid 0 \leq t \leq r \})|_{G}
\leq (2 \pi)^{-1} C(\mu) |\varphi(D(r))|_{G}^{\frac{1}{2}},
\]
where $C(\mu)$ is the constant in Lemma \ref{lemma_derivative_area}.
\end{corollary}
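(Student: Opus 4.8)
The plan is to deduce Corollary \ref{corollary_ray} from the integral estimate of Lemma \ref{lemma_derivative_area} by means of Tonelli's theorem together with an elementary averaging argument.

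First I would record that, for all but at most one value $\theta_{0} \in [0, 2\pi]$, the radial ray $\{te^{i\theta_{0}} \mid 0 \leq t \leq r\}$ misses the point $\varphi^{-1}(0)$, so that along such a ray the conformal factor $t \mapsto \|d\varphi(te^{i\theta_{0}})\|_{G}$ is a well-defined continuous function and the corresponding image curve stays in a compact subset of $\Omega - \{0\}$. Since $\varphi$ is holomorphic and $G$ is conformal to the Euclidean metric, $d\varphi$ is conformal-linear at every point of $D - \{\varphi^{-1}(0)\}$; hence for such $\theta_{0}$ the $G$-length of the image of this radial segment is exactly
\[
|\varphi(\{ te^{i\theta_{0}} \mid 0 \leq t \leq r \})|_{G} = \int_{0}^{r} \|d\varphi(te^{i\theta_{0}})\|_{G} \, dt .
\]

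Next I would apply Tonelli's theorem to the nonnegative integrand appearing in Lemma \ref{lemma_derivative_area} and interchange the order of integration, obtaining
\[
\int_{0}^{2\pi} g(\theta) \, d\theta \leq C(\mu) \, |\varphi(D(r))|_{G}^{1/2}, \qquad g(\theta) := \int_{0}^{r} \|d\varphi(te^{i\theta})\|_{G} \, dt ,
\]
where $g$ is a nonnegative measurable function on $[0, 2\pi]$ (measurability of $g$ being part of the conclusion of Tonelli's theorem). Dividing by $2\pi$, the mean value of $g$ over $[0,2\pi]$ is at most $M := (2\pi)^{-1} C(\mu) |\varphi(D(r))|_{G}^{1/2}$.

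Finally, set $\Theta := \{\theta \in [0, 2\pi] \mid g(\theta) \leq M\}$, with the single value of $\theta$ (if any) whose ray passes through $\varphi^{-1}(0)$ deleted; this deletion does not change the measure. Then $\Theta$ has positive measure, for otherwise $g(\theta) > M$ for almost every $\theta$, which forces $\int_{0}^{2\pi} g \, d\theta > 2\pi M$ and contradicts the displayed inequality. For every $\theta_{0} \in \Theta$, the first step then gives $|\varphi(\{te^{i\theta_{0}} \mid 0 \leq t \leq r\})|_{G} = g(\theta_{0}) \leq M = (2\pi)^{-1} C(\mu) |\varphi(D(r))|_{G}^{1/2}$, which is precisely the asserted bound. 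Every step here is routine; the only point requiring a little care is the identification of the inner radial integral with the length of an image curve, which rests on the conformality of the holomorphic isomorphism $\varphi$ and on keeping the singular point of $G$ off the rays in question.
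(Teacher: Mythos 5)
Your argument is correct and is essentially the paper's own proof: identify $|\varphi(\{te^{i\theta}\mid 0\le t\le r\})|_{G}$ with the radial integral $\int_{0}^{r}\|d\varphi(te^{i\theta})\|_{G}\,dt$, swap the order of integration in Lemma \ref{lemma_derivative_area} (Tonelli, nonnegative integrand), and conclude by averaging that the set of good angles has positive measure. Your explicit exclusion of the (at most one) ray through $\varphi^{-1}(0)$ is a small extra care the paper leaves implicit, but it does not change the argument.
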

\begin{proof}
Denote $|\varphi(\{ te^{i \theta} \mid 0 \leq t \leq r \})|_{G}$ by
$\mathfrak{l}(\theta)$. Then
\[
\int_{0}^{2 \pi} \mathfrak{l}(\theta) d \theta =
\int_{0}^{2 \pi} \int_{0}^{r} \|d \varphi \|_{G} dt d \theta.
\]
Thus Lemma \ref{lemma_derivative_area} immediately implies this corollary.
\end{proof}

The above lemmas lead to the following important geometric consequence
which is motivated by lines 2-3 in \cite[p.\ 318]{gromov}. In particular,
it implies that the diameter of $F(\check{D}(r))$ is bounded in terms of
the length of $F(\partial D(r))$.

\begin{lemma}\label{lemma_diameter_boundary}
There exists a constant $C>0$ such that the following holds.

For any disk $\Omega$ in (\ref{disk_in_disk}), if $0 \notin \partial \Omega$,
then
\[
d(\Omega)_{G} \leq C |\partial \Omega|_{G},
\]
where $d(\Omega)_{G}$ is the diameter of $\Omega - \{0\}$
with respect to $G$.
\end{lemma}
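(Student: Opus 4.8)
The plan is to pass to the holomorphic isomorphism $\varphi:\overline D\to\Omega$ of (\ref{holomorphic_isomorphism}) and to prove the stronger statement that each point of $\Omega-\{0\}$ can be joined to the boundary circle $\partial\Omega=\varphi(\partial D)$ by a path in $\check D$ whose $G$-length is bounded by a fixed multiple of $|\partial\Omega|_G$. Since any two points of $\partial\Omega$ are joined along $\partial\Omega$ by a path of length $|\partial\Omega|_G$, and $0\notin\partial\Omega$, this yields $d(\Omega)_G\le C|\partial\Omega|_G$ at once. We are free to choose $\varphi$ so that, when $0\in\Omega$, the preimage $z_*:=\varphi^{-1}(0)$ has $r_*:=|z_*|>0$; then $\theta_*:=\arg z_*$ is the single forbidden radial direction, and the only place where $G$ behaves singularly inside $\Omega$ sits near $z_*$.

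First I would extract from (the proof of) Lemma \ref{lemma_derivative_area} the two facts obtained by integrating $\|d\varphi\|_G$ over $D(1)$ in the two orders: writing $\ell(\theta)=|\varphi(\{te^{i\theta}:0\le t\le1\})|_G$ and $L(s)=|\varphi(\partial D(s))|_G$, one has $\int_0^{2\pi}\ell(\theta)\,d\theta\le C(\mu)\,|\Omega|_G^{1/2}$ and $\int_0^1 L(s)\,s^{-1}\,ds\le C(\mu)\,|\Omega|_G^{1/2}$ (the second because $L(s)/s=\int_0^{2\pi}\|d\varphi(se^{i\theta})\|_G\,d\theta$), and Lemma \ref{lemma_isoperimetric_disk} converts $|\Omega|_G^{1/2}$ into a constant times $|\partial\Omega|_G$. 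By Chebyshev's inequality the set $\Theta$ of directions $\theta\neq\theta_*$ with $\ell(\theta)$ at most a fixed multiple of $|\partial\Omega|_G$ has measure as close to $2\pi$ as we wish (at the cost of enlarging the constant), and likewise, on each dyadic annulus $\{2^{-k-1}\le|z|\le2^{-k}\}$, there are plenty of radii $s$ with $L(s)$ at most a fixed multiple of $|\partial\Omega|_G$. For $\theta\in\Theta$ the radial curve $c_\theta=\varphi(\{te^{i\theta}:0\le t\le1\})$ avoids $0$, joins $\varphi(0)$ to a point of $\partial\Omega$, and has controlled length; in particular points lying on some $c_\theta$ are already handled, and $d_G(\varphi(0),\partial\Omega)$ is controlled.

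For a general point $p=\varphi(se^{i\psi})$ with $se^{i\psi}\neq z_*$, the path I would use runs along the arc of the circle $\varphi(\partial D(s))$, traversed on the side away from the parameter $\theta_*$ so as to avoid the portion of that circle lying close to $0$, until it meets a good direction $\theta_0\in\Theta$, and then along $c_{\theta_0}$ out to $\partial\Omega$. The radial leg costs a controlled amount because $\theta_0\in\Theta$. For the arc leg: when $s$ is one of the good radii, its length is at most $L(s)$, which is controlled; when $s$ is a bad radius, $p$ must lie near $0$ (so $s$ is close to $r_*$ and $\psi$ close to $\theta_*$), and one first slides $p$ to a nearby good radius using short detours in $\Theta$-directions, which exist because, by Lemma \ref{lemma_decay_rate} together with the generalized form of (\ref{gromov_formula}) applied to $\varphi$, the $G$-area $|\varphi(D(s))|_G$ and hence the radial mass on an annulus around $p$ is small; here one exploits that all points whose $\varphi$-preimage has modulus near $r_*$ are mutually $G$-close, since their images wind around $0$ at small $G$-radius (Lemma \ref{lemma_circle_zero}, Lemma \ref{lemma_metric_hyperbolic_bound}).

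Assembling, any $p_1,p_2\in\Omega-\{0\}$ are each joined to $\partial\Omega$ by a path of $G$-length $\le C_2|\partial\Omega|_G$ and then to each other along $\partial\Omega$, so $d_G(p_1,p_2)\le(2C_2+1)|\partial\Omega|_G$, which is the assertion with $C=2C_2+1$. The delicate step is the third one: the level circles $\varphi(\partial D(s))$ for $s$ near $r_*$ have large — indeed infinite when $s=r_*$ — $G$-length, so one cannot simply walk around a level circle, and the fix is to detour around the near-$0$ part of the circle and to feed in the area decay of Lemma \ref{lemma_decay_rate} so that only bounded-length corrections are incurred. All the quantitative input — the positive-measure set of cheap radial directions, the decay rate $\mu$, and the isoperimetric inequality — is already available from Corollary \ref{corollary_ray} and Lemmas \ref{lemma_isoperimetric_disk}, \ref{lemma_decay_rate} and \ref{lemma_derivative_area}.
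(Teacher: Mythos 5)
There is a genuine gap, and it sits exactly where you flag the ``delicate step.'' Your quantitative inputs (Corollary \ref{corollary_ray}, Lemma \ref{lemma_derivative_area}, Lemma \ref{lemma_isoperimetric_disk}, and the final assembly using the connectedness of $\partial \Omega$) are the same ones the paper uses, but you fix a \emph{single} holomorphic isomorphism $\varphi:\overline{D}\rightarrow\Omega$ and then must transport an arbitrary point $p=\varphi(se^{i\psi})$ angularly to a good direction $\theta_{0}\in\Theta$. The length of that arc leg is not controlled by anything available at this stage: the estimates of Lemma \ref{lemma_derivative_area} and Lemma \ref{lemma_decay_rate} only bound weighted \emph{averages} of $L(s)=|\varphi(\partial D(s))|_{G}$, so for a specific radius $s$ there is no bound, and (before the Lipschitz estimate of Section \ref{section_lipschitz_continuity}, which logically depends on this very lemma) the conformal factor of $G$ can spike on a thin region of $\Omega$ far from $0$, making $L(s)$ large while $p$ is nowhere near $0$. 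Hence your claim ``when $s$ is a bad radius, $p$ must lie near $0$'' is a non sequitur, and the companion claim that all points whose $\varphi$-preimage has modulus near $r_{*}$ are mutually $G$-close misapplies Lemma \ref{lemma_circle_zero}: that lemma concerns the circles $\partial D(r)$ centered at the puncture in the original coordinate on $\check{D}$, whereas $\varphi(\partial D(s))$ for $s$ near $r_{*}$ is a circle in $\Omega$ passing near $0$ but typically of diameter comparable to $\Omega$ itself, so most of its points are far from $0$ and need not be mutually close.

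The missing idea is that $\varphi$ in (\ref{holomorphic_isomorphism}) is an \emph{arbitrary} isomorphism, so it may be re-chosen for each point: given $z_{0}\in\Omega-\{0\}$ in the interior, take $\varphi$ with $\varphi(0)=z_{0}$. Then Corollary \ref{corollary_ray} applied to this recentered $\varphi$ produces a positive-measure set of directions $\theta_{0}$ for which the radial curve $\varphi(\{te^{i\theta_{0}}:0\le t\le 1\})$ starts at $z_{0}$, ends on $\partial\Omega$, has $G$-length at most $(2\pi)^{-1}C(\mu)|\Omega|_{G}^{1/2}$, and (since $\varphi^{-1}(0)\neq 0$ lies on only one ray) can be chosen to avoid $0$. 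Lemma \ref{lemma_isoperimetric_disk} converts $|\Omega|_{G}^{1/2}$ into a multiple of $|\partial\Omega|_{G}$, and the assembly you describe in your last paragraph then goes through verbatim. With this recentering the entire ``arc leg'' problem disappears; without it, your third step would need an argument that the available averaged estimates do not supply.
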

\begin{proof}
Let $z_{0}$ be an interior point of $\Omega$ such that $z_{0} \neq 0$.
Choose $\varphi$ in (\ref{holomorphic_isomorphism})
such that $\varphi(0) = z_{0}$.

Since $\varphi^{-1}(0) \neq 0$, by Corollary \ref{corollary_ray}, we can find a curve
$\gamma(t) = \varphi(te^{i \theta_{0}})$,
$t \in [0,1]$, such that $0 \notin \gamma([0,1])$ and its length has the bound
\[
|\gamma([0,1])|_{G} \leq
(2 \pi)^{-1} C(\mu) |\varphi(D(1))|_{G}^{\frac{1}{2}}
= (2 \pi)^{-1} C(\mu) |\Omega|_{G}^{\frac{1}{2}}.
\]
Since $0 \notin \partial \Omega$, by Lemma \ref{lemma_isoperimetric_disk}
and the above inequality, we have
\[
|\gamma([0,1])|_{G} \leq (2 \pi)^{-\frac{3}{2}} C(\mu)
\mu^{- \frac{1}{2}} |\partial \Omega|_{G}.
\]

Since $0 \notin \gamma([0,1])$,
we know that $\gamma$ is a curve in $\Omega - \{0\}$ and it connects $z_{0}$ with
$\partial \Omega$.
Thus, for every $z_{0} \in \Omega - \{0\}$, the distance between
$z_{0}$ and $\partial \Omega$ is bounded by $(2 \pi)^{-\frac{3}{2}} C(\mu)
\mu^{- \frac{1}{2}} |\partial \Omega|_{G}$.

Since $\partial \Omega$ is connected, we get
\[
d(\Omega)_{G} \leq 2 (2 \pi)^{-\frac{3}{2}} C(\mu)
\mu^{- \frac{1}{2}} |\partial \Omega|_{G} + |\partial \Omega|_{G}.
\]
This lemma is proved because the constant $\mu$ is independent of $\Omega$, .
\end{proof}

\begin{lemma}\label{lemma_continuity}
The map $F: \check{D} \rightarrow U$ has a continuous extension over $0 \in D$.
\end{lemma}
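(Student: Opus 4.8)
The plan is to show that the image $F(D(r))$ shrinks to a single point as $r\to 0$, and then to define $F(0)$ to be that point. The two key inputs are already available: Lemma \ref{lemma_diameter_boundary}, which bounds the $G$-diameter of a disk by the $G$-length of its boundary, and Lemma \ref{lemma_circle_zero}, which says the $G$-length of $\partial D(r)$ tends to $0$.

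First I would apply Lemma \ref{lemma_diameter_boundary} to the Euclidean disks $\Omega=\overline{D(r)}$ for $0<r<1$. These are of the form \eqref{disk_in_disk} with $z_0=0$, and $0\notin\partial\Omega$ since $r>0$, so the lemma gives a constant $C_0>0$ with $d(D(r))_G\le C_0\,|\partial D(r)|_G$. Combined with Lemma \ref{lemma_circle_zero} this yields $d(D(r))_G\to 0$ as $r\to 0$; in particular each $d(D(r))_G$ is finite, so any two points of $D(r)-\{0\}$ are joined by a piecewise-smooth curve in $\check D-\{0\}$ of $G$-length at most (say) $2C_0\,|\partial D(r)|_G$.

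Next I would transport $G$-distance on $\check D$ to distance on $U$ via $F$. For a piecewise-smooth curve $\gamma$ in $\check D-\{0\}$, decompose it into arcs lying in $\check D^{+}$ or $\check D^{-}$. By Lemma \ref{lemma_structure_F} we have $G|_{\check D^{+}}=F^{*}\mathrm{Re}H$ and $G|_{\check D^{-}}=F^{*}\mathrm{Re}\widetilde H$, so the $\mathrm{Re}H$-length (resp.\ $\mathrm{Re}\widetilde H$-length) of the image of each arc equals the $G$-length of that arc. Since $F(\check D)$ is relatively compact, $\mathrm{Re}H$ and $\mathrm{Re}\widetilde H$ are uniformly equivalent on the compact set $\overline{F(\check D)}$, so there is a constant $C_1>0$ with $\ell_{\mathrm{Re}H}(F\circ\gamma)\le C_1\,\ell_G(\gamma)$. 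Hence for all $z,z'\in D(r)-\{0\}$,
\[
d_{\mathrm{Re}H}\bigl(F(z),F(z')\bigr)\le 2C_0C_1\,|\partial D(r)|_G,
\]
so the $\mathrm{Re}H$-diameter of $F(D(r)-\{0\})$ tends to $0$ as $r\to 0$.

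Finally, $\overline{F(\check D)}$ is a compact, hence complete, metric space, and the closed sets $K_r:=\overline{F(D(r)-\{0\})}$ ($0<r<1$) are nonempty, nested, and have $\mathrm{Re}H$-diameter tending to $0$; therefore $\bigcap_{r>0}K_r$ consists of a single point $p_0\in\overline{F(\check D)}\subseteq U$. Setting $F(0)=p_0$, the inequality $d_{\mathrm{Re}H}(F(z),p_0)\le 2C_0C_1\,|\partial D(|z|)|_G\to 0$ shows that $F$ extends continuously over $0$. Given the earlier lemmas, there is no essential obstacle here: the only care needed is the bookkeeping across the real axis, where $F$ is merely $C^1$ and where the two metrics $\mathrm{Re}H$, $\mathrm{Re}\widetilde H$ must be reconciled — both handled by the relative compactness of $F(\check D)$. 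The genuine work, the isoperimetric control behind Lemma \ref{lemma_diameter_boundary}, is already done.
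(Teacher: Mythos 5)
Your proof is correct and follows the paper's argument step by step: bound $d(D(r))_G$ by $C|\partial D(r)|_G$ via Lemma \ref{lemma_diameter_boundary}, push this diameter estimate into $M$ using Lemma \ref{lemma_structure_F} together with the uniform equivalence of $\text{Re}H$ and $\text{Re}\widetilde{H}$ on $\overline{F(\check{D})}$, and then conclude by the Cauchy criterion since $|\partial D(r)|_G\to 0$ by Lemma \ref{lemma_circle_zero}. The only cosmetic difference is that you decompose curves into arcs lying in $\check{D}^{\pm}$, whereas one can simply compare the length integrands pointwise to get $\ell_{\text{Re}H}(F\circ\gamma)\le C_1\,\ell_G(\gamma)$; this avoids worrying about curves that cross the real axis infinitely often.
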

\begin{proof}
Since $F(\check{D})$ is relatively compact, $\text{Re}H$ and
$\text{Re}\widetilde{H}$ are equivalent on $F(\check{D})$.
By Lemma \ref{lemma_structure_F}, there exists a constant $C_{1} > 0$
such that the diameter of $F(\check{D}(r))$ with respect to $\text{Re}H$
is bounded by $C_{1} d(D(r))_{G}$, where $\check{D}(r) = D(r)-\{0\}$ and
$d(D(r))_{G}$ is the diameter of $\check{D}(r)$ with respect to $G$.

By Lemma \ref{lemma_diameter_boundary}, $d(D(r))_{G}$ is bounded by
$C |\partial D(r)|_{G}$.

By Lemma \ref{lemma_circle_zero}, $|\partial D(r)|_{G} \rightarrow 0$
when $r \rightarrow 0$. Thus the diameter of $F(\check{D}(r))$ shrinks to $0$
when $r \rightarrow 0$. Since $F(\check{D})$ is relatively compact, by Cauchy's
criterion, the limit of $F(z)$ exists when $z \rightarrow 0$, which completes
the proof.
\end{proof}

\section{Lipschitz Continuity}\label{section_lipschitz_continuity}
The goal of this section is to prove the Lipschitz continuity of $f$ near $0 \in D^{+}$.
This step is one of the main differences between a geometric proof and an analytic
proof. More precisely, we shall prove the following lemma.

\begin{lemma}\label{lemma_lipschitz}
Under the assumption of Theorem \ref{theorem_finite_area},
consider the standard Euclidean metric on $\check{D}^{+}$. Then $\|df\|$ is bounded in
$\check{D}^{+}(r)$ for some $r \in (0,1)$. In particular, f
has a Lipschitz continuous extension over $0 \in D^{+}$.
\end{lemma}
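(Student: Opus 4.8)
The plan is to improve the arguments of Section~\ref{section_continuity} by exploiting the continuity of $F$ at $0$. As there, by Observations \ref{claim_metric} and \ref{claim_embedding} we may assume $M$ carries the metric $\text{Re}H$ of Lemma \ref{lemma_hermitian}, that $f$ is an embedding, and (by Lemma \ref{lemma_converge_neighborhood}) that $f(\check D^{+})\subseteq U$ with relatively compact image, so that the doubling $F:\check D\to U$ and the conformal $C^{2}$ metric $G$ of (\ref{metric_on_disk}) are available; by Lemma \ref{lemma_continuity}, $F$ extends continuously over $0$, and we set $p_{0}=F(0)$. Writing $G=g\,(dx\otimes dx+dy\otimes dy)$, we have $g=\|df\|^{2}$ on $\check D^{+}$ by the conformality of $f$ and Lemma \ref{lemma_structure_F}, so the lemma amounts to bounding $g$ on a punctured disk about $0$. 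Put $A(\rho)=|D(\rho)|_{G}$ and $L(\rho)=|\partial D(\rho)|_{G}$. Since the total $G$-area is finite, $A$ is continuous on $[0,1]$ with $A(0)=0$ by absolute continuity of the integral, and the conformality of $G$ together with the generalization of (\ref{gromov_formula}) recorded in Section \ref{section_set_up} gives $\dot A(\rho)\ge(2\pi\rho)^{-1}L(\rho)^{2}$.

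The heart of the proof is to replace the isoperimetric inequality of Lemma \ref{lemma_isoperimetric_disk}, whose constant $2\pi\mu$ need not be good, by one with the \emph{sharp} leading constant. For a disk $\Omega$ of the form (\ref{disk_in_disk}) with $0\notin\overline\Omega$ this is immediate from Lemma \ref{lemma_gauss} and the classical curvature isoperimetric inequality already used to prove Lemma \ref{lemma_isoperimetric_gauss}: there is a constant $C>0$ with $|\partial\Omega|_{G}^{2}\ge4\pi\bigl(|\Omega|_{G}-C|\Omega|_{G}^{2}\bigr)$. For the radial disks $D(\rho)$, which contain the puncture, I would establish the same inequality for $\rho$ small by exhausting $D(\rho)\setminus D(\epsilon)$ by annuli and, since $|\partial D(\epsilon)|_{G}\to 0$ as $\epsilon\to 0$ by Lemma \ref{lemma_metric_hyperbolic_bound} and $F(\partial D(\epsilon))$ lies in a coordinate ball about $p_{0}$ by Lemma \ref{lemma_continuity}, capping off the inner circle by a disk of vanishingly small area and bounded curvature; passing to the limit $\epsilon\to 0$ recovers $L(\rho)^{2}\ge4\pi\bigl(A(\rho)-CA(\rho)^{2}\bigr)$. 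This is the promised refinement of the isoperimetric inequalities of Section \ref{section_continuity}.

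Feeding this into $\dot A\ge(2\pi\rho)^{-1}L^{2}$ gives $\dot A(\rho)\ge2\rho^{-1}A(\rho)-2C\rho^{-1}A(\rho)^{2}$ for $\rho$ small; since $A(\rho)\to 0$, the function $u=1/A$ then satisfies $\dot u\le-2\rho^{-1}(u-C)$, hence $u(\rho)-C\ge(u(\rho_{1})-C)(\rho_{1}/\rho)^{2}$ for $\rho\le\rho_{1}$, that is $A(\rho)\le C_{1}\rho^{2}$ for all small $\rho$ with $C_{1}$ a fixed constant; note that the sharpness of the constant $4\pi$ is exactly what produces the exponent $2$. To pass to a pointwise bound, fix $z_{0}$ with $|z_{0}|$ small, set $\Omega_{0}=D(z_{0},|z_{0}|/2)$ (which does not meet $0$), and take a holomorphic isomorphism $\varphi:\overline D\to\Omega_{0}$ with $\varphi(0)=z_{0}$, so that $|\varphi'(0)|=|z_{0}|/2$ and $\|d\varphi(0)\|_{G}=\sqrt{g(z_{0})}\,|z_{0}|/2$. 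Every $\varphi(D(t))$ avoids $0$, so the sharp isoperimetric inequality applies to it; running the same ODE argument on $[0,1]$ with $A_{\varphi}(1)=|\Omega_{0}|_{G}\le A(2|z_{0}|)\le4C_{1}|z_{0}|^{2}$ gives $A_{\varphi}(t)\le2t^{2}A_{\varphi}(1)$, whence, as in the proof of Lemma \ref{lemma_metric_hyperbolic_bound}, $\|d\varphi(0)\|_{G}^{2}=\lim_{t\to0}(\pi t^{2})^{-1}A_{\varphi}(t)\le2\pi^{-1}A_{\varphi}(1)\le8\pi^{-1}C_{1}|z_{0}|^{2}$. Comparing the two expressions for $\|d\varphi(0)\|_{G}$ yields $g(z_{0})\le32\pi^{-1}C_{1}$, independently of $z_{0}$; so $\|df\|$ is bounded on a punctured disk about $0$, and, the continuous extension already being in hand, the extension is Lipschitz.

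The main obstacle is the second paragraph: one must obtain the genuinely sharp constant $4\pi$ — it is this, and not merely a constant $4\pi-\varepsilon$, that forces the decay exponent to equal $2$; with a smaller exponent $(\pi t^{2})^{-1}A_{\varphi}(t)$ need not stay bounded as $t\to0$ and the argument collapses. This compels the use of the curvature-corrected form $L^{2}\ge4\pi(A-CA^{2})$ and, in the radial case, of a careful treatment of this inequality across the finite-area singularity at $0$, which is precisely where the continuity of $F$ and the decay of $|\partial D(\epsilon)|_{G}$ are needed. Everything else is a repackaging of the monotonicity arguments of Sections \ref{section_reduction_lemma} and \ref{section_continuity}.
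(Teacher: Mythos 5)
Your overall architecture (reduce to bounding the conformal factor $g$ of $G$, get quadratic decay of areas via an isoperimetric inequality with leading constant $4\pi$ fed into (\ref{gromov_formula}), then a pointwise bound via small disks $D(z_{0},|z_{0}|/2)$ avoiding the puncture) is sound, and for disks $\Omega$ with $0\notin\overline{\Omega}$ the inequality $|\partial\Omega|_{G}^{2}\geq 4\pi\bigl(|\Omega|_{G}-C|\Omega|_{G}^{2}\bigr)$ does follow from Lemma \ref{lemma_gauss} and Barbosa--do Carmo, exactly as in Lemma \ref{lemma_isoperimetric_gauss}. But the step you yourself flag as the main obstacle --- the same sharp inequality for the radial disks $D(\rho)$ containing the puncture --- is a genuine gap, and it is the heart of the lemma, not a technicality. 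Your capping argument is not justified: the classical curvature isoperimetric inequality requires an (immersed) disk with a $C^{2}$ metric of curvature bounded above, and to glue an \emph{intrinsic} cap onto the annulus $D(\rho)\setminus D(\epsilon)$ without destroying the upper curvature bound you would need pointwise control of the geodesic curvature of $\partial D(\epsilon)$ in the metric $G$ (otherwise positive curvature concentrates on the seam); the only thing you control is its \emph{length}, via Lemma \ref{lemma_circle_zero}. The fact that $F(\partial D(\epsilon))$ lies in a small coordinate ball about $p_{0}$ only lets you cone it off \emph{extrinsically} in $M$ (as the paper does), and such an extrinsic cap cannot be inserted into the intrinsic Gauss-curvature inequality on $(\check{D},G)$. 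Note also what is really at stake: if the completion of $(\check D,G)$ at $0$ were a cone point of angle $\theta<2\pi$, then for the radial disks $L(\rho)^{2}/A(\rho)\to 2\theta<4\pi$ and your inequality is false; ruling out concentrated positive curvature at the puncture is essentially equivalent to the quadratic decay you are trying to prove, so it cannot be obtained by a soft limiting argument.

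The paper takes a different route precisely to avoid this: it never proves a sharp intrinsic inequality across the puncture. Instead, using the extrinsic doubling $F$, Gromov's trick of comparing with the \emph{constant} forms $-\text{Im}H_{p_{0}}$, $-\text{Im}\widetilde{H}_{p_{0}}$ (Lemma \ref{lemma_lipschitz_form}, with Lemma \ref{lemma_structrue_in_u} to make the two constants agree at a point $p_{0}\in W$), the cone construction plus Wirtinger's inequality, and a Stokes argument across the puncture that only needs $|\partial D(\epsilon)|_{G}\to 0$ (as in Lemma \ref{lemma_form_integral}), it obtains the almost-sharp inequality $|\partial\Omega|_{G}^{2}\geq 4\pi(1-C\,d(\Omega)_{\mathrm{Eu}})|\Omega|_{G}$, hence $4\pi(1-C|\partial\Omega|_{G})|\Omega|_{G}$ by Lemma \ref{lemma_diameter_boundary}, valid for disks \emph{containing} the puncture (Lemmas \ref{lemma_isoperimetric_diameter}, \ref{lemma_isoperimetric_boundary}). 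The multiplicative error is then absorbed in $A(r)\leq r^{2}A(1)\exp\bigl(2C_{1}\int_{r}^{1}t^{-1}L\,dt\bigr)$, the exponential being finite by Lemma \ref{lemma_derivative_area}. If you want to keep your purely intrinsic scheme you must either supply a bona fide construction of caps compatible with an upper curvature bound (which requires information you do not have), or replace your second paragraph by an argument of the paper's type that handles the puncture through Stokes and the decay of circle lengths rather than through the intrinsic curvature inequality.
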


As seen from the above lemma, the main task in this section is to estimate derivatives.
We shall apply an argument similar to the proof of Lemma \ref{lemma_bound_disk_covering}.
Typically, we need an isoperimetric inequality stronger than
Lemma \ref{lemma_isoperimetric_disk}.

In order to get such an inequality, we need another
simple and powerful observation due to Gromov \cite[1.3.B]{gromov}: An almost complex
manifold is locally tamed by an exact form which is ``almost" compatible with the
almost complex structure. The idea is as follows. Let $H$ be a Hermitian metric on
$(M,J)$. Denote by $H_{p}$ the Hermitian at $p$, where $p \in B$ and $B$ is contained in a
coordinate chart. Fixing $p$, by using the coordinate in $B$,
we can also consider $-\text{Im}H_{p}$ as a differential form in $B$.
As a constant form, $-\text{Im}H_{p}$ is exact in $B$. Since
$-\text{Im}H_{p}$ is compatible with $J$ at $p$, by the continuity of $J$, the form
$-\text{Im}H_{p}$ tames and is even ``almost" compatible with $J$ near $p$.

\begin{remark}\label{remark_monotone}
The comment below Lemma \ref{lemma_monotone} mentions that $J$ is locally tamed
by exact forms. One can use the above construction to obtain these exact forms.
\end{remark}

In this section, we use the same assumption as in Section \ref{section_continuity}.
Therefore, every result in Section \ref{section_continuity} can be used now.

By Lemma \ref{lemma_continuity}, we know that $F(0)$ is defined and $F(0)$ is in $U$.
We can choose in a chart a neighborhood $B \subseteq U$ of $F(0)$, which is
identified through the coordinates with a closed ball in $\mathbb{R}^{2n}$.

Denote by $J_{p}$, $\widetilde{J}_{p}$, $H_{p}$ and $\widetilde{H}_{p}$ the almost complex structures
or Hermitians at $p \in B$. They are functions on $B$ whose values are matrices (operators and bilinear forms).
Since $B \subseteq \mathbb{R}^{2n}$, the tangent spaces $T_{p}B$ and $T_{q}B$ are
both naturally identified with $\mathbb{R}^{2n}$ for $q \in B$. Therefore,
$J_{p}$, $\widetilde{J}_{p}$, $H_{p}$ and $\widetilde{H}_{p}$ are also defined
on $T_{q}B$.

Denote the Euclidean metric on $\mathbb{R}^{2n}$ by $\langle \cdot, \cdot
\rangle_{\text{Eu}}$ and $\| \cdot \|_{\text{Eu}}$.

We consider $H_{p}(v, J_{q}v)$ and $\widetilde{H}_{p}(v, \widetilde{J}_{q}v)$
as smooth functions of $(p,q,v)$, where $v \in \mathbb{R}^{2n}$.
Since $-\text{Im}H_{p}(v, J_{p} v) > 0$
and $-\text{Im}\widetilde{H}_{p}(v, \widetilde{J}_{p} v) > 0$ for $v \neq 0$,
we can choose $B$ so small that
\[
-\text{Im}H_{p}(v, J_{q} v) \geq C_{0} \langle v, v \rangle_{\text{Eu}}
\]
and
\[
-\text{Im}\widetilde{H}_{p}(v, \widetilde{J}_{q} v) \geq C_{0} \langle v, v \rangle_{\text{Eu}}
\]
for some constant $C_{0}>0$ and all $p$ and $q$ in $B$.

\begin{lemma}\label{lemma_lipschitz_form}
There exists a constant $C>0$ such that, for all $p$, $q \in B$
and $v \in \mathbb{R}^{2n}$, we have
\[
-\text{Im}H_{q} (v, J_{q} v) \leq (1 + C \|p-q\|_{\text{Eu}})
(-\text{Im}H_{p}) (v, J_{q} v),
\]
\[
-\text{Im}\widetilde{H}_{q} (v, \widetilde{J}_{q} v) \leq (1 + C \|p-q\|_{\text{Eu}})
(-\text{Im}\widetilde{H}_{p}) (v, \widetilde{J}_{q} v),
\]
\[
\text{Re}H_{q} (v, v) \leq (1 + C \|p-q\|_{\text{Eu}})
\text{Re}H_{p} (v, v),
\]
and
\[
\text{Re}\widetilde{H}_{q} (v, v) \leq (1 + C \|p-q\|_{\text{Eu}})
\text{Re}\widetilde{H}_{p} (v, v).
\]
\end{lemma}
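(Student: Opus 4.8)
The statement is a purely local, finite-dimensional estimate: it says that on a small enough coordinate ball $B$, the quadratic forms $v \mapsto -\mathrm{Im}\,H_q(v,J_qv)$ and $v \mapsto \mathrm{Re}\,H_q(v,v)$ (and their tilde versions) vary Lipschitz-continuously in the base point $q$, \emph{relative to their own positive lower bounds}. The plan is to reduce all four inequalities to a single abstract lemma about a smooth family of positive-definite quadratic forms on $\mathbb{R}^{2n}$ and then apply it four times. Concretely, for fixed $q\in B$ consider the two smooth maps $p\mapsto Q^{(1)}_p:= -\mathrm{Im}\,H_p(\cdot,J_q\cdot)$ and $p\mapsto Q^{(2)}_p:=\mathrm{Re}\,H_p(\cdot,\cdot)$ from $B$ into the space of symmetric bilinear forms on $\mathbb{R}^{2n}$. (We use the identification of all $T_pB$ with $\mathbb{R}^{2n}$ fixed in the paragraph preceding the lemma, which makes ``$H_p$ evaluated on $T_qB$'' meaningful.) Each $Q^{(i)}_p$ is positive definite: for $Q^{(1)}$ this is exactly the choice of $B$ made just before the lemma, namely $-\mathrm{Im}\,H_p(v,J_qv)\ge C_0\|v\|_{\mathrm{Eu}}^2$; for $Q^{(2)}$ it is because $\mathrm{Re}\,H$ is a Riemannian metric and $\overline{B}$ is compact.

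The abstract step I would isolate is: \emph{if $p\mapsto Q_p$ is a $C^1$ family of symmetric bilinear forms on a compact convex set $K\subseteq\mathbb{R}^{2n}$ with $Q_p(v,v)\ge c\|v\|_{\mathrm{Eu}}^2$ for all $p\in K$ and some $c>0$, then there is $C>0$ with $Q_q(v,v)\le(1+C\|p-q\|_{\mathrm{Eu}})\,Q_p(v,v)$ for all $p,q\in K$ and all $v$.} Its proof is a one-line computation: write $Q_q(v,v)-Q_p(v,v)=(Q_q-Q_p)(v,v)$, bound $|(Q_q-Q_p)(v,v)|\le L\|p-q\|_{\mathrm{Eu}}\|v\|_{\mathrm{Eu}}^2$ where $L:=\sup_{p\in K}\|dQ_p\|$ is finite by $C^1$-ness and compactness (mean value inequality along the segment from $p$ to $q$, which stays in $K$ by convexity), and then replace $\|v\|_{\mathrm{Eu}}^2$ by $c^{-1}Q_p(v,v)$. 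This gives the claimed inequality with $C=L/c$. Applying this to $Q^{(1)}$ with $c=C_0$ yields the first inequality of the lemma, and applying it to $Q^{(2)}$ with $c=$ the ellipticity constant of $\mathrm{Re}\,H$ on $\overline{B}$ yields the third. The second and fourth inequalities follow verbatim by the same argument with $H,J$ replaced by $\widetilde H,\widetilde J$, using the lower bound $-\mathrm{Im}\,\widetilde H_p(v,\widetilde J_qv)\ge C_0\|v\|_{\mathrm{Eu}}^2$ that was also arranged in the choice of $B$, and the fact that $\mathrm{Re}\,\widetilde H=\tau^*(\mathrm{Re}\,H)$ is a Riemannian metric. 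Finally one takes $C$ to be the maximum of the four constants produced.

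There is no real obstacle here; the only thing to be careful about is bookkeeping. The two mild points worth stating explicitly in the writeup are: (i) that shrinking $B$ to a ball means $B$ is convex, so the segment joining any $p,q\in B$ lies in $B$ and the mean value inequality applies; and (ii) that in $-\mathrm{Im}\,H_p(\cdot,J_q\cdot)$ the \emph{same} endomorphism $J_q$ sits in both slots of the difference $Q^{(1)}_q-Q^{(1)}_p$, so only the dependence of $H$ on the base point is differentiated — which is why the positivity hypothesis we need is precisely the ``mixed'' bound $-\mathrm{Im}\,H_p(v,J_qv)\ge C_0\|v\|_{\mathrm{Eu}}^2$ prepared before the lemma rather than the diagonal taming bound. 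With these two remarks, each of the four inequalities is immediate from the abstract step, and the proof is complete.
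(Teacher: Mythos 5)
Your proposal is correct and follows essentially the same route as the paper: both arguments use the boundedness of $d_p(\mathrm{Im}\,H_p)$ and of $\|J_q\|_{\mathrm{Eu}}$ on the compact convex ball $B$, a mean value / fundamental theorem of calculus estimate along the segment from $p$ to $q$, and the taming lower bound $-\mathrm{Im}\,H_p(v,J_qv)\ge C_0\|v\|_{\mathrm{Eu}}^2$ to convert the Euclidean error into a multiplicative factor. The only difference is that you package the computation as a reusable abstract lemma about $C^1$ families of uniformly positive-definite quadratic forms, whereas the paper carries out the same calculation directly for the first inequality and says the others are similar.
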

\begin{proof}
Since $B$ is compact, the derivative of $-\text{Im}H_{p}$
with respect to $p$ is bounded on $B$. Since $B$ is also convex,
applying the fundamental theorem of calculus to $-\text{Im}H_{p}$,
we infer that there exists a constant $C_{1} > 0$ such that
\[
|\text{Im}H_{p} (v, J_{q} v) - \text{Im}H_{q} (v, J_{q} v)|
\leq C_{1} \|p-q\|_{\text{Eu}} \|J_{q}\|_{\text{Eu}}
\|v\|_{\text{Eu}}^{2}.
\]
Clearly, $\|J_{q}\|_{\text{Eu}}$ is also bounded on $B$.

As mentioned above,
\[
-\text{Im}H_{p}(v, J_{q} v) \geq C_{0} \|v\|_{\text{Eu}}^{2}.
\]

Therefore, there exists a constant $C>0$ such that
\[
|\text{Im}H_{p} (v, J_{q} v) - \text{Im}H_{q} (v, J_{q} v)|
\leq C \|p-q\|_{\text{Eu}} (-\text{Im}H_{p}) (v, J_{q} v).
\]
and
\begin{eqnarray*}
-\text{Im}H_{q} (v, J_{q} v)
& \leq & -\text{Im}H_{p} (v, J_{q} v) +
|\text{Im}H_{p} (v, J_{q} v) - \text{Im}H_{q} (v, J_{q} v)| \\
& \leq & (1 + C \|p-q\|_{\text{Eu}}) (-\text{Im}H_{p}) (v, J_{q} v),
\end{eqnarray*}
which proves the first inequality.

By similar arguments, one can prove the other inequalities.
\end{proof}

By Lemma \ref{lemma_continuity}, $F$ is continuous.
Since $B$ is a neighborhood of $F(0)$,
there exists $r_{0}>0$ such that $F(D(r_{0})) \subseteq B$ and
the diameter of $F(D(r_{0}))$ with respect to $\|\cdot\|_{\text{Eu}}$
is less than $1$.

We shall improve Lemma \ref{lemma_isoperimetric_disk} to be
the following lemma on disks inside $D(r_{0})$
(compare \cite[p.\ 317, (12)]{gromov}). The proof is a refinement of
that of Lemma \ref{lemma_isoperimetric_disk}. Besides Gromov's idea described
above, Lemma \ref{lemma_structrue_in_u} needs to be fully employed.

\begin{lemma}\label{lemma_isoperimetric_diameter}
There exists a constant $C>0$ such that the following holds.
Suppose $\Omega$ is a disk in (\ref{disk_in_disk}) such that
$\Omega \subseteq D(r_{0})$ and $0 \notin \partial \Omega$, then
\[
|\partial \Omega|_{G}^{2} \geq 4 \pi (1 - C d(\Omega)_{\text{Eu}}) |\Omega|_{G},
\]
where $d(\Omega)_{\text{Eu}}$ is the diameter of $F(\Omega-\{0\})$ with respect
to $\| \cdot \|_{\text{Eu}}$.
\end{lemma}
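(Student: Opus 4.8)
The plan is to re-run the argument of Lemma~\ref{lemma_isoperimetric_disk} with the taming form $\alpha_{0}$ replaced by the \emph{constant} exact form furnished by Gromov's observation recalled just before Lemma~\ref{lemma_lipschitz_form}: since such a form is \emph{almost} compatible with the almost complex structure near its base point, the cone estimate of Lemma~\ref{lemma_cone_isoperimetric} produces the sharp constant $4\pi$ up to a factor $1+Cd(\Omega)_{\text{Eu}}$. Fix $\Omega$ as in (\ref{disk_in_disk}) with $\Omega\subseteq D(r_{0})$ and $0\notin\partial\Omega$. If $\Omega\subseteq D^{+}$, choose $z_{0}\in\Omega$ and put $p=F(z_{0})$, $\omega_{p}=-\text{Im}H_{p}$; symmetrically for $\Omega\subseteq D^{-}$, using $\widetilde{H},\widetilde{J}$ and $\omega_{p}=-\text{Im}\widetilde{H}_{p}$. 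If $\Omega$ meets $\partial\check{D}^{+}$, then since $0\notin\partial\Omega$ its real trace is a nondegenerate arc, so we may take a real $z_{0}\in\Omega$ with $z_{0}\neq 0$ and put $p=f(z_{0})\in W$, $\omega_{p}=-\text{Im}H_{p}=-\text{Im}\widetilde{H}_{p}$ (these agree at $p\in W$ by Lemma~\ref{lemma_structrue_in_u}). In all cases $p\in F(\Omega-\{0\})\subseteq B$, hence $\|p-q\|_{\text{Eu}}\leq d(\Omega)_{\text{Eu}}$ for every $q\in F(\Omega-\{0\})$. The form $\omega_{p}$ is a constant $2$-form on $B\subseteq\mathbb{R}^{2n}$, compatible with $J_{p}$ (so $\omega_{p}(v,J_{p}v)=\text{Re}H_{p}(v,v)$), and by the inequalities fixed just before Lemma~\ref{lemma_lipschitz_form} the relevant one of $\omega_{p}(v,J_{q}v)>0$, $\omega_{p}(v,\widetilde{J}_{q}v)>0$ holds on $B$, so $F^{*}\omega_{p}$ is a positive area form on $\Omega-\{0\}$; being constant, $\omega_{p}=d\beta_{p}$ with $\beta_{p}$ linear, vanishing at $p$, and bounded on $B$.

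Next I would reduce $|\Omega|_{G}$ to a boundary integral of $\beta_{p}$. Since $F$ is $J$-holomorphic on $\check{D}^{+}$ and $\widetilde{J}$-holomorphic on $\check{D}^{-}$, Lemma~\ref{lemma_structure_F} and Lemma~\ref{lemma_lipschitz_form} give
\[
|\Omega|_{G}=\int_{\Omega\cap\check{D}^{+}}F^{*}(-\text{Im}H)+\int_{\Omega\cap\check{D}^{-}}F^{*}(-\text{Im}\widetilde{H})\leq (1+Cd(\Omega)_{\text{Eu}})\int_{\Omega}F^{*}\omega_{p}.
\]
Then $\int_{\Omega}F^{*}\omega_{p}=\int_{\partial\Omega}F^{*}\beta_{p}$: when $0\notin\Omega$ this is Stokes on $\Omega\cap\check{D}^{\pm}$, the line integrals along the real axis cancelling because $F^{*}\beta_{p}$ is continuous (only continuity of $\beta_{p}$ and of $F$ is used); when $0$ is interior to $\Omega$ one removes $D(\varepsilon)$, uses $\int_{\partial D(\varepsilon)}F^{*}\beta_{p}\to 0$ via Lemma~\ref{lemma_circle_zero} and boundedness of $\beta_{p}$, and passes to the limit by monotone convergence. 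Both are verbatim the proof of Lemma~\ref{lemma_form_integral}, now with the pair $(\omega_{p},\beta_{p})$ in place of $(d\alpha_{0},\alpha_{0})$.

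For the sharp cone step, let $\gamma:S^{1}\to F(\partial\Omega)$ be a piecewise smooth immersed reparametrization, smooth on the arcs lying in $F(\check{D}^{+})$ and in $F(\check{D}^{-})$, and build the cone $K(\gamma(S^{1}))$ with vertex the $\text{Re}H_{p}$-center of mass, i.e.\ (\ref{mass_center}) taken for the inner product $\text{Re}H_{p}$, parametrized by $\xi$ as in (\ref{lemma_isoperimetric_disk_2}); $B$ being convex, $\xi(\overline{D})\subseteq B$. Arguing as in the $C^{1}$ part of the proof of Lemma~\ref{lemma_isoperimetric_disk} — splitting $\overline{D}$ along a ray into $Q^{\pm}$ and cancelling on $\partial D(\tfrac12)$, where $\xi^{*}\beta_{p}=0$ — Stokes gives $\int_{\partial\Omega}F^{*}\beta_{p}=\int_{F(\partial\Omega)}\beta_{p}=\int_{\xi(\overline{D})}\omega_{p}$, so by Wirtinger's inequality for the $J_{p}$-compatible constant form $\omega_{p}$ and by Lemma~\ref{lemma_cone_isoperimetric} in the inner product space $(\mathbb{R}^{2n},\text{Re}H_{p})$,
\[
\int_{\partial\Omega}F^{*}\beta_{p}=\int_{\xi(\overline{D})}\omega_{p}\leq |\xi(\overline{D})|_{\text{Re}H_{p}}\leq (4\pi)^{-1}|F(\partial\Omega)|_{\text{Re}H_{p}}^{2}.
\]
Finally, the two-sided form of the last two inequalities of Lemma~\ref{lemma_lipschitz_form} makes $\text{Re}H$ and $\text{Re}\widetilde{H}$ along $F(\partial\Omega)$ comparable to the constant metric $\text{Re}H_{p}$ up to $1+Cd(\Omega)_{\text{Eu}}$ (using $\text{Re}\widetilde{H}_{p}=\text{Re}H_{p}$ from Lemma~\ref{lemma_structrue_in_u} when $p\in W$), so $|F(\partial\Omega)|_{\text{Re}H_{p}}^{2}\leq (1+Cd(\Omega)_{\text{Eu}})|\partial\Omega|_{G}^{2}$. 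Chaining the three displays yields $|\Omega|_{G}\leq (4\pi)^{-1}(1+Cd(\Omega)_{\text{Eu}})|\partial\Omega|_{G}^{2}$, i.e.\ $|\partial\Omega|_{G}^{2}\geq 4\pi(1-C'd(\Omega)_{\text{Eu}})|\Omega|_{G}$ once $d(\Omega)_{\text{Eu}}$ is small; for larger $d(\Omega)_{\text{Eu}}$ (which stays $<1$ on $D(r_{0})$) one enlarges $C'$ so that the right-hand side is $\leq 0$, and the inequality holds with a uniform constant.

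I expect the main difficulty to be organizational rather than conceptual. The conceptual point is that to glue the halves $\check{D}^{\pm}$ while keeping a multiplicative error $1+O(d(\Omega)_{\text{Eu}})$, the constant form must be based at a point of $W$, exploiting $J=\widetilde{J}$ and $H=\widetilde{H}$ on $TU|_{W}$ (Lemma~\ref{lemma_structrue_in_u}); this is available exactly because, when $\Omega$ straddles the real axis and $0\notin\partial\Omega$, its real trace is a nondegenerate arc mapping into $W$. The technical obstacle is the three simultaneous cuttings forced by $F$ being only $C^{1}$ — along the real axis inside $\Omega$, around the puncture $0$, and along a ray inside the cone's parameter disk — which must be arranged so that every spurious boundary term cancels and all the errors combine into the single factor $1+Cd(\Omega)_{\text{Eu}}$.
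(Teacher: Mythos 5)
Your proposal is correct and follows essentially the same route as the paper's proof: fix a point $p$ in $F(\Omega-\{0\})\cap W$ when $\Omega$ straddles the real axis (using that its real trace is a nondegenerate arc once $0\notin\partial\Omega$), replace the true Hermitian data by the constant compatible form $-\text{Im}H_{p}=-\text{Im}\widetilde{H}_{p}$ with linear primitive, pass to the boundary via the Stokes argument of Lemma~\ref{lemma_form_integral}, apply the cone isoperimetric inequality with vertex at the $\text{Re}H_{p}$-center of mass plus Wirtinger, and track the errors from Lemma~\ref{lemma_lipschitz_form} as factors $1+O(d(\Omega)_{\text{Eu}})$. Two small remarks: your chained bound should read $|\Omega|_{G}\leq (4\pi)^{-1}(1+Cd)^{2}|\partial\Omega|_{G}^{2}$ rather than with a single factor $(1+Cd)$, but this changes nothing since $(1+x)^{-2}\geq 1-2x$ for all $x\geq 0$, so no smallness of $d(\Omega)_{\text{Eu}}$ is actually needed (the paper instead uses $d(\Omega)_{\text{Eu}}<1$ on $D(r_{0})$ to absorb the quadratic term); and the two-sided use of Lemma~\ref{lemma_lipschitz_form} for the boundary length comparison is justified simply by swapping the roles of $p$ and $q$ in its statement, which holds for all pairs in $B$.
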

\begin{proof}
We only give a detailed proof for the case that $\Omega$ is contained in neither
$D^{+}$ nor $D^{-}$. The proof for the remaining case is similar and even easier.

Define $\Omega^{+} = \Omega \cap \check{D}^{+}$
and $\Omega^{-} = \Omega \cap \check{D}^{-}$. By Lemma \ref{lemma_structure_F}, we get
\[
|\Omega|_{G} = \int_{F(\Omega^{+})} -\text{Im}H
+ \int_{F(\Omega^{-})} -\text{Im}\widetilde{H}.
\]

Choose an arbitrary point $p_{0} \in F(\Omega-\{0\}) \cap W$ and fix it. By the first
two inequalities of Lemma \ref{lemma_lipschitz_form}, there exists a constant $C_{1}>0$
such that
\begin{eqnarray*}
& & \int_{F(\Omega^{+})} -\text{Im}H
+ \int_{F(\Omega^{-})} -\text{Im}\widetilde{H} \\
& \leq & (1 + C_{1} d(\Omega)_{\text{Eu}})
\left( \int_{F(\Omega^{+})} -\text{Im}H_{p_{0}}
+ \int_{F(\Omega^{+})} -\text{Im}\widetilde{H}_{p_{0}} \right).
\end{eqnarray*}
Since $p_{0} \in W$, by Lemma \ref{lemma_structrue_in_u}, we get
$-\text{Im}H_{p_{0}} = -\text{Im}\widetilde{H}_{p_{0}}$. Therefore,
\begin{equation}\label{lemma_isoperimetric_diameter_1}
|\Omega|_{G} \leq (1 + C_{1} d(\Omega)_{\text{Eu}})
\int_{F(\Omega)} -\text{Im}H_{p_{0}}.
\end{equation}

Since $-\text{Im}H_{p_{0}}$ is a constant form in $B$, it has a primitive form
$\alpha_{p_{0}}$. Since $0 \notin \partial \Omega$, similar to the proof of
(\ref{lemma_form_integral_2}), we get
\begin{equation}\label{lemma_isoperimetric_diameter_2}
\int_{F(\Omega)} -\text{Im}H_{p_{0}} = \int_{F(\partial \Omega)} \alpha_{p_{0}}.
\end{equation}

Similar to the proof of Lemma \ref{lemma_isoperimetric_disk}, we construct the cone $K$
described in Section \ref{section_set_up}.
The boundary of $K$ is $F(\partial \Omega)$. As in (\ref{mass_center}), the vertex of $K$
is the center of mass of $F(\partial \Omega)$ with respect to $\text{Re}H_{p_{0}}$.
Construct a map $\xi: \overline{D} \rightarrow K$ as in (\ref{lemma_isoperimetric_disk_2}).
Then by Lemma \ref{lemma_cone_isoperimetric}, we get
\begin{equation}\label{lemma_isoperimetric_diameter_3}
|\xi(\overline{D})|_{\text{Re}H_{p_{0}}} \leq (4 \pi)^{-1}
|F(\partial \Omega)|_{\text{Re}H_{p_{0}}}^{2}.
\end{equation}
Similar to the argument in Lemma \ref{lemma_isoperimetric_disk}, we also get
\begin{equation}\label{lemma_isoperimetric_diameter_4}
\int_{F(\partial \Omega)} \alpha_{p_{0}}
= \int_{\xi(\overline{D})} d \alpha_{p_{0}}
= \int_{\xi(\overline{D})} -\text{Im}H_{p_{0}}.
\end{equation}

Since $-\text{Im}H_{p_{0}}$ is compatible with $J_{p_{0}}$ and $\text{Re}H_{p_{0}}$,
we have Wirtinger's inequality
\[
|-\text{Im}H_{p_{0}} (v_{1}, v_{2})| \leq \|v_{1}\|_{\text{Re}H_{p_{0}}}
\|v_{2}\|_{\text{Re}H_{p_{0}}}.
\]
Applying Wirtinger's inequality, we get
\begin{equation}\label{lemma_isoperimetric_diameter_5}
\int_{\xi(\overline{D})} -\text{Im}H_{p_{0}} \leq |\xi(\overline{D})|_{\text{Re}H_{p_{0}}}.
\end{equation}

By the third and fourth inequalities in Lemma \ref{lemma_lipschitz_form},
for the same constant $C_{1}$ as the above, we have
\begin{eqnarray*}
|F(\partial \Omega \cap \check{D}^{+})|_{\text{Re}H_{p_{0}}} & \leq &
(1 + C_{1} d(\Omega)_{\text{Eu}})^{\frac{1}{2}}
|F(\partial \Omega \cap \check{D}^{+})|_{\text{Re}H} \\
& = & (1 + C_{1} d(\Omega)_{\text{Eu}})^{\frac{1}{2}}
|\partial \Omega \cap \check{D}^{+}|_{G}.
\end{eqnarray*}
Similarly,
\[
|F(\partial \Omega \cap \check{D}^{-})|_{\text{Re}\widetilde{H}_{p_{0}}}
\leq (1 + C_{1} d(\Omega)_{\text{Eu}})^{\frac{1}{2}}
|\partial \Omega \cap \check{D}^{-}|_{G}.
\]
By Lemma \ref{lemma_structrue_in_u} again and the above two inequalities, we get
\begin{eqnarray}\label{lemma_isoperimetric_diameter_6}
|F(\partial \Omega)|_{\text{Re}H_{p_{0}}} & = &
|F(\partial \Omega \cap \check{D}^{+})|_{\text{Re}H_{p_{0}}}
+ |F(\partial \Omega \cap \check{D}^{-})|_{\text{Re}\widetilde{H}_{p_{0}}} \\
& \leq & (1 + C_{1} d(\Omega)_{\text{Eu}})^{\frac{1}{2}}
|\partial \Omega|_{G}. \nonumber
\end{eqnarray}

Combining (\ref{lemma_isoperimetric_diameter_1}), (\ref{lemma_isoperimetric_diameter_2}),
(\ref{lemma_isoperimetric_diameter_4}), (\ref{lemma_isoperimetric_diameter_5}),
(\ref{lemma_isoperimetric_diameter_3}) and (\ref{lemma_isoperimetric_diameter_6}),
we get
\[
|\Omega|_{G} \leq (1 + C_{1} d(\Omega)_{\text{Eu}})^{2} (4 \pi)^{-1}
|\partial \Omega|_{G}^{2}.
\]
Here
\[
(1 + C_{1} d(\Omega)_{\text{Eu}})^{2}
= 1 + 2 C_{1} d(\Omega)_{\text{Eu}} + C_{1}^{2} d(\Omega)_{\text{Eu}}^{2}
\leq 1+ (2 C_{1} + C_{1}^{2}) d(\Omega)_{\text{Eu}},
\]
the last inequality comes from the fact that
the diameter of $F(D(r_{0})-\{0\})$ with respect to
$\| \cdot \|_{\text{Eu}}$ is less than $1$.

Let $C = 2 C_{1} + C_{1}^{2}$, we get
\[
|\partial \Omega|_{G}^{2} \geq 4 \pi (1 + C d(\Omega)_{\text{Eu}})^{-1} |\Omega|_{G}
\geq 4 \pi (1 - C d(\Omega)_{\text{Eu}}) |\Omega|_{G},
\]
which proves the lemma for the case that $F(\Omega)$ is contained in neither
$D^{+}$ nor $D^{-}$.

If $F(\Omega)$ is contained in $D^{+}$ (resp.\ $D^{-}$), then we choose an arbitrary point
$p_{0} \in F(\Omega)$ and compare $H$ (resp.\ $\widetilde{H}$) with $H_{p_{0}}$
(resp.\ $\widetilde{H}_{p_{0}}$). By a similar and easier argument, we finish the proof.
\end{proof}

By Lemma \ref{lemma_isoperimetric_diameter}, we easily get the following isoperimetric
inequality which is the most important for this section. (Compare \cite[p.\ 317, (12')]{gromov}.)

\begin{lemma}\label{lemma_isoperimetric_boundary}
There exists a constant $C>0$ such that the following holds.
For every disk $\Omega$ in Lemma \ref{lemma_isoperimetric_diameter}, we have
\[
|\partial \Omega|_{G}^{2} \geq 4 \pi (1 - C |\partial \Omega|_{G}) |\Omega|_{G}.
\]
\end{lemma}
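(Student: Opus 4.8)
The plan is to deduce this directly from Lemma~\ref{lemma_isoperimetric_diameter} together with Lemma~\ref{lemma_diameter_boundary}. Lemma~\ref{lemma_isoperimetric_diameter} already yields the asserted inequality but with the error term $C\,d(\Omega)_{\text{Eu}}$ in place of $C|\partial\Omega|_G$, so everything reduces to showing that the extrinsic Euclidean diameter $d(\Omega)_{\text{Eu}}$ of $F(\Omega-\{0\})$ is controlled by a fixed constant multiple of the intrinsic boundary length $|\partial\Omega|_G$.

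First I would compare $d(\Omega)_{\text{Eu}}$ with the intrinsic $G$-diameter $d(\Omega)_G$ of $\Omega-\{0\}$. Since $\Omega\subseteq D(r_0)$ we have $F(\Omega)\subseteq B$, and on $B$ both $\text{Re}H$ and $\text{Re}\widetilde{H}$ are equivalent to the Euclidean metric; fix $c_0>0$ with $\text{Re}H_q(v,v)\ge c_0\|v\|_{\text{Eu}}^2$ and $\text{Re}\widetilde{H}_q(v,v)\ge c_0\|v\|_{\text{Eu}}^2$ for all $q\in B$ and $v\in\mathbb{R}^{2n}$. The set $\Omega-\{0\}$ is path-connected (a closed disk with at most one interior point removed), so given $z_1,z_2\in\Omega-\{0\}$ and any path $\gamma$ joining them inside $\Omega-\{0\}$, Lemma~\ref{lemma_structure_F} identifies the $G$-length of $\gamma$ with the $\text{Re}H$- (resp.\ $\text{Re}\widetilde{H}$-)length of $F\circ\gamma$, which is at least $\sqrt{c_0}$ times the Euclidean length of $F\circ\gamma$, hence at least $\sqrt{c_0}\,\|F(z_1)-F(z_2)\|_{\text{Eu}}$. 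Taking the infimum over such $\gamma$ and then the supremum over $z_1,z_2$ gives
\[
d(\Omega)_{\text{Eu}}\le c_0^{-1/2}\,d(\Omega)_G.
\]

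Next, Lemma~\ref{lemma_diameter_boundary} applies to every disk $\Omega$ of the form (\ref{disk_in_disk}) with $0\notin\partial\Omega$ — in particular to those contained in $D(r_0)$ — and provides a constant $C'>0$, independent of $\Omega$, with $d(\Omega)_G\le C'|\partial\Omega|_G$. Combining with the previous display yields $d(\Omega)_{\text{Eu}}\le c_0^{-1/2}C'|\partial\Omega|_G$. Substituting into Lemma~\ref{lemma_isoperimetric_diameter} (whose constant we denote $C''$) gives
\[
|\partial\Omega|_G^2\ \ge\ 4\pi\bigl(1-C''c_0^{-1/2}C'\,|\partial\Omega|_G\bigr)|\Omega|_G,
\]
so the lemma holds with $C=C''c_0^{-1/2}C'$; note that the inequality is trivially true when $C|\partial\Omega|_G\ge 1$, since the right-hand side is then non-positive.

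I do not expect a serious obstacle here. The one point requiring care is the direction of the diameter comparison: the intrinsic $G$-distance dominates the extrinsic Euclidean distance (up to the fixed metric equivalence on $B$), not conversely, and one must check that $c_0$, $C'$ and $C''$ are all genuinely independent of the particular disk $\Omega$, which they are by construction.
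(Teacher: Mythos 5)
Your proposal is correct and follows essentially the same route as the paper: the paper likewise uses the equivalence of $\text{Re}H$, $\text{Re}\widetilde{H}$ and the Euclidean metric on the compact set $B$ together with Lemma \ref{lemma_structure_F} to get $d(\Omega)_{\text{Eu}} \leq C_{1} d(\Omega)_{G}$, and then concludes by combining Lemmas \ref{lemma_diameter_boundary} and \ref{lemma_isoperimetric_diameter}. Your write-up merely spells out the path-length comparison that the paper leaves implicit.
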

\begin{proof}
Since $B$ is compact, $\text{Re}H$, $\text{Re}\widetilde{H}$ and $\| \cdot \|_{\text{Eu}}$
are equivalent on $B$. Since $F(D(r_{0})) \subseteq B$, by Lemma \ref{lemma_structure_F},
there exists $C_{1}>0$ such that
\[
d(\Omega)_{\text{Eu}} \leq C_{1} d(\Omega)_{G},
\]
where $d(\Omega)_{\text{Eu}}$ is defined in Lemma \ref{lemma_isoperimetric_diameter}
and $d(\Omega)_{G}$ is defined in Lemma \ref{lemma_diameter_boundary}.

Thus Lemmas \ref{lemma_diameter_boundary} and \ref{lemma_isoperimetric_diameter}
finish the proof.
\end{proof}

The following lemma immediately implies Lemma \ref{lemma_lipschitz} as Lemma
\ref{lemma_metric_hyperbolic_bound} implies Lemma \ref{lemma_tame_to_area}.
The proof follows lines 4-6 in \cite[p.\ 318]{gromov}. It needs an argument
similar to that of Lemma \ref{lemma_bound_disk_covering}.

\begin{lemma}\label{lemma_metric_bound}
Using the standard coordinate $z=x+iy$ on $\mathbb{C}$, the metric $G$ on $\check{D}$
has the form $g dx \otimes dx + g dy \otimes dy$, and $g$ is a bounded function on
$D(\frac{r_{0}}{2}) - \{0\}$.
\end{lemma}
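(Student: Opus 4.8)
The plan is to run the argument of Lemma \ref{lemma_bound_disk_covering} once more, but now feeding in the \emph{refined} isoperimetric inequality of Lemma \ref{lemma_isoperimetric_boundary} (with the sharp constant $4\pi$) instead of Lemmas \ref{lemma_isoperimetric_disk}--\ref{lemma_isoperimetric_gauss}, in order to force the decay rate of areas near $0$ to be \emph{exactly} of order $2$, and then to de-integrate this global area estimate into a pointwise bound on $g$. Since $g$ is $C^{2}$ on $\check D$ by Lemma \ref{lemma_metric_on_disk}, it is automatically bounded on the compact annulus $\{\tfrac{r_0}{4}\le|z|\le\tfrac{r_0}{2}\}$, so it suffices to bound $g$ on $D(\tfrac{r_0}{4})-\{0\}$. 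For $0<s<r_0$ put $A(s)=|D(s)-\{0\}|_{G}$ and $L(s)=|\partial D(s)|_{G}$; then $A$ is continuous on $[0,r_0]$ with $A(0)=0$ and $A(r_0)\le|\check D|_{G}<\infty$, and is $C^{1}$ on $(0,r_0)$ by the generalization of $(\ref{gromov_formula})$ explained after it. Applying $(\ref{gromov_formula})$ together with Lemma \ref{lemma_isoperimetric_boundary} to the disk $\Omega=\overline{D(s)}\subseteq D(r_0)$ (whose boundary circle misses $0$) gives, for $s\in(0,r_0)$,
\[
\dot A(s)\ \ge\ (2\pi s)^{-1}L(s)^{2}\ \ge\ 2s^{-1}\bigl(1-C\,L(s)\bigr)A(s),
\]
with $C$ the constant of Lemma \ref{lemma_isoperimetric_boundary}; since $g>0$ (as $F$ is an immersion) this yields $(\log A)'(s)\ge 2s^{-1}-2C\,s^{-1}L(s)$.

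The integrability input that makes the error term harmless comes from Lemma \ref{lemma_derivative_area} applied to the dilation $\varphi(w)=r_0w\colon\overline D\to\overline{D(r_0)}$, which is a holomorphic isomorphism of the type $(\ref{holomorphic_isomorphism})$: unwinding norms one gets, for every $\sigma\in(0,r_0]$,
\[
\int_{0}^{\sigma}s^{-1}L(s)\,ds\ =\ \int_{0}^{\sigma/r_0}\!\!\int_{0}^{2\pi}\|d\varphi(te^{i\theta})\|_{G}\,d\theta\,dt\ \le\ C(\mu)\,A(\sigma)^{1/2}\ <\ \infty .
\]
Integrating $(\log A)'(s)\ge 2s^{-1}-2C\,s^{-1}L(s)$ from $s$ to $\sigma:=\tfrac{r_0}{2}$ and using this bound then gives the bootstrapped decay
\[
A(s)\ \le\ A(\tfrac{r_0}{2})\Bigl(\tfrac{2s}{r_0}\Bigr)^{2}\exp\!\bigl(2C\,C(\mu)\,A(\tfrac{r_0}{2})^{1/2}\bigr)\ =:\ K\,s^{2}\qquad\bigl(0<s<\tfrac{r_0}{2}\bigr),
\]
with $K$ an explicit constant.

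Now I localize. Fix $z_{0}$ with $d_{0}:=|z_{0}|<\tfrac{r_0}{4}$, set $\rho_{0}=\tfrac{d_{0}}{2}$, let $\Omega_{0}$ be the closed disk of radius $\rho_{0}$ centred at $z_{0}$ (a disk of type $(\ref{disk_in_disk})$, with $|z_0|+\rho_0=\tfrac{3d_0}{2}<\tfrac{r_0}{2}$), and let $\psi(w)=z_{0}+\rho_{0}w\colon\overline D\to\Omega_{0}$; its image lies in the annulus $\{\tfrac{d_0}{2}\le|z|\le\tfrac{3d_0}{2}\}\subseteq D(r_0)-\{0\}$, so $G$ is smooth there and $\|d\psi(0)\|_{G}=\rho_{0}\sqrt{g(z_{0})}$. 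Running the same ODE argument for $a(t):=|\psi(D(t))|_{G}$ and $\ell(t):=|\psi(\partial D(t))|_{G}$ --- namely $\dot a\ge 2t^{-1}(1-C\ell)a$ from $(\ref{gromov_formula})$ and Lemma \ref{lemma_isoperimetric_boundary} (applied to the disks $\psi(\overline{D(t)})\subseteq D(r_0)$, which miss $0$), and $\int_{0}^{1}t^{-1}\ell(t)\,dt\le C(\mu)\,a(1)^{1/2}$ from Lemma \ref{lemma_derivative_area} --- and integrating from $t$ to $1$, then letting $t\to0$, yields
\[
\|d\psi(0)\|_{G}^{2}\ =\ \lim_{t\to0}\frac{a(t)}{\pi t^{2}}\ \le\ \frac{a(1)}{\pi}\exp\!\bigl(2C\,C(\mu)\,a(1)^{1/2}\bigr).
\]
Since $\Omega_{0}\subseteq D(\tfrac{3d_0}{2})-\{0\}$ and $\tfrac{3d_0}{2}<\tfrac{r_0}{2}$, the bootstrap gives $a(1)\le A(\tfrac{3d_0}{2})\le\tfrac94K d_0^{2}$, whence $a(1)^{1/2}\le\tfrac38K^{1/2}r_0$ is bounded and the exponential is $\le M$ for a constant $M$ independent of $z_{0}$. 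Therefore
\[
g(z_{0})\ =\ \rho_{0}^{-2}\|d\psi(0)\|_{G}^{2}\ \le\ \Bigl(\tfrac{d_0}{2}\Bigr)^{-2}\cdot\frac{1}{\pi}\cdot\tfrac94K d_0^{2}\cdot M\ =\ \frac{9KM}{\pi},
\]
uniformly in $z_{0}\in D(\tfrac{r_0}{4})-\{0\}$; together with the $C^{2}$-bound on $\{\tfrac{r_0}{4}\le|z|\le\tfrac{r_0}{2}\}$ this proves the lemma.

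The main obstacle is securing the decay exponent $2$: Lemma \ref{lemma_isoperimetric_disk} alone only gives $A(s)\lesssim s^{\mu}$ with $\mu$ possibly $<2$, which is useless here, so one must genuinely exploit the refined inequality Lemma \ref{lemma_isoperimetric_boundary} with constant $4\pi$ --- and then control the resulting error term $2C\,s^{-1}L(s)$ in the differential inequality. Its integrability over $(0,\sigma)$ is exactly what Lemma \ref{lemma_derivative_area} supplies; note that the naive estimate $\sqrt{g(z)}\lesssim(|z|\log\tfrac1{|z|})^{-1}$ from Lemma \ref{lemma_metric_hyperbolic_bound} would only give the non-integrable bound $s^{-1}L(s)\lesssim(s\log\tfrac1s)^{-1}$, so this point is essential and not merely cosmetic. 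The rest is bookkeeping: verifying that each disk used sits inside $D(r_0)$ with $0$ off its boundary so that Lemma \ref{lemma_isoperimetric_boundary} is applicable, and that the generalized form of $(\ref{gromov_formula})$ is legitimate when the conformal factor has its singularity at the centre of the disk.
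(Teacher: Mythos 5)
Your argument is correct, and it reaches the conclusion by a noticeably different organization than the paper, even though the analytic engine is the same (the monotonicity formula (\ref{gromov_formula}), the refined isoperimetric inequality of Lemma \ref{lemma_isoperimetric_boundary}, and the error-integral control of Lemma \ref{lemma_derivative_area}, combined through the exponential-corrected integration of the logarithmic derivative). The paper runs this chain exactly once, but for the M\"obius uniformizers $\varphi_{z_{0}}\colon\overline D\to D(r_{0})$ with $\varphi_{z_{0}}(0)=z_{0}$: since $A(1)=|D(r_{0})|_{G}$ is then a fixed constant and $|\varphi_{z_{0}}'(0)|\geq \tfrac34 r_{0}$ for $|z_{0}|<\tfrac{r_{0}}{2}$, the uniform bound on $\|d\varphi_{z_{0}}(0)\|_{G}$ converts directly into a bound on $\sqrt{g(z_{0})}$. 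You instead run the chain twice: first with the dilation centred at $0$ to extract the explicit quadratic decay $A(s)\le Ks^{2}$ for concentric disks, and then with the affine map onto the round disk of radius $|z_{0}|/2$ about $z_{0}$, where the factor $\rho_{0}^{-2}\sim|z_{0}|^{-2}$ is cancelled precisely by that decay. What the paper's route buys is brevity (one application, no intermediate statement); what yours buys is that it avoids any estimate on derivatives of M\"obius maps and isolates the quadratic area decay as a statement in its own right, which is arguably the conceptual heart of the Lipschitz bound. Your bookkeeping is sound: all disks used lie in $D(r_{0})$ with $0$ off their boundaries, $\psi$ and the dilation are admissible in (\ref{holomorphic_isomorphism}) (for the dilation one should say ``after shrinking $r_{0}$ so that $r_{0}<1$,'' a harmless normalization the paper also makes implicitly), and the singular-centre case of the generalized (\ref{gromov_formula}) is exactly the one discussed after that formula. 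Two cosmetic slips: $G$ is only $C^{2}$ (not smooth) across the real axis, which suffices everywhere you use it; and on the annulus $\{\tfrac{r_{0}}{4}\le|z|\le\tfrac{r_{0}}{2}\}$ continuity of $g$ already gives the bound, no $C^{2}$ regularity needed.
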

\begin{proof}
Let $z_{0}$ be a point in $D(\frac{r_{0}}{2})$ such that $z_{0} \neq 0$.
Consider a holomorphic isomorphism $\varphi_{z_{0}}: \overline{D} \rightarrow D(r_{0})$
such that $\varphi_{z_{0}}(0) = z_{0}$. Let $r_{*} = |\varphi_{z_{0}}^{-1}(0)|$.

Define $A(t) = |\varphi_{z_{0}}(D(t))|_{G}$ and $L(t) = |\varphi_{z_{0}}(\partial D(t))|_{G}$.
The following computation is similar to the proof of Lemma \ref{lemma_decay_rate}.

By (\ref{gromov_formula}) and Lemma \ref{lemma_isoperimetric_boundary},
there exists $C_{1} > 0$ such that, for $r \in (0,1)$ and $r \neq r_{*}$,
\[
\dot{A} \geq 2 t^{-1} (1- C_{1}L) A.
\]
Since $A > 0$ when $t >0$, we have
\[
A^{-1} \dot{A} \geq 2 t^{-1}- 2 C_{1} t^{-1} L.
\]
Integrating both sides of the above inequalities on $[r,1]$ for $r>0$, we get
\begin{equation}\label{lemma_metric_bound_1}
A(r) \leq r^{2} A(1) \exp \left( 2 C_{1} \int_{r}^{1} t^{-1} L dt \right).
\end{equation}

We have
\begin{eqnarray*}
\int_{r}^{1} t^{-1} L dt & = &
\int_{r}^{1} t^{-1} \int_{0}^{2 \pi} \|d \varphi_{z_{0}} \|_{G}\ t d \theta dt \\
& \leq & \int_{0}^{1}\int_{0}^{2 \pi}
\| d \varphi_{z_{0}} \|_{G} d \theta dt.
\end{eqnarray*}
Applying Lemma \ref{lemma_derivative_area} to the above inequality, we obtain
\begin{equation}\label{lemma_metric_bound_2}
\int_{r}^{1} t^{-1} L dt
\leq C(\mu) |\varphi_{z_{0}}(D)|_{G}^{\frac{1}{2}}.
\end{equation}

By (\ref{lemma_metric_bound_1}) and (\ref{lemma_metric_bound_2}), we get
\begin{equation}\label{lemma_metric_bound_3}
A(r) \leq r^{2} |D(r_{0})|_{G} \exp \left( 2 C_{1} C(\mu)
|D(r_{0})|_{G}^{\frac{1}{2}} \right).
\end{equation}

Since $\displaystyle \| d \varphi_{z_{0}} (0) \|_{G}^{2} = \lim_{r \rightarrow 0}
(\pi r^{2})^{-1} A(r)$, by (\ref{lemma_metric_bound_3}), we infer that
$\| d \varphi_{z_{0}} (0) \|_{G}$ is bounded for all $z_{0} \in D(\frac{r_{0}}{2})$.

When we consider $\varphi_{z_{0}}$ as a holomorphic function on $D$,
denote by $\varphi_{z_{0}}'(0)$ the derivative of $\varphi_{z_{0}}$ at $0$. Then
\[
\|d \varphi_{z_{0}} (0)\|_{G} =  \sqrt{g(z_{0})} |\varphi_{z_{0}}'(0)|.
\]
Since $z_{0} \in D(\frac{r_{0}}{2})$, it's easy to check that $|\varphi_{z_{0}}'(0)|^{-1}$
is bounded for all $z_{0}$. Therefore, $g$ is bounded on $D(\frac{r_{0}}{2}) - \{0\}$.
\end{proof}

\section{An Almost Complex Structure}\label{section_almost_structure}
Suppose $(N,J)$ is an almost
complex manifold. Then its tangent bundle $TN$ is also a manifold.
In this section, we shall describe a natural structure on $TN$ which makes
$TN$ an almost complex manifold. This almost complex structure will be used
in next section. This structure was constructed in
\cite[Proposition 6.7]{yano_kobayashi} and its properties have been extensively studied in \cite{lempert_szoke}.

Denote by $P$ the projection $P: TN \rightarrow N$. For each $q \in N$, $P^{-1}(q)$ is a complex
linear space. Certainly, $P^{-1}(q)$ is a complex manifold. For $v \in P^{-1}(q)$,
we shall use the pair $(q,v)$ to denote this point in $TN$.

Suppose $h: \Omega \rightarrow N$ is a $J$-holomorphic map, where $\Omega$
is an open subset of $\mathbb{C}$. There are liftings of $h$ defined as $\frac{\partial h}{\partial x}: \Omega
\rightarrow TN$ and $\frac{\partial h}{\partial y}: \Omega \rightarrow TN$
which are derivatives of $h$, i.e.
\[
\frac{\partial h}{\partial x} = dh \cdot \frac{\partial}{\partial x}
\qquad \text{and} \qquad
\frac{\partial h}{\partial y} = dh \cdot \frac{\partial}{\partial y},
\]
and $\frac{\partial}{\partial x}$ and $\frac{\partial}{\partial y}$ are vector
fields on $\Omega$.

We shall define an almost complex structure $J^{(1)}$ on $TN$ such that $\frac{\partial h}{\partial x}$
and $\frac{\partial h}{\partial y}$ are $J$-holomorphic maps. For any coordinate chart
$B$ of $N$, by using its coordinate, $TB$ is identified with $B \times \mathbb{R}^{2n}$, and
$T^{2} B = TT B$ is identified with $B \times \mathbb{R}^{2n} \times \mathbb{R}^{2n}
\times \mathbb{R}^{2n}$. Therefore, a point in $T^{2} B$ is $(q, v, w_{1}, w_{2})$,
where $(q,v) \in TB \subseteq TN$, $w_{1} \in T_{q} B = \mathbb{R}^{2n}$, and $w_{2} \in T_{v} \mathbb{R}^{2n}
= \mathbb{R}^{2n}$.

By this identification, the almost complex structure on $B \subseteq N$ becomes a map
$J: B \rightarrow \text{GL}(\mathbb{R}^{2n})$ such that $J(q)^{2} = - \text{Id}$
and the action of $J$ on $T B$ is
\[
J(q,v) = (q, J(q) v).
\]

For convenience, we introduce the notation $d_{v}J$ which is the directional derivative
along the direction $v$, i.e.\ $d_{v} J = dJ \cdot v$. Clearly,
$d_{v} J $ is a linear transformations on $\mathbb{R}^{2n}$.
The definition of $J^{(1)}$ on $T^{2} B$ is (see also \cite[p.\ 76, (3.5)]{lempert_szoke})
\begin{equation}
J^{(1)}(q, v, w_{1}, w_{2}) = (q, v, J(q) w_{1}, (d_{v} J)(q) w_{1} + J(q) w_{2} ).
\end{equation}

By the definition of $J^{(1)}$, we have
\begin{eqnarray*}
\left( J^{(1)} \right)^{2} (q, v, w_{1}, w_{2}) & = &
J^{(1)} (q, v, J w_{1}, (d_{v} J) w_{1} + J w_{2} ) \\
& = & (q, v, J^{2} w_{1}, (d_{v} J) J w_{1} + J (d_{v} J) w_{1} + J^{2} w_{2}).
\end{eqnarray*}
Clearly, $J^{2} = - \text{Id}$, $J^{2} w_{1} = - w_{1}$ and $J^{2} w_{2} = - w_{2}$.
We also have
\[
(d_{v} J) J w_{1} + J (d_{v} J) w_{1} = (d_{v} J^{2}) w_{1} = (- d_{v} \text{Id}) w_{1} = 0.
\]
Therefore, $\left( J^{(1)} \right)^{2} (q, v, w_{1}, w_{2}) = (q, v, -w_{1}, -w_{2})$.

We infer that $J^{(1)}$ defines an almost complex structure in a coordinate chart of $TN$.
Actually, the definition in a particular chart is enough for our proof of Theorems
\ref{theorem_finite_area} and \ref{theorem_tame}.
However, for the interest of readers, we would like to point out that this definition
globally makes sense. (See \cite[Section 3]{lempert_szoke}.)

Moreover, we have following useful proposition (see \cite[Theorem 3.2]{lempert_szoke}).

\begin{proposition}\label{proposition_almost_complex}
The above $J^{(1)}$ is a well defined an almost complex structure on the manifold $TN$
which satisfies the following properties.

(1). The projection $P: TN \rightarrow N$ is $J$-holomorphic.

(2). The inclusion of the fibre $P^{-1}(q) \hookrightarrow TN$ is $J$-holomorphic.

(3). If $\Omega$ is an open subset of $\mathbb{C}$ and $h: \Omega \rightarrow N$ is $J$-holomorphic,
then $\frac{\partial h}{\partial x}:
\Omega \rightarrow TN$ and $\frac{\partial h}{\partial y}: \Omega \rightarrow TN$ are $J$-holomorphic.
\end{proposition}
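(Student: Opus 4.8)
The plan is to verify everything in a single coordinate chart, using the identifications $TB \cong B \times \mathbb{R}^{2n}$ and $T^{2}B \cong B \times \mathbb{R}^{2n} \times \mathbb{R}^{2n} \times \mathbb{R}^{2n}$ fixed above. Global well-definedness of $J^{(1)}$ is not actually needed for this paper and may be quoted from \cite[Section 3]{lempert_szoke}; in any case it follows a posteriori since, as the computation below shows, $J^{(1)}$ is determined by property (3) applied to coordinate lines. The identity $\big(J^{(1)}\big)^{2} = -\mathrm{Id}$ has already been checked in coordinates, so it remains to prove (1)--(3). Throughout I write a tangent vector to $TN$ at $(q,v)$ as a pair $(w_{1},w_{2})$ with $w_{1} \in T_{q}B$ the horizontal part and $w_{2} \in \mathbb{R}^{2n}$ the vertical part, so that $J^{(1)}_{(q,v)}(w_{1},w_{2}) = \big(J(q)w_{1},\ (d_{v}J)(q)w_{1} + J(q)w_{2}\big)$.

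For (1): in coordinates $P(q,v)=q$ and $dP_{(q,v)}(w_{1},w_{2})=w_{1}$, so $dP \circ J^{(1)} = J \circ dP$ reads off directly from the formula for $J^{(1)}$. For (2): the fibre $P^{-1}(q) = \{q\} \times \mathbb{R}^{2n}$ carries the constant complex structure $J(q)$, the inclusion sends $v \mapsto (q,v)$ with differential $w_{2} \mapsto (0,w_{2})$, and $J^{(1)}_{(q,v)}(0,w_{2}) = \big(0, J(q)w_{2}\big)$; hence the inclusion intertwines $J(q)$ with $J^{(1)}$.

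For (3), which is the crux, let $h\colon\Omega\to N$ be $J$-holomorphic and work in a chart around $h(z)$, writing $h$ also for its coordinate representative. Recall that $g\colon\Omega\to TN$ is $J^{(1)}$-holomorphic iff $\partial_{y}g = J^{(1)}_{g}\,\partial_{x}g$. Take $g = \partial h/\partial x$, so $g(z) = (h(z),\partial_{x}h(z))$ and, using smoothness of $h$ so that $\partial_{y}\partial_{x}h = \partial_{x}\partial_{y}h$, one has $\partial_{x}g = (\partial_{x}h,\ \partial_{x}^{2}h)$ and $\partial_{y}g = (\partial_{y}h,\ \partial_{x}\partial_{y}h)$. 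The equation $\partial_{y}h = J(h)\,\partial_{x}h$ and the chain rule $\partial_{x}\big(J(h)\big) = (d_{\partial_{x}h}J)(h)$ give
\[
\partial_{x}\partial_{y}h = (d_{\partial_{x}h}J)(h)\,\partial_{x}h + J(h)\,\partial_{x}^{2}h .
\]
Comparing with $J^{(1)}_{(h,\partial_{x}h)}(\partial_{x}h,\partial_{x}^{2}h) = \big(J(h)\partial_{x}h,\ (d_{\partial_{x}h}J)(h)\partial_{x}h + J(h)\partial_{x}^{2}h\big)$ and again using $\partial_{y}h = J(h)\partial_{x}h$ in the first slot, we get exactly $\partial_{y}g = J^{(1)}_{g}\,\partial_{x}g$. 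The case $g=\partial h/\partial y$ is the same computation, differentiating $\partial_{y}h = J(h)\partial_{x}h$ in $y$ instead of $x$.

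The only difficulty worth naming is bookkeeping: one must keep the horizontal/vertical splitting of $T(TN)$ straight and notice that the vertical derivative $d_{v}J$, evaluated along $h$, is precisely $\partial_{x}(J(h))$ (resp.\ $\partial_{y}(J(h))$) by the chain rule. This is exactly the extra term built into the definition of $J^{(1)}$, which is the conceptual reason the lift of a $J$-holomorphic map remains $J^{(1)}$-holomorphic. There is no analytic content: each assertion reduces to one differentiation of the equation $\partial_{y}h = J(h)\,\partial_{x}h$ together with equality of mixed partials.
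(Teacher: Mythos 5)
Your proposal is correct and follows essentially the same route as the paper: (1) and (2) are checked directly from the coordinate formula for $J^{(1)}$, and (3) is obtained by differentiating $\partial_{y}h = J(h)\,\partial_{x}h$ in $x$ (resp.\ $y$) and comparing with the definition of $J^{(1)}$, which is exactly the alternative verification the paper indicates alongside its citation of Lempert--Sz\H{o}ke; like the paper, you defer global well-definedness to that reference.
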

\begin{proof}
(1) and (2) are obviously true in a coordinate chart. Therefore, they are true globally.

(3) follows immediately from (c) of \cite[Theorem 3.2]{lempert_szoke}. One can also
easily check it by differentiating the equation $\frac{\partial h}{\partial y}
= J \frac{\partial h}{\partial x}$.
\end{proof}

\begin{remark}
In \cite[1.4]{gromov}, Gromov describes an almost complex structure which
is slightly different from the above $J^{(1)}$. But these two structures are
actually related. They also play the same role in the proof of Theorems \ref{theorem_finite_area}
and \ref{theorem_tame}. Let $PTN$ be the projectivization of $TN$. The tangent bundle of
$PTN$ contains a subbundle $\Theta$. (Here $\Theta$ is the notation in \cite[p.\ 318, 1.4]{gromov}.)
The paper \cite{gromov} defines an complex structure on the vector bundle $\Theta \rightarrow PTN$.
In fact, this complex structure comes form the above $J^{(1)}$ on $T^{2}N \rightarrow TN$.
Define $TN-N = \{ (q,v) \mid q \in N, v \in T_{q}N, v \neq 0 \}$. Denote the natural projection
by $P^{(1)}: (TN-N) \rightarrow PTN$. We try to descend $J^{(1)}$ on $TN-N$ to $PTN$. More precisely,
suppose $w \in T_{(q,v)} (TN-N)$, $u \in T_{P^{(1)}(q,v)} PTN$, and $d P^{(1)} w = u$,
we try to define $J^{(1)} u = d P^{(1)} (J^{(1)} w)$. This definition does not
work in general because it depends on the choice of $w$ for a fixed $u$.
However, it does work when $u \in \Theta$. This induces
the complex structure in \cite{gromov} on $\Theta$.
\end{remark}

\section{Higher Order Derivatives}\label{section_higher_order_derivatives}
The goal of this section is to finish the proof of Theorems \ref{theorem_finite_area}
and \ref{theorem_tame}. By Lemma \ref{lemma_lipschitz}, it remains to study the higher
order derivatives of $f$ near $0 \in D^{+}$.

The proof follows Gromov's approach in \cite[1.4]{gromov}. The idea is as follows.
Roughly speaking, based on Lemma \ref{lemma_lipschitz}, the derivatives of $f$ are
also $J$-holomorphic maps which satisfy the same assumption as $f$ does. Therefore, a
boot-strapping argument finishes the proof.

This argument is remarkably different from that of an analytic proof. In an analytic
proof, the argument of higher order regularity is reduced to the problem of the
elliptic regularity of PDEs (see e.g. \cite[p.\ 92 \& Appendix B.4]{mcduff_salamon}).
However, this argument relies on a geometric construction.

First, we recall the definition of a differentiable map on $D^{+}(r)= \{z \mid |z|<r, \text{Im}z \geq 0 \}$,
where $0 < r \leq 1$.
A map $h: D^{+}(r) \rightarrow \mathbb{R}^{m}$ is said to be $C^{k}$
($1 \leq k \leq +\infty$) if, for every $z \in D^{+}(r)$, there exists an open neighborhood
$U_{z}$ of $z$ in $\mathbb{R}^{2}$ and a $C^{k}$ function $h_{z}: U_{z} \rightarrow \mathbb{R}^{m}$
such that $h|_{U_{z} \cap D^{+}(r)} = h_{z}|_{U_{z} \cap D^{+}(r)}$. This is the standard
definition of a $C^{k}$ map on a manifold with boundary.

We shall use the following Lemmas \ref{lemma_derivative_extension} and \ref{lemma_smooth} about $C^{k}$ maps on $D^{+}(r)$.
Their proofs are given in the Appendix.

\begin{lemma}\label{lemma_derivative_extension}
Suppose $h: D^{+}(r) \rightarrow \mathbb{R}^{m}$ is $C^{k}$ and
$h: \check{D}^{+}(r) \rightarrow \mathbb{R}^{m}$ is $C^{k+1}$ for some $k$ such that $0 \leq k < + \infty$.
Suppose $d^{k+1}h$ on $\check{D}^{+}(r)$ has a continuous extension over $0 \in D^{+}(r)$. Then $h$ is
$C^{k+1}$ in $D^{+}(r)$.
\end{lemma}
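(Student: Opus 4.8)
The plan is to reduce the claim to a purely one-dimensional statement about functions on a half-interval, and then apply a Taylor expansion argument at points of $\partial D^+(r)$. First I would fix a point $z_0 \in \partial D^+(r)$, i.e.\ $z_0 = x_0 \in (-r,r)$, and work in a half-disk neighborhood $U_{z_0}\cap D^+(r)$. Since $h$ is already known to be $C^k$ on all of $D^+(r)$ and $C^{k+1}$ on $\check D^+(r)$, and since $d^{k+1}h$ extends continuously over $0$, the only issue is the existence and continuity of the $(k+1)$-st derivatives of $h$ at the boundary points $z_0 \in \partial D^+(r)$ (the interior points being covered by the $C^{k+1}$ hypothesis on $\check D^+(r)$, and $0$ being handled by the continuous extension hypothesis). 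Away from $0$, every boundary point has a genuine half-disk neighborhood on which $h$ is $C^k$ up to the boundary and $C^{k+1}$ in the open half; I would observe that the continuous extension of $d^{k+1}h$ over $0$, together with the $C^k$-ness on the closed half-disk, means $d^{k+1}h$ is bounded near $\partial D^+(r)$, so it suffices to show $d^{k+1}h$ extends continuously to each boundary point and that this extension agrees with the one-sided limits coming from the interior.

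The key step is a standard lemma from calculus: if $g$ is continuous on $[0,\delta)$, $C^1$ on $(0,\delta)$, and $g'$ has a finite limit $\ell$ as $t\to 0^+$, then $g$ is differentiable at $0$ with $g'(0)=\ell$ and $g'$ is continuous on $[0,\delta)$. Concretely, I would take a $C^k$ extension $h_{z_0}\colon U_{z_0}\to\mathbb R^m$ of $h$ near $z_0$ guaranteed by the definition of $C^k$ on a manifold with boundary, write $h = h_{z_0}$ on $U_{z_0}\cap D^+(r)$, and apply the one-variable lemma in the $y$-direction to the functions $\partial^\beta h$ for each multi-index $\beta$ with $|\beta| = k$: for fixed $x$ near $x_0$, the function $y\mapsto \partial^\beta h(x,y)$ is continuous on $[0,\varepsilon)$ (from $C^k$-ness up to the boundary) and $C^1$ on $(0,\varepsilon)$ with $\partial_y\partial^\beta h$ having a limit as $y\to 0^+$ (from the continuous extension of $d^{k+1}h$, which is bounded and uniformly continuous on a compact neighborhood of $z_0$ in the closed half-disk minus $0$, hence has one-sided limits at $z_0$). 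The $x$-derivatives of order $k+1$ are already continuous up to $y=0$ because $\partial_x$ commutes with restriction to $\{y=\text{const}\}$ and $\partial^\beta h$ with $\beta_2 = 0$, $|\beta|=k+1$ is $C^0$ on $D^+(r)$ by the $C^k$ hypothesis applied to first-order $x$-derivatives — more carefully, I would handle mixed derivatives by inducting on the number of $\partial_y$'s appearing, peeling off one $\partial_y$ at a time via the one-variable lemma and using that after differentiating in $x$ the resulting function is still continuous up to the boundary.

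Having obtained that all partial derivatives of $h$ of order $k+1$ exist and are continuous on $U_{z_0}\cap D^+(r)$ for every boundary point $z_0\neq 0$, and recalling that they are continuous on $\check D^+(r)$ by hypothesis and extend continuously over $0$ by hypothesis, I conclude that $d^{k+1}h$ is defined and continuous on all of $D^+(r)$. It then follows by the standard characterization (a function all of whose partials up to order $k+1$ exist and are continuous is $C^{k+1}$) that $h$ is $C^{k+1}$ on $D^+(r)$ in the sense of maps on a manifold with boundary; concretely one produces the required local $C^{k+1}$ extensions $h_z$ at boundary points by reflecting/extending, or simply by the fact that a function on a closed half-disk with continuous derivatives up to order $k+1$ admits a $C^{k+1}$ Whitney-type extension, but the cleanest route is to note that $C^{k+1}$ on a manifold with boundary is exactly equivalent to "all partials up to order $k+1$ exist and are continuous up to the boundary," which is what we have shown. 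The main obstacle is the bookkeeping for mixed partials: one must be careful that differentiating in $x$ a function that is only known to be $C^k$ up to the boundary does not destroy the boundary continuity, and the resolution is the induction on the number of $y$-derivatives described above, combined with the observation that $\partial_x$ of a $C^j$-up-to-boundary function is $C^{j-1}$ up to the boundary. Since this is precisely the content deferred to the Appendix, I would present the one-variable lemma carefully and then indicate the induction, leaving the most routine verifications to the reader.
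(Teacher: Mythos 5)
Your strategy is correct in outline, but it is a genuinely different route from the paper's, and it has one small soft spot. The paper works with the \emph{global} convex geometry of $D^+(r)$: writing $\Phi$ for the continuous extension of $d^{k+1}h$, it uses Taylor's theorem with integral remainder along segments (which stay inside the convex set $D^+(r)$) to verify directly that the jet $(h,dh,\dots,d^kh,\Phi)$ satisfies Whitney's conditions on $D^+(r)$, and then invokes Whitney's extension theorem \cite{whitney}. You instead argue \emph{locally in each coordinate direction}: a one-variable lemma (continuous on $[0,\delta)$, $C^1$ on $(0,\delta)$, $g'$ with a one-sided limit, implies $C^1$ on $[0,\delta)$) peels off one more partial derivative at a time, giving that all partials of order $k+1$ exist and agree with the continuous extension $\Phi$, hence are continuous; you then invoke the equivalence of ``all partials of order $\leq k+1$ continuous up to the boundary'' with ``$C^{k+1}$ on the half-disk.'' Both routes work, but be aware that the final equivalence you invoke is itself a nontrivial extension statement for half-spaces (Seeley/Hestenes reflection, or Whitney's theorem on a quasi-convex set), so you are appealing to the same circle of ideas, packaged differently; the paper's version has the advantage of citing a single external black box and covering all orders $\leq k+1$ in one Taylor estimate.

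The soft spot is your treatment of the pure-$x$ derivatives of order $k+1$ at the point $0$. The parenthetical claim that $\partial_x^{k+1}h$ is ``$C^0$ on $D^+(r)$ by the $C^k$ hypothesis applied to first-order $x$-derivatives'' does not follow: the $C^k$ hypothesis controls $\partial_x^k h$, not $\partial_x^{k+1}h$, and the subsequent ``induct on the number of $\partial_y$'s'' does not repair it, because the pure-$x$ case is precisely the base case of that induction and is not reached by peeling off a $\partial_y$. The correct fix is to apply your one-variable lemma \emph{in the $x$-direction along the boundary segment $y=0$}: for $g(x)=\partial_x^k h(x,0)$, one has $g$ continuous on $(-r',r')$, $g'$ exists for $x\neq 0$ (since $h$ is $C^{k+1}$ on $\check D^+(r)$), and $g'(x)=\partial_x^{k+1}h(x,0)$ has a limit at $x=0$ by the continuity of $\Phi$; the lemma (applied on both sides of $0$) then gives existence and continuity of $\partial_x^{k+1}h$ at $0$. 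With that base case in place, your induction on the number of $y$-derivatives closes up correctly, and the rest of the argument is fine.
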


When we consider a function $h$ defined on an open domain of $\mathbb{R}^{n}$. We say
$h$ is $C^{\infty}$ or smooth if $h$ is $C^{k}$ for all $k$ such that $0 \leq k < + \infty$.
However, when we consider the similar situation on $D^{+}(r)$, the situation becomes slightly
subtle. If $h$ is $C^{k}$ on $D^{+}(r)$ for all $k$ such that $0 \leq k < + \infty$, then
is $h$ a $C^{\infty}$ function? Or can we find a $C^{\infty}$ function $h_{z}$ on $U_{z}$ such that
$h|_{U_{z} \cap D^{+}(r)} = h_{z}|_{U_{z} \cap D^{+}(r)}$ for all $z$? The following
lemma gives the affirmative answer.

\begin{lemma}\label{lemma_smooth}
Suppose $h: D^{+}(r) \rightarrow \mathbb{R}^{m}$ is $C^{k}$ for all $k$ such that
$0 \leq k < + \infty$. Then $h$ is $C^{\infty}$.
\end{lemma}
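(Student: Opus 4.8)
The plan is to reduce the statement to a purely local extension problem and then invoke the Whitney extension theorem. By definition, being $C^{\infty}$ on $D^{+}(r)$ means that every point of $D^{+}(r)$ has an open neighbourhood $U_{z}$ in $\mathbb{R}^{2}$ carrying a $C^{\infty}$ map that agrees with $h$ on $U_{z}\cap D^{+}(r)$, so it suffices to produce such a local smooth extension near each point. For an interior point $z$ (i.e.\ $\text{Im}\,z>0$) this is immediate: a small open disk about $z$ lies inside the open set $\{\,w:\text{Im}\,w>0\,\}\cap D(r)$, on which $h$ itself is $C^{k}$ for every finite $k$ and hence $C^{\infty}$ in the usual sense. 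The entire content is therefore concentrated at the boundary points $x_{0}\in(-r,r)$ on the real axis.

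First I would extract a Whitney field from the hypotheses. For each multi-index $\alpha$, any two of the local $C^{k}$ extensions of $h$ with $k\ge|\alpha|$ have the same $\alpha$-th partial derivative on the overlap with $D^{+}(r)$, since $U_{z}\cap D^{+}(r)$ has nonempty interior in $\mathbb{R}^{2}$; hence there is a well-defined continuous function $f_{\alpha}$ on $D^{+}(r)$ with $f_{0}=h$ and $\partial_{j}f_{\alpha}=f_{\alpha+e_{j}}$. Fix $x_{0}$ and put $E=\{\,z:|z-x_{0}|\le\delta,\ \text{Im}\,z\ge 0\,\}$ with $\delta<r-|x_{0}|$, a compact subset of $D^{+}(r)$, hence closed in $\mathbb{R}^{2}$. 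The claim is that $(f_{\alpha})_{\alpha}$ is a Whitney field of class $C^{\infty}$ on $E$: for every pair $\alpha,N$ one must check that $f_{\alpha}(y)-\sum_{|\beta|\le N}\tfrac{1}{\beta!}f_{\alpha+\beta}(x)(y-x)^{\beta}=o(|x-y|^{N})$ uniformly for $x,y\in E$. This follows by covering $E$ by finitely many of the neighbourhoods $U_{z}$ on which $h$ admits a $C^{|\alpha|+N+1}$ extension, applying the ordinary Taylor formula with remainder to those extensions on small convex pieces, and patching by means of a Lebesgue number of the cover.

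Granting this, the Whitney extension theorem (equivalently, after flattening the boundary, Seeley's half-space extension theorem) produces a $C^{\infty}$ map $g:\mathbb{R}^{2}\to\mathbb{R}^{m}$ with $\partial^{\alpha}g=f_{\alpha}$ on $E$; in particular $g=f_{0}=h$ on $E$. Taking $U_{x_{0}}$ to be the open disk of radius $\delta/2$ about $x_{0}$, we have $U_{x_{0}}\cap D^{+}(r)\subseteq E$, so $g$ is the desired local smooth extension of $h$ near $x_{0}$. Together with the interior case, this exhibits the required $U_{z}$ and $h_{z}$ at every point of $D^{+}(r)$, so $h$ is $C^{\infty}$.

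The well-definedness of the functions $f_{\alpha}$ and the interior case are routine. The step I expect to be the real technical point, and the one that genuinely uses the full hypothesis ``$C^{k}$ for every finite $k$'' rather than ``$C^{k}$ for one fixed $k$'', is the verification that $(f_{\alpha})$ satisfies the Whitney conditions to all orders: each target order $N$ consumes one further degree of differentiability of $h$, which is exactly what the hypothesis supplies and what a single $C^{k}$ extension would not.
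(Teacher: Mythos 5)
Your proposal is correct and follows essentially the same route as the paper: build the jet of all partial derivatives from the finite-order local extensions, verify that it satisfies Whitney's conditions of every order (which is exactly the "easy to check" step the paper leaves implicit), and conclude by Whitney's extension theorem \cite{whitney}. Your localization to compact half-disks near boundary points and the Lebesgue-number/Taylor verification simply supply the details the paper omits, so there is no substantive difference in method.
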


By Lemma \ref{lemma_lipschitz}, $f$ is continuous at $z=0$.
Since $f(\partial \check{D}^{+}) \subseteq W$ and $W$ is closed,
we have $f(0) \in W$. We can find an open neighborhood $U_{0}$ of $f(0)$
in $M$ satisfying the following properties: (1).\ By (3) in Lemma \ref{lemma_hermitian},
we can require that a $1$-form $\alpha_{0}$ is defined in $U_{0}$,
it vanishes on $W \cap U_{0}$ and
\begin{equation}\label{tame_U_0}
d \alpha_{0}(v,Jv) \geq C_{0} \|v\|^{2}
\end{equation}
for some constant $C_{0} > 0$ and all tangent vectors on $U_{0}$.
(2).\ $U_{0}$ is a coordinate chart, by this coordinate, $U_{0}$ is identified with
$B_{1} \times B_{2} \subseteq \mathbb{R}^{n} \times \mathbb{R}^{n}$, where $B_{1}$
and $B_{2}$ are open subsets of $\mathbb{R}^{n}$, and $W \cap U_{0} = B_{1} \times \{0\}$.
(3).\ There exists a complex frame $\{ e_{1}, \cdots, e_{n} \}$ of $TU_{0}$ such that $e_{j}(q) \in T_{q}W$
($1 \leq j \leq n$) for all $q \in W \cap U_{0}$.

Using the above frame, $TU_{0}$ is identified with $U_{0} \times \mathbb{C}^{n}$,
\begin{equation}\label{T_2_U_0}
T(U_{0} \times \mathbb{C}^{n})= U_{0} \times \mathbb{C}^{n} \times \mathbb{R}^{2n}
\times \mathbb{C}^{n}
\end{equation}
and a point in $T(U_{0} \times \mathbb{C}^{n})$ is represented
by a tuple $(q, v, w_{1}, w_{2})$, where $q \in U_{0}$, $v \in \mathbb{C}^{n}$,
$w_{1} \in T_{q} U_{0} = \mathbb{R}^{2n}$ and $w_{2} \in T_{v} \mathbb{C}^{n}
= \mathbb{C}^{n}$. We use the notation
\begin{equation}
W_{0} = W \cap U_{0}.
\end{equation}
Then $W_{0} = B_{1} \times \{0\}$,
\begin{equation}\label{T_W_0}
TW_{0} = (B_{1} \times \{0\}) \times \mathbb{R}^{n}
\subseteq U_{0} \times \mathbb{C}^{n},
\end{equation}
and $J(TW_{0}) = (B_{1} \times \{0\})
\times J_{0} \mathbb{R}^{n} \subseteq U_{0} \times \mathbb{C}^{n}$,
where $\mathbb{R}^{n} \hookrightarrow \mathbb{C}^{n}$
is the standard inclusion and $J_{0}$ is the standard complex structure on $\mathbb{C}^{n}$.

Define the projections
\begin{equation}
P_{1}: U_{0} \times \mathbb{C}^{n} \rightarrow U_{0}
\qquad \text{and} \qquad
P_{2}: U_{0} \times \mathbb{C}^{n} \rightarrow \mathbb{C}^{n}.
\end{equation}

Let's consider the almost complex structure $J^{(1)}$ on $U_{0} \times \mathbb{C}^{n} = TU_{0}$
which is defined in Proposition \ref{proposition_almost_complex}. By (1) and (2) of
Proposition \ref{proposition_almost_complex}, we know that
$P_{1}$ and the fibre inclusion
$\{q\} \times \mathbb{C}^{n} \hookrightarrow U_{0} \times \mathbb{C}^{n}$ are $J$-holomorphic.
Thus, by (\ref{T_2_U_0}), $J^{(1)}$ on $T^{2} U_{0}$ has the form
\begin{equation}\label{J_1}
J^{(1)} (q, v, w_{1}, w_{2}) =
(q, v, J(q) w_{1}, \phi(q,v) w_{1} + J_{0} w_{2}),
\end{equation}
where $\phi$ is a function on $U_{0} \times \mathbb{C}^{n}$
whose values are real linear maps from $\mathbb{R}^{2n}$ to $\mathbb{C}^{n}$.

By Lemma \ref{lemma_lipschitz}, we know that there exists $\epsilon >0$
such that the following holds: (1).\ $f(D^{+}(\epsilon)) \subseteq U_{0}$,
where $D^{+}(\epsilon) = \{ z \in D^{+} \mid |z| < \epsilon \}$.
(2).\ There exists an open ball $B_{3}$ with finite radius in $\mathbb{C}^{n}$
such that the image of $\frac{\partial f}{\partial x}$
(resp. $\frac{\partial f}{\partial y}$) $: \check{D}^{+}(\epsilon)
\rightarrow TU_{0} = U_{0} \times \mathbb{C}^{n}$ is actually contained in $U_{0} \times B_{3}$,
i.e.\ we get the map
\begin{equation}
\frac{\partial f}{\partial x} \ \left( \text{resp.}\ \frac{\partial f}{\partial y} \right):
\check{D}^{+}(\epsilon)
\rightarrow U_{0} \times B_{3} \subseteq TU_{0}.
\end{equation}
(3).\ The images of $\frac{\partial f}{\partial x}$ and $\frac{\partial f}{\partial y}$
are relatively compact in $U_{0} \times B_{3}$.

Since $B_{3}$ is relatively compact in $\mathbb{C}^{n}$, shrinking $U_{0}$
if necessary, we get
\begin{equation}\label{vertical_bound}
\|\phi(q,v)\| \leq C_{1}
\end{equation}
for some constant $C_{1}>0$, $\phi$ in (\ref{J_1}) and all $(q,v) \in U_{0} \times B_{3}$.
Here we consider $\phi(q,v)$ as a real linear map from
$T_{q}U_{0}$ to $\mathbb{C}^{n}$, $T_{q}U_{0}$ is equipped with the Hermitian $H_{q}$,
$\mathbb{C}^{n}$ is equipped with the standard metric, and the norm of $\phi$ comes
from these two metrics.

Define
\begin{equation}
W_{x} = TW_{0} \cap (U_{0} \times B_{3})
\qquad \text{and} \qquad
W_{y} = J(TW_{0}) \cap (U_{0} \times B_{3}).
\end{equation}

\begin{lemma}
$W_{x}$ and $W_{y}$ are closed totally real submanifolds of $U_{0} \times B_{3}$.
\end{lemma}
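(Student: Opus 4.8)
The plan is to verify, in the coordinates and frame fixed above, the three requirements of Definition \ref{definition_totally_real} together with closedness; the whole argument reduces to bookkeeping on $T(U_{0}\times\mathbb{C}^{n})$ using (\ref{T_W_0}) and (\ref{J_1}).

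\emph{Submanifold and closedness.} Since $W$ is a closed subset of $M$, the set $W_{0}=W\cap U_{0}=B_{1}\times\{0\}$ is relatively closed in $U_{0}$. By property (3) of $U_{0}$ the complex frame $\{e_{1},\dots,e_{n}\}$ restricts on $W_{0}$ to a real frame of $TW_{0}$, so in the identification $TU_{0}=U_{0}\times\mathbb{C}^{n}$ the bundle $TW_{0}$ is the \emph{genuine product} $(B_{1}\times\{0\})\times\mathbb{R}^{n}$ of (\ref{T_W_0}). Being cut out of $U_{0}\times\mathbb{C}^{n}$ by the closed conditions ``the $B_{2}$-coordinate of the base point vanishes'' and ``the fibre vector has zero imaginary part'', $TW_{0}$ is a closed embedded submanifold of $TU_{0}$ of real dimension $2n$. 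Since $\dim(U_{0}\times B_{3})=\dim(U_{0}\times\mathbb{C}^{n})=4n$, this is half the ambient dimension. Intersecting with the open set $U_{0}\times B_{3}$, we conclude that $W_{x}=TW_{0}\cap(U_{0}\times B_{3})$ is a closed embedded submanifold of $U_{0}\times B_{3}$ with $\dim W_{x}=\tfrac12\dim(U_{0}\times B_{3})$. The identical argument applied to $J(TW_{0})=(B_{1}\times\{0\})\times J_{0}\mathbb{R}^{n}$ handles $W_{y}$.

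\emph{Total reality.} Because $TW_{0}$ is a product, at a point $(q,v)\in W_{x}$ we have $T_{(q,v)}W_{x}=T_{(q,v)}TW_{0}=\{(q,v,w_{1},w_{2})\mid w_{1}\in T_{q}W_{0},\ w_{2}\in\mathbb{R}^{n}\}$ inside $T(U_{0}\times\mathbb{C}^{n})=U_{0}\times\mathbb{C}^{n}\times\mathbb{R}^{2n}\times\mathbb{C}^{n}$. By (\ref{J_1}), $J^{(1)}(q,v,w_{1},w_{2})=(q,v,J(q)w_{1},\ \phi(q,v)w_{1}+J_{0}w_{2})$. Assume this vector lies again in $T_{(q,v)}W_{x}$. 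Then $J(q)w_{1}\in T_{q}W_{0}$; but $W$ is totally real in $(M,J)$, so $T_{q}W_{0}\cap J(q)(T_{q}W_{0})=\{0\}$, and $J(q)w_{1}$ lies in both subspaces, whence $J(q)w_{1}=0$ and therefore $w_{1}=0$ (as $J(q)$ is invertible, $J(q)^{2}=-\text{Id}$). The vertical component then reduces to $J_{0}w_{2}\in\mathbb{R}^{n}$, while $w_{2}\in\mathbb{R}^{n}$ forces $J_{0}w_{2}\in\mathbb{R}^{n}\cap J_{0}\mathbb{R}^{n}=\{0\}$, so $w_{2}=0$. Hence $J^{(1)}(T_{(q,v)}W_{x})\cap T_{(q,v)}W_{x}=\{0\}$, i.e.\ $W_{x}$ is totally real. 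For $W_{y}$ one repeats the argument with $\mathbb{R}^{n}$ replaced by $J_{0}\mathbb{R}^{n}$ in the fibre slot: the vanishing $w_{1}=0$ is obtained exactly as before, and then $J_{0}w_{2}\in J_{0}\mathbb{R}^{n}$ together with $w_{2}\in J_{0}\mathbb{R}^{n}$ gives, using $J_{0}^{2}=-\text{Id}$, that $J_{0}w_{2}\in\mathbb{R}^{n}\cap J_{0}\mathbb{R}^{n}=\{0\}$, so $w_{2}=0$.

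The only point needing care — rather than any real obstacle — is keeping straight the two identifications of $T_{q}U_{0}$ in play: the coordinate identification $T_{q}U_{0}=\mathbb{R}^{2n}$ governs the horizontal slot $w_{1}$, while the frame identification $T_{q}U_{0}=\mathbb{C}^{n}$ governs fibres and the vertical slot $w_{2}$, in which $J$ acts as $J_{0}$. Once (\ref{T_W_0}) and (\ref{J_1}) are recorded, the verification above is immediate; note that it does not even use the bound (\ref{vertical_bound}) on $\phi$, since the term $\phi(q,v)w_{1}$ drops out as soon as $w_{1}=0$.
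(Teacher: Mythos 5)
Your proof is correct and follows essentially the same route as the paper: both reduce to $TW_0$ and $J(TW_0)$ in $U_0\times\mathbb{C}^n$, use the explicit block form (\ref{J_1}) of $J^{(1)}$, and conclude by intersecting the horizontal slot with $T_qW_0\cap J(q)T_qW_0=\{0\}$ (total reality of $W$) and the vertical slot with $\mathbb{R}^n\cap J_0\mathbb{R}^n=\{0\}$, after which the $\phi$-term is irrelevant. You spell out the closedness and the two competing identifications of $T_qU_0$ a bit more explicitly than the paper does, but the substance is the same.
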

\begin{proof}
It suffices to show that $TW_{0}$ and $J(TW_{0})$ are closed totally real submanifolds
of $U_{0} \times \mathbb{C}^{n}$.
Clearly, they are closed submanifolds. It remains to check that they
are totally real (see Definition \ref{definition_totally_real}).

Obviously, we have
$\dim (TW_{0}) = \dim (J(TW_{0})) = \frac{1}{2} \dim (U_{0} \times \mathbb{C}^{n})$.

By (\ref{T_2_U_0}), (\ref{T_W_0}) and (\ref{J_1}),
at the base point $(q,v) \in TW_{0}$, the elements in $T^{2} W_{0}$ and
$J^{(1)} (T^{2} W_{0})$ have the form
\[
(q, v, w_{1}, w_{2}) \in T^{2} W_{0}
\]
and
\[
(q, v, J(q) u_{1}, \phi(q,v)u_{1} + J_{0} u_{2}) \in J^{(1)}(T^{2} W_{0}),
\]
where $w_{1}$, $u_{1} \in \mathbb{R}^{n} \times \{0\} \subseteq \mathbb{R}^{2n}$, and
$w_{2}$, $u_{2} \in \mathbb{R}^{n} \subseteq \mathbb{C}^{n}$. Then
$J(q) u_{1} \in J(q) \mathbb{R}^{n}$ and
$J_{0} u_{2} \in J_{0} \mathbb{R}^{n}$. If
\[
(q, v, w_{1}, w_{2}) =
(q, v, J(q) u_{1}, \phi(q,v)u_{1} + J_{0} u_{2}),
\]
by the fact $J(q) \mathbb{R}^{n} \cap \mathbb{R}^{n} = \{0\}$
and $J_{0} \mathbb{R}^{n} \cap \mathbb{R}^{n} = \{0\}$,
we infer that the vectors $w_{1}$, $w_{2}$, $u_{1}$ and $u_{2}$ are zeros.
This implies that $TW_{0}$ is totally real.

By a similar argument, one sees that $J(TW_{0})$ is also totally real.
\end{proof}

By the fact that $f(\partial \check{D}^{+}) \subseteq W$ and $f$ is
$J$-holomorphic, we know that $\frac{\partial f}{\partial x}
(\partial \check{D}^{+}(\epsilon))$ $\subseteq W_{x}$ and
$\frac{\partial f}{\partial y} (\partial \check{D}^{+}(\epsilon)) \subseteq W_{y}$.
By (3) of Proposition \ref{proposition_almost_complex}, we infer $\frac{\partial f}{\partial x}$
and $\frac{\partial f}{\partial y}$ are $J$-holomorphic in the interior of
$\check{D}^{+}(\epsilon)$. By the smoothness of $\frac{\partial f}{\partial x}$
and $\frac{\partial f}{\partial y}$, we know that they are $J$-holomorphic
on $\check{D}^{+}(\epsilon)$.

Now we define a $1$-form on $U_{0} \times B_{3}$. In $U_{0}$, we already have the
$1$-form $\alpha_{0}$ satisfying (\ref{tame_U_0}).
Using the standard coordinate $(x_{1}+iy_{1}, \cdots, x_{n}+iy_{n}) \in \mathbb{C}^{n}$,
define a $1$-form in $\mathbb{C}^{n}$ as
\begin{equation}
\beta = \sum_{j=1}^{n} x_{j} d y_{j}.
\end{equation}
For $\lambda > 0$, define a $1$-form on $U_{0} \times B_{3}$ as
\begin{equation}\label{form}
\alpha = P_{1}^{*} \alpha_{0} + \lambda P_{2}^{*} \beta.
\end{equation}

\begin{lemma}
The $1$-form $\alpha$ in (\ref{form}) vanishes on $W_{x}$ and $W_{y}$. Furthermore,
there exists a $\lambda > 0$ such that $d \alpha$ tames $J^{(1)}$
on $U_{0} \times B_{3}$.
\end{lemma}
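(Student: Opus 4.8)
The plan is to split $\alpha = P_{1}^{*}\alpha_{0} + \lambda\,P_{2}^{*}\beta$ and to exploit that $\alpha_{0}$ already controls the base directions through (\ref{tame_U_0}), while $\beta$ controls the fibre directions through the fact that $d\beta = \sum_{j} dx_{j}\wedge dy_{j}$ is the standard symplectic form on $\mathbb{C}^{n}$, compatible with $J_{0}$. The off-diagonal term $\phi$ of $J^{(1)}$ in (\ref{J_1}) will be absorbed using the uniform bound (\ref{vertical_bound}), once $\lambda$ is chosen small relative to $C_{0}/C_{1}^{2}$.

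For the vanishing, $P_{1}$ maps both $W_{x}=TW_{0}\cap(U_{0}\times B_{3})$ and $W_{y}=J(TW_{0})\cap(U_{0}\times B_{3})$ into $W_{0}$, on which $\alpha_{0}$ vanishes, so $P_{1}^{*}\alpha_{0}$ restricts to $0$ on each of $W_{x}$ and $W_{y}$. For the second summand, by (\ref{T_W_0}) the projection $P_{2}$ maps $W_{x}$ into $\mathbb{R}^{n}\subseteq\mathbb{C}^{n}$, on which all the forms $dy_{j}$ vanish, and maps $W_{y}$ into $J_{0}\mathbb{R}^{n}=i\mathbb{R}^{n}$, on which all the coordinate functions $x_{j}$ vanish; in either case $\beta=\sum_{j}x_{j}\,dy_{j}$ pulls back to $0$, so $P_{2}^{*}\beta$ restricts to $0$ on $W_{x}$ and $W_{y}$. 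Hence $\alpha$ vanishes on $W_{x}$ and $W_{y}$ for every $\lambda$.

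For the taming, I would fix $(q,v)\in U_{0}\times B_{3}$ and a nonzero tangent vector $\zeta=(q,v,w_{1},w_{2})$ with $w_{1}\in T_{q}U_{0}$ and $w_{2}\in\mathbb{C}^{n}$. By (\ref{J_1}), $J^{(1)}\zeta=(q,v,J(q)w_{1},\,\phi(q,v)w_{1}+J_{0}w_{2})$. Since $P_{1}$ is $J$-holomorphic by (1) of Proposition \ref{proposition_almost_complex} and $P_{2}$ is the fibre projection, $dP_{1}$ sends $\zeta\mapsto w_{1}$ and $J^{(1)}\zeta\mapsto J(q)w_{1}$, while $dP_{2}$ sends $\zeta\mapsto w_{2}$ and $J^{(1)}\zeta\mapsto\phi(q,v)w_{1}+J_{0}w_{2}$. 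Therefore
\[
d\alpha\bigl(\zeta,J^{(1)}\zeta\bigr) = d\alpha_{0}\bigl(w_{1},J(q)w_{1}\bigr) + \lambda\,d\beta\bigl(w_{2},J_{0}w_{2}\bigr) + \lambda\,d\beta\bigl(w_{2},\phi(q,v)w_{1}\bigr).
\]
Now (\ref{tame_U_0}) gives $d\alpha_{0}(w_{1},J(q)w_{1})\geq C_{0}\|w_{1}\|^{2}$; compatibility of $d\beta$ with $J_{0}$ gives $d\beta(w_{2},J_{0}w_{2})=\|w_{2}\|_{\text{Eu}}^{2}$; and Wirtinger's inequality together with (\ref{vertical_bound}) gives $|d\beta(w_{2},\phi(q,v)w_{1})|\leq\|w_{2}\|_{\text{Eu}}\,\|\phi(q,v)w_{1}\|_{\text{Eu}}\leq C_{1}\|w_{1}\|\,\|w_{2}\|_{\text{Eu}}$, the norm $\|w_{1}\|$ on $T_{q}U_{0}$ being that of $H_{q}$, with respect to which both (\ref{tame_U_0}) and (\ref{vertical_bound}) are stated. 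Writing $a=\|w_{1}\|$ and $b=\|w_{2}\|_{\text{Eu}}$ this yields
\[
d\alpha\bigl(\zeta,J^{(1)}\zeta\bigr)\ \geq\ C_{0}a^{2}+\lambda b^{2}-\lambda C_{1}ab .
\]

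It remains to pick $\lambda$. The right-hand side is a quadratic form in $(a,b)$ which is positive definite exactly when $\lambda\bigl(C_{0}-\tfrac{1}{4}\lambda C_{1}^{2}\bigr)>0$, i.e.\ $0<\lambda<4C_{0}/C_{1}^{2}$ (any $\lambda>0$ works if $C_{1}=0$). Fixing such a $\lambda$, say $\lambda=2C_{0}/C_{1}^{2}$, we get $d\alpha(\zeta,J^{(1)}\zeta)>0$ whenever $(a,b)\neq(0,0)$, which holds because $\zeta\neq0$; hence $d\alpha$ tames $J^{(1)}$ on $U_{0}\times B_{3}$. The only step that deserves attention is the off-diagonal term $\phi(q,v)w_{1}$ appearing in $dP_{2}(J^{(1)}\zeta)$ — the precise defect measuring the failure of $P_{2}$ to be $J$-holomorphic — and the observation that (\ref{vertical_bound}) keeps it uniformly bounded on $U_{0}\times B_{3}$, so that the genuinely positive fibre contribution $\lambda\|w_{2}\|_{\text{Eu}}^{2}$ dominates it once $\lambda$ is small enough; everything else is routine bilinear algebra.
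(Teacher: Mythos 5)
Your proof is correct and follows essentially the same route as the paper: split $\alpha$ via $P_1,P_2$, use (\ref{tame_U_0}) and $d\beta(w_2,J_0w_2)=\|w_2\|^2$ on the diagonal, bound the off-diagonal $\lambda\,d\beta(w_2,\phi w_1)$ via (\ref{vertical_bound}), and choose $\lambda$ small enough. The only cosmetic difference is that you phrase the choice of $\lambda$ as a positive-definiteness criterion for the quadratic form in $(\|w_1\|,\|w_2\|)$ (yielding the full admissible range $0<\lambda<4C_0/C_1^2$), whereas the paper applies the AM--GM inequality $C_1ab\leq\tfrac12 C_1^2a^2+\tfrac12 b^2$ directly and picks $\lambda=C_0C_1^{-2}$.
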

\begin{proof}
Since $\alpha_{0}$ vanishes on $W_{0}$ and $\beta$ vanishes on $\mathbb{R}^{n}$
and $J_{0} \mathbb{R}^{n}$ in $\mathbb{C}^{n}$, we know that $\alpha$ vanishes on
$W_{x}$ and $W_{y}$.

By (\ref{tame_U_0}), (\ref{vertical_bound}) and the fact that
$d \beta (w_{2}, J_{0} w_{2}) = \| w_{2} \|^{2}$ for $w_{2} \in \mathbb{C}^{n}$,
we have
\begin{eqnarray*}
& & d \alpha ((w_{1}, w_{2}), J^{(1)}(w_{1}, w_{2})) \\
& = & d \alpha_{0} (dP_{1}(w_{1}, w_{2}), dP_{1} \cdot J^{(1)}(w_{1}, w_{2}))
+ \lambda d \beta (dP_{2}(w_{1}, w_{2}), dP_{2} \cdot J^{(1)}(w_{1}, w_{2})) \\
& = & d \alpha_{0} (w_{1}, Jw_{1})
+ \lambda d \beta (w_{2}, \phi w_{1} + J_{0} w_{2}) \\
& = & d \alpha_{0} (w_{1}, Jw_{1}) + \lambda d \beta (w_{2}, \phi w_{1})
+ \lambda d \beta (w_{2}, J_{0} w_{2}) \\
& \geq & C_{0} \|w_{1}\|^{2} - \lambda \|w_{2}\| \|\phi\| \|w_{1}\| + \lambda \|w_{2}\|^{2} \\
& \geq & C_{0} \|w_{1}\|^{2} - \lambda C_{1} \|w_{1}\| \|w_{2}\| + \lambda \|w_{2}\|^{2} \\
& \geq & C_{0} \|w_{1}\|^{2} - \lambda C_{1} \left( \tfrac{1}{2} C_{1} \|w_{1}\|^{2}
+ \tfrac{1}{2} C_{1}^{-1} \|w_{2}\|^{2} \right) + \lambda \|w_{2}\|^{2} \\
& = & \left( C_{0} - \tfrac{1}{2} \lambda C_{1}^{2} \right) \|w_{1}\|^{2}
+ \tfrac{1}{2} \lambda \|w_{2}\|^{2}.
\end{eqnarray*}

We finish the proof by choosing $\lambda = C_{0} C_{1}^{-2}$.
\end{proof}

In summary, the maps $\frac{\partial f}{\partial x}: (\check{D}^{+}(\epsilon),
\partial \check{D}^{+}(\epsilon)) \rightarrow (U_{0} \times B_{3}, W_{x})$
and $\frac{\partial f}{\partial y}: (\check{D}^{+}(\epsilon),$
$\partial \check{D}^{+}(\epsilon)) \rightarrow (U_{0} \times B_{3}, W_{y})$
satisfy the assumption of Theorem \ref{theorem_tame}. Applying Lemma
\ref{lemma_tame_to_area} to these two maps, we obtain the following.

\begin{lemma}\label{lemma_boot_strap}
There exists $\epsilon_{0} > 0$ such that
$\frac{\partial f}{\partial x}: (\check{D}^{+}(\epsilon_{0}),
\partial \check{D}^{+}(\epsilon_{0})) \rightarrow (U_{0} \times B_{3}, W_{x})$
and $\frac{\partial f}{\partial y}: (\check{D}^{+}(\epsilon_{0}),
\partial \check{D}^{+}(\epsilon_{0})) \rightarrow (U_{0} \times B_{3}, W_{y})$
are well-defined $J$-holomorphic maps. Here $W_{x}$ and $W_{y}$ are closed totally real submanifolds
of $U_{0} \times B_{3}$. The images of $\frac{\partial f}{\partial x}$ and
$\frac{\partial f}{\partial y}$ are
relatively compact and have finite areas with respect to
any metric on $U_{0} \times B_{3}$.
\end{lemma}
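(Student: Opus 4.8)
The plan is to observe that all of the hypotheses of Theorem \ref{theorem_tame} have now been verified for each of the two lifted maps $\frac{\partial f}{\partial x}, \frac{\partial f}{\partial y} \colon \check{D}^{+}(\epsilon) \rightarrow U_{0} \times B_{3}$, and then simply to feed these maps into Lemma \ref{lemma_tame_to_area}. First I would assemble the ingredients already established: by Proposition \ref{proposition_almost_complex}(3) both maps are $J^{(1)}$-holomorphic; by the discussion preceding this lemma they send $\partial \check{D}^{+}(\epsilon)$ into $W_{x}$, respectively $W_{y}$; by the lemma just before this one, $W_{x}$ and $W_{y}$ are closed totally real submanifolds of $U_{0} \times B_{3}$; by the most recent lemma the $1$-form $\alpha$ of (\ref{form}) has the property that $d\alpha$ tames $J^{(1)}$ and $\alpha$ vanishes on $W_{x}$ and on $W_{y}$, so in particular its pullback to $W_{x}$ or $W_{y}$ is $0$, hence exact; and by the third property in the choice of $\epsilon$ above the images of $\frac{\partial f}{\partial x}$ and $\frac{\partial f}{\partial y}$ are relatively compact in $U_{0} \times B_{3}$. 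Thus each of the two maps satisfies exactly the assumptions of Theorem \ref{theorem_tame}, with $(M,J)$ there replaced by $(U_{0} \times B_{3}, J^{(1)})$.

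Second, since Lemma \ref{lemma_tame_to_area} is phrased for a map whose domain is the unit punctured half disk $\check{D}^{+}$ rather than $\check{D}^{+}(\epsilon)$, I would precompose with the holomorphic rescaling $z \mapsto \epsilon z$, producing maps on $\check{D}^{+}$ which still satisfy the assumption of Theorem \ref{theorem_tame}. Applying Lemma \ref{lemma_tame_to_area} to each then gives the gradient bound (\ref{lemma_tame_to_area_1}) and, in particular, that $\left|\frac{\partial f}{\partial x}(\check{D}^{+}(r))\right| < +\infty$ and $\left|\frac{\partial f}{\partial y}(\check{D}^{+}(r))\right| < +\infty$ for every $r < \epsilon$. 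Fixing any $\epsilon_{0} < \epsilon$ (say $\epsilon_{0} = \epsilon/2$) yields the finite-area conclusion on $\check{D}^{+}(\epsilon_{0})$ for both maps. Finally, because the images are relatively compact in $U_{0} \times B_{3}$, any two Riemannian metrics are equivalent on the compact closures of these images, so the finiteness of the area does not depend on the choice of metric, exactly as in Observation \ref{claim_metric}.

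I do not expect a serious obstacle here: all of the genuine content — constructing $J^{(1)}$ with the compatibility of Proposition \ref{proposition_almost_complex}, verifying the total reality of $W_{x}$ and $W_{y}$, and producing the taming exact form $\alpha$ with the correct boundary vanishing — has already been carried out, and Lemma \ref{lemma_tame_to_area} is a black box at this stage. The only mild care needed is the bookkeeping of the rescaling $z \mapsto \epsilon z$ together with the remark that, since removability of the singularity at $0$ is a local question, shrinking the radius from $\epsilon$ to $\epsilon_{0}$ costs nothing. The one point I would double-check explicitly is that the $1$-form $\alpha$ of (\ref{form}) is defined on the whole open set $U_{0} \times B_{3}$ into which the two lifts actually map, not merely in a neighbourhood of $W_{x}, W_{y}$; this is exactly why $B_{3}$ was taken with finite radius and $U_{0}$ was shrunk to secure (\ref{vertical_bound}), and with that in hand $d\alpha$ tames $J^{(1)}$ on all of $U_{0} \times B_{3}$ and Theorem \ref{theorem_tame} applies verbatim.
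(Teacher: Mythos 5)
Your proposal is correct and follows essentially the same route as the paper: the text preceding the lemma has already verified that $\frac{\partial f}{\partial x}$ and $\frac{\partial f}{\partial y}$ satisfy the hypotheses of Theorem \ref{theorem_tame} (with $(U_{0}\times B_{3}, J^{(1)})$, $W_{x}$, $W_{y}$ and the form $\alpha$ of (\ref{form})), and the lemma is obtained by feeding them into Lemma \ref{lemma_tame_to_area}, with the rescaling $z \mapsto \epsilon z$ handled exactly as in the comment after that lemma. Your extra remarks (metric independence via relative compactness, and checking that $\alpha$ is defined on all of $U_{0}\times B_{3}$) are consistent with the paper's treatment.
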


Now we are in a position to finish the proof of Theorems \ref{theorem_finite_area}
and \ref{theorem_tame}. The comment after Lemma \ref{lemma_tame_to_area} tells
us that, by Lemma \ref{lemma_tame_to_area}, Theorem \ref{theorem_finite_area} immediately
implies Theorem \ref{theorem_tame}. Therefore, it suffices to prove Theorem \ref{theorem_finite_area}.

\begin{proof}[Proof of Theorem \ref{theorem_finite_area}]
Following \cite[1.4.B]{gromov}, this proof is a boot-strapping argument.

By Lemma \ref{lemma_boot_strap}, we know that
$\frac{\partial f}{\partial x}$ and $\frac{\partial f}{\partial y}$ are well-defined
on $\check{D}^{+}(\epsilon_{0})$. The proof mainly contains two steps.

First, we prove that, if $f$ has a $C^{k}$ extension for some $k$ such that
$0 \leq k < + \infty$, then $\frac{\partial f}{\partial x}$
and $\frac{\partial f}{\partial y}$ also have $C^{k}$ extensions.

By comparing the conclusion of Lemma \ref{lemma_boot_strap} and the assumption of Theorem \ref{theorem_finite_area},
up to a holomorphic reparametrization of the domain, we infer
$\frac{\partial f}{\partial x}: \check{D}^{+}(\epsilon_{0})
\rightarrow U_{0} \times B_{3}$
and $\frac{\partial f}{\partial y}: \check{D}^{+}(\epsilon_{0})
\rightarrow U_{0} \times B_{3}$ can be viewed as special cases of $f$
because $f$ is the map in the general Theorem \ref{theorem_finite_area}.
Therefore, if $f$ has a $C^{k}$ extension, then, as special cases,
so do $\frac{\partial f}{\partial x}$ and $\frac{\partial f}{\partial y}$.

Second, we prove $f$ is $C^{k}$ on $D^{+}$ for all $k$ such that $0 \leq k < + \infty$.

By Lemma \ref{lemma_continuity}, we know $f$ has a continuous extension over $0 \in D^{+}$.
By taking a coordinate chart near $f(0)$, we may assume that $f$ maps $D^{+}(r)$ into
$\mathbb{R}^{m}$ for some $r \in (0, \epsilon_{0})$.

By the continuity of $f$ on $D^{+}$ and
the result of the first step, we infer that $\frac{\partial f}{\partial x}$ and $\frac{\partial f}{\partial y}$
also have $C^{0}$ extensions. Clearly, $f$ is smooth on $\check{D}^{+}$. By Lemma
\ref{lemma_derivative_extension}, we know that $f$ has a $C^{1}$ extension.

In general, if we know that $f$ has a $C^{k}$ extension over $0$, then, by the result of the first step,
$\frac{\partial f}{\partial x}$ and $\frac{\partial f}{\partial y}$ also
have $C^{k}$ extensions. Since $f$ is smooth on $\check{D}^{+}$, by Lemma \ref{lemma_derivative_extension}, we infer
$f$ has a $C^{k+1}$ extension.

By an induction on $k$, we finished the second step.

Finally, Lemma \ref{lemma_smooth} and the result of the second step finish the proof.
\end{proof}

\appendix
\section{}
In this appendix, we use Whitney's classical result \cite{whitney} to prove
Lemmas \ref{lemma_derivative_extension} and \ref{lemma_smooth}.

\begin{proof}[Proof of Lemma \ref{lemma_derivative_extension}]
It suffices to prove that each coordinate function of $h$ is $C^{k+1}$. Thus
we may assume that $m=1$.

We first prove the case of $k=0$.

Denote by $\Phi$ the continuous extension of $dh$ on $D^{+}(r)$. Then $\Phi$ is a continuous
function from $D^{+}(r)$ to $L(\mathbb{R}^{2}, \mathbb{R}^{1})$, where $L(\mathbb{R}^{2}, \mathbb{R}^{1})$
is the linear space of linear maps from $\mathbb{R}^{2}$ to $\mathbb{R}^{1}$.
Since $D^{+}(r)$ is convex, for all $z_{1}$ and $z_{2}$ in $D^{+}(r)$, the integral
\[
\int_{0}^{1} dh[z_{1} +t (z_{2}-z_{1})] dt =
\int_{0}^{1} \Phi [z_{1} +t (z_{2}-z_{1})] dt
\]
makes sense. Furthermore, since $h$ is continuous on $D^{+}(r)$ and $C^{1}$ on $\check{D}^{+}(r)$,
the fundamental theorem of calculus still holds for $h$ on $D^{+}(r)$. Thus
\begin{eqnarray}\label{lemma_derivative_extension_1}
&   & h(z_{2}) - h(z_{1}) - \Phi(z_{1}) (z_{2}-z_{1}) \\
& = & \int_{0}^{1} \{\Phi [z_{1} +t (z_{2}-z_{1})] - \Phi(z_{1})\} dt \cdot (z_{2}-z_{1}). \nonumber
\end{eqnarray}

Since $\Phi$ is continuous on $D^{+}(r)$, we obtain the following:
for any $z_{0} \in D^{+}(r)$ and for any $\epsilon >0$, there exists $\delta >0$
such that, if $|z_{1} - z_{0}| \leq \delta$ and $|z_{2} - z_{0}| \leq \delta$,
then
\begin{equation}\label{lemma_derivative_extension_2}
\| \Phi (z_{2}) - \Phi(z_{1}) \| \leq \epsilon,
\end{equation}
$\| \Phi [z_{1} +t (z_{2}-z_{1})] - \Phi(z_{1}) \| \leq \epsilon$
for all $t \in [0,1]$ and
\begin{equation}\label{lemma_derivative_extension_3}
|h(z_{2}) - h(z_{1}) - \Phi(z_{1}) (z_{2}-z_{1})| \leq \epsilon |z_{2}-z_{1}|.
\end{equation}

The inequalities (\ref{lemma_derivative_extension_2}) and
(\ref{lemma_derivative_extension_3}) imply that $h$ is a $C^{1}$ function
in the sense of Whitney and the Whitney derivative of $h$ is
$\Phi$ (see \cite[p.64]{whitney}). By Whitney's theorem \cite[Theorem 1]{whitney},
we infer that $h$ is $C^{1}$.

The proof of the case of $k > 0$ is similar to the previous case.

Let $\Phi$ be the continuous extension of $d^{k+1}h$ on $D^{+}(r)$. Similar to
(\ref{lemma_derivative_extension_1}), we can prove the following Taylor expansion. For $0 \leq i \leq k$,
\begin{eqnarray*}
&   & d^{i}h(z_{2}) - \sum_{j=i}^{k} \frac{1}{(j-i)!} d^{j}h(z_{1}) (z_{2}-z_{1})^{j-i}
- \frac{1}{(k+1-i)!} \Phi(z_{1}) (z_{2}-z_{1})^{k+1-i} \\
& = & \frac{1}{(k-i)!} \int_{0}^{1} (1-t)^{k-i} \{ \Phi [z_{1} +t (z_{2}-z_{1})] - \Phi(z_{1}) \} dt
\cdot (z_{2}-z_{1})^{k+1-i},
\end{eqnarray*}
where the $j$-th differential $d^{j}h(z_{1})$ is a multiple linear map with $j$ arguments,
$d^{j}h(z_{1}) (z_{2}-z_{1})^{l}$ ($0 \leq l \leq j$) is the abbreviation of
$d^{j}h(z_{1})(z_{2}-z_{1}, \cdots, z_{2}-z_{1})$, i.e.\ plug $l$ many $z_{2}-z_{1}$ into
the first $l$ arguments of $d^{j}h(z_{1})$.

Since $\Phi$ is continuous, by the above Taylor expansion, we can prove that $h$ is $C^{k+1}$
in the sense of Whitney and the Whitney derivatives of $h$ are $dh, \cdots, d^{k}h$ and
$\Phi$. Then Whitney's theorem again implies that $h$ is $C^{k+1}$.
\end{proof}

\begin{proof}[Proof of Lemma \ref{lemma_smooth}]
It suffices to consider the coordinate function of $h$. Thus we may assume $m=1$.
It's easy to check that $h$ is $C^{\infty}$ in the sense of Whitney \cite[p.65]{whitney}.
By Whitney's theorem \cite[Theorem 1]{whitney}, we finish the proof.
\end{proof}

\section*{Acknowledgements}
The first author would like to thank his PhD advisor Prof.\ Peter Albers
for his support and guidance.
The research of the first author was supported by the NSF grant
DMS~1001701 and the SFB 878-Groups, Geometry and Actions.
The second author wishes to thank his PhD advisor Prof.\ John Klein for suggesting symplectic geometry
as a research area, and for encouragement and support over the years.


\end{document}